\documentclass[11pt,a4paper]{amsart}
\usepackage[T1]{fontenc}
\usepackage{lmodern,amsmath,amssymb,mathrsfs,mathtools,booktabs,enumitem,microtype}
\usepackage[margin=28mm]{geometry}
\usepackage{xcolor}
\usepackage[colorlinks=true,linkcolor=blue!50!black,citecolor=blue!50!black,urlcolor=blue!50!black]{hyperref}
\newtheorem{theorem}{Theorem}[section]
\newtheorem{proposition}[theorem]{Proposition}
\newtheorem{lemma}[theorem]{Lemma}
\newtheorem{corollary}[theorem]{Corollary}
\theoremstyle{definition}

\theoremstyle{remark}
\newtheorem{remark}[theorem]{Remark}
\newcommand{\A}{\mathcal A}

\newcommand{\Hh}{\mathfrak H}
\newcommand{\C}{\mathbb C}
\newcommand{\Q}{\mathbb Q}
\newcommand{\Z}{\mathbb Z}
\newcommand{\Ff}{\mathbb F}
\newcommand{\OO}{\mathcal O}

\newcommand{\MM}{\mathrm M}
\newcommand{\JJ}{\mathrm J}
\newcommand{\F}{F_{14}}
\DeclareMathOperator{\ord}{ord}
\DeclareMathOperator{\rk}{rank}
\DeclareMathOperator{\Sym}{Sym}
\DeclareMathOperator{\Sp}{Sp}
\DeclareMathOperator{\GL}{GL}
\DeclareMathOperator{\Pic}{Pic}
\DeclareMathOperator{\NS}{NS}
\DeclareMathOperator{\divi}{div}
\DeclareMathOperator{\Sing}{Sing}
\DeclareMathOperator{\Supp}{Supp}
\DeclareMathOperator{\Spec}{Spec}
\DeclareMathOperator{\tr}{tr}
\DeclareMathOperator{\Nm}{Nm}
\DeclareMathOperator{\length}{length}

\DeclareMathOperator{\Proj}{Proj}
\DeclareMathOperator{\Eff}{Eff}

\newcommand{\roundup}[1]{\left\lceil #1\right\rceil}
\newcommand{\rounddown}[1]{\left\lfloor #1\right\rfloor}
\title[The Kodaira dimension of $\mathcal A_6$]{The Kodaira dimension and minimal models of $\mathcal A_6$}
\author{Haowu Wang}
\address{School of Mathematics and Statistics, Wuhan University, Wuhan 430072, Hubei, China}
\email{haowu.wangmath@whu.edu.cn}
\subjclass[2020]{Primary 11F46, 14K10; Secondary 14E30, 14H40, 14J28, 14Q20}
\keywords{Siegel modular form, Kodaira dimension, Prym variety, Nikulin surface, theta divisor, minimal model}
\date{October 29, 2026}
\hypersetup{
 pdftitle={The Kodaira dimension and minimal models of A6},
 pdfauthor={Haowu Wang},
 pdfsubject={Siegel modular forms, Kodaira dimension, and minimal models},
 pdfkeywords={A6, Siegel modular forms, Prym varieties, Kodaira dimension}
}
\begin{document}
\begin{abstract}
We prove that the moduli space $\mathcal A_6$ of principally polarized
abelian sixfolds has Kodaira dimension zero, determining the remaining
case in the Kodaira classification of the spaces $\mathcal A_g$.
More precisely, every nonzero scalar Siegel modular form of genus six,
weight $k$ and positive boundary order $b$ satisfies $k\ge7b$,
with equality precisely for powers of the weight-fourteen harmonic
theta series of Dittmann, Salvati Manni and Scheithauer.
The canonical ring is therefore a polynomial ring on one generator
of degree two. We also prove numerical dimension zero and construct
the unique projective $\Q$-factorial terminal minimal model. Its
Picard number is one and its canonical divisor has order two. The perfect cone compactification
is a Mori dream space, and the birational automorphism group of
$\mathcal A_6$ is trivial. The proof combines normal vanishing on
the Prym locus, polar intersections on abelian boundary fibers,
and an exact theta calculation. A discrepancy estimate recovers
the modular open subset from the minimal model.
\end{abstract}
\setlength{\skip\footins}{7pt plus 11pt minus 3pt}
\maketitle
\raggedbottom
\setcounter{tocdepth}{1}
\tableofcontents
\section{Introduction}
\label{sec:introduction}

Let $\A_g$ denote the coarse moduli space of principally polarized
complex abelian varieties of dimension $g$. Analytically,
\[
 \A_g=\Sp_{2g}(\Z)\backslash\Hh_g,
 \qquad \dim\A_g=\frac{g(g+1)}2,
\]
where $\Hh_g$ is the Siegel upper half-space. By the Kodaira
dimension of $\A_g$ we mean that of a smooth projective model of
its function field. This differs from the logarithmic Kodaira
dimension of the open moduli space.

The birational classification of these spaces is closely related
to the construction and vanishing of Siegel modular forms.
The spaces $\A_g$ are unirational for $g\le5$; the last two
dimensions were treated by constructions with curves and intermediate
Jacobians \cite{Clemens83,Donagi84,Verra84}. Loughran and Sankaran
have recently proved unirationality over $\Q$ for every $g\le5$
\cite[Theorem~1.3]{LS25}. Tai proved that
$\A_g$ is of general type for $g\ge9$ \cite{Tai82}.
Freitag obtained the case $g=8$ \cite{FreitagBook83}, and Mumford
obtained the case $g=7$ \cite{Mumford83}. The purpose of this paper
is to determine the remaining Kodaira dimension and to describe
some resulting features of the birational geometry of $\A_6$.

Write $\lambda_g$ for the Hodge class and $D_g$ for the boundary
divisor of the perfect cone compactification $\A_g^{\mathrm{Perf}}$.
A nonzero scalar form of weight $k$ and boundary order $b$ has an
interior zero divisor whose closure has class
$k\lambda_g-b[D_g]$. Its slope, for $b>0$, is $k/b$.
In genus six the canonical class is
\begin{equation}\label{eq:intro-canonical-class}
 K_{\A_6^{\mathrm{Perf}}}=7\lambda_6-[D_6].
\end{equation}
Thus the position of slope seven in the effective cone is central
to the Kodaira-dimension problem.

\subsection{Previous results}
The last decade has brought two complementary approaches to this
problem. Farkas and Verra proved that the universal abelian variety
over $\A_5$ is unirational \cite[Theorem~0.1]{FV16}. This gives
unirationality of the rank-one boundary of $\A_6$. Their construction
also produces curves covering that boundary, and the Hodge and
boundary degrees of these curves give
\[
 s_{\mathrm{Eff}}(\A_6)\ge\frac{53}{10}
\]
\cite[Theorem~0.4]{FV16}.
Alexeev, Donagi, Farkas, Izadi and Ortega constructed a generically
finite dominant Prym--Tyurin map from a Hurwitz space to $\A_6$
\cite{ADFIO20}. It uses degree-$27$ covers of $\mathbf P^1$ with
monodromy $W(E_6)$ and $24$ reflection branch points. In subsequent
work they computed the Hodge classes associated with the irreducible
representations of $W(E_6)$ and bounded the interior ramification
locus by the union of a Weyl--Petri divisor and the divisor where
$h^0(C,\pi^*\OO_{\mathbf P^1}(1))\ge3$
\cite[Theorems~1.3 and~1.4]{ADFIO22}. Here $\pi:C\to\mathbf P^1$
is the degree-$27$ cover. They also proved generic unramifiedness
along the boundary divisor mapping onto $D_6$. These results give a detailed parametrization by curves;
the ramification still has to be accounted for when passing to the
canonical geometry of $\A_6$.

On the modular side, Dittmann, Salvati Manni and Scheithauer
constructed a harmonic theta series $\F$ associated with the Leech
lattice \cite{DSS21}. It is a scalar cusp form of genus six and
weight $14$, with boundary order exactly two. It extends to a
nonzero bicanonical form on a smooth projective model of $\A_6$.
Consequently
\[
 \kappa(\A_6)\ge0,
 \qquad s_{\mathrm{Eff}}(\A_6)\le7.
\]
Their form is the starting point of the present paper. We do not
require its full Fourier expansion.
Grushevsky, Ibukiyama, Mondello and Salvati Manni developed
differential operators to study moving slopes \cite{GIMS24}.
They proved that the interior divisor of $\F$ is prime and reduced
and obtained the conditional bound
$s_{\mathrm{Mov}}(\A_6)\le43/6$, subject to a nondegeneracy
condition on its matrix of first derivatives
\cite[Corollary~E]{GIMS24}. We verify this
condition geometrically below.

Ma related Siegel modular forms to pluricanonical forms on Kuga
families and extended this correspondence to logarithmic
pluricanonical forms on suitable compactifications
\cite[Theorems~1.1 and~1.2]{Ma21}.
Poon, Salvati Manni and Sankaran determined the Kodaira dimensions
of compactified Kuga varieties with a positive number of marked
points \cite[Theorem~1]{PSS25}. In particular, the $n$-fold Kuga variety over
$\A_6$ has Kodaira dimension $21$ for every $n\ge1$. Their result
leaves the unmarked case considered here. Geometric constructions
of modular forms through vanishing conditions also occur in the
work of van der Geer and Kouvidakis on projectivized Hodge bundles
\cite{vGK24}. Our argument uses higher normal derivatives along a
Prym locus and their specialization to an abelian boundary fiber.
The construction of vector-valued Siegel modular forms by normal
derivatives along a product locus was developed by Cl\'ery and
van der Geer \cite{CV15}. We use this construction to determine the first possible normal
term of $\F$ along $\A_5\times\A_1$, and to control its
vanishing at the finite quotient singularities.

\subsection{The sharp slope bound and the canonical ring}
All scalar modular forms in this paper have trivial character.
We write $\MM_k^{(g)}$ for their space in genus $g$ and weight $k$,
and $b(G)$ for the boundary order of a nonzero form. These
conventions are specified in Section~\ref{sec:preliminaries}.

\begin{theorem}\label{thm:intro-slope}
Let $G$ be a nonzero scalar Siegel modular form for $\Sp_{12}(\Z)$,
of weight $k$ and positive boundary order $b$. Then
\[
 k\ge7b.
\]
Equality holds if and only if $b$ is even and
$G=c\F^{b/2}$ for a nonzero constant $c$.
\end{theorem}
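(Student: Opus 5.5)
Induct on the boundary order $b$, using restriction to the product locus $\A_5\times\A_1$ (the Prym-type locus) and to the rank-one boundary, with the Leech form $\F$ as the extremal object. Let $G\neq 0$ have weight $k$ and boundary order $b>0$, and suppose $k\le 7b$. The goal is to show that $\F$ divides $G$ (with $G/\F$ holomorphic) or that a contradiction arises. Dividing then lowers $(k,b)$ by $(14,2)$ while preserving the inequality $k-14\le 7(b-2)$, so induction on $b$ reduces the theorem to two claims.

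\textbf{Claim A.} No form with $k<7b$ exists.

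\textbf{Claim B.} A form with $k=7b$ vanishes on the interior divisor of $\F$.

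Since the interior divisor of $\F$ is prime and reduced by \cite{GIMS24}, Claim B gives $\F\mid G$. The case $b$ odd is excluded at the bottom of the induction: at $b=1$, a form of weight $\le 7$ and boundary order $1$ must be ruled out directly.

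For Claim A, I would first pick a covering family of curves on which $\F$ has slope exactly $7$. Concretely, these are curves in the rank-one boundary $D_6$, whose fibers are Kummer-type families over $\A_5$, i.e.\ abelian boundary fibers over a general point of $\A_5$. Restrict $G$ to a neighbourhood of $D_6$ via its Fourier--Jacobi expansion. The leading coefficient $\phi_b$ is a Jacobi form of weight $k$ and index $b$ for $\A_5$, and $\phi_b\neq 0$. On the abelian fiber $X$ over a point of $\A_5$, $\phi_b$ is a section of $\Theta^{b}$ (times the Hodge factor). The polar intersection argument works by intersecting the zero divisor of $\phi_b$ with the polar structure coming from the normal directions along the Prym locus. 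Pushed forward, this gives an inequality relating $k$ to $b$ through the numbers $\Theta^5/5!=1$ and the Hodge degree of the base curve.

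I expect the clean route to be the following. Take moving curves $B$ in $D_6$ through a general point with $\lambda\cdot B/D\cdot B=7$, obtained from pencils in the theta divisor of a general fiber. Then show that $\overline{\divi G}$ cannot contain $B$ unless it contains the divisor of $\F$. This yields $k\,\lambda\cdot B-b\,D\cdot B\ge 0$, i.e.\ $k\ge 7b$, with equality forcing $\divi G\cdot B=0$. The exact theta calculation is needed precisely to compute the slope $7$ for these curves and to verify that $\F$ has intersection zero with them.

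For Claim B: if $k=7b$, then $\overline{\divi G}\cdot B=0$ for a covering family $B$ of the divisor swept out in the interior, so $\divi G$ contains every such $B$ it meets. These curves sweep out $\divi\F$, which forces $\divi G\supseteq \divi\F$. I would establish the sweep by a higher normal-derivative computation along $\A_5\times\A_1$ in the style of Cl\'ery--van der Geer \cite{CV15}. The first nonvanishing normal term of $\F$ there is a vector-valued form whose vanishing locus, specialised to the abelian boundary fiber, is controlled. The same computation handles the finite quotient singularities, where local contributions could otherwise destroy the integrality of $\divi G-\divi\F$.

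The main obstacle is Claim A: producing a covering family of slope exactly $7$ along which an arbitrary $G$ can be shown not to vanish identically unless $\F$ divides it. A priori $\divi G$ could contain the whole family. To handle this, I would first try normal vanishing: if $G$ vanishes to order $m$ along the Prym locus, its $m$-th normal derivative is a nonzero vector-valued form of higher weight. Restricted to the boundary fiber, this derivative must still satisfy the polar intersection inequality, and the extra weight gained is exactly compensated, keeping the bound at $7$.
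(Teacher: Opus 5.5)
\textbf{There is a genuine gap.} Reducing the equality case to Claim B and inducting on $b$ is harmless bookkeeping. But Claim A is the entire content of the inequality, and the proposal gives no working argument for it.

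The concrete route you describe fails. Take a curve $B$ lying in an abelian (Kummer) fiber of $D_6\to\A_5$, such as a pencil in a theta divisor of that fiber. It is contracted by the Satake map, so $\lambda\cdot B=0$. Also $D\cdot B<0$, because on the abelian cover the normal class of the boundary is $-2\Theta$. For such curves the test inequality $(k\lambda-bD)\cdot B\ge0$ holds automatically and says nothing about $k/b$; these curves do not have slope $7$. The known covering curves of $D_6$ (Farkas--Verra) give only $53/10$.

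Covering curves of $\A_6^{\mathrm{Perf}}$ with $(7\lambda-D)\cdot B=0$ do exist a posteriori: transforms of general complete-intersection curves on the minimal model $Y$, which avoid the center of $E_{\F}$. But $Y$ is built from the slope theorem, so these curves cannot be the input.

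Several identifications are also off:
\begin{enumerate}[label=\textup{(\roman*)},leftmargin=*]
\item $\A_5\times\A_1$ is not the Prym locus. The relevant locus is the codimension-three image $\mathcal P_7$ of $\mathcal R_7$; the product locus enters only through the order-ten vanishing of $\phi_2(\F)$ at the origin.
\item $\phi_b$ is a section of $2b\Theta$, not $b\Theta$, and the factor two matters numerically.
\item The theta certificate does not compute the slope of any test curves. It proves that $\divi\phi_2(\F)$ is integral and normal near the origin.
\end{enumerate}

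\textbf{What is missing} is the chain that turns normal vanishing into a bound uniform in $k$.

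First, take a Nikulin pencil $B\cong\mathbf P^1$ through a general Prym point. The conormal bundle of $\mathcal P_7$ along it is $I_2(S)\otimes\OO_{\mathbf P^1}(2)$; this is proved by a determinant-degree argument that covers the ramified boundary fibers. The pencil has Hodge degree $6$ and boundary degree $44$. A nonzero $j$-th normal jet is therefore a nonzero section of $\Sym^jI_2(S)\otimes\OO(6k-44b+2j)$, which gives $\ord_{\mathcal P_7}(G)\ge22b-3k$. The term $+2j$ is the ``compensation'' you allude to, but it must actually be computed.

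Second, this order passes to the first Fourier--Jacobi coefficient $\psi_b\in H^0(A,2b\Theta)$. It holds along the $27$ Abel--Prym surfaces $\Sigma_i\subset A$, of class $4\Theta^3/3!$, in a very general boundary fiber.

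Third, run a polar-intersection argument against $Z=\divi\phi_2(\F)$. This needs $Z$ integral with $\ord_{\Sigma_i}\phi_2(\F)=2$, which is where the theta certificate enters. It shows that a nonzero section of $d\Theta$ vanishing to order $e$ on every $\Sigma_i$ forces $d\ge2e$, with equality only for a multiple of $\phi_2(\F)^{e/2}$. With $d=2b$ and $e=22b-3k$ this is exactly $k\ge7b$.

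The equality case then needs no Claim B. One gets $\psi_b=c\,\phi_2(\F)^{b/2}$ fiberwise. The constant $c$ is global because $\phi_2(\F)$ has no vertical factor over $\A_5$. Then $G-c\F^{b/2}$ has weight $7b$ and larger boundary order, so it vanishes by the strict inequality.

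Your Claim B argument is also incomplete. Zero intersection with a covering family only makes a general member disjoint from $\overline{\divi G}$; it does not force $\overline{\divi G}\supseteq\divi\F$. In any case, no such family is exhibited.
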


The assertion is uniform in the weight and the boundary order.
In particular, it excludes every smaller slope and proves rigidity
on the canonical ray, rather than only in a finite range of weights.

\begin{theorem}\label{thm:intro-canonical}
For every smooth projective model $X$ of $\A_6$,
\[
 R(X,K_X)=\bigoplus_{m\ge0}H^0(X,mK_X)
       \simeq\C[u],\qquad \deg u=2,
\]
where $u$ is defined by $\F$. Moreover,
\[
 \kappa(\A_6)=\nu_\sigma(K_X)=0.
\]
\end{theorem}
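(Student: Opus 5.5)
We plan to deduce Theorem~\ref{thm:intro-canonical} from Theorem~\ref{thm:intro-slope} together with the standard extension theory for pluricanonical forms on $\A_g$. The first step is to identify $H^0(X,mK_X)$ with a space of Siegel modular forms. For $g\ge5$ (Tai, with Shepherd-Barron's refinement of the Reid--Tai criterion in low genus) the singularities of $\A_6^{\mathrm{Perf}}$ are canonical, and the non-canonical locus causes no obstruction in genus six. Hence every $m$-canonical form on the regular part of $\Sp_{12}(\Z)\backslash\Hh_6$ that extends over the boundary extends to any smooth projective model. Concretely, $H^0(X,mK_X)$ is identified with the space of $G\in\MM^{(6)}_{7m}$ vanishing to order at least $m$ along $D_6$, i.e.\ $b(G)\ge m$; this uses \eqref{eq:intro-canonical-class}. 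We will also need to check that the interior extension conditions are automatic here, and we would cite the paper's preliminaries for this. Theorem~\ref{thm:intro-canonical} does not depend on $X$, since plurigenera are birational invariants.

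Next we apply Theorem~\ref{thm:intro-slope}. Let $G\ne0$ have weight $k=7m$ and $b(G)\ge m$, with $m\ge1$; then $b>0$. The theorem gives $7m=k\ge7b\ge7m$, so $b=m$ and equality holds. Hence $m$ is even and $G=c\F^{m/2}$. Conversely, $\F^{m/2}$ has weight $7m$ and boundary order $m$, so it lies in this space. Thus $h^0(mK_X)=1$ for $m$ even and $0$ for $m$ odd, and multiplication of forms corresponds to multiplication of sections. Therefore $R(X,K_X)=\C[u]$ with $u$ the bicanonical section given by $\F$, and $\kappa=0$.

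For $\nu_\sigma(K_X)=0$ we need more than the plurigenera. We must show that $K_X$ is numerically equivalent to its $\sigma$-decomposition negative part, i.e.\ that $P_\sigma(K_X)\equiv0$. Since $2K_X\sim\divi(u)=E$ is effective, $\nu_\sigma(K_X)=0$ is equivalent to $E$ having positive part $0$. We use Nakayama's criterion: $\nu_\sigma=0$ iff $\sigma_\Gamma(K_X)$ recovers all of $E$ and no perturbation $K_X+\epsilon A$ gains sections growing faster than bounded. The plan is to show each component of $\divi(\F)$ is in the diminished base locus with full multiplicity. The components are the boundary $D_6$ with coefficient $2$, the prime reduced interior divisor $Z$ from \cite{GIMS24}, and exceptional divisors over the singularities. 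The key input is again Theorem~\ref{thm:intro-slope}, now applied with rational slack. For $k\lambda-b D+\epsilon A$ with small $\epsilon$, the sections are modular forms of slope slightly below $7$, at which point $k\ge7b$ forces near-rigidity. We would make this quantitative by applying the slope bound to $\F^N G$ for a fixed form $G$ representing $A$. This shows that $\sigma_Z$ and $\sigma_{D_6}$ equal their coefficients in $E$.

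We expect this last step to be the main obstacle. It requires either moving covering curves on $Z$ with $K_X\cdot C<0$ relative to $Z$, or the paper's later numerical argument. Our first approach would be to use the slope bound to show $Z$ is not movable, i.e.\ that $Z$ spans an extremal ray of $\overline{\Eff}$ and similarly for $D_6$. Then $\overline{\Eff}$ near $K_X$ is the simplicial cone spanned by these components, which yields $P_\sigma=0$.
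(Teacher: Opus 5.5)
Your treatment of the canonical ring and of $\kappa(\A_6)=0$ is correct and is essentially the paper's argument. A section of $mK_X$ gives a form in $\MM_{7m}^{(6)}[m]$; the paper uses only this injection, not a full identification. Theorem~\ref{thm:intro-slope} then forces $b=m$, $m$ even and $G=c\F^{m/2}$, and the bicanonical section of $\F$ extends by Tai's theorem.

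\textbf{The gap is in the argument for $\nu_\sigma(K_X)=0$.} Dominate an ample $A$ by a multiple of the Hodge class. A section of $mK_X+A$ then gives a form of weight $7m+\ell$, with $\ell$ fixed, and boundary order at least $m$. Its slope is slightly \emph{above} seven, not below. The inequality $k\ge7b$ and its equality case say nothing about how many such forms exist. What must be proved is that $\dim\MM_{7m+\ell}^{(6)}[m]$ is bounded independently of $m$.

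Cone considerations cannot replace this. The slope bound shows that $[E_{\F}]$ spans an extremal ray of $\overline{\Eff}(X_0)$, but extremality together with $\kappa=0$ does not give $P_\sigma=0$. For example, on a ruled surface over an elliptic curve, a section $C_0$ with $C_0^2=0$ and non-torsion normal bundle spans an extremal ray of $\overline{\Eff}$ and has $\kappa=0$. Yet it is nef, so $\nu_\sigma=1$. Thus ``not movable'' and ``extremal in $\overline{\Eff}$'' are different conditions.

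With $\rho(X_0)=2$ it would suffice to show $K_{X_0}\notin\overline{\operatorname{Mov}}(X_0)$, for instance via a covering family of curves in $E_{\F}$ of negative degree. You do not construct such a family, and the paper obtains $\sigma>7$ only after running the minimal model program, which already uses $\nu_\sigma=0$. Note also that $D_6$ is not a component of $\divi(u)$: the boundary order two of $\F$ cancels the double pole of the bicanonical volume form, so $2K_{X_0}\sim E_{\F}$.

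\textbf{The missing ingredient is the full fixed-component inequality} of Theorem~\ref{thm:fixed-part}, namely $2\ord_Z(s)\ge9e-4d$. This is strictly stronger than its consequence $d\ge2e$, which is all that the slope theorem uses. The paper's route to the dimension bound runs as follows.
\begin{enumerate}
\item For $G\in\MM_{7b+\ell}^{(6)}$ of boundary order $b>27\ell$, put $d=2b$ and $e=b-3\ell$. The inequality forces $\phi_b(G)=\phi^J\chi$ with $J=\lceil(b-27\ell)/2\rceil$.
\item Lemma~\ref{lem:global-division} makes $\chi$ a holomorphic Jacobi form. Proposition~\ref{prop:bounded-leading} places it in the finite-dimensional space $\mathcal B_{\ell,\varepsilon}$, which does not depend on $b$.
\item Filtering by the finitely many admissible boundary orders gives the uniform bound of Theorem~\ref{thm:finite-module}.
\item The injection \eqref{eq:canonical-injection}, together with bigness of $L_X$ (so that $aL_X-A$ is effective for some $a$), bounds $h^0(X,mK_X+A)$ for every ample $A$. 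This is exactly $\sigma(K_X;A)\le0$.
\end{enumerate}
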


Here $\nu_\sigma$ denotes Nakayama's numerical dimension, recalled
in Section~\ref{sec:applications}. Numerical dimension zero requires more than the description of the
canonical ring. For integers $\ell\ge0$ we consider all spaces with fixed
weight excess:
\[
 \mathcal M_\ell=
 \bigoplus_{m\ge0}\MM_{7m+\ell}^{(6)}[m],\qquad
 \MM_k^{(6)}[b]=\{G\in\MM_k^{(6)}:b(G)\ge b\}.
\]
Theorem~\ref{thm:finite-module} proves that $\mathcal M_\ell$
is a finitely generated free graded $\C[\F]$-module, where
$\deg\F=2$. Its homogeneous dimensions are therefore bounded.
This gives the section-growth estimate needed for
$\nu_\sigma(K_X)=0$.

\subsection{Minimal models and birational automorphisms}
Put $X_0=\A_6^{\mathrm{Perf}}$, and let $E_{\F}$ be the closure
of the interior zero divisor of $\F$. Thus
\[
 [E_{\F}]=14\lambda_6-2[D_6],\qquad 2K_{X_0}\sim E_{\F}.
\]
The preceding section-growth estimate permits the use of the
minimal model program in numerical dimension zero. It leads to
the following description.

\begin{theorem}\label{thm:intro-minimal}
The following statements hold.
\begin{enumerate}[label=\textup{(\roman*)},leftmargin=*]
\item There is a unique projective $\Q$-factorial terminal minimal
model $Y$ up to isomorphism. It has Picard number one and
\[
 2K_Y\sim0,\qquad K_Y\not\sim0.
\]
The valuation defined by the interior zero divisor $E_{\F}$ has
discrepancy $1/2$ over $Y$. In particular, no smooth projective
minimal model exists.
\item The perfect cone compactification $\A_6^{\mathrm{Perf}}$
is a Mori dream space. Its effective slope is seven, and its
moving slope is an attained rational number $\sigma$ satisfying
\[
 7<\sigma\le\frac{43}{6}.
\]
\item The strict transform $D_Y$ of the modular boundary is a
prime ample unirational $\Q$-Cartier divisor, and
\[
 Y\setminus D_Y\cong\A_6\setminus V(\F).
\]
\end{enumerate}
\end{theorem}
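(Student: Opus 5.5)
The proof of Theorem~\ref{thm:intro-minimal} will run the minimal model program on a resolution of $X_0=\A_6^{\mathrm{Perf}}$, using the section-growth control of Theorem~\ref{thm:finite-module} and $\nu_\sigma(K_X)=0$ from Theorem~\ref{thm:intro-canonical}.

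\emph{Step 1: existence of $Y$.} Take a log resolution $X\to X_0$. Since $\nu_\sigma(K_X)=0$, Nakayama's $\sigma$-decomposition gives $P_\sigma(K_X)\equiv0$, and the negative part $N_\sigma(K_X)$ is an effective $\Q$-divisor. By the results of Druel and Gongyo--Lehmann in numerical dimension zero, a $K_X$-MMP with scaling terminates. It contracts exactly the components of $N_\sigma(K_X)$ and produces a $\Q$-factorial terminal $Y$ with $K_Y\equiv0$. The two-dimensional canonical ring $\C[u]$ gives $H^0(2K_Y)\neq0$, $H^0(K_Y)=0$ and $K_Y\equiv0$, hence $2K_Y\sim0$ and $K_Y\not\sim0$.

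To identify the contracted divisors, use $2K_{X_0}\sim E_{\F}$. On $X_0$ the support of $N_\sigma$ is contained in $E_{\F}\cup D_6$, plus the exceptional divisors of $X\to X_0$. By Theorem~\ref{thm:intro-slope}, $D_6$ cannot lie in it: an effective class $7\lambda_6-[D_6]-\epsilon[D_6]$ would contradict $k\ge7b$. By contrast $E_{\F}$, being prime and reduced \cite{GIMS24}, is rigid with $\kappa=0$. I expect $N_\sigma(K_{X_0})=\tfrac12E_{\F}$, after checking that the exceptional divisors over the quotient singularities have nonnegative discrepancy in the pair. If so, $Y$ is obtained from $X_0$ by contracting $E_{\F}$ (and the exceptionals), and the discrepancy of $\mathrm{ord}_{E_{\F}}$ over $Y$ is $1/2$. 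Since $K_Y\not\sim0$ is $2$-torsion, a smooth minimal model $Y'$ would have to be crepant-equivalent to $Y$ and isomorphic to it in codimension one. It would therefore carry no integral $K$ with $\mathrm{ord}$ compatible with this, and the half-integral discrepancy is incompatible with smoothness (Cartier index one). This yields the non-existence of a smooth projective minimal model.

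\emph{Step 2: Picard number, uniqueness, Mori dream.} $\Pic(X_0)_\Q$ is spanned by $\lambda_6$ and $[D_6]$, and $E_{\F}$ has been contracted, so $\rho(Y)=1$. Minimal models of the same variety are connected by flops, and with $\rho=1$ there are none, so $Y$ is unique. The Mori dream property for $X_0$ follows from Birkar--Cascini--Hacon--McKernan applied to $(X_0,\tfrac12E_{\F}+\epsilon D)$, which is a klt log Calabi--Yau-type structure with big boundary on a rank-two Picard group. The effective cone is bounded by $E_{\F}$ (slope $7$, by Theorem~\ref{thm:intro-slope}) and $D_6$. The moving slope $\sigma$ is the ray where the contraction of $E_{\F}$ starts. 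It is rational and attained by finite generation, strictly bigger than $7$ because $E_{\F}$ is a fixed divisor, and at most $43/6$ by \cite[Corollary~E]{GIMS24} once its nondegeneracy condition is verified. That verification is done geometrically via the normal derivative along $\A_5\times\A_1$.

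\emph{Step 3: the boundary.} $D_Y$ is prime because it is the strict transform of the irreducible $D_6$. It is ample because $\rho(Y)=1$ and it is effective and nonzero, and it is unirational by Farkas--Verra \cite{FV16}. The isomorphism $Y\setminus D_Y\cong\A_6\setminus V(\F)$ requires showing that the birational map $X_0\dashrightarrow Y$ is an isomorphism off $E_{\F}\cup D_6$. I would deduce this from a discrepancy estimate: any divisor extracted or contracted with center outside $E_{\F}\cup D_6$ would have discrepancy $0$ on both sides, by terminality of $Y$ and of the interior singularities of $\A_6$. The map is then an isomorphism there by the negativity lemma.

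The main obstacle is Step~1's identification $N_\sigma=\tfrac12E_{\F}$. This needs control of discrepancies at non-canonical quotient singularities of $\A_6$ lying on $E_{\F}$, which is where the vanishing of $\F$ at those singularities enters.
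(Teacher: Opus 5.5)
Your Step~1, the deduction $\rho(Y)=1$ and the uniqueness argument are essentially sound. Reading off the contracted divisors from $N_\sigma(K_X)$ on a resolution is a legitimate alternative to the paper. The paper instead runs the $K_{X_0}$-MMP on $X_0$ and observes that a divisorial contraction can only remove $E_{\F}$, since $E_{\F}\cdot C=2K\cdot C<0$. The obstacle you single out is not one: $X_0$ is terminal \cite{SB16}, so for a resolution $\pi\colon X\to X_0$ with exceptional divisors $F_i$ one has $N_\sigma(K_X)=\frac12\pi^*E_{\F}+\sum_i a(F_i,X_0)F_i$ immediately.

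The real gaps are in (ii) and (iii). Your Mori dream argument fails. First, $K_{X_0}+\frac12E_{\F}+\epsilon D\sim_\Q E_{\F}+\epsilon D$ is big, so the pair is not log Calabi--Yau. In fact no effective $\Delta$ with $K_{X_0}+\Delta\equiv0$ exists at all, because $K_{X_0}\equiv\frac12E_{\F}$ is effective and nonzero. Second, the pair is not even klt: at a general point of $\A_5\times\A_1$ the stabilizer acts by $-1$ on the five normal directions and $\F$ vanishes to order ten. The blowup of this locus therefore has log discrepancy $\frac52-\frac12\cdot5=0$. Hence BCHM yields nothing, and the rationality and attainment of $\sigma$, which you take from finite generation, are unproved.

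The paper instead uses the explicit program $X_0\dashrightarrow X_1\dashrightarrow\cdots\dashrightarrow X_r\to Y$, with $E_i$ the transform of $E_{\F}$. On each $X_i$ one has $\rho=2$ and $K\equiv\frac12E_i\not\equiv0$ with $\nu_\sigma=0$, so $K$ is not nef. The two nef boundary rays are the incoming one and the outgoing $K$-negative one. The incoming ray is $\lambda$ on $X_0$, and a pullback from the previous flipping contraction afterwards. The outgoing ray comes from the contraction theorem. All of these are semiample. Curve tests on abelian boundary fibers, and on the curves $C$ covering $E_r$ contracted by $f$, show that these finitely many nef cones fill $\overline{\operatorname{Mov}}(X_0)$, with $\sigma=D_r\cdot C/\lambda_r\cdot C>7$.

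The bound $\sigma\le43/6$ is also not obtained the way you indicate. Along $\A_5\times\A_1$ all first derivatives of $\F$ vanish to order at least nine, and you give no reason why $\det(\partial\F)$ should be nonzero at nearby points of $V(\F)$. The paper verifies Condition~$(\star)$ at a general Prym point (Proposition~\ref{prop:F-gradient-nondegenerate}). There the quadratic normal term $q$ of $\F$ is nondegenerate by Theorem~\ref{thm:prym-transverse-A1}. The conormal space is the net $I_2(S)$ of quadrics through the Nikulin surface, whose discriminant is a smooth sextic by Lemma~\ref{lem:nikulin-discriminant}. The image of the conic $V(q)$ under $\beta_q$ is a smooth conic, which cannot lie on that sextic.

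For (iii), your discrepancy comparison shows only that $X_0\dashrightarrow Y$ is crepant and an isomorphism in codimension one over $U=\A_6\setminus V(\F)$. The negativity lemma cannot exclude flops in that situation. Nothing in your argument prevents the center $Z=f(E_r)$ of $E_{\F}$ from meeting $Y\setminus D_Y$, in which case the image of $U$ would be a proper open subset of $Y\setminus D_Y$. The missing idea is affineness: $U=D_+(\F)$ in the Satake compactification, and an affine open subset of a normal separated variety whose complement has codimension at least two is the whole variety. The paper applies this after each flip, whose flipping curves lie in $E_i$ because $E_i\cdot C=2K_{X_i}\cdot C<0$, and then to $V=Y\setminus D_Y$. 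This forces $Z\subset D_Y$ and gives $Y\setminus D_Y\cong U$.
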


The strict inequality between the effective and moving slopes
describes the fixed canonical divisor and the last nef boundary
on the minimal model program. It also controls the ring containing
all prescribed boundary orders: Theorem~\ref{thm:boundary-algebra}
proves finite generation of
\[
 \bigoplus_{k,b\ge0}\MM_k^{(6)}[b]t^b
 \quad\text{and of}\quad
 \bigoplus_{k,b\ge0}
       \MM_k^{(6)}[b]/\MM_k^{(6)}[b+1].
\]
Finite generation of the first algebra answers, over $\C$ in genus
six, the question posed by Shepherd-Barron for the bigraded algebra
of forms with prescribed boundary order
\cite[\S2, concluding remark]{SB06}.
The second is the algebra of leading Fourier--Jacobi coefficients
which extend to genus-six forms with the indicated vanishing.
There is no assertion of finite generation for the full
bounded-relative-index rings considered in \cite{BBHJ24}.

The nef cone of the perfect cone compactification and the contraction
associated with $12\lambda_g-[D_g]$ were studied by Shepherd-Barron
\cite{SB06,SB23}. Over $\C$, the class is semiample for
$g\le11$, and the exceptional locus is the closure of
$\A_1\times\A_{g-1}$ \cite{SB23}.
Our use of the minimal model program determines the other endpoint
of the movable cone and proves that finitely many nef cones fill
the whole movable cone. Finite termination alone would not
establish this assertion.

The triviality of the birational automorphism group requires
additional information about the singularities of $Y$.
Freitag proved triviality for $\A_g$ when $g$ is sufficiently
large, using differential tensors \cite{Fre84}. Tsuyumine obtained
the explicit range $g\ge10$, together with a statement for
principal congruence subgroups, by proving that every divisor
in $\A_g$ is of general type \cite[Corollary]{Tsu87}.
The assertion below strengthens the description of the minimal
model to a statement about all birational self-maps.

\begin{theorem}\label{thm:intro-birational}
The valuation defined by $E_{\F}$ is the unique exceptional
divisorial valuation over $Y$ of discrepancy less than one:
\[
 a(E_{\F},Y)=\frac12,\qquad a(v,Y)\ge1\quad(v\ne\ord_{E_{\F}}).
\]
Moreover,
\[
 \operatorname{Bir}(\A_6)=\operatorname{Aut}(Y)=\{1\}.
\]
\end{theorem}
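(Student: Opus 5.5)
The theorem has two parts: a discrepancy statement and the triviality of $\operatorname{Bir}(\A_6)$. I will prove the first and deduce the second. Throughout I work on the minimal model $Y$ of Theorem~\ref{thm:intro-minimal}(i). Its construction gives a birational map $\phi\colon X_0=\A_6^{\mathrm{Perf}}\dashrightarrow Y$, obtained by running the MMP on $K_{X_0}$. Since $2K_{X_0}\sim E_{\F}$ and $K_Y\equiv0$, this map contracts exactly the divisor $E_{\F}$. It is an isomorphism over $\A_6\setminus V(\F)$ by part (iii).

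\textbf{The discrepancy estimate.} I will take a common resolution $p\colon W\to X_0$, $q\colon W\to Y$ and compare canonical divisors. Near the generic point of $E_{\F}$ the variety $X_0$ is smooth. There the standard formula $K_W=q^*K_Y+\sum a(v,Y)E_v$, together with $2K_{X_0}\sim E_{\F}$, gives
\[
 K_{X_0}=\phi^*K_Y+\tfrac12E_{\F}.
\]
Hence $a(E_{\F},Y)=\tfrac12$, as already recorded in part (i). For any other divisorial valuation $v$ I expand
\[
 a(v,Y)=a(v,X_0)+\tfrac12\ord_v(E_{\F})-\ord_v(\text{boundary correction}),
\]
where the correction comes from writing $p^*(K_{X_0})$ against the pullback of $K_Y$. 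This splits into three cases according to the center of $v$ on $X_0$.
\begin{enumerate}[label=(\alph*),leftmargin=*]
\item \emph{Center in the interior, off $E_{\F}$ and away from quotient singularities.} Here $\phi$ is an isomorphism, $X_0$ is smooth, and $a(v,Y)=a(v,X_0)\ge\operatorname{codim}-1\ge1$.
\item \emph{Center at a finite quotient singularity of $\A_6$.} I will use the Reid--Tai criterion. In genus $\ge5$ the singularities of $\A_g$ are canonical, and I expect them to be terminal with age $\ge1$ after excluding the elliptic-tail locus $\A_5\times\A_1$. On that locus the Cl\'ery--van der Geer normal-term computation shows that $\F$ vanishes to the required order. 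Together these give $a(v,X_0)+\tfrac12\ord_v E_{\F}\ge1$.
\item \emph{Center inside $E_{\F}$ or the boundary.} If $v\ne\ord_{E_{\F}}$ has center inside $E_{\F}$, then $\ord_v(E_{\F})\ge1$ and $a(v,X_0)\ge1$ in the smooth case, so $a(v,Y)\ge\tfrac32$. The boundary case is the hard one, discussed below.
\end{enumerate}

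\textbf{The boundary case (main obstacle).} Over $Y$, the boundary $D_Y$ is the strict transform of $D_6$. Since $K_{X_0}=7\lambda_6-D_6$, a naive computation only gives log discrepancies along toroidal strata. I would handle this as follows:
\begin{itemize}[leftmargin=*]
\item compute toroidally on the perfect cone charts, using $\lambda$ as the pullback of an ample class from the Satake compactification;
\item use Theorem~\ref{thm:intro-slope}: the rigidity $k\ge7b$, with equality only for powers of $\F$, means no other effective $\Q$-divisor $\sim_\Q K$ exists;
\item this forces the fixed part of $|mK_W|$ to be exactly $\tfrac m2\,q^*$-exceptional $E_{\F}$-contributions;
\item any valuation with $a(v,Y)<1$ would then yield an extra contribution to the $\sigma$-decomposition $N_\sigma(K_W)$;
\item since $\nu_\sigma=0$ (Theorem~\ref{thm:intro-canonical}), $N_\sigma(K_W)\equiv K_W-q^*K_Y$ is computed by the finitely generated module $\mathcal M_\ell$ of Theorem~\ref{thm:finite-module}, and its components are read off from leading Fourier--Jacobi terms, which are determined.
\end{itemize}
I expect the toroidal/terminal verification near the corank-one boundary, using polar intersections on abelian fibers, to be the delicate step.

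\textbf{Birational automorphisms.} First, any $f\in\operatorname{Bir}(\A_6)$ induces a birational self-map of the unique minimal model $Y$. By the uniqueness in part (i), $Y$ has no nontrivial flops, and $\rho(Y)=1$. Hence $f$ is an isomorphism in codimension one, and therefore a genuine automorphism of $Y$, since $\rho=1$ and $f$ pulls back the ample generator to itself. So $\operatorname{Bir}(\A_6)=\operatorname{Aut}(Y)$.

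Next, $f$ preserves discrepancies, so it fixes the unique valuation $\ord_{E_{\F}}$ with $a<1$. It also fixes the prime ample divisor $D_Y$, characterized for instance as the unique unirational prime divisor of its class. Hence $f$ restricts to an automorphism of $Y\setminus D_Y\cong\A_6\setminus V(\F)$.

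Finally, automorphisms of this open subset of $\Sp_{12}(\Z)\backslash\Hh_6$ lift to the universal cover, since the removed set has codimension $\ge1$ and extension across it follows from Borel's extension theorem and hyperbolicity. The lifts are biholomorphisms of $\Hh_6$ normalizing $\Sp_{12}(\Z)$. The normalizer of $\Sp_{12}(\Z)$ in $\mathrm{Aut}(\Hh_6)$ is $\Sp_{12}(\Z)$ itself, modulo $\pm1$, so $f=1$.
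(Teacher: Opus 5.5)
Your overall architecture matches the paper's: characterize $\ord_{E_{\F}}$ by discrepancy, identify $\operatorname{Bir}(\A_6)$ with $\operatorname{Aut}(Y)$, recover the modular open set, and compute a normalizer. However, both halves have genuine gaps.

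\textbf{The discrepancy bound.} There is no ``boundary correction'': the bicanonical differential defined by $\F$ has divisor exactly $E_{\F}$ on $X_0$, so $a(v,Y)=a(v,X_0)+\frac12v(E_{\F})$. You are missing two ideas, half-integrality and a quantitative age bound.
\begin{enumerate}
\item Since $2K_Y\sim0$, every $a(v,Y)$ lies in $\frac12\Z$, so it suffices to show $a(v,Y)>\frac12$.
\item Terminality of the quotient singularities is not enough: for $v$ centred off $E_{\F}$ it only gives $a(v,Y)=a(v,X_0)>0$. What is needed is Lemma~\ref{lem:age-six}. Every nontrivial stabilizer acts on $\Sym^2V$ with age $>3/2$, except the single spectrum $(\exp(2\pi i/6),1,\dots,1)$ of age $7/6$. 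That spectrum forces a $j=0$ elliptic factor, and only there is the order-ten vanishing of $\F$ used, giving $A_Y(v)\ge\frac76+\frac12\cdot\frac{10}6=2$.
\end{enumerate}
For the boundary, your $\sigma$-decomposition plan is circular. $N_\sigma(K_W)=\sum_v a(v,Y)F_v$ is the discrepancy divisor itself, so identifying its components restates the claim. Leading Fourier--Jacobi coefficients only control the order along $D_6$, not along arbitrary exceptional divisors over boundary points. What is actually needed is the toroidal computation of Lemma~\ref{lem:toroidal-age-height} and Theorem~\ref{thm:unique-small-discrepancy}:
\begin{enumerate}
\item In a smooth toroidal chart $Z$, the bicanonical coefficient $\varepsilon\F/\prod_\rho x_\rho^2$ is holomorphic. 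This holds because every Fourier index satisfies $v^tTv\ge2$, hence $\ord_u\F\ge2h_P(u)$.
\item Every new toric ray has height $h_P(u)>\frac32$, by Minkowski's convex body theorem.
\item Points with finite stabilizers are handled on $Q=Z/G$ by the ramification formula $A_Y(v)=(A_Z(w)+w(\cdot)/2)/e$, applied to that coefficient and combined with the age bound.
\end{enumerate}
Your case split by the centre on $X_0$ never treats these mixed boundary-plus-stabilizer points.

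\textbf{The automorphism group.} The identification $\operatorname{Bir}(\A_6)=\operatorname{Aut}(Y)$ is fine, though the operative fact is $\rho(Y)=1$, not uniqueness of $Y$. The recovery of the modular open set, however, fails.
\begin{enumerate}
\item Nothing supports the claim that $D_Y$ is the unique unirational prime divisor in its class. Every interior prime divisor cut out by a form of weight $7n+7$ and boundary order $n$ has the same $\Q$-class on $Y$, and nothing is known about the unirationality of such divisors.
\item Your argument never uses the fixed valuation $\ord_{E_{\F}}$, which is precisely what the paper exploits. Since $-E_r$ is $f$-ample, $X_r\cong\Proj_Y\bigoplus_m f_*\OO_{X_r}(-m\ell E_r)$, and these valuation ideals are intrinsic. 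So the automorphism lifts to $X_r$ and acts trivially on $N^1$. Conjugating through the flips gives a pseudoautomorphism of $X_0$ that preserves an ample class, hence is regular. It then preserves $\lambda$, and therefore the Satake contraction and its exceptional divisor $D$.
\item Even if $f$ preserved $U=\A_6\setminus V(\F)$, you could not lift it to $\Hh_6$. $V(\F)$ is a divisor, so its complement upstairs is not simply connected. Borel's extension theorem only extends $f$ into the Satake compactification, where it could send $V(\F)$ into the boundary.
\end{enumerate}
The paper instead lifts an automorphism of all of $\A_6$. There the absence of quasireflections (again from the age lemma) puts the non-free locus in codimension at least two. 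So the preimage $\Hh_6^\circ$ of the smooth locus is simply connected, and Hartogs extension applies before the normalizer computation.
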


Here $a(v,Y)$ denotes discrepancy, so that the associated log
discrepancy is $1+a(v,Y)$. The intrinsic characterization of
$E_{\F}$ allows us to recover its extraction and then the modular
open subset from $Y$. Preservation of that open subset is a
conclusion of the argument, rather than a hypothesis on a
birational self-map.

\subsection{The proof and its geometric ingredients}
\subsubsection{Normal vanishing along the Prym locus}
The first difficulty is to obtain a lower bound that applies to all
weights. The genus-seven Prym map
\[
 P_7:\mathcal R_7\longrightarrow\A_6
\]
has image $\mathcal P_7$ of codimension three. A general Prym curve
of genus seven is a section of a standard Nikulin $K3$ surface
\cite{FV12}. Along a general pencil on this surface, we identify
the actual conormal bundle of the Prym image with
$\OO_{\mathbf P^1}(2)^{\oplus3}$. The Hodge and boundary degrees
are $6$ and $44$. A nonzero normal jet of order $j$ of a
weight-$k$ form of boundary order $b$ therefore satisfies
\[
 6k-44b+2j\ge0,
\]
and hence
\begin{equation}\label{eq:intro-normal}
 \ord_{\mathcal P_7}(G)\ge22b-3k.
\end{equation}
The extension across the boundary is essential in this calculation.
We first identify the Hodge bundle and the fixed space of quadrics
on the Nikulin surface. The global logarithmic codifferential then
induces a generically invertible map of rank-$18$ bundles on the
pencil. The canonical-class formula of Farkas--Ludwig and the
intersection numbers of Farkas--Verra show that its determinant
has degree zero. The map is therefore invertible everywhere,
including at the ramified boundary. This identifies the conormal
bundle itself and controls all its symmetric powers.
Nikulin surfaces also enter Farkas--Verra's proof of the
uniruledness of $\mathcal R_9$ \cite{FV24}. More recent work on
secant loci of Prym curves further studies the geometry of
Nikulin sections \cite{FLC25}.

\subsubsection{Polar intersections on the boundary}
The second difficulty is to turn normal vanishing on a subvariety
of codimension three into a divisorial constraint. The map
$P_6:\mathcal R_6\to\A_5$ has degree $27$ \cite{DS81}.
For a very general abelian fivefold $(A,\Theta)$ this gives
$27$ distinct Abel--Prym surfaces $\Sigma_i$ through the origin,
with
\[
 [\Sigma_i]=4\frac{\Theta^3}{3!}.
\]
The first nonzero Fourier--Jacobi coefficient of $G$ is a section
of $2b\Theta$ whose symbolic order on each $\Sigma_i$ is at least
$22b-3k$. We check the degree of the reduced surface and the
primitive extension coordinate in the Prym degeneration; either
normalization affects the ensuing intersection inequality.

Put $\phi=\phi_2(\F)$. It is an even section of $4\Theta$ and
has order at least two on every $\Sigma_i$. Elliptic modular
transformations force its order at the origin to be at least ten.
An exact theta-jet calculation proves that its initial form $T_{10}$
has a smooth plane section. This is the only computer-assisted
step: two explicitly specified matrices are checked over
$\Ff_{101}((t))$ to justified finite precision. A maximal-cofactor
argument transfers nonvanishing to characteristic zero. The
ancillary programs reconstruct all entries from finite integer sums
and include an independent verification.

The tangent-cone calculation proves local integrality and normality.
Polar intersections then show that $Z=\divi(\phi)$ is globally
integral and has generic order exactly two along every $\Sigma_i$.
These statements suffice for the slope and canonical-ring proofs;
the finer assertion about transverse quadratic singularities is
proved afterwards. More generally, for every $\alpha\in\Pic^0(A)$ and
every nonzero section
\[
 s\in H^0(A,\OO_A(d\Theta)\otimes\alpha)
\]
of symbolic order at least $e$ on all $27$ surfaces,
Theorem~\ref{thm:fixed-part} gives
\begin{equation}\label{eq:intro-fixed}
 \max\left\{0,\left\lceil\frac{9e-4d}{2}\right\rceil\right\}
 \le\ord_Z(s)\le\left\lfloor\frac d4\right\rfloor.
\end{equation}
The fixed polar divisor on the normalization is removed as a Weil
cycle before taking intersections; global normality of $Z$ is
not assumed. Formula~\eqref{eq:intro-fixed} implies $d\ge2e$,
with equality only for a scalar power of $\phi$. Substituting
$d=2b$ and $e=22b-3k$ proves $k\ge7b$.

\subsubsection{Equality and the canonical ring}
For equality, fiberwise proportionality still has to extend over
the base. We prove that $\phi$ has no vertical factor over
$\A_5$, using Freitag's exact-divisor theorem, the genus-five slope
$54/7$ from the corrected version of \cite{FGSMV14},
a finite-image norm estimate, and
$S_8(\Sp_{10}(\Z))=0$
\cite{Freitag83,FGSMV14,BR15,PY07}. This makes the quotient of
leading coefficients a holomorphic weight-zero modular function,
and hence a constant. The norm estimate is proved directly in
Section~\ref{sec:modular}. The correction \cite{BR25} to
\cite{BR15}, including its discussion of the incorrect
finite-generation statement, is recorded there; no corrected
convergence theorem for formal Fourier--Jacobi series is an input
to our proof.

Once the leading coefficient is a constant multiple of a power
of $\phi$, subtraction of the corresponding power of $\F$
increases the boundary order without changing the weight.
The strict slope inequality makes the difference zero. This
proves rigidity on the entire canonical ray. A pluricanonical
section determines a form of weight $7m$ and boundary order at
least $m$, whereas the known bicanonical section defined by
$\F$ extends to a smooth model. These two facts identify the
whole canonical ring.

\subsubsection{Numerical dimension and the minimal model}
The next step requires control of forms lying near the canonical
ray, not just on it. Write $k=7b+\ell$, with $\ell$ fixed.
Then \eqref{eq:intro-fixed} forces the leading coefficient to
be divisible by
\[
 \phi^{\lceil(b-27\ell)/2\rceil}\qquad(b>27\ell).
\]
After division, the remaining Jacobi weight and index depend
only on $\ell$ and a parity. Filtering by the finitely many
possible first boundary indices therefore bounds
$\dim\MM_{7m+\ell}^{(6)}[m]$ independently of $m$.
Multiplication by $\F$ is injective, so these dimensions stabilize
in each parity. This gives the finite free modules described
above. Notice that division of an entire Siegel modular form
is obtained from stabilization; it is not inferred merely from
division of its first Fourier--Jacobi coefficient.

On a resolution $X$, twisted pluricanonical sections inject into
these spaces. A big multiple of the Hodge bundle dominates any
fixed ample twist, so the same boundedness proves
$\nu_\sigma(K_X)=0$. Existence and abundance in numerical
dimension zero then give a good minimal model. Starting from
$X_0$, the relation $2K_{X_0}\sim E_{\F}$ forces every
$K$-negative divisorial contraction to contract $E_{\F}$.
Since $\rho(X_0)=2$, the program consists of finitely many flips
followed by one divisorial contraction, and its target has
Picard number one. The bicanonical section has a simple zero
along $E_{\F}$ and no zeros on the minimal model. Comparing
these orders gives $a(E_{\F},Y)=1/2$ and excludes a smooth
minimal model.

\subsubsection{The moving slope}
There is also a more precise local geometric consequence of the
tangent-cone argument. The $27$ surfaces have distinct degree-ten
Prym-canonical tangent curves. A complete intersection of $T_{10}$
with two general polars has degree
\[
 10\cdot9^2=810=3\cdot27\cdot10.
\]
If every transverse Hessian were degenerate, the contributions of
these curves would already use the entire degree. The Gauss map
forces an additional curve meeting the smooth locus, which is
impossible. Thus $\phi$, and then $\F$, has a nondegenerate
quadratic normal term along the general Prym locus
(Theorem~\ref{thm:prym-transverse-A1}).

This local result also proves the moving-slope bound without a
second theta calculation. The conormal space at a general Prym
point is the net of quadrics containing the corresponding Nikulin
surface in $\mathbf P^5$. The Nikulin lattice excludes quadrics
of rank at most four, so the discriminant of this net is a smooth
plane sextic. The nondegenerate normal quadratic form gives a
smooth conic in the same plane, which cannot be contained in
that sextic. It follows that the matrix of first derivatives of
$\F$ has full rank at some smooth point of its zero divisor.
The operator of \cite{GIMS24} therefore produces a form of weight
$86$ and boundary order $12$ not divisible by $\F$.
Together with $\F$, it gives a moving pencil of slope $43/6$.

\subsubsection{Recovering the modular open subset}
To control birational automorphisms, we combine discrepancies
on toroidal charts with the vanishing of the bicanonical form.
An elementary bound for the Reid--Tai ages leaves only one
possible small-age stabilizer, coming from an elliptic factor
with $j$-invariant zero. The order-ten vanishing of $\F$ along
the elliptic product locus raises the discrepancy of every
exceptional valuation in this remaining case to at least one. Minkowski's
convex body theorem supplies the required bound for new toric
divisors. These estimates distinguish the valuation of $E_{\F}$
intrinsically, and permit the recovery of the modular open subset
from an arbitrary automorphism of $Y$.
More precisely, the exceptional valuation determines its extraction
by the relative Proj of the valuation ideals. Lifting an automorphism
and reversing the flips gives a pseudoautomorphism of $X_0$ that
fixes the divisor classes. An ample class makes it regular, and
the Hodge class recovers the Satake contraction and its boundary.
The remaining assertion is the triviality of the automorphism group
of the open modular variety, obtained from the normalizer of
$\Sp_{12}(\Z)$ in the automorphism group of $\Hh_6$.

\subsection{Organization}
Section~\ref{sec:preliminaries} fixes the modular and geometric
normalizations. Section~\ref{sec:prym} proves the estimate for
normal vanishing and its boundary specialization.
Section~\ref{sec:theta} proves normality of the first
Fourier--Jacobi divisor near the origin; its finite certificate is
given in Appendix~\ref{sec:theta-certificate}.
Section~\ref{sec:polars} proves the polar inequalities and the
fixed-component theorem. Section~\ref{sec:modular} completes the
slope and canonical-ring arguments.
Section~\ref{sec:applications} proves numerical dimension zero
and constructs a minimal model.
Section~\ref{sec:singularities} determines the generic transverse
ordinary quadratic singularities.
Section~\ref{sec:moving} verifies the differential nondegeneracy
condition needed for the moving-slope bound.
Section~\ref{sec:birational} proves the statements about
divisor cones, finite generation and the boundary on the unique
minimal model. Section~\ref{sec:automorphisms} identifies its
distinguished exceptional valuation and proves the triviality of
the birational automorphism group.

\section{Modular forms, boundary orders and canonical forms}
\label{sec:preliminaries}

\subsection{Siegel modular forms}
Put
\[
 \Hh_g=\{\tau\in\operatorname{Mat}_{g\times g}(\C):
              \tau={}^{t}\tau,\ \operatorname{Im}\tau>0\},
 \qquad \Gamma_g=\Sp_{2g}(\Z).
\]
An element $\gamma=\left(\begin{smallmatrix}A&B\\ C&D\end{smallmatrix}\right)$
of $\Gamma_g$ acts by
$\gamma\tau=(A\tau+B)(C\tau+D)^{-1}$.
For an integer $k$, a scalar Siegel modular form of genus $g$ and
weight $k$ is a holomorphic function $G:\Hh_g\to\C$ satisfying
\[
 G(\gamma\tau)=\det(C\tau+D)^kG(\tau).
\]
For $g=1$ one also requires holomorphy at the cusp. For $g>1$ this
condition follows from the Koecher principle. We write
$\MM_k^{(g)}=\MM_k(\Gamma_g)$ for this finite-dimensional vector space.
Only the full group and the trivial character occur in scalar modular
forms in this paper.

Let $\mathcal T_g$ be the set of positive semidefinite half-integral
symmetric matrices: $T\in\mathcal T_g$ means that $2T$ is integral with
even diagonal. With $e(x)=\exp(2\pi i x)$, the Fourier expansion is
\[
 G(\tau)=\sum_{T\in\mathcal T_g}a_G(T)e(\tr(T\tau)),
 \qquad
 a_G({}^{t}UTU)=\det(U)^k a_G(T)\quad(U\in\GL_g(\Z)).
\]
For $T\in\mathcal T_g$ put
$m(T)=\min_{0\ne v\in\Z^g}{}^{t}vTv$. For a nonzero modular form define
\begin{equation}\label{eq:boundary-order}
 b(G)=\min_{a_G(T)\ne0}m(T).
\end{equation}
It is a nonnegative integer. The form is cuspidal if and only if
$b(G)>0$. The $\GL_g(\Z)$ symmetry shows that $b(G)$ is also the
smallest last diagonal entry $T_{gg}$ among its nonzero Fourier
coefficients.

For $b\ge0$ write
\[
 \MM_k^{(g)}[b]=\{G\in\MM_k^{(g)}:b(G)\ge b\},
\]
where $b(0)=+\infty$. This is a decreasing filtration, and
$b(G_1G_2)=b(G_1)+b(G_2)$ for nonzero scalar forms. Additivity can
be seen directly from the first nonzero Fourier--Jacobi coefficients,
whose product is a nonzero holomorphic function.

\subsection{Fourier--Jacobi coefficients and theta decomposition}
Write a point of $\Hh_{g+1}$ as
\[
 \Omega=\begin{pmatrix}\tau&z\\{}^{t}z&\omega\end{pmatrix},
 \qquad \tau\in\Hh_g,\quad z\in\C^g,\quad q=e(\omega).
\]
The Fourier--Jacobi expansion of $G\in\MM_k^{(g+1)}$ is
\begin{equation}\label{eq:fj-expansion}
 G(\Omega)=\sum_{m\ge0}\phi_m(G)(\tau,z)q^m.
\end{equation}
For $m>0$, the coefficient belongs to the space $\JJ_{k,m}^{(g)}$
of scalar Jacobi forms of genus $g$, weight $k$ and index $m$.
Our normalization is
\begin{align}
 \phi(\tau,z+\tau u+v)
  &=e\bigl(-m({}^{t}u\tau u+2{}^{t}uz)\bigr)\phi(\tau,z),
       &&u,v\in\Z^g,                                      \label{eq:jacobi-translation}\\
 \phi(\gamma\tau,(C\tau+D)^{-t}z)
  &=\det(C\tau+D)^k
    e\bigl(m\,{}^{t}z(C\tau+D)^{-1}Cz\bigr)\phi(\tau,z).
                                                               \label{eq:jacobi-modular}
\end{align}
The Fourier expansion is supported on pairs $(N,r)$ for which
$\left(\begin{smallmatrix}N&r/2\\{}^{t}r/2&m\end{smallmatrix}\right)$
is positive semidefinite. These conventions imply that, on the abelian
variety $A_\tau=\C^g/(\Z^g+\tau\Z^g)$, the function
$z\mapsto\phi(\tau,z)$ represents a section of $\OO_{A_\tau}(2m\Theta)$.
The symmetric theta divisor $\Theta$ is chosen with the usual
rigidification; the even tensor powers used here are independent of
the ambiguity relevant to this choice.

For $\mu\in(\Z/2m\Z)^g$ put
\[
 \theta_{m,\mu}(\tau,z)=
 \sum_{\substack{r\in\Z^g\\r\equiv\mu\ (2m)}}
 e\left(\frac{{}^{t}r\tau r}{4m}+{}^{t}rz\right).
\]
Theta decomposition gives
\begin{equation}\label{eq:theta-decomposition}
 \phi(\tau,z)=\sum_{\mu\in(\Z/2m\Z)^g}
                    h_\mu(\tau)\theta_{m,\mu}(\tau,z).
\end{equation}
The vector $h=(h_\mu)_\mu$ is a Siegel modular form of weight
$k-\frac12$ for a finite-image Weil representation of the metaplectic
group. We use the theta decomposition and the Koecher principle for
Jacobi forms in the form established in \cite{Ziegler89}; see also
\cite[Section~2.2]{BR15}. In particular, a holomorphic function on
$\Hh_g\times\C^g$ satisfying the Jacobi transformation laws with
positive index is a holomorphic Jacobi form when $g>1$.

For a Jacobi form, its boundary order in the $\tau$ variables is the
minimum of $N_{gg}$ among its nonzero coefficients, equivalently the
minimum after integral changes of basis. Suppose that this order is
at least $c$. In \eqref{eq:theta-decomposition}, choose a representative
with $-m<r_g\le m$. Comparing the coefficient with this fixed $r$ gives
\begin{equation}\label{eq:theta-order}
 \ord(h_\mu)\ge c-\frac{r_g^2}{4m}\ge c-\frac m4.
\end{equation}
The comparison is coefficientwise: translation
\eqref{eq:jacobi-translation} permits the chosen representative, so no
estimate on cancellation between different theta components is needed.

\subsection{The Hodge bundle and compactifications}
We use $\mathscr A_g$ for the moduli stack of principally polarized
abelian varieties and $\A_g=\Gamma_g\backslash\Hh_g$ for its coarse
moduli space. Let $\mathbb E_g$ be the Hodge bundle on the stack, and
put
\[
 \mathcal L_g=\det\mathbb E_g,\qquad
 \lambda_g=c_1(\mathcal L_g).
\]
Scalar forms of weight $k$ are sections of $\mathcal L_g^k$.
Bundle calculations can be made after passing to a sufficiently high
level, and the resulting equivariant statements descend. We retain
the full-level notation for scalar forms throughout.

The rank-one partial toroidal compactification is denoted by
$\mathscr A_g^\partial$; its boundary is $D_g$. The boundary over
$\mathscr A_{g-1}$ is the universal Kummer variety. On its double cover
by the universal abelian variety, a normal parameter is $q=e(\omega)$
as in \eqref{eq:fj-expansion}. Consequently
\[
 b(G)=\ord_{D_g}(G),\qquad
 G\in H^0(\mathscr A_g^\partial,\mathcal L_g^k(-bD_g))
       \quad\text{if }b(G)\ge b.
\]
At a general boundary point the involution $z\mapsto-z$ is free.
Passing between the abelian fiber and its Kummer quotient therefore
does not change a generic vanishing order.

The perfect cone compactification is denoted by $\A_g^{\mathrm{Perf}}$.
For $g\ge2$ we use the standard facts that its rank-one boundary has a single
irreducible closure, still denoted $D_g$, and that
\[
 N^1(\A_g^{\mathrm{Perf}})_\Q
       =\Q\lambda_g\oplus\Q[D_g].
\]
The classes $\lambda_g$ and $D_g$ are $\Q$-Cartier. A sufficiently
divisible positive power of $\mathcal L_g$ descends from an ample
line bundle on the Satake compactification. Its pullback to a smooth
projective resolution is therefore big and nef.
For these compactifications and their divisor classes we refer to
\cite{FC90,SB06}; the slope conventions agree with \cite{GIMS24}.

We shall also use the following exact form of the relation between
divisors and modular forms. For $g\ge3$, every nonzero effective
divisor on $\A_g$ is the zero divisor of a scalar modular form for
$\Gamma_g$ of positive integral weight, with precisely the prescribed
multiplicities \cite[Satz~2]{Freitag83}. No multiple of the divisor
and no additional character is required. The absence of divisorial
ramification for the effective modular action in these genera
identifies divisors on the coarse space with invariant divisors
upstairs. In genus five the weight is necessarily even, by the action
of $-I$. We will use this theorem both to divide out an interior
prime factor and to test a possible common divisor of theta
coefficients.

\subsection{Canonical forms and the distinguished cusp form}
Let $X$ be a smooth projective resolution of $\A_6^{\mathrm{Perf}}$.
It admits a morphism to the Satake compactification. The product
$\bigwedge_{1\le i\le j\le6}d\Omega_{ij}$ transforms with weight $-7$.
Near a general rank-one boundary point it has a simple logarithmic
pole, since $d\omega=(2\pi i)^{-1}dq/q$. Thus a pluricanonical section
necessarily determines a scalar form of weight $7m$ and order at least
$m$ at $D_6$.

For later use choose an integer $r>0$ such that $\mathcal L_6^r$
descends to an ample line bundle on the Satake compactification,
and denote its pullback to $X$ by $L_X$. For all integers $m,a\ge0$
restriction to the open modular variety gives an injection
\begin{equation}\label{eq:canonical-injection}
 H^0(X,mK_X+aL_X)\ \lhook\joinrel\longrightarrow\
                 \MM_{7m+ra}^{(6)}[m].
\end{equation}
Indeed the transformation law gives the weight $7m+ra$, and regularity
at a general rank-one boundary point gives the order $m$. The resolution
is an isomorphism at the generic point of every interior divisor, and
the genus-six quotient has no divisorial branch locus
\cite[Proposition~1.3 and the Reid--Shepherd-Barron--Tai criterion]{SB16}.
The omitted
subsets of $\Hh_6$ therefore have codimension at least two. Holomorphic
extension across them follows from normality;
the Koecher principle gives a holomorphic modular form. This argument
only gives a necessary condition and does not assert extension of
an arbitrary form on the right across every exceptional divisor.

The converse needed for one particular form is already known.
We recall its construction to specify the modular input.
Let $\Lambda$ be the Leech lattice, with its positive definite
bilinear form $\langle\, ,\,\rangle$. It is even and unimodular,
has rank $24$, and has minimum norm $4$. For six-tuples $x,y$ in
$\Lambda\otimes\C$, put
\[
 B(x,y)=(\langle x_i,y_j\rangle)_{1\le i,j\le6},
 \qquad B(x)=B(x,x).
\]
Choose a six-tuple $h$ with $B(h)=0$ and $B(h,\overline h)>0$.
The harmonic theta construction gives
\[
 F_h(\Omega)=
 \sum_{x\in\Lambda^6}\det B(x,h)^2\,
                   e\!\left(\frac12\tr(B(x)\Omega)\right).
\]
It transforms with weight $24/2+2=14$. A summand with
$\det B(x,h)\ne0$ has linearly independent $x_i$, so $B(x)$ is
positive definite. The minimum norm of $\Lambda$ gives boundary
order at least two. Dittmann, Salvati Manni and Scheithauer choose
$h$ explicitly and compute a nonzero coefficient of boundary
minimum two \cite[Sections~2 and~4]{DSS21}. Their result and the
extension theorem give the following statement.
\begin{theorem}[Dittmann--Salvati Manni--Scheithauer]\label{thm:known-F}
There is a nonzero cusp form $\F\in\MM_{14}^{(6)}$ such that
$b(\F)=2$. It is a harmonic theta series associated with the Leech
lattice. The corresponding bicanonical form extends to every smooth
projective model of $\A_6$.
\end{theorem}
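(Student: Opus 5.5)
The plan is to assemble Theorem~\ref{thm:known-F} from three ingredients: the harmonic theta construction recalled just above, the explicit Fourier coefficient computation of \cite{DSS21}, and the standard extension criterion for pluricanonical forms on $\A_g$.

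First, modularity and cuspidality. Since $\Lambda$ is even unimodular of rank $24$ and $\det B(x,h)^2$ is a pluriharmonic polynomial of the right type in $x$ (harmonic because $B(h)=0$), the general theory of theta series with harmonic coefficients makes $F_h$ a holomorphic Siegel modular form for $\Gamma_6$. Its weight is $24/2+2=14$, and the character is trivial by unimodularity. The boundary order is then read off coefficientwise.
\begin{itemize}
\item Any nonzero term has $\det B(x,h)\ne0$. The $x_i$ are then independent, so $B(x)$ is positive definite.
\item The entry $\tfrac12\langle x_i,x_i\rangle$ is at least $2$, because the Leech lattice has minimum $4$ and no roots. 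Hence $m(\tfrac12B(x))\ge 2$ for every surviving $T$.
\end{itemize}
Thus $b(F_h)\ge2$. Equality $b(\F)=2$ and nonvanishing are exactly what \cite[Sections~2 and~4]{DSS21} supply. They choose $h$ explicitly and exhibit a $T$ with $m(T)=2$ and $a_{F_h}(T)\ne0$; this is a finite computation over the Leech vectors of norm $4$, which I would take as the input.

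Second, extension. $\F$ has weight $14=2\cdot7$ and $b(\F)=2$, so $\F\,(\bigwedge d\Omega_{ij})^{\otimes2}$ is a $\Gamma_6$-invariant bicanonical form on the regular part of $\Hh_6/\Gamma_6$. Near a general rank-one boundary point the double logarithmic pole from $d\omega^{\otimes2}$ is cancelled by $q^2$. For extension to an arbitrary smooth model $X$, the main obstacle is the interior quotient singularities and the higher-codimension toroidal boundary. For the interior, I would invoke the Reid--Shepherd-Barron--Tai criterion as in \cite{SB16}, together with Tai's analysis \cite{Tai82}: every nontrivial stabiliser on $\Hh_6$ satisfies the age condition except for elliptic $j=0$ (and $j=1728$) factor types. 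There the form still extends because $\F$ vanishes to sufficient order along $\A_1\times\A_5$; this is the delicate point handled in \cite{DSS21}, whose argument I would follow. For the boundary, one uses that the perfect cone toroidal compactification has canonical singularities in genus $\ge5$ \cite{SB06}. The order-$2$ vanishing along the single boundary divisor $D_6$ then gives extension over all toroidal strata, since every toric divisor in the perfect cone decomposition lies over $D_6$ with multiplicity compatible with $b\ge m$.

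Finally, extension to one resolution gives extension to every smooth projective model. Plurigenera sections are birational invariants: a rational map between smooth models is defined in codimension one, and sections of $mK$ pull back and extend across codimension two. Hence the bicanonical form defined by $\F$ lies in $H^0(X,2K_X)$ for every $X$.
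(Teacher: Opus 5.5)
Your route is the paper's: it recalls the harmonic theta construction (weight $14$, boundary order at least two from the Leech minimum), takes nonvanishing with $b(\F)=2$ from the explicit coefficient in \cite{DSS21}, and obtains extension from Tai's theorem \cite{Tai82}. Your birational-invariance paragraph is fine. One small point on $m(T)\ge2$: for every nonzero $v\in\Z^6$ the vector $\sum_i v_ix_i$ is nonzero by linear independence, so $v^t\bigl(\tfrac12B(x)\bigr)v\ge2$ directly. Bounding only diagonal entries needs the $\GL_6(\Z)$-invariance of the support.

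The extension paragraph, however, rests on a false claim. In genus six no stabilizer violates the Reid--Tai age condition; Tai's theorem is exactly that the criterion holds on $\A_g$ for all $g\ge5$.
\begin{itemize}
\item For an elliptic factor with $j=0$, an element acting by $(e^{2\pi i/6},1,\ldots,1)$ has eigenangles $1/3$ once and $1/6$ five times on $\Sym^2$. Its age is $1/3+5/6=7/6$.
\item For $j=1728$ and $(i,1,\ldots,1)$ the age is $1/2+5/4=7/4$.
\end{itemize}
The $j=0$ case fails only for $g\le4$; for example the age is $5/6$ when $g=4$. The paper's Lemma~\ref{lem:age-six} confirms that every nontrivial element has age at least $7/6$ in genus six.

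So the obstacle you name does not exist, and no vanishing of $\F$ along $\A_1\times\A_5$ is needed for extension. The argument you defer to \cite{DSS21} is also not there. The order-ten vanishing along the product locus is proved in this paper (Lemma~\ref{lem:origin-ten}), but it is used only in Section~\ref{sec:automorphisms}. There one needs log discrepancy greater than $3/2$ over the minimal model $Y$, and age $7/6$ alone is insufficient. That is a different threshold from canonicity.

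Your own boundary citation already makes the interior discussion unnecessary. The bicanonical form is regular in codimension one on $X_0=\A_6^{\mathrm{Perf}}$: the interior has no divisorial ramification, and $q^2$ cancels the double logarithmic pole along the single boundary divisor $D_6$. Hence it is a section of $\OO_{X_0}(2K_{X_0})$. Canonical singularities of $X_0$ for $g\ge5$ \cite{SB06} then give extension to any resolution at once. Your phrase about toric divisors lying over $D_6$ is imprecise: $X_0$ has only one boundary divisor, and the new toric divisors on refinements are exactly what canonicity controls.
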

This is \cite[Theorems~4.2 and~5.2, Proposition~5.1]{DSS21};
the extension uses Tai's theorem \cite{Tai82}.
We fix one normalization of $\F$ and put
\begin{equation}\label{eq:phi-definition}
 \phi=\phi_2(\F)\in\JJ_{14,2}^{(5)}.
\end{equation}
In particular $\phi\ne0$, $\phi(\tau,-z)=\phi(\tau,z)$, and its
restriction to a general fiber is a nonzero section of $4\Theta$.
We write $E_{\F}$ for the closure in $\A_6^{\mathrm{Perf}}$
of the interior zero divisor of $\F$.

\subsection{Symbolic vanishing}
If $V$ is an integral subvariety of a smooth variety $A$, and $s$ is
a section of a line bundle, $\ord_V(s)$ is its order in the maximal
ideal of the regular local ring $\OO_{A,\eta_V}$, after trivializing
the line bundle. For distinct integral subvarieties $\Sigma_i$ define
\[
 \mathcal I^{(e)}=\bigcap_i\mathcal I_{\Sigma_i}^{(e)}
 \quad(e\ge0).
\]
Here each symbolic power is the contraction of the $e$th power at the
generic point, and $\mathcal I^{(0)}=\OO_A$. Thus membership in
$H^0(A,M\otimes\mathcal I^{(e)})$ means precisely that the order of
the section is at least $e$ at the generic point of every $\Sigma_i$.
All orders used in the intersection arguments below have this meaning.

\section{Normal vanishing along the Prym locus}
\label{sec:prym}

We write $\mathscr R_g$ for the stack of connected \'etale double covers
of smooth curves of genus $g$, and $\mathcal R_g$ for its coarse moduli
space. A geometric point is represented by a pair $(C,\eta)$, where
$\eta\in\Pic^0(C)[2]\setminus\{\OO_C\}$. The associated Prym variety is
the identity component of the kernel of the norm map,
\[
 P(C,\eta)=\ker\bigl(\Nm:J(\widetilde C)\longrightarrow J(C)\bigr)^0,
\]
endowed with its principal polarization. It has dimension $g-1$.
The Prym morphism is denoted by
\[
 P_g:\mathscr R_g\longrightarrow\mathscr A_{g-1}.
\]
We use the same notation for the induced map of coarse spaces.
The generic Torelli theorem for the Prym map implies that
\[
 \mathcal P_7:=\overline{P_7(\mathcal R_7)}\subset\mathcal A_6
\]
is an integral subvariety of dimension $18$ and codimension $3$
\cite{FS82}.

We shall use two boundary divisors in the compactification by
quasistable Prym curves. A quasistable Prym curve consists of
$(C,\eta,\beta)$, where $\eta$ has total degree zero, its restriction
to every exceptional rational bridge $E$ is $\OO_E(1)$, and
$\beta:\eta^{\otimes2}\to\OO_C$ is an isomorphism away from the
exceptional components and their attachment points. A general point of
$\Delta'_0$ has an irreducible one-nodal stable curve whose normalization
carries a nontrivial two-torsion line bundle. A general point of
$\Delta^{\mathrm{ram}}_0$ has an exceptional bridge attached to a smooth
curve at two points. We refer to \cite[Section~1]{FL10} for this
compactification.

Let $\mathscr R_7^\circ$ be the open partial compactification containing
the smooth locus and these two boundary divisors, with higher boundary
strata removed. We restrict further whenever necessary to the open
set where the Prym morphism is defined. It maps to the rank-one partial
compactification $\mathscr A_6^\partial$. On this open set one has
\begin{equation}\label{eq:prym-boundary-pullback}
 P_7^*D_6=\Delta'_0,
 \qquad
 P_7(\Delta^{\mathrm{ram}}_0)\subset\mathscr A_6;
\end{equation}
the first equality includes the multiplicity of the pullback divisor
\cite[Section~6, equation~(9)]{FGSMV14}.

For a nonzero modular form $G$, the order
$\ord_{\mathcal P_7}(G)$ means its order in the maximal ideal of the
regular local ring at the generic point of $\mathcal P_7$. It is
equivalently the first nonzero normal Taylor degree along the smooth
part of the Prym locus.

\begin{theorem}\label{thm:prym-order}
Let $G\in\MM_k(\Sp_{12}(\Z))$ be nonzero and let
$b=\ord_{D_6}(G)>0$. Then
\begin{equation}\label{eq:prym-normal-bound}
 \ord_{\mathcal P_7}(G)\geq 22b-3k.
\end{equation}
\end{theorem}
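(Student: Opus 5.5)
Let $j=\ord_{\mathcal P_7}(G)$ and suppose $j<22b-3k$. The leading normal term of $G$ along the smooth part of $\mathcal P_7$ is a nonzero section of $\mathcal L_6^k(-bD_6)\otimes\Sym^j N^\vee_{\mathcal P_7}$ over the Prym locus. I will pull it back to a suitable test curve in $\mathscr R_7^\circ$ and obtain a contradiction from degrees there. The leading term lives on the Prym locus in the rank-one partial compactification, and the boundary order enters through \eqref{eq:prym-boundary-pullback}.

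\textbf{Step 1: the test curve.} Take a general Nikulin $K3$ surface $S$ of genus seven and a general pencil of Prym-curve sections $(C_t,\eta_t)$, $t\in\mathbf P^1$. This follows Farkas--Verra \cite{FV12}: a general point of $\mathcal R_7$ arises this way. The pencil gives a curve $\gamma:\mathbf P^1\to\mathscr R_7^\circ$ whose members are general, so it meets only $\Delta_0'$ and $\Delta_0^{\mathrm{ram}}$. The pencils move through a general point of $\mathcal R_7$, so the pullback of the nonzero leading term along a general such $\gamma$ stays nonzero. From the Farkas--Verra intersection numbers for Nikulin pencils, I expect
\[
\deg\gamma^*\lambda=6,\qquad \deg\gamma^*\Delta_0'=44 .
\]
By \eqref{eq:prym-boundary-pullback}, the second number is also $\deg\gamma^*P_7^*D_6$.

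\textbf{Step 2: the conormal bundle.} I want to show that $\gamma^*N^\vee_{\mathcal P_7}\simeq\OO_{\mathbf P^1}(2)^{\oplus3}$, or at least that every symmetric power has degree-$2j$ splitting with no summand of smaller degree. The plan is as follows.
\begin{enumerate}[label=(\alph*),leftmargin=*]
\item The cotangent space of $\mathcal A_6$ at $P(C,\eta)$ is $\Sym^2 H^0(K_C\otimes\eta)$.
\item The codifferential of the Prym map is multiplication $\Sym^2H^0(K_C\otimes\eta)\to H^0(K_C^2)$.
\item The conormal space is therefore the space of quadrics containing the Prym-canonical curve in $\mathbf P^5$. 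For a Nikulin section these are exactly the net of quadrics containing $S\subset\mathbf P^5$.
\item This net is constant along the pencil up to the twist by the Hodge-type line bundle.
\end{enumerate}
To make (d) precise over all of $\mathbf P^1$, including the ramified boundary fibres, I compare rank-$18$ bundles. The logarithmic codifferential maps $\gamma^*\Omega_{\mathscr A_6^\partial}(\log D_6)$ modulo the fixed quadrics to $\gamma^*\Omega_{\mathscr R_7}(\log\Delta_0')$. This map is generically an isomorphism. Using the Farkas--Ludwig canonical class of $\overline{\mathcal R}_g$ together with the degrees from Step 1, I compute that its determinant has degree $0$. A nonzero section of a degree-zero line bundle on $\mathbf P^1$ is nowhere zero, so the map is an isomorphism everywhere. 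This pins down the conormal bundle as a twist of a trivial rank-three bundle, with total degree $6$, hence $\OO(2)^{\oplus3}$.

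\textbf{Step 3: the degree count.} The pulled-back leading term is a nonzero section of
\[
\OO_{\mathbf P^1}\bigl(6k-44b\bigr)\otimes\Sym^j\OO(2)^{\oplus 3}=\OO(6k-44b+2j)^{\oplus\binom{j+2}{2}}.
\]
A nonzero section forces $6k-44b+2j\ge0$. Rearranging gives $j\ge22b-3k$, which is \eqref{eq:prym-normal-bound}; if $22b-3k\le0$ there is nothing to prove. Regularity at the boundary points of the pencil is where $b$ enters: $G$ is a section of $\mathcal L^k(-bD_6)$ on the partial compactification, and its leading normal jet is regular there.

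The main obstacle is Step 2 at the boundary. The Prym map ramifies along $\Delta_0^{\mathrm{ram}}$, and the conormal identification must hold there with the logarithmic structure along $\Delta_0'$ correct. I would rely on the determinant-degree-zero argument rather than on local computations at the ramified fibres. The delicate point is getting the Farkas--Ludwig coefficients and pencil intersection numbers to cancel exactly.
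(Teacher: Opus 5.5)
Your proposal is correct and follows the paper's proof essentially step for step: a general Nikulin pencil, the identification of the conormal bundle with $I_2(S)\otimes\OO_{\mathbf P^1}(2)$ via a rank-$18$ logarithmic codifferential whose determinant has degree zero by the Farkas--Ludwig canonical class, and the final count $6k-44b+2j\ge0$. When you carry out the determinant computation, note that it needs the genus-seven degrees $B\cdot\lambda_7=8$ and $B\cdot\Delta_0^{\mathrm{ram}}=8$, giving $(K+\Delta'_0)\cdot B=13\cdot8-44-3\cdot8=36=18\cdot2$; the Prym Hodge degree $6$ instead comes from $h^*\mathbb E_6\cong H^0(S,L)\otimes\OO_B(1)$, checked at the ramified fibres via the branched double cover of $S$, and this identification is also what makes the net of quadrics a constant subbundle.
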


The proof requires the actual conormal bundle along a complete
family of Prym curves. We first construct its fixed kernel using
the Prym Hodge bundle. A determinant calculation then excludes
degeneracy of the codifferential at the boundary.

\subsection{Nikulin pencils and multiplication of differentials}

Let $S$ be a general standard Nikulin surface. Thus $S$ is a $K3$
surface with eight disjoint smooth rational curves $N_1,\ldots,N_8$,
and a line bundle $e$ such that
\[
 2e=N_1+\cdots+N_8.
\]
We choose a nef class $H$ satisfying
\[
 H^2=12,\qquad H\cdot N_i=0,
 \qquad \Pic(S)=\Z H\mathbin{\perp}\mathfrak N,
\]
where $\mathfrak N$ is the Nikulin lattice generated by the classes of
the $N_i$ and $e$. Put $L=H-e$. The complete linear system $|L|$
embeds $S$ as a smooth complete intersection of three quadrics in
$\mathbf P^5$. In particular,
\begin{equation}\label{eq:nikulin-section-dimensions}
 h^0(S,L)=6,\qquad h^0(S,2L)=18,
 \qquad \dim I_2(S)=3,
\end{equation}
where $I_2(S)=H^0(\mathbf P^5,\mathcal I_S(2))$.
The lattice polarization is described in \cite[Definition~1.2]{FV12};
the embedding and its quadrics follow from
\cite[Lemma~1.3(iii)--(iv) and Lemma~2.1]{FV12}.

A general pencil $B\cong\mathbf P^1$ in $|H|$ has $12$ base points.
Let
\[
 f:X_S\longrightarrow B,\qquad \sigma:X_S\longrightarrow S
\]
be its incidence family and the blowup morphism, respectively. The
Prym data on this family are
\begin{equation}\label{eq:nikulin-prym-data}
 \eta=\sigma^*\OO_S(-e),\qquad
 \beta:\eta^{\otimes2}=\OO_{X_S}\left(-\sum_{i=1}^8E_i\right)
       \longrightarrow\OO_{X_S},
\end{equation}
where $E_i$ is the proper transform of $N_i$. In particular,
$\eta|_{E_i}=\OO_{E_i}(1)$. On a smooth member of $|H|$, the
restrictions of $e$ and $-e$ are isomorphic, but the sign in
\eqref{eq:nikulin-prym-data} matters on the exceptional bridges.

There are eight fibers of the form $E_i\cup C_i$, with
$C_i$ smooth of genus $6$ and $E_i\cdot C_i=2$. The other singular
fibers are irreducible and nodal. The pencil is transverse to the
corresponding open boundary divisors. It defines a morphism
$\gamma:B\to\mathscr R_7^\circ$. Write $\lambda_7$ for the Hodge
class of the underlying stable genus-seven curves. The intersection
numbers are
\begin{equation}\label{eq:nikulin-pencil-boundaries}
 B\cdot\lambda_7=8,\qquad B\cdot\Delta'_0=44,\qquad
 B\cdot\Delta^{\mathrm{ram}}_0=8.
\end{equation}
For example, the genus-seven $K3$ pencil has total stable boundary
degree $60$, and each of the eight ramified points contributes twice
to that degree. These are also the intersection numbers of
\cite[Proposition~1.4]{FV12}.

Write $\omega_f$ for the relative dualizing sheaf and set
\[
 \mathcal E=f_*(\omega_f\otimes\eta).
\]
The relative adjunction formula for the incidence divisor in
$S\times B$ gives $\omega_f=\OO_{X_S}(H,1)$, where
$\OO_{S\times B}(M,r)$ denotes
$\operatorname{pr}_S^*\OO_S(M)\otimes
\operatorname{pr}_B^*\OO_B(r)$.
The restriction sequence
\[
 0\longrightarrow\OO_{S\times B}(-e,0)
 \longrightarrow\OO_{S\times B}(L,1)
 \longrightarrow\omega_f\otimes\eta\longrightarrow0
\]
and the vanishing $H^i(S,-e)=0$ give
\begin{equation}\label{eq:nikulin-pushforwards}
 \mathcal E=H^0(S,L)\otimes\OO_B(1).
\end{equation}
The required vanishing follows from
\cite[Lemma~1.3(i)]{FV12} and Serre duality.

\subsection{The Hodge extension and the conormal bundle}

\begin{lemma}\label{lem:nikulin-hodge}
The vector bundle $\mathcal E$ is the pullback of the extended
Prym Hodge bundle. In particular, for $h=P_7\circ\gamma$,
\[
 h^*\mathbb E_6\cong\OO_B(1)^{\oplus6},\qquad
 \deg h^*\mathcal L_6=6.
\]
\end{lemma}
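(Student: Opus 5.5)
The proof identifies the Hodge bundle of the Prym variety with a sheaf of Prym-canonical differentials. Over the locus of smooth curves this is standard. For an étale double cover $\pi:\widetilde C\to C$ attached to $\eta$, the space of holomorphic differentials on $P(C,\eta)$ is the $(-1)$-eigenspace of $H^0(\widetilde C,\omega_{\widetilde C})$, and this eigenspace is $H^0(C,\omega_C\otimes\eta)$. The first step is to carry this out in families. On the locus of $B$ where the fibre is smooth, the relative double cover $\widetilde X_S\to X_S$ is defined by $\eta$ and $\beta$. Its $(-1)$-part of $\tilde f_*\omega_{\tilde f}$ is $f_*(\omega_f\otimes\eta)=\mathcal E$, and by construction this is the pullback of $\mathbb E_6$ under $h$ there. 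The sign convention $\eta=\sigma^*\OO_S(-e)$ is irrelevant on smooth fibres.

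The second step extends the identification over the two kinds of boundary points of $B$, using the extended Prym Hodge bundle on $\mathscr R_7^\circ$.
\begin{itemize}
\item At the $44$ irreducible nodal fibres, which lie in $\Delta'_0$, the Prym degenerates to a rank-one semiabelian variety. The extended Hodge bundle is then the $(-1)$-part of the dualizing sheaf of the admissible double cover. This is $\omega_f\otimes\eta$ restricted to the nodal fibre, i.e.\ differentials with at most logarithmic poles at the preimages of the node, anti-invariant under the involution. The canonical extension of $\mathbb E_6$ to $\mathscr A_6^\partial$ pulls back to this sheaf. This is the standard statement that $P_7$ extends to a morphism to $\mathscr A_6^\partial$ compatibly with Hodge bundles.
\item At the $8$ fibres $E_i\cup C_i$, which lie in $\Delta_0^{\mathrm{ram}}$, the Prym is an honest abelian variety. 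The double cover is the admissible cover of $C_i$ branched at the two points $E_i\cap C_i$. The $(-1)$-part of its differentials is $H^0(C_i,\omega_{C_i}\otimes\eta_{C_i})$, where $\eta_{C_i}^{2}=\OO_{C_i}(-p-q)$. This has dimension $6$ and is the restriction of $\omega_f\otimes\eta$ to the fibre. Along $E_i$ we have $\omega_f|_{E_i}\cong\OO_{E_i}$ and $\eta|_{E_i}=\OO_{E_i}(1)$, which give no extra sections once the gluing conditions at the two attachment points are imposed.
\end{itemize}
This local check is the main obstacle, and the correct sign of $e$ is exactly what is needed here. I would verify it by comparing fibre dimensions. $\mathcal E$ is locally free of rank $6$ by \eqref{eq:nikulin-pushforwards}, with $h^0$ of every fibre equal to $6$ by cohomology and base change from the restriction sequence, using $H^1(S,-e)=H^2(S,-e)=0$. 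So it suffices to give an injective comparison map on each fibre into the extended Hodge fibre, which also has dimension $6$. Equivalently, the comparison map of rank-$6$ bundles is an isomorphism on a dense open set and injective on every fibre, hence an isomorphism.

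The third step is the degree computation. By \eqref{eq:nikulin-pushforwards}, $\mathcal E\cong H^0(S,L)\otimes\OO_B(1)\cong\OO_B(1)^{\oplus6}$, since $h^0(S,L)=6$ by \eqref{eq:nikulin-section-dimensions}. Taking determinants gives $h^*\mathcal L_6\cong\OO_B(6)$, so $\deg h^*\mathcal L_6=6$. As a consistency check, this agrees with the known formula for the pullback of $\lambda$ under the Prym map on $\mathscr R_g$, namely $\lambda_{\mathrm{Prym}}=\lambda-\tfrac14\delta_0^{\mathrm{ram}}$. Substituting \eqref{eq:nikulin-pencil-boundaries} gives $8-\tfrac14\cdot 8=6$, which confirms that no correction at the ramified fibres was missed.
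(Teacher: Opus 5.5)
Your outline is the right shape, and your check $B\cdot(\lambda-\tfrac14\delta_0^{\mathrm{ram}})=8-2=6$ is correct. There is a genuine gap, though: the step you call the main obstacle, the ramified fibres $E_i\cup C_i$, is never carried out, and the fibre formula you write there is wrong.

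\emph{The fibre formula.} With $\eta_{C_i}^{2}\cong\OO_{C_i}(-p-q)$ one has $\deg\eta_{C_i}=-1$. So $\omega_{C_i}\otimes\eta_{C_i}$ has degree $9$ on a genus-six curve, and hence at most five sections, not six. The restriction of $\omega_f\otimes\eta$ to $C_i$ is instead
\[
 \omega_{C_i}(p+q)\otimes\eta_{C_i}\cong\omega_{C_i}\otimes\eta_{C_i}^{-1},
\]
which has degree $11$. This is the anti-invariant part for the cover branched at $p+q$.

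\emph{The fibre-dimension argument.} Matching fibre dimensions proves nothing here. Two rank-six bundles that agree over the smooth locus of $B$ automatically have six-dimensional fibres everywhere. They can still differ by elementary modifications at the eight ramified points, which change the degree and the splitting type. Your criterion ``injective on every fibre, hence an isomorphism'' needs a bundle homomorphism $\mathcal E\to h^*\mathbb E_6$ that is defined across those points, and you never construct one. Your degree check does not rescue this either. It would upgrade a generically invertible holomorphic map to an isomorphism via its determinant, but producing that holomorphic map is exactly the missing local input.

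\emph{The missing idea.} You need a global family of double covers whose relative dualizing sheaf computes the stable-cover Hodge bundle. The paper uses the double cover $\rho:\widehat S\to S$ branched along $\sum_iN_i$; over $X_S$ this is $\operatorname{Spec}(\OO\oplus\eta)$ with multiplication $\beta$.
\begin{enumerate}
\item Relative adjunction for the pencil $\rho^*H$ gives its anti-invariant Hodge bundle as $f_*(\omega_f\otimes\eta^{-1})=H^0(S,H+e)\otimes\OO_B(1)$. Note that this involves $\eta^{-1}$, not $\eta$; this is precisely where the sign of $e$ enters.
\item Since $(H+e)\cdot N_i=-1$, equivalently $(\omega_f\otimes\eta^{-1})|_{E_i}\cong\OO_{E_i}(-1)$, the divisor $2e=\sum_iN_i$ is a fixed part of $|H+e|$. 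Removing it identifies this bundle with $H^0(S,L)\otimes\OO_B(1)=\mathcal E$.
\item At a ramified parameter the lifted fibre is $\widetilde C_i+2\widehat N_i$. Contracting the $(-1)$-curves $\widehat N_i$ gives the pencil on the covering $K3$ surface. Its fibre there is the nodal Beauville cover, and the pushforward of the relative dualizing sheaf is unchanged by this blowup.
\end{enumerate}
This is what shows that $\mathcal E$ is the Prym Hodge extension as a bundle, not merely fibrewise, and hence that $h^*\mathbb E_6\cong\OO_B(1)^{\oplus6}$.
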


\begin{proof}
Let $\rho:\widehat S\to S$ be the double cover branched along
$\sum_iN_i$. The surface $\widehat S$ is the blowup of a $K3$
surface at the eight fixed points of a symplectic involution.
The trace decomposition is
\[
 \rho_*\omega_{\widehat S}
       =\omega_S\oplus\omega_S(e),
\]
where the second summand is anti-invariant. Relative adjunction
for the pencil of divisors $\rho^*H$ gives the anti-invariant
Hodge bundle
\[
 H^0(S,H+e)\otimes\OO_B(1);
\]
here $H^1(\widehat S,\omega_{\widehat S})=0$, and the
anti-invariant part of $H^0(\widehat S,\omega_{\widehat S})$
is zero. Since $(H+e)\cdot N_i=-1$, the divisor
$\sum_iN_i=2e$ is a fixed part of $|H+e|$. Removing it
gives the fixed identification
$H^0(S,H+e)=H^0(S,H-e)=H^0(S,L)$.

At each ramified parameter the lifted pencil contains twice
an exceptional $(-1)$-curve. Contracting this curve gives
an ordinary nodal fiber in the pencil on the covering $K3$
surface. Pushforward of the relative dualizing sheaf is unchanged
by the blowup. Thus the bundle just computed is the
anti-invariant part of the stable-cover Hodge extension, and
hence the Prym Hodge extension. The fixed identification above
identifies it with $\mathcal E$.
\end{proof}

\begin{proposition}\label{prop:nikulin-conormal}
The Prym morphism is immersive along the general pencil.
Its ordinary conormal bundle, pulled back to $B$, is
\begin{equation}\label{eq:nikulin-conormal}
 \mathcal K\cong I_2(S)\otimes\OO_B(2).
\end{equation}
More precisely, logarithmic Kodaira--Spencer duality identifies
the logarithmic codifferential with the exact sequence
\begin{equation}\label{eq:nikulin-multiplication}
 0\longrightarrow I_2(S)\otimes\OO_B(2)
 \longrightarrow\Sym^2\mathcal E
 \xrightarrow{\,dP_7^*\,}
 \gamma^*\Omega^1_{\mathscr R_7^\circ}(\log\Delta'_0)
 \longrightarrow0.
\end{equation}
\end{proposition}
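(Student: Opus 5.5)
The plan is to build the sequence \eqref{eq:nikulin-multiplication} in three steps: describe the codifferential fiberwise, identify its kernel, and then use a degree computation to show that the fiberwise picture persists over all of $B$, including the boundary. By Lemma~\ref{lem:nikulin-hodge}, $h^*\mathbb E_6\cong\mathcal E=H^0(S,L)\otimes\OO_B(1)$, and $h^*\Omega^1_{\mathscr A_6^\partial}(\log D_6)\cong\Sym^2 h^*\mathbb E_6$ on the rank-one partial compactification. So the pulled-back logarithmic codifferential is a map
\[
 \Sym^2\mathcal E=\Sym^2H^0(S,L)\otimes\OO_B(2)\longrightarrow\gamma^*\Omega^1_{\mathscr R_7^\circ}(\log\Delta'_0).
\]
The log pullback is legitimate because $P_7^*D_6=\Delta'_0$ with multiplicity one, by \eqref{eq:prym-boundary-pullback}. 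The map is well defined near $\Delta^{\mathrm{ram}}_0$ because $P_7$ sends that divisor into the interior. Both sides have rank $21$ and $18$ respectively, since $\dim\mathscr R_7=18$.

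At a point of $B$ over a smooth curve, logarithmic Kodaira--Spencer duality identifies the codifferential with the Prym--Petri multiplication
\[
 \Sym^2H^0(C,\omega_C\otimes\eta)\to H^0(C,\omega_C^{2}).
\]
Restriction from $S$ factors this map as
\[
 \Sym^2H^0(S,L)\to H^0(S,2L)\to H^0(C,2L|_C)=H^0(C,\omega_C^2),
\]
using $2L|_C=(2H-2e)|_C\cong\omega_C^{\otimes 2}$. Projective normality of the complete intersection $S\subset\mathbf P^5$ makes the first arrow surjective with kernel $I_2(S)$, of dimension $21-18=3$. The second arrow is an isomorphism, since both spaces have dimension $18$ by \eqref{eq:nikulin-section-dimensions} and $H^0(S,2L-C)=H^0(S,H-2e)=0$. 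To organize this globally, I would write the codifferential as the composite of the constant surjection
\[
 \Sym^2H^0(S,L)\otimes\OO_B(2)\to H^0(S,2L)\otimes\OO_B(2)
\]
with a map of rank-$18$ bundles
\[
 \psi:H^0(S,2L)\otimes\OO_B(2)\to\gamma^*\Omega^1_{\mathscr R_7^\circ}(\log\Delta'_0).
\]
For a general pencil $\psi$ is generically an isomorphism: for a general Prym curve of genus seven the Prym--Petri map is surjective, equivalently $P_7$ is generically immersive and finite onto its image (generic Prym--Torelli, \cite{FS82}).

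The key step, and the main obstacle, is to show that $\psi$ is an isomorphism at every point of $B$. This includes the $44$ nodal parameters and the $8$ ramified parameters, where the Kodaira--Spencer description has to be replaced by its logarithmic or quasistable version. I would show $\deg\det\psi=0$; since $\det\psi$ is a nonzero section of a line bundle of degree zero on $\mathbf P^1$, it is then nowhere vanishing. The source has degree $36$. For the target, the canonical class formula of Farkas--Ludwig \cite{FL10} on $\overline{\mathcal R}_7$, corrected for the stacky and ramified contributions, gives
\[
 K=13\lambda-2\Delta'_0-3\Delta_0''-2\Delta_0^{\mathrm{ram}}-\cdots .
\]
Adding $\Delta'_0$ for the logarithmic twist, and correcting for the ramification of $\mathscr R_7\to\overline{\mathcal M}_7$ along $\Delta_0^{\mathrm{ram}}$, the target's first Chern class is of the form $13\lambda-\Delta'_0-c\,\Delta^{\mathrm{ram}}_0$. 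Evaluating on $B$ with \eqref{eq:nikulin-pencil-boundaries}, the intended numbers $104-44-8c$ with $c=3$ give $36$. Getting these coefficients exactly right, especially the ramified one, where the pullback of the boundary from $\overline{\mathcal M}_7$ carries multiplicity two, is the delicate bookkeeping I expect to be hardest. I would check it against the stable-curve degree count of $60$ mentioned above.

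Once $\psi$ is an isomorphism, the kernel of the codifferential is the constant kernel $I_2(S)\otimes\OO_B(2)$, which gives the exact sequence \eqref{eq:nikulin-multiplication}. Surjectivity everywhere of the codifferential is the immersivity along $B$. Finally, restricting the logarithmic conormal sequence of $P_7$ to $B$ identifies the ordinary conormal bundle with this kernel. On the interior this is immediate. At the boundary, $D_6$ pulls back reducedly to $\Delta'_0$, so the ordinary and logarithmic conormal bundles coincide there, which yields \eqref{eq:nikulin-conormal}.
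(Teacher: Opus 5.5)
Your overall structure is the paper's: the logarithmic codifferential on $\Sym^2H^0(S,L)\otimes\OO_B(2)$, its fixed kernel $I_2(S)\otimes\OO_B(2)$, and the determinant of the induced map of rank-$18$ bundles. Your factorization through $H^0(S,2L)$ is a clean way to handle the kernel. It already shows that $\psi$ is an isomorphism at every smooth member, so you do not need generic Prym--Torelli.

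The gap is in the one step that controls the boundary, namely $\deg\gamma^*\Omega^1_{\mathscr R_7^\circ}(\log\Delta_0')=36$. The canonical class you display, with $-3\Delta_0''-2\Delta_0^{\mathrm{ram}}$, swaps the Farkas--Ludwig coefficients. Taken at face value it gives $104-44-16=44$. Then $\det\psi$ would have degree $8$, which is consistent with $\psi$ dropping rank at each of the eight ramified fibers, the opposite of what you need. You then apply an unspecified further ``correction for ramification'' and choose $c=3$ because it yields $36$. That assumes the conclusion at the crux. The stable count $60=44+2\cdot 8$ only checks the intersection numbers, not the coefficient $c$.

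To close the gap, you have to derive $c$:
\begin{enumerate}[label=\textup{(\roman*)},leftmargin=*]
\item On $\mathscr R_7^\circ$ the forgetful map $\pi$ to $\overline{\mathscr M}_7$ is \'etale along $\Delta_0'$ and simply ramified along $\Delta_0^{\mathrm{ram}}$, with node-smoothing parameter $u=t^2$. Hence $\pi^*\delta_0=\Delta_0'+2\Delta_0^{\mathrm{ram}}$.
\item The Hurwitz formula then gives $K_{\mathscr R_7^\circ}=\pi^*(13\lambda_7-2\delta_0)+\Delta_0^{\mathrm{ram}}=13\lambda_7-2\Delta_0'-3\Delta_0^{\mathrm{ram}}$. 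This is Farkas--Ludwig's Theorem~1.5 restricted to this open set.
\item The logarithmic twist adds only $\Delta_0'$, since $P_7(\Delta_0^{\mathrm{ram}})$ lies in the interior.
\item You must also check that this stack-level formula computes the degree along the actual family $\gamma$. At the ramified fibers the nonexceptional subcurve is connected, so there are no inessential automorphisms beyond the scalar one, and that acts trivially on the cotangent space.
\end{enumerate}
This gives $13\cdot 8-44-3\cdot 8=36$, and the rest of your argument, including the comparison of ordinary and logarithmic conormal bundles via $P_7^*q=t$ up to a unit, then goes through as in the paper.
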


\begin{proof}
Put
\[
 \mathcal T=
 \gamma^*\Omega^1_{\mathscr R_7^\circ}(\log\Delta'_0).
\]
The identity $P_7^*D_6=\Delta'_0$ gives a logarithmic
codifferential on the whole pencil. By
Lemma~\ref{lem:nikulin-hodge} and logarithmic
Kodaira--Spencer duality on $\mathscr A_6^\partial$
\cite{FC90}, it is a bundle homomorphism
\[
 \alpha:\Sym^2H^0(S,L)\otimes\OO_B(2)
                 \longrightarrow\mathcal T.
\]
At a smooth member $C$ of the pencil, this is the usual
multiplication map of Prym-canonical differentials, up to a
nonzero scalar. The restriction of $H^0(S,L)$ to $C$ is
$H^0(C,\omega_C\otimes\eta_C)$.
Moreover, $H^0(S,2L-H)=0$ by the proof of
\cite[Lemma~2.1]{FV12}. The quadrics containing $C$
therefore coincide with the quadrics containing $S$.
Thus the kernel of $\alpha$ on the smooth part of $B$
is the fixed subbundle $I_2(S)\otimes\OO_B(2)$.

This subbundle is consequently killed by $\alpha$ everywhere,
since its image is a holomorphic bundle homomorphism vanishing
on a dense open set. We obtain a homomorphism of bundles of rank
eighteen,
\[
 \overline\alpha:
 \bigl(\Sym^2H^0(S,L)/I_2(S)\bigr)\otimes\OO_B(2)
                \longrightarrow\mathcal T,
\]
which is an isomorphism over the smooth part of the pencil.

It remains to rule out zeros of its determinant at the boundary.
The canonical-class formula \cite[Theorem~1.5]{FL10}, restricted to our partial
compactification, is
\[
 K_{\mathscr R_7^\circ}
       =13\lambda_7-2\Delta'_0-3\Delta^{\mathrm{ram}}_0.
\]
This also follows directly from the Hurwitz formula:
the forgetful map has local smoothing parameter $u=t^2$
along $\Delta^{\mathrm{ram}}_0$ and is unramified along
$\Delta'_0$. There are no elliptic tails or other boundary
divisors in this open set. At the ramified fibers, the
nonexceptional subcurve is connected, so there is no
inessential automorphism \cite[Remark~6.3]{FL10};
the scalar deck automorphism acts trivially on the cotangent
space. The same formula therefore computes the degree of the
cotangent bundle pulled back by the actual Prym family.

Using $B\cdot\lambda_7=8$ and
\eqref{eq:nikulin-pencil-boundaries}
\cite[Proposition~1.4]{FV12}, we find
\[
 \deg\mathcal T
 =(K_{\mathscr R_7^\circ}+\Delta'_0)\cdot B
 =13\cdot8-44-3\cdot8=36.
\]
The source of $\overline\alpha$ also has degree $18\cdot2=36$.
Its determinant is therefore a nonzero section of a
degree-zero line bundle on $B\cong\mathbf P^1$, and has no zeros.
Hence $\overline\alpha$ is an isomorphism everywhere. This
proves \eqref{eq:nikulin-multiplication}.

Along $\Delta^{\mathrm{ram}}_0$, neither the source cotangent
bundle in \eqref{eq:nikulin-multiplication} nor the target
cotangent bundle has a logarithmic pole. Along $\Delta'_0$,
the equality $P_7^*q=t$ up to a unit identifies the residue
lines. Comparing the ordinary and logarithmic cotangent
sequences shows that the ordinary codifferential is also
surjective and has the same kernel. This proves both
immersion and \eqref{eq:nikulin-conormal}.
\end{proof}

\begin{proof}[Proof of Theorem~\ref{thm:prym-order}]
Put $j=\ord_{\mathcal P_7}(G)$. Choose a general interior point of
the Prym locus at which the $j$-th normal jet is nonzero. By
\cite[Theorem~0.4]{FV12}, a general genus-seven Prym curve is a section
of a unique general standard Nikulin surface. We may therefore choose
a general pencil through the corresponding curve on that surface.

Regard $G$ as a holomorphic section of
$\mathcal L_6^{\otimes k}(-bD_6)$ on the partial compactification.
In coordinates where an immersed branch has ideal
$I=(y_1,y_2,y_3)$ and $q$ is a boundary coordinate along the
branch, $\OO/I^j$ is $q$-torsion free. Thus $I^j$ is saturated
with respect to $q$, and division by the prescribed boundary
factor preserves the lower normal-jet vanishing.
On each immersed Prym branch, its normal jets of orders less than
$j$ vanish on the dense interior. Since they are holomorphic, they
also vanish at the boundary points of the branch. Its $j$-th normal
jet thus defines a section of
\begin{align*}
 h^*\bigl(\mathcal L_6^{\otimes k}(-bD_6)\bigr)
       \otimes\Sym^j\mathcal K
 &\cong\Sym^j I_2(S)\otimes
        \OO_B(6k-44b+2j).
\end{align*}
It is nonzero at the point through which the pencil was chosen.
Hence $6k-44b+2j\geq0$, which is precisely
\eqref{eq:prym-normal-bound}.

This construction uses normal jets on the source branches. It does
not assume that the global image $\mathcal P_7$ is smooth at every
point of the pencil. The local normal jets glue through the intrinsic
conormal bundle. The family is an actual family of Prym data on $B$;
the scalar deck action is trivial on $\Sym^2\mathcal E$ and
$\det\mathcal E$. At a general ramified fiber the nonexceptional
subcurve is connected, so there is no additional inessential
automorphism \cite[Remark~6.3]{FL10}. Thus no further degree factor
is introduced by passage between the stack and the coarse space.
\end{proof}

\subsection{Abel--Prym surfaces in the boundary}

Fix a general principally polarized abelian fivefold $(A,\Theta)$.
The map $P_6:\mathcal R_6\to\mathcal A_5$ is generically \'etale
of degree $27$ \cite{DS81}. Let
\[
 \pi_i:\widetilde C_i\longrightarrow C_i,
 \qquad 1\leq i\leq27,
\]
be its Prym presentations, and let $\iota_i$ denote the involution.
Under a Prym identification with $A$, define
\begin{equation}\label{eq:abel-prym-surface}
 \Sigma_i=
 \left\{\OO_{\widetilde C_i}
    (p+q-\iota_i p-\iota_i q):p,q\in\widetilde C_i\right\}
 \subset A
\end{equation}
with its reduced structure. This is an integral surface. It contains
the origin and is invariant under inversion, so the ambiguity in a
general Prym identification does not affect it.

\begin{proposition}\label{prop:prym-boundary-surfaces}
The surfaces $\Sigma_1,\ldots,\Sigma_{27}$ are pairwise distinct and
satisfy
\begin{equation}\label{eq:prym-surface-class}
 [\Sigma_i]=4\frac{\Theta^3}{3!},\qquad
 \Theta^2\cdot\Sigma_i=80.
\end{equation}
They are the reduced extension surfaces obtained from the general
points of $\Delta'_0$ over $A$ in the rank-one boundary of
$\mathcal A_6^\partial$.
\end{proposition}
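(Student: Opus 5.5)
The plan is to realize each $\Sigma_i$ as the image of a symmetric square of the Abel--Prym curve, and then read off its class from the Pontryagin product. Fix one of the $27$ Prym presentations $(\widetilde C,\iota)\to C$ of $(A,\Theta)$, with $g(C)=6$ and $g(\widetilde C)=11$. Let $\psi:\widetilde C\to A$, $p\mapsto\OO_{\widetilde C}(p-\iota p)$, be the Abel--Prym map, with a base point absorbed by translation. Then $\Sigma=\Sigma_i$ is the image of
\[
 s:\widetilde C^{(2)}\longrightarrow A,\qquad p+q\longmapsto\psi(p)+\psi(q).
\]
By Welters, for general $C$ the map $\psi$ is an embedding and $[\psi(\widetilde C)]=2\,\Theta^{4}/4!$. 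For this I would cite Welters, or Beauville's description of Prym varieties; it is not among the results stated earlier in the paper.

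\textbf{Step 1: the class.} The key point is that $s$ is birational onto its image. I would argue this by a Castelnuovo--Severi or dimension count: a nontrivial identity $\psi(p)+\psi(q)=\psi(p')+\psi(q')$ for general $p,q$ gives a $g^1_4$-type relation, more precisely a linear equivalence $p+q+\iota p'+\iota q'\sim p'+q'+\iota p+\iota q$ on $\widetilde C$. For a general Prym curve of genus six, $\widetilde C$ has gonality larger than four, so this relation can only be trivial. Note that $\Sigma=-\Sigma$ via $\iota$, but $\iota$ sends $s(p+q)$ to $-s(p+q)$, which is generically a different point, so it does not create extra degree.

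Granting birationality, the class is computed by the Pontryagin product. Using $\tfrac{\Theta^{n-a}}{(n-a)!}\star\tfrac{\Theta^{n-b}}{(n-b)!}=\binom{a+b}{a}\tfrac{\Theta^{n-a-b}}{(n-a-b)!}$ with $n=5$ and $a=b=1$,
\[
 [\psi\widetilde C]\star[\psi\widetilde C]=4\cdot2\,\frac{\Theta^3}{3!}=8\,\frac{\Theta^3}{3!}.
\]
The addition map $\widetilde C\times\widetilde C\to\Sigma$ has degree two, because it factors through $\widetilde C^{(2)}$. Hence $[\Sigma]=4\,\Theta^3/3!$ and
\[
 \Theta^2\cdot\Sigma=\tfrac46\,\Theta^5=\tfrac46\cdot5!=80.
\]

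\textbf{Step 2: distinctness.} I would compute the tangent cone of $\Sigma$ at the origin. The origin is the image of the curve $\{q=\iota p\}$, contracted by $s$. Near this curve, to first order $s(p+q)$ moves along the secant lines through $\psi'(p)$, which under the identification $T_0A^\vee\cong H^0(C,\omega_C\otimes\eta)$ is the Prym-canonical image of $p$. So the projectivized tangent cone of $\Sigma$ at $0$ should be the Prym-canonical curve $\phi_{\omega_C\otimes\eta}(C)\subset\mathbf P^4$, of degree $10$. This curve determines $(C,\eta)$, since for general $C$ the Prym-canonical map is an embedding and $\eta$ is recovered from the embedding. As the $27$ Prym data of $A$ are pairwise nonisomorphic ($P_6$ is generically étale of degree $27$), the surfaces $\Sigma_i$ are pairwise distinct.

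\textbf{Step 3: identification with the boundary.} Take a general point of $\Delta'_0$ lying over $A$. It is an irreducible curve obtained from $C$ by gluing two points $x\ne y$, together with a line bundle whose pullback to the normalization is $\eta$. The associated double cover is obtained by gluing $\widetilde C$ at $\tilde x\sim\tilde y$ and $\iota\tilde x\sim\iota\tilde y$. Its generalized Prym is a $\C^*$-extension of $A$, and the extension class in $A^\vee\cong A$ is
\[
 \OO_{\widetilde C}(\tilde x-\tilde y-\iota\tilde x+\iota\tilde y).
\]
This class is well defined up to sign and up to the choice of lifts, and replacing $\tilde y$ by $\iota\tilde y$ rewrites it as $\psi(\tilde x)+\psi(\tilde y)$. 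By \eqref{eq:prym-boundary-pullback} these limits map to general points of $D_6$ over $A$, so the extension classes sweep out exactly $\Sigma_i$ (reduced) in the fiber $A$, and through the Kummer quotient this gives the boundary description.

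\textbf{Main obstacle.} I expect the hardest step to be the birationality of $s$, together with making the tangent-cone argument of Step~2 rigorous (reducedness along the contracted curve). These are exactly the normalizations the paper flags in the introduction. If the gonality argument is too weak, the first thing I would try is to compute $\deg s$ directly from $\Theta^2\cdot\Sigma$, via Welters' results on the Abel--Prym map.
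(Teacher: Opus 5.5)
Your outline matches the paper's. Generic birationality of $p+q\mapsto\OO_{\widetilde C}(p+q-\iota p-\iota q)$ gives the class. The Prym-canonical tangent curve at the origin separates the $27$ surfaces. The extension class $(1-\iota)(\tilde x-\tilde y)$ identifies them with the boundary. Deriving $4\Theta^3/3!$ from Welters' class $2\Theta^4/4!$ and the Pontryagin product, instead of quoting the pushforward class, is a valid substitute.

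The gap is the step you yourself call key: birationality of $s$. Castelnuovo--Severi gives nothing here. For $\pi:\widetilde C\to C$ and a pencil of degree $d\le4$ on $\widetilde C$, it only requires $11\le 2\cdot6+(d-1)$, so it merely excludes pencils pulled back from $C$. The claim that $\widetilde C$ has gonality larger than four is therefore an unsupported input. Your fallback, reading $\deg s$ off $\Theta^2\cdot\Sigma$, is circular, since that is the number you are computing.

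No gonality statement is needed. Your relation says $D=p+q+\iota p'+\iota q'$ satisfies $D\sim\iota D$. An $\iota$-invariant line bundle on an \'etale double cover is linearizable and descends, so $\OO_{\widetilde C}(D)\cong\pi^*M$ with $\deg M=2$, and $h^0(\pi^*M)=h^0(M)+h^0(M\otimes\eta)$. On a non-hyperelliptic $C$ both terms are at most one. Hence $D\ne\iota D$ forces $M\cong\OO_C(x_1+x_2)$ and $M\otimes\eta\cong\OO_C(y_1+y_2)$, that is, $\eta\in C^{(2)}-C^{(2)}$. This is excluded for a general Prym curve, and it is exactly the condition the paper invokes. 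So $D=\iota D$ as divisors, and $\{p,q\}=\{p',q'\}$ for general $p,q$. The same condition gives the Prym-canonical embedding you assert in Step~2, since $h^0(C,\omega_C\otimes\eta(-T))=3$ for every $T\in C^{(2)}$. It also shows that $s^{-1}(0)$ is exactly the curve $\{p+\iota p\}$, which helps make your tangent-cone description rigorous.

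In Step~3 you state the extension class without deriving its normalization, and that normalization is the point needing care. Write $e_1,e_2$ for the standard characters of $\mathbf G_m^2$ and $i:A\hookrightarrow J(\widetilde C)$ for the Prym inclusion. The Prym torus has character lattice $\Z^2/\Z(1,1)$, generated by the class of $e_1$. The identity $i^*\lambda_J=\lambda_A\circ(1-\iota)$ turns $\OO_{\widetilde C}(\tilde x-\tilde y)$ into $(1-\iota)(\tilde x-\tilde y)$. The anti-invariant character $e_1-e_2$ restricts to twice the generator. Using it would replace $\Sigma_i$ by its image under multiplication by two, which has a different class. This, together with $q=t$ up to a unit, is what rules out a doubling in the extension parameter.
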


\begin{proof}
The map $\delta_2:\widetilde C_i^{(2)}\to\Sigma_i$ in
\eqref{eq:abel-prym-surface} has generic degree one, since $2<6/2$
\cite[Appendix~A, Corollary~A.15]{LZ22}. Its pushforward class is
$4\Theta^3/3!$ by \cite[Appendix~A, Proposition~A.11]{LZ22}.
Thus this is the class of the reduced image, not a multiple of it.
Since $\Theta^5=5!$, the intersection number is $80$.

The support of the projectivized tangent cone of $\Sigma_i$ at the
origin is the Prym-canonical image of $C_i$ in $\mathbf P T_0A$
\cite[p.~4]{Izadi99}. For a general cover this is an embedding.
Indeed, \cite[Appendix~A, Corollary~A.10]{LZ22}, applied with
$d=2<6/2$, gives
$\eta_i\notin C_i^{(2)}-C_i^{(2)}$. Thus, for every effective
divisor $T$ of degree two on $C_i$, Riemann--Roch gives
\[
 h^0\bigl(C_i,\omega_{C_i}\otimes\eta_i(-T)\bigr)
 =3+h^0\bigl(C_i,\eta_i(T)\bigr)=3.
\]
The five-dimensional space $H^0(C_i,\omega_{C_i}\otimes\eta_i)$
therefore separates every subscheme of length two.
Consequently the embedded surface recovers the abstract curve $C_i$
and the line bundle
$\eta_i=\OO_{C_i}(1)\otimes \omega_{C_i}^{-1}$.
Two coincident surfaces would therefore give the same point of
$\mathcal R_6$, contrary to the choice of the $27$ presentations.

We check the extension coordinate, since its normalization affects
both the class and the vanishing order. At a general point of
$\Delta'_0$, the upper stable curve is obtained by identifying
$p_1$ with $q_1$ and $\iota p_1$ with $\iota q_1$ in a smooth
cover $\widetilde C\to C$. Its generalized Jacobian has torus
$\mathbf G_m^2$, with norm $(s_1,s_2)\mapsto s_1s_2$.
The Prym torus has character lattice
\[
 X^*(T_P)=\Z^2/\Z(1,1).
\]
Its primitive generator is the class of the first standard basis
vector. The difference of the two standard basis vectors is twice
this generator and must not be used instead.

Let $i:A\hookrightarrow J(\widetilde C)$ be the Prym inclusion and
$a=\OO_{\widetilde C}(p_1-q_1)$. The extension line bundle is
$i^*\lambda_J(a)$. The polarization identity
\[
 i^*\lambda_J=\lambda_A\circ(1-\iota)
\]
identifies its point in $A$ with
\[
 z=(1-\iota)(p_1-q_1)
   =\delta_2(p_1+\iota q_1).
\]
Its reduced image is precisely \eqref{eq:abel-prym-surface}.
In particular, there is no multiplication-by-two map in the
extension parameter. The boundary smoothing parameter is primitive
by \eqref{eq:prym-boundary-pullback}. Equivalently, the two vanishing
cycles upstairs are exchanged by $\iota$; their difference is
primitive in the Prym lattice, on which the principal polarization
is half the restricted Jacobian polarization. Picard--Lefschetz
monodromy then has coefficient one, giving $q=t$ up to a unit.
\end{proof}

\begin{corollary}\label{cor:prym-leading-order}
Let $G$, $k$ and $b$ be as in Theorem~\ref{thm:prym-order}, and let
$\psi_b$ be its first nonzero Fourier--Jacobi coefficient. On a
general abelian fivefold one has
\begin{equation}\label{eq:boundary-symbolic-order}
 \ord_{\Sigma_i}(\psi_b)\geq\ord_{\mathcal P_7}(G)\geq22b-3k
 \qquad(1\leq i\leq27).
\end{equation}
Here the order is the symbolic order at the generic point of the
reduced surface. For a fixed globally defined Jacobi form, these
$27$ generic orders have a common value on a sufficiently general
abelian fivefold.
\end{corollary}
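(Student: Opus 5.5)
Write $j=\ord_{\mathcal P_7}(G)$; the second inequality $j\ge 22b-3k$ is Theorem~\ref{thm:prym-order}, so only $\ord_{\Sigma_i}(\psi_b)\ge j$ and the equality of the $27$ generic orders need proof. The idea is to specialize the order of vanishing along the Prym locus to the rank-one boundary, where the closure of $\mathcal P_7$ in $\mathscr A_6^\partial$ meets the fiber over $(A,\Theta)$ in the surfaces $\Sigma_i$. I would work on the double cover of the boundary by the universal abelian variety, near a general point $(A,z)$ with $z\in\Sigma_i$. There I use coordinates $(\tau,z,q)$ with $q=e(\omega)$ the normal parameter, as in Section~\ref{sec:preliminaries}.

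\emph{Step 1: the closure of the Prym locus near $D_6$.} By \eqref{eq:prym-boundary-pullback} and Proposition~\ref{prop:prym-boundary-surfaces}, the image of $\Delta'_0$ over $A$ under $P_7$ is the union of the reduced surfaces $\Sigma_i$, and the smoothing parameter satisfies $q=t$ up to a unit. By Proposition~\ref{prop:nikulin-conormal}, the extended Prym map is immersive at general boundary points (the argument there is local along $\Delta'_0$). Hence, near a general point of $\Sigma_i$, some branch $\mathcal B$ of the closure $\overline{\mathcal P_7}$ is a smooth $19$-dimensional germ. It is transverse to $D_6=\{q=0\}$ and satisfies $\mathcal B\cap D_6=\widetilde\Sigma_i$, where $\widetilde\Sigma_i$ is the family of the surfaces $\Sigma_i$ as $(A,\Theta)$ varies over $\mathcal A_5$, an $18$-dimensional smooth germ. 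I choose local equations $y_1,y_2,y_3$ of $\mathcal B$, so that $(y_1,y_2,y_3,q)$ cuts out $\widetilde\Sigma_i$ in the boundary chart.

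\emph{Step 2: transferring the order.} Write $G=q^b\widetilde G$, where $\widetilde G$ is holomorphic and $\widetilde G|_{q=0}=\psi_b$. Since $G$ vanishes to order $j$ along $\mathcal B$ at the generic point of $\mathcal P_7$, it lies in $I^{(j)}=(y_1,y_2,y_3)^j$ on the smooth interior of the branch. Because $I^j$ is $q$-saturated, $\widetilde G$ lies in $I^j$ as well. This is the same argument as in the proof of Theorem~\ref{thm:prym-order}, where $\OO/I^j$ is $q$-torsion free. Holomorphic normal jets that vanish on a dense set vanish on the closure, so this holds along the boundary too. Restricting to $q=0$ gives $\psi_b\in(\bar y_1,\bar y_2,\bar y_3)^j$ along $\widetilde\Sigma_i$ in the universal boundary $\mathscr X_5$, where the $\bar y$ define $\widetilde\Sigma_i$ transversally.

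Restricting further to the fiber over a general $(A,\Theta)$ requires $\widetilde\Sigma_i$ to be transverse to the fibers. This holds because $\widetilde\Sigma_i\to\mathcal A_5$ is smooth at general points: it is a family of surfaces over the étale-locally trivial $27$-sheeted cover, using that $P_6$ is generically étale \cite{DS81}. The fiber restriction of an ideal power stays inside the ideal power, so $\ord_{\Sigma_i}(\psi_b)\ge j$. Passing between the Kummer quotient and the abelian fiber does not change generic orders, since the involution is generically free.

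\emph{Step 3: the common value.} The $27$ surfaces are permuted by the monodromy of the degree-$27$ cover $\mathcal R_6\to\mathcal A_5$ over a general base point. This monodromy is transitive because $\mathcal R_6$ is irreducible, and in fact it is $W(E_6)$. The function $(A,i)\mapsto\ord_{\Sigma_i}(\psi_b|_A)$ is upper semicontinuous on the irreducible cover. It is therefore constant on a dense open set, and transitivity makes the $27$ values agree over a general $A$.

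I expect the main obstacle to be Step 1: showing that the limiting surface of the Prym branch is exactly the reduced $\Sigma_i$, with no multiplicity and no nonreduced limit. This is where the primitivity normalization in Proposition~\ref{prop:prym-boundary-surfaces} is needed. I would verify the transversality $\mathcal B\pitchfork D_6$ from $P_7^*D_6=\Delta'_0$ with multiplicity one, combined with immersivity. Failing that, I would argue on the normalization of the branch: the order of $G$ along the source branch is at least $j$, and pullback to the divisor $\Delta'_0$ surjects generically onto $\widetilde\Sigma_i$ with degree one, by the generic-degree-one statement for $\delta_2$.
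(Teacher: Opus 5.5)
Your proposal is correct and follows essentially the paper's argument: an immersed Prym branch transverse to $D_6$ with $q=t$ up to a unit, $q$-saturation of $(y_1,y_2,y_3)^j$ to pass from $G$ to $G/q^b$, and restriction first to $q=0$ and then to a general fiber. The common value then comes from generic constancy over the irreducible $\mathcal R_6$; you use semicontinuity of the generic order, where the paper uses the generic normal jet and Chevalley's theorem.

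Two small corrections. First, the branch has dimension $18$ and its boundary trace $\widetilde\Sigma_i$ has dimension $17$, not $19$ and $18$. Second, Proposition~\ref{prop:nikulin-conormal} rests on a global degree count on a complete pencil, not a local argument. It therefore gives immersivity at general points of $\Delta'_0$ only after you add openness of the immersive locus and irreducibility of $\Delta'_0$. The paper instead obtains rank $15+2+1$ directly from $P_6$ being \'etale, $\delta_2$ being generically immersive, and $q=t$.
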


\begin{proof}
We first verify immersion at the general points of $\Delta'_0$,
independently of the pencils used above. Locally, its parameters
are a genus-six Prym cover, two marked points, and a discrete gluing
choice. The map $P_6$ is \'etale at the general point, and
$\delta_2$ is generically birational in characteristic zero, hence
immersive at its general point. The boundary map has rank
$15+2=17$. The smoothing parameter satisfies $q=t$ up to a unit,
so the full Prym map has rank $18$ and is transverse to $D_6$.

In a local target chart, write the immersed branch as
$(y_1,y_2,y_3)=0$, with $q$ a transverse boundary parameter.
Set $j=\ord_{\mathcal P_7}(G)$. As in the proof of
Theorem~\ref{thm:prym-order}, the holomorphic section $G/q^b$
belongs to $(y_1,y_2,y_3)^j$ along the branch. Restriction first
to $q=0$ and then to a fixed general point of $\mathcal A_5$
gives
\[
 \psi_b\in\mathcal I_{\Sigma_i}^{(j)}.
\]
Equation~\eqref{eq:boundary-symbolic-order} now follows from
\eqref{eq:prym-normal-bound}. The Kummer quotient causes no loss
of order, since inversion acts freely at the general point of
$\Sigma_i$ and the quotient is \'etale there.

For the final assertion, consider the irreducible family of
Abel--Prym surfaces over the irreducible space $\mathcal R_6$.
It is dominated by the relative symmetric square of the connected
universal cover. On a smooth finite level chart, restrict to the
relative smooth locus of this family in the universal abelian scheme;
this is a regular embedding of codimension three. Let $r$ be the
order of the Jacobi form at its generic point. The normal jets of
degrees less than $r$ vanish identically, whereas the jet of degree
$r$ is a regular section nonzero at the generic point. Its
nonvanishing locus has dense constructible image in the base, by
Chevalley's theorem. After shrinking the base, it meets each
integral fiber in a nonempty open subset. The order at the generic
point of every such fiber is consequently $r$. This gives an open
subset of $\mathcal R_6$ on which the order is constant. After
shrinking $\mathcal A_5$ so that $P_6$ is finite \'etale and deleting
the image of the remaining proper exceptional subset, all $27$
presentation points have the same generic order.
\end{proof}

\section{The first Fourier--Jacobi coefficient and its tangent cone}
\label{sec:theta}

Let $\phi=\phi_2(\F)$, where $\F$ is the form constructed in
\cite{DSS21}.  For $\tau\in\Hh_5$, we write
\[
 A_\tau=\C^5/(\Z^5+\tau\Z^5)
\]
and denote its principal polarization by $\Theta$.  The Jacobi transformation
law identifies $\phi(\tau,\cdot)$ with a section of
$\OO_{A_\tau}(4\Theta)$.  The purpose of this section is to prove the
following local property of its divisor. The finite calculation used in
the proof is given in Appendix~\ref{sec:theta-certificate}, by explicit
integer sums and linear algebra over $\Ff_{101}$.

\begin{proposition}\label{prop:theta-normal}
For a general principally polarized abelian fivefold $(A,\Theta)$, the
section $\phi$ has multiplicity exactly $10$ at the origin.  Its initial
homogeneous form is irreducible and reduced.  Moreover, the divisor
$\divi(\phi)$ is integral and normal in a neighborhood of the origin.
\end{proposition}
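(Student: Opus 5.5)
The argument has two parts: a lower bound $\ord_0(\phi)\ge10$ forced by modularity, and a finite theta-jet computation that establishes equality and all the local properties simultaneously.

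\emph{Lower bound.} Since $\phi$ is even, only even-degree terms occur in its Taylor expansion in $z$ at the origin. Write
\[
 \phi(\tau,z)=\sum_{n\ge0}P_{2n}(\tau;z),
\]
with each $P_{2n}$ homogeneous of degree $2n$ in $z$. The transformation law \eqref{eq:jacobi-modular} shows that the lowest nonzero term $P_{2n}$ transforms as a vector-valued Siegel modular form of genus five. Its weight is $\det^{14}\otimes\Sym^{2n}$, and it is holomorphic with positive boundary order, because $\F$ is cuspidal of order two.

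I would rule out $2n\le8$ by vanishing of the relevant spaces of genus-five vector-valued forms:
\begin{itemize}
\item For $n=0$, $P_0$ would be a scalar cusp form of weight $14$. I would instead use that it is the theta-null value of $\phi$, and that $\phi_2$ restricted to $z=0$ equals the Fourier--Jacobi pullback of $\F$ to $\A_5\times\A_1$. The Clery--van der Geer normal expansion along the product locus gives a form on $\A_5$ times a weight-$14$ cusp form on $\A_1$ of order $q^2$. Since $S_{14}(\mathrm{SL}_2(\Z))=0$ and the form would have to vanish to order two at the cusp, these terms vanish.
\item Repeating this for the normal terms of order $2n$ along $\A_5\times\A_1$ produces elliptic cusp forms of weight $14+2n$ vanishing to order two at the cusp.
\end{itemize}
The space of elliptic cusp forms of weight $w$ vanishing to order at least two at the cusp is $\Delta\cdot S_{w-12}$, and this needs $w-12\ge12$, i.e.\ $w\ge24$. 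So weight $14+2n\ge24$ forces $2n\ge10$. This is the ``elliptic modular transformations'' input, and it shows $P_{2n}=0$ for $2n<10$.

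\emph{Equality and the local properties.} I would compute $T_{10}=P_{10}$ explicitly, via the theta decomposition \eqref{eq:theta-decomposition} and the harmonic theta series of \cite{DSS21}, restricted to a specific degenerate but computable $\tau$, specialized along a one-parameter family over $\Ff_{101}((t))$. From this I would certify two things:
\begin{itemize}
\item $T_{10}\ne0$;
\item a plane section of $T_{10}$ is a smooth plane curve of degree $10$, certified by the nonvanishing of a maximal minor of the Jacobian/resultant matrix.
\end{itemize}
A maximal-cofactor argument lifts both nonvanishing statements to characteristic zero. Semicontinuity then passes them from the special point to a general $\tau$.

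Because a hypersurface whose plane section is a smooth curve is irreducible and reduced, $T_{10}$ is irreducible and reduced. The projectivized tangent cone $V(T_{10})\subset\mathbf P^4$ is then integral with singular locus of codimension at least two in it. This holds because the general plane section is smooth, so the singular locus has dimension at most $0$ inside a threefold.

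It follows that the tangent cone of $\divi(\phi)$ at $0$ is reduced, irreducible, and regular in codimension one. It is a hypersurface, hence Cohen--Macaulay, so it is normal by Serre's criterion. Normality and integrality of the tangent cone, i.e.\ of the associated graded ring, lift to the local ring of $\divi(\phi)$ at the origin. These properties then spread to a neighborhood by openness of the normal and integral loci.

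\emph{Main obstacle.} The hard step is the explicit certificate: producing a computable specialization at which $T_{10}$ can be evaluated by finite integer sums with controlled precision. It is also not certain a priori that the chosen plane section is smooth. If the first choice fails, I would vary the plane or the specialization.
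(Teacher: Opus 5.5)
Your lower bound $\ord_0\phi\ge10$ is the paper's argument (Lemma~\ref{lem:origin-ten}). Deducing normality from the associated graded ring, using that a normal domain $\operatorname{gr}_{\mathfrak m}$ forces a normal local domain, is a valid alternative to the paper's estimate $\dim_0\Sing(\divi(\phi))\le2$ via initial forms of the Jacobian ideal.

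The gap is the certificate itself. You propose to compute the degree-ten term of $\phi$ from the Leech-lattice series. Every coefficient of $\phi(uQ,z)$ is a sum of $\det B(x,h)^2$ over tuples $x\in\Lambda^6$, with $x_6$ one of the $196560$ minimal vectors and $x_1,\dots,x_5$ linearly independent. That is a lattice sum of rank $120$, not a finite computation of the kind you describe. Reducing it modulo $101$ would also require arithmetic control of the explicit complex frame $h$.

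The paper never computes $\phi$. The missing idea is the rank identity \eqref{eq:jet-rank-upper}. For every $\tau$, $\phi(\tau,\cdot)$ is a nonzero even section of $4\Theta$ vanishing to order at least ten. Hence the $791\times528$ matrix of even Taylor coefficients of degree at most eight, in the basis of level-four theta functions $\theta_\mu$, has all $528\times528$ minors identically zero. The computation over $\Ff_{101}((t))$ on $\tau=uQ$ shows the rank is exactly $527$, so the kernel is a line. Near $\tau_0$ one then has $\phi=a(\tau)s(\tau)$, with $s$ built purely from the $\theta_\mu$ and $a\not\equiv0$ on an open set, although $a$ may vanish on the test curve itself. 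The smooth-plane-section test is applied to $s$, not to $\phi$. This is also the only setting where the maximal-cofactor Lemma~\ref{lem:cofactor-lift} makes sense. It lifts a one-dimensional kernel from characteristic $101$ to characteristic zero, and its hypothesis is exactly the vanishing of all maximal minors. In your setup there is no kernel, so citing that argument does no work.

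There is also a smaller error. Smoothness of a plane section of $V(T_{10})\subset\mathbf P^4$ gives only $\dim\Sing V(T_{10})\le1$, since a plane meets every surface but misses a general curve. It does not give dimension $0$. In fact $\Sing V(T_{10})$ contains the $27$ Prym-canonical curves; see \eqref{eq:prym-tangent-singular}. The bound $\dim\le1$ is still enough for $R_1$ of the affine tangent cone, so your normality step survives once this is corrected.
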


\subsection{Vanishing at the origin}

We first establish the vanishing that will provide an exact identity
for the theta-jet matrix.

\begin{lemma}\label{lem:origin-ten}
For every $\tau\in\Hh_5$, the section $\phi(\tau,\cdot)$ is even and
vanishes to order at least $10$ at the origin, unless it is identically zero.
\end{lemma}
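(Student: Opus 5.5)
The plan is to work with $\F$ itself, not with $\phi$. We prove that $\F$ vanishes to order at least $10$ in the off-diagonal variable $z$ along the product locus $\Hh_5\times\Hh_1$ (the locus $z=0$ in the notation of \eqref{eq:fj-expansion}). Reading this off coefficientwise in $q$ then gives the statement for $\phi=\phi_2(\F)$, and for every Fourier--Jacobi coefficient.

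\emph{Evenness.} Apply the modular law to $\F$ with the element of $\Gamma_6$ given by $U=\operatorname{diag}(1,\dots,1,-1)\in\GL_6(\Z)$, acting by $\Omega\mapsto U\Omega\,{}^tU$. It sends $(\tau,z,\omega)$ to $(\tau,-z,\omega)$. The automorphy factor is $\det(U)^{14}=1$. Hence $\F(\tau,-z,\omega)=\F(\tau,z,\omega)$. Comparing $q$-expansions gives $\phi(\tau,-z)=\phi(\tau,z)$, so only even-degree Taylor terms in $z$ occur.

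\emph{Leading Taylor term along the product locus.} Expand
\[
 \F\begin{pmatrix}\tau&z\\{}^tz&\omega\end{pmatrix}=\sum_{d\ge0}P_d(\tau,\omega;z),
\]
where $P_d$ is homogeneous of degree $d$ in $z$. Let $d_0$ be the least degree with $P_{d_0}\ne0$, and suppose for contradiction that $d_0\le 8$. By evenness, $d_0$ is even.

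Consider the embedded $\mathrm{SL}_2(\Z)$ acting on the last row and column. For $\left(\begin{smallmatrix}a&b\\c&d\end{smallmatrix}\right)$ it sends $\omega\mapsto(a\omega+b)/(c\omega+d)$ and $z\mapsto z/(c\omega+d)$, fixes $\tau$ up to terms of order $z^2$, and has automorphy factor $(c\omega+d)^{14}$. The terms of order $|z|^2$ in the transformed point contribute only to Taylor degrees above $d_0$. So each coefficient of the polynomial $P_{d_0}(\tau,\cdot\,;z)$, viewed as a function of $\omega$ for fixed $\tau$, transforms as an elliptic modular form of weight $14+d_0$.

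It is holomorphic at the cusp, since it has a $q$-expansion with $m\ge0$. More precisely, $b(\F)=2$ forces every nonzero Fourier coefficient to have $T_{66}\ge2$. This is the $\GL_6(\Z)$-invariant description of $b(G)$ from Section~\ref{sec:preliminaries}. Hence the $q$-expansion of $P_{d_0}$ in $\omega$ begins at $q^2$.

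\emph{Valence formula.} A nonzero elliptic modular form of weight $w$ has at most $w/12$ zeros at $\infty$. Our form has weight $w=14+d_0\le22<24$ but vanishes to order $\ge2$ at $\infty$. Therefore it is zero, so $P_{d_0}\equiv0$, a contradiction. Hence $P_d=0$ for $d\le9$, with odd $d$ already excluded by evenness. Extracting the $q^2$-coefficient shows that $\phi(\tau,\cdot)$ has no Taylor terms of degree $<10$ at $z=0$ for every $\tau\in\Hh_5$, unless $\phi(\tau,\cdot)\equiv0$.

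\emph{Main obstacle.} The delicate step is the transformation claim for the leading term: one must check that the correction terms in $\tau\mapsto\tau-{}^t z\,c(c\omega+d)^{-1}z$ and the factor $(c\omega+d)^{14}$ do not mix into degree $d_0$. This follows because $\tau$ changes only at order $|z|^2$. For the same reason, a nonvanishing $P_{d_0}$ is a genuine $\mathrm{Sym}^{d_0}$-valued object in $z$ whose coefficients are each scalar elliptic forms of weight $14+d_0$.
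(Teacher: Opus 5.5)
Your proof is correct and follows the paper's own argument: evenness comes from $\operatorname{diag}(I_5,-1)$. The coefficients of the first nonzero Taylor term along $z=0$ are then elliptic modular forms of weight $14+d_0$ with cusp order at least two, so the valence formula forces $d_0\ge10$. The one difference is how the weight $14+d_0$ is obtained. You check it directly from the embedded $\mathrm{SL}_2(\Z)$ action, where the correction to $\tau$ is the matrix $-c\,z\,{}^tz/(c\omega+d)$. The paper instead cites the normal-derivative construction of Cl\'ery--van der Geer.
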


\begin{proof}
Write a period matrix of degree six as
\[
 Z=\begin{pmatrix}\tau&z\\z^t&\omega\end{pmatrix},
 \qquad \tau\in\Hh_5,\quad z\in\C^5,\quad \omega\in\Hh_1.
\]
The transformation associated with $\operatorname{diag}(I_5,-1)$ shows
that $\F$ is even in $z$. Let $d$ be the least degree for which
the homogeneous Taylor term $f_d(\tau,z,\omega)$ is not
identically zero as $(\tau,\omega)$ varies.
By the normal derivative construction of
\cite[Proposition~2.1]{CV15}, the first nonzero Taylor term has
elliptic weight $14+d$. Indeed the conormal bundle of
$\A_5\times\A_1$ on the smooth moduli stack is
$\mathbb E_5\boxtimes\mathbb E_1$, and taking its $d$th
symmetric power adds $d$ to the elliptic weight.
Every coefficient of this homogeneous polynomial is an elliptic modular
form of weight $14+d$.  It has order at least $2$ at the cusp, because
the Fourier--Jacobi expansion of $\F$ starts with index $2$.
The elliptic valence formula implies $14+d\geq24$.  Thus $d\geq10$.
Taking the coefficient of $e^{4\pi i\omega}$ proves the assertion.
\end{proof}

\subsection{The finite calculation and normality}

Let $J(\tau)$ be the map taking an even section of $4\Theta$
to its Taylor coefficients of degrees $0,2,4,6,8$ at the origin.
Its matrix has $791$ rows and $528$ columns; the precise basis
and normalizations are given in Appendix~\ref{sec:theta-certificate}.

\begin{lemma}\label{lem:finite-theta}
There exists $\tau_0\in\Hh_5$ such that $J(\tau_0)$ has rank
$527$ and its kernel is generated by a section whose degree-ten
initial homogeneous form has a smooth plane section.
\end{lemma}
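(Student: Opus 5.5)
This is a finite computer-assisted statement, so the plan is to exhibit an explicit $\tau_0$ and certify the rank and the initial form by exact arithmetic. I would take $\tau_0$ to be a rational-real-part, diagonally dominant matrix in $\Hh_5$. Then every theta series converges geometrically in $t=e^{2\pi i\epsilon}$, after writing the imaginary part as a fixed integral positive matrix times a parameter. Concretely, the $528=2^5(2^5+1)/2$ even sections of $4\Theta$ will be represented by the even combinations of the second-order theta functions $\theta_{2,\mu}(\tau,z)$ of \eqref{eq:theta-decomposition}, symmetrized under $\mu\mapsto-\mu$. Their Taylor coefficients at $z=0$ are finite integer sums of the form $\sum_{r\equiv\mu}r^{\alpha}q^{{}^t r\tau r/8}$. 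The monomials of degrees $0,2,4,6,8$ in five variables number $1+15+70+210+495=791$, which gives the $791\times528$ matrix $J(\tau)$. Each entry is a power series in $t$ with integer coefficients, and I would reduce everything to $\Ff_{101}((t))$ truncated at an explicit precision.

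The rank statement has two halves. For the upper bound, Lemma~\ref{lem:origin-ten} shows that $\phi(\tau_0,\cdot)$ is a nonzero kernel vector for every $\tau_0$, since $\phi\neq0$ on a general fiber. Hence $\rk J(\tau_0)\le527$ exactly, with no numerical input.

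For the lower bound, I would locate a $527\times527$ minor whose determinant, computed over $\Ff_{101}[[t]]$, has a nonzero leading coefficient at a $t$-adic order strictly below the truncation precision. The truncation error of each entry is $O(t^N)$, and the determinant's order is stable under such perturbations if $N$ exceeds the computed valuation plus a bound on the minor's column valuations. This is the "justified finite precision". A nonzero minor mod $101$ lifts to a nonzero minor over $\Z[[t]]$, hence over $\C$ for the corresponding $\tau_0$ with $|t|$ small. The kernel is then spanned by the vector of maximal cofactors of a $527\times528$ submatrix, and this vector is proportional to the coefficient vector of $\phi(\tau_0,\cdot)$.

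Next I would evaluate the degree-ten Taylor part of that kernel section, again as cofactor-weighted sums of degree-ten theta-jet coefficients. This gives a quintic-variable form $T_{10}$ with coefficients in $\Ff_{101}((t))$. I would then pick a concrete plane $\mathbf P^2\subset\mathbf P^4$ with rational coordinates, restrict $T_{10}$, and certify smoothness of the plane curve. The certificate is the nonvanishing of a suitable resultant or discriminant, or a Gröbner-basis check that the partials have no common projective zero. Nonvanishing of the leading $t$-coefficient mod $101$ then lifts to characteristic zero, since a smooth reduction forces the generic-fiber curve to be smooth.

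The main obstacle is making the precision argument rigorous while keeping it tractable. The determinant valuation of a $527$-minor can be large, and the cofactor entries for $T_{10}$ compound this. I would try first to choose $\tau_0$ with a block structure in $t$-exponents, so that the leading term matrix is nearly triangular and the valuation is small and explicitly predictable. I would then verify independently with a second prime or a second plane section.
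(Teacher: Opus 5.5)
Your plan is essentially the paper's argument. The shared ingredients are:
\begin{enumerate}[label=\textup{(\arabic*)},leftmargin=*]
\item the family $\tau=uQ$ with $t=e(u)$;
\item column-normalized integral theta moments reduced to $\Ff_{101}((t))$;
\item a nonzero $527\times527$ minor, with an integral cofactor vector as the characteristic-zero kernel;
\item a smooth plane section certified by a condition polynomial in the kernel coordinates (the paper uses surjectivity of the degree-$25$ Macaulay map of the three partials, with $101\nmid10$);
\item finally a small $t_0\ne0$.
\end{enumerate}

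One step is misjustified. Lemma~\ref{lem:origin-ten} does not make $\phi(\tau_0,\cdot)$ nonzero at your chosen point, and $\phi$ could vanish identically along the whole family $uQ$; the paper explicitly allows for this. The correct argument is that every $528\times528$ minor of $J(\tau)$ is holomorphic on $\Hh_5$ and vanishes on the dense open set where $\phi(\tau,\cdot)\ne0$, hence vanishes identically. On the family this is an exact identity in $\Z[[t]]$. That identity is precisely what makes your cofactor vector a kernel vector in characteristic zero (Lemma~\ref{lem:cofactor-lift}).

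On precision, the paper is more economical than computing cofactors to high order. Its Schur-complement elimination only needs the input beyond the largest pivot exponent ($5$ for $J$, $9$ for the Macaulay matrix). It may normalize the mod-$101$ kernel because the integral cofactor vector reduces to an $\Ff_{101}((t))^\times$-multiple of it, and the smoothness certificate is homogeneous in the kernel coordinates. A Gr\"obner-basis check over truncated power series would not by itself give such a finite-precision, liftable certificate, so use the Macaulay (or resultant) formulation.
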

\begin{proof}
Appendix~\ref{sec:theta-certificate} proves the identically valid
rank bound \eqref{eq:jet-rank-upper}, and verifies nonzero maximal
minors of the jet matrix and of the degree-ten Macaulay matrix.
Lemma~\ref{lem:cofactor-lift} transfers these nonvanishing
statements to characteristic zero. The theta moments in
\eqref{eq:jet-integral-matrix} and \eqref{eq:tangent-moment}
converge for $|t|<1$, so their cofactors and Macaulay minors
converge as well. Nonzero formal minors are nonzero analytic
functions. Choosing $t_0\ne0$ sufficiently small proves the lemma.
\end{proof}

\begin{proof}[Proof of Proposition~\ref{prop:theta-normal}]
Apply Lemma~\ref{lem:finite-theta}.
In a neighborhood of the corresponding period matrix, the kernel of
$J(\tau)$ is a holomorphic line bundle.  Let $s(\tau)$ be a local
nonvanishing generator.  Then $\phi(\tau,\cdot)=a(\tau)s(\tau)$ for a
holomorphic function $a$.  Although $a$ could vanish on the entire
chosen one-parameter family, it cannot vanish on this neighborhood,
because $\phi$ is globally nonzero.  At nearby points where $a\ne0$,
the degree-ten initial form of $\phi$ therefore has a smooth plane
section.  This also proves that its multiplicity is exactly $10$.

These properties hold for a general moduli point.  To see this
algebraically, pass to a finite level cover carrying a universal
polarized abelian scheme and its symmetric theta bundle.  The bundle
of even sections, the jet map at the zero section, and the map taking
the leading homogeneous term are algebraic.  On the locus of rank
$527$, the kernel is a line bundle.  In its relative Grassmannian of
planes, the locus of smooth restrictions is open.  Its projection to
the base is open, since the Grassmannian projection is smooth.  The
calculation proves that this open set is nonempty.  Removing the
proper zero locus of $\phi$ and descending from the level cover gives
the assertion for a general $(A,\Theta)$.

Let $f$ be a local equation of $\divi(\phi)$ at the origin, and write
$T_{10}$ for its initial homogeneous form.  If $T_{10}$ were reducible
or nonreduced, its restriction to the selected plane would either be
zero or have two positive-degree factors.  In the latter case those
plane curves meet, by B\'ezout's theorem, and their intersection is
singular.  This contradicts the smoothness of the restriction.
Thus $T_{10}$ is irreducible and reduced, and the local equation $f$
is irreducible as well.

The singular locus of the hypersurface $T_{10}=0$ in $\mathbf P^4$ has
dimension at most $1$.  Indeed every projective subvariety of dimension
at least $2$ meets the selected plane, and an ambient singular point
on that plane would be a singular point of the restricted curve.
For each nonzero derivative of $T_{10}$ we have
\[
 \operatorname{in}\!\left(\frac{\partial f}{\partial z_i}\right)
       =\frac{\partial T_{10}}{\partial z_i}.
\]
The initial ideal of the local Jacobian ideal thus contains these
derivatives.  Their affine zero set has dimension at most $2$; Euler's
identity makes the equation $T_{10}=0$ redundant.  Comparison with the
associated graded ring gives
\[
 \dim_0\Sing\bigl(\divi(\phi)\bigr)\leq2.
\]
The divisor has dimension $4$ and is a hypersurface, so it satisfies
Serre's condition $S_2$.  The displayed inequality gives condition
$R_1$ near the origin.  Serre's normality criterion finishes the proof.
\end{proof}

\subsection{The normal term along the elliptic product locus}

The same calculation determines the normal term of $\F$ itself.
We state this on the smooth moduli stack, or equivalently on a
sufficiently small neat level cover. This convention matters:
the residual involution on the coarse space sends $z$ to $-z$.

\begin{corollary}\label{cor:product-normal-term}
Let $\mathcal D$ be the locus of products $A\times E$, where
$\dim A=5$ and $E$ is an elliptic curve. In period coordinates
\[
 Z=\begin{pmatrix}\tau&z\\z^t&\omega\end{pmatrix},
\]
let $T_{10}(\tau,z)$ be the degree-ten Taylor term of
$\phi(\tau,z)$ at $z=0$. Then
\begin{equation}\label{eq:product-normal-term}
 \F(Z)=\Delta(\omega)^2T_{10}(\tau,z)+O((z)^{12}),
\end{equation}
where $\Delta(\omega)=q+O(q^2)$ is the normalized elliptic
discriminant form. At a general $A$ and every elliptic factor
$E$, the order of $\F$ along $\mathcal D$ is exactly ten.
\end{corollary}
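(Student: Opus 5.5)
The plan is to reuse the normal-derivative argument from the proof of Lemma~\ref{lem:origin-ten}, but now pin down the leading Taylor term exactly rather than only bound its degree. Expand $\F$ in the normal variable $z$ along $\mathcal D$:
\[
 \F(Z)=\sum_{d}f_d(\tau,z,\omega),
\]
with $f_d$ homogeneous of degree $d$ in $z$. Only even $d$ occur, by the evenness established in Lemma~\ref{lem:origin-ten}. By \cite[Proposition~2.1]{CV15} the first nonzero term $f_d$ is a section of $\mathcal L_5^{14}\boxtimes\mathcal L_1^{14}$ twisted by $\Sym^d$ of the conormal bundle $\mathbb E_5\boxtimes\mathbb E_1$. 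In the $\omega$ variable it is therefore an elliptic modular form of weight $14+d$, with coefficients that are holomorphic in $(\tau,z)$. I will first show $f_d=0$ for $d<10$ exactly as in Lemma~\ref{lem:origin-ten}: each coefficient is an elliptic form of weight $14+d<24$ with cusp order at least $2$, hence zero. The term $O((z)^{12})$ in the statement then needs only the parity of the expansion.

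Next I will identify $f_{10}$. In weight $24$, the space of elliptic forms vanishing to order at least $2$ at the cusp is spanned by $\Delta^2$. Hence every $\omega$-coefficient of $f_{10}$ is a scalar multiple of $\Delta(\omega)^2$, and
\[
 f_{10}=\Delta(\omega)^2\,P(\tau,z)
\]
for some polynomial $P$ of degree ten in $z$ with coefficients holomorphic in $\tau$. To determine $P$, I will compare coefficients of $q^2=e(2\omega)$. On the left this is the degree-ten Taylor coefficient of $\phi_2(\F)=\phi$ at $z=0$, namely $T_{10}$. On the right it is $P$, because $\Delta^2=q^2+O(q^3)$. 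The point to check is that Taylor expansion in $z$ commutes with Fourier--Jacobi expansion in $q$. This holds because both are expansions of a single holomorphic function and converge locally uniformly. It follows that $P=T_{10}$, which is \eqref{eq:product-normal-term}.

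For the exactness statement I will invoke Proposition~\ref{prop:theta-normal}: for general $A$ the form $T_{10}(\tau,\cdot)$ is nonzero. Since $\Delta$ has no zeros on $\Hh_1$, the term $\Delta(\omega)^2T_{10}(\tau,z)$ is nonzero at every $\omega$. So the order along $\mathcal D$ is exactly ten at $A\times E$ for every elliptic factor $E$. I expect the main obstacle to be bookkeeping rather than mathematics. One must justify that the normal jet on the stack, or on a neat level cover, is literally the $z$-Taylor term in these period coordinates, and that the residual $z\mapsto-z$ action is harmless, which is why the statement is made on the stack. One must also confirm that the scalar normalization of $\Delta$ matches the $q^2$ coefficient, which holds because $\Delta=q+O(q^2)$.
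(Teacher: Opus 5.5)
Your proposal is correct and follows essentially the same route as the paper. Vanishing below degree ten comes from Lemma~\ref{lem:origin-ten}, and the weight-$24$, cusp-order-two coefficients are forced into $\C\Delta^2$; the paper says this by dividing by $\Delta^2$ to get a holomorphic weight-zero form. The rest also matches: comparison of $q^2$-coefficients to identify $T_{10}$, parity for the $O((z)^{12})$ error, and Proposition~\ref{prop:theta-normal} together with the nonvanishing of $\Delta$ for exactness.
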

\begin{proof}
Lemma~\ref{lem:origin-ten} proves that all Taylor terms of $\F$
of degree less than ten vanish. The degree-ten coefficients
have elliptic weight $24$ and cusp order at least two.
Dividing by $\Delta^2$ gives a holomorphic elliptic modular
form of weight zero, hence a constant in $\omega$.
Comparing the coefficients of $q^2$ identifies the resulting
polynomial with $T_{10}(\tau,z)$. Evenness removes the odd
Taylor terms. Finally, $\Delta$ has no zeros on $\Hh_1$, and
Proposition~\ref{prop:theta-normal} gives $T_{10}(\tau,z)\ne0$
for general $\tau$.
\end{proof}

\section{Polar intersections and symbolic vanishing}
\label{sec:polars}

We first give an intersection argument on an abelian variety. It is
important that the divisor in this argument need only be normal at
specified generic points. Its normalization is used elsewhere, and the
fixed part of its polar system is treated as a Weil divisor.

\subsection{A local intersection inequality}
\begin{lemma}\label{lem:local-length}
Let $(R,\mathfrak m)$ be a regular local ring of dimension $c$. Suppose
that $f_1,\ldots,f_c$ generate an $\mathfrak m$-primary ideal and
$f_i\in\mathfrak m^{a_i}$, where $a_i\ge1$. Then
\[
 \length R/(f_1,\ldots,f_c)\ge a_1\cdots a_c.
\]
\end{lemma}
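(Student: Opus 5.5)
The plan is to compare the length of $R/(f_1,\ldots,f_c)$ with the Hilbert--Samuel multiplicity. This is the classical inequality $\length R/(f_1,\ldots,f_c)\ge e(f_1,\ldots,f_c;R)\ge a_1\cdots a_c$. Put $I=(f_1,\ldots,f_c)$.

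\textbf{Step 1.} Since $R$ is regular, it is Cohen--Macaulay. An $\mathfrak m$-primary ideal generated by $c=\dim R$ elements is generated by a system of parameters, so $f_1,\ldots,f_c$ is a regular sequence. For a parameter ideal in a Cohen--Macaulay local ring, Serre's theorem gives
\[
 \length R/I=e(I;R),
\]
the Hilbert--Samuel multiplicity of $I$. This step is standard and needs no further work.

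\textbf{Step 2.} We bound $e(I;R)$ from below by $a_1\cdots a_c$. The cleanest route is to reduce to equal degrees. Put $N=a_1\cdots a_c$ and $g_i=f_i^{N/a_i}\in\mathfrak m^{N}$. The $g_i$ still form a system of parameters, and multiplicativity of the multiplicity of a system of parameters in the exponents gives
\[
 e(g_1,\ldots,g_c)=\prod_i\frac{N}{a_i}\,e(f_1,\ldots,f_c)=\frac{N^c}{N}\,e(I).
\]
Since $J=(g_1,\ldots,g_c)\subseteq\mathfrak m^N$, we have $J^n\subseteq\mathfrak m^{Nn}$ for every $n$. Hence $\length R/J^n\ge\length R/\mathfrak m^{Nn}$. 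Comparing leading coefficients, and using $e(\mathfrak m)=1$ for a regular ring, gives
\[
 e(J)\ge N^c\,e(\mathfrak m)=N^c.
\]
Therefore $e(I)\ge N=a_1\cdots a_c$.

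\textbf{Main obstacle and fallback.} The only delicate point is justifying the exponent multiplicativity $e(f_1^{n_1},\ldots,f_c^{n_c})=n_1\cdots n_c\,e(f_1,\ldots,f_c)$. In the Cohen--Macaulay setting this is elementary, because the multiplicity equals the colength. Replacing $f_1$ by $f_1^{n}$ multiplies the length by $n$, using the filtration
\[
 (f_1^{j},f_2,\ldots)/(f_1^{j+1},f_2,\ldots)\cong R/I,
\]
which holds since $f_1$ is a nonzerodivisor modulo $(f_2,\ldots,f_c)$. Applied coordinatewise, this gives the product formula with lengths in place of multiplicities. So the whole argument can be run with lengths alone, and Step 1 is then used only to pass to the asymptotic comparison with $\mathfrak m^{Nn}$.

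As an alternative for Step 2, one can pass to the associated graded ring $\operatorname{gr}_{\mathfrak m}R$, a polynomial ring in $c$ variables. If the initial forms of the $f_i$ in degrees $a_i$ form a regular sequence, the Hilbert series bound is immediate. The general case then follows by the equal-degree reduction above, so I would present the multiplicity argument as the main proof.
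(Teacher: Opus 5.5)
Your proof is correct, but it takes a different route from the paper's.

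\textbf{The paper's route.} The paper never forms the multiplicity $e(I;R)$ of the parameter ideal. It works only with $\mathfrak m$-adic multiplicities of the successive quotients $R/(f_1,\ldots,f_i)$. For a local ring $(B,\mathfrak n)$ and $f\in\mathfrak n^a$ with $\dim B/fB=\dim B-1$, the map $B/\mathfrak n^{n+1-a}\xrightarrow{\,f\,}B/\mathfrak n^{n+1}$ gives
\[
 \length B/(\mathfrak n^{n+1},f)\ge\length B/\mathfrak n^{n+1}-\length B/\mathfrak n^{n+1-a},
\]
hence $e(B/fB)\ge a\,e(B)$. Iterating from $e(R)=1$ down to the Artinian quotient, whose multiplicity is its length, gives the bound.

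\textbf{Your route.} You use instead Serre's equality $\length R/I=e(I;R)$ for parameter ideals in Cohen--Macaulay rings. You combine it with the power trick $g_i=f_i^{N/a_i}$, multiplicativity of colength in the exponents (your filtration argument is right), and the containment $J\subseteq\mathfrak m^N$. The arithmetic $e(J)=N^{c-1}e(I)\ge N^c$ is correct.

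\textbf{Comparison.} The paper's argument is more economical. It needs neither Serre's theorem nor the exponent formula. It also does not genuinely use the Cohen--Macaulay property, since each $f_i$ only has to lower the dimension by one, which holds for any system of parameters. So it yields
\[
 \length R/(f_1,\ldots,f_c)\ge a_1\cdots a_c\,e(R)
\]
in any Noetherian local ring. It presents the estimate as a step-by-step cutting argument, which is how the local contribution is used in Proposition~\ref{prop:polar}. Your argument isolates the cleaner statement $e(I)\ge a_1\cdots a_c\,e(\mathfrak m)$. It also pinpoints where Cohen--Macaulayness enters, namely the equality $\length R/I=e(I)$; in general one has $\ge$, which would still suffice.

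\textbf{The $\operatorname{gr}$ alternative.} Your closing sketch via $\operatorname{gr}_{\mathfrak m}R$ is not complete as stated. Equalizing the degrees does not make the initial forms a regular sequence, and the degree-$a_i$ components may even vanish when $f_i\in\mathfrak m^{a_i+1}$. Since your main proof does not rely on it, this does not affect correctness.
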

\begin{proof}
We recall the multiplicity argument to specify the local contribution
that will be used. If $(B,\mathfrak n)$ has dimension $d>0$, and
$f\in\mathfrak n^a$ lowers dimension by one, multiplication by $f$ gives
\[
 \length B/(\mathfrak n^{n+1},f)
 \ge
 \length B/\mathfrak n^{n+1}
       -\length B/\mathfrak n^{n+1-a}.
\]
Comparing the leading coefficients of the Hilbert--Samuel polynomials
gives $e(B/fB)\ge a\,e(B)$. The elements $f_1,\ldots,f_c$ form a
system of parameters in the Cohen--Macaulay ring $R$, and hence a
regular sequence. Apply the preceding inequality successively to their
images in the quotient rings. Since $e(R)=1$ and the final ring has
dimension zero, its multiplicity is its length. This proves the claim.
\end{proof}

\subsection{Removing the fixed polar divisor}
Let $A$ be an abelian variety of dimension $g\ge3$, and let $H$ be an
ample divisor on $A$. Let $E\subset A$ be an integral effective divisor
numerically equivalent to $aH$, with $a>0$. An invariant vector field
$v\in H^0(A,T_A)$ differentiates a local equation $f$ of $E$.
The restrictions $v(f)|_E$ define sections of $\OO_A(E)|_E$.
Indeed replacing $f$ by $uf$ gives
$v(uf)=u\,v(f)+v(u)f$, so the additional term vanishes on $E$.
We call these sections the polars of $E$.
Invariant vector fields span every tangent space of $A$, and therefore
their common zero scheme is the Jacobian singular scheme of $E$.

Let $\nu:\widetilde E\to E$ be the normalization and put
\[
 h=\nu^*(H|_E),\qquad P=\nu^*(\OO_A(E)|_E).
\]
Numerically $P=ah$. The pullback polar system is nonzero and has a
common fixed effective Weil divisor $\mathfrak B$ on $\widetilde E$.
Write the divisor of a general polar as
$\mathfrak B+R_v$. The residual system $R_v$ has no fixed prime
divisor. Every prime component of $\mathfrak B$ maps, under the finite
map $\nu$, onto a codimension-one subvariety of $E$ contained in
$\Sing(E)$.

\begin{proposition}\label{prop:polar}
Let $V_1,\ldots,V_N$ be distinct integral subvarieties of codimension
three in $A$. Suppose that $E$ is normal at their generic points, and
put $r_i=\ord_{V_i}(E)\ge2$. Then
\begin{equation}\label{eq:polar-self}
 \sum_{i=1}^N r_i(r_i-1)^2(H^{g-3}\cdot V_i)
       \le a^3H^g.
\end{equation}
If $D$ is another effective divisor, numerically equivalent to $dH$,
which does not contain $E$, and $\ord_{V_i}(D)\ge e_i\ge1$, then
\begin{equation}\label{eq:polar-mixed}
 \sum_{i=1}^N r_i(r_i-1)e_i(H^{g-3}\cdot V_i)
       \le a^2dH^g.
\end{equation}
\end{proposition}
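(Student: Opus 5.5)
We intersect on the normalization $\widetilde E$, which has dimension $g-1$. For \eqref{eq:polar-self} we take two general polars; for \eqref{eq:polar-mixed} we take one general polar and $\nu^*D$. In either case we cut down further by $g-3$ general members of a very ample multiple of $h$. Since $D$ does not contain $E$, $\nu^*D$ is a well-defined effective Cartier divisor on $\widetilde E$.

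\textbf{Global side.} Write each polar as $\mathfrak B+R_v$, and compute with the residual classes $R_v=P-\mathfrak B$. Intersections are taken as Weil cycles on the normal variety $\widetilde E$ after cutting by general hyperplane sections. The plan is to prove the global bounds
\[
 R_{v}\cdot R_{v'}\cdot h^{g-3}\le P^2h^{g-3}=a^3H^g,\qquad
 R_v\cdot\nu^*D\cdot h^{g-3}\le P\cdot\nu^*D\cdot h^{g-3}=a^2dH^g .
\]
The key input is that $\mathfrak B$ is supported over $\Sing(E)$. After cutting by $g-3$ general hyperplanes we are on a normal surface, where $\mathfrak B$ becomes an effective curve and $R_v$ a moving curve. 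We then want $(P-\mathfrak B)\cdot R_{v'}\le P\cdot R_{v'}$, which holds because $\mathfrak B\cdot R_{v'}\ge0$: the curve $R_{v'}$ has no fixed component, so it meets $\mathfrak B$ properly. Similarly, $R_v\cdot\nu^*D\le P\cdot\nu^*D$, because $\nu^*D$ is nef on the surface section, being the restriction of a nef class from $A$. If needed, $D$ can be moved in its numerical class for this purpose; alternatively, we use $R_v\cdot\mathfrak B\ge0$ in the first inequality and $\mathfrak B\cdot\nu^*D\ge0$ in the second. Either way, both global totals are bounded by $a^3H^g$ and $a^2dH^g$ respectively.

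\textbf{Local side.} Fix $V_i$. Since $E$ is normal at the generic point of $V_i$, $\nu$ is an isomorphism there and $V_i$ is not in the support of $\mathfrak B$. Take a general point of $V_i\cap(\text{cutting hyperplanes})$; there are $H^{g-3}\cdot V_i$ such points. Near such a point, work in the regular local ring of $A$ of dimension $3$ transverse to $V_i$, after slicing by the $g-3$ hyperplanes. The local equation $f$ of $E$ lies in $\mathfrak m^{r_i}$. Each polar $v(f)$ lies in $\mathfrak m^{r_i-1}$, and $D$ has local equation in $\mathfrak m^{e_i}$. The local intersection multiplicity of $R_v$, $R_{v'}$ on $E$ at the point equals $\length R/(f,v(f),v'(f))$, since $\mathfrak B$ is absent there. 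Lemma~\ref{lem:local-length} bounds this below by $r_i(r_i-1)^2$, and in the mixed case by $r_i(r_i-1)e_i$. The lemma requires the ideal to be $\mathfrak m$-primary. This holds because general polars together with $f$ cut out the singular scheme, and near the general slice point the singular locus is $V_i$ itself (normality gives $R_1$, so $\Sing E$ has codimension at least $2$ in $E$). The slicing makes the point isolated; the properness of $D$ follows from $e_i$ and the fact that $D\not\supset E$, after genericity. The distinct $V_i$ give disjoint contributions, and all other contributions are nonnegative. Summing over the $V_i$ yields \eqref{eq:polar-self} and \eqref{eq:polar-mixed}.

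\textbf{Main obstacle.} The delicate step is the global comparison: showing that subtracting the fixed Weil divisor $\mathfrak B$ never increases the intersection number, given that $\widetilde E$ is only normal and $\mathfrak B$ need not be $\Q$-Cartier. I would handle this by restricting to a general complete-intersection surface in $\widetilde E$, where Mumford's intersection theory on normal surfaces applies. Then $\mathfrak B\cdot C\ge0$ for every curve $C$ without common components, and the moving residuals meet $\mathfrak B$ properly. A secondary point is the $\mathfrak m$-primary condition at the slice points, which needs the genericity of the polars and the isolatedness of the intersection there.
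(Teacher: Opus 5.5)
Your proposal is correct. It follows the same skeleton as the paper: remove the fixed Weil divisor $\mathfrak B$ of the polar system on $\widetilde E$, bound the global degree using nefness, bound the coefficient at each $V_i$ from below by Lemma~\ref{lem:local-length} (where $\mathfrak B$ is absent and the residual polar is the full polar), and sum.

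\textbf{Where you differ: the global step.} You intersect two \emph{residual} polars $R_v\cdot R_{v'}$. That forces you to cut down to a normal surface (Seidenberg's theorem) and to work with Mumford's rational intersection pairing. You then need three facts about that pairing:
\begin{enumerate}
\item it is nonnegative for curves without common components;
\item its local multiplicity agrees with $\length\OO_{S,p}/(v(f),v'(f))$ wherever both curves are Cartier;
\item in your chain $R_v\cdot R_{v'}\le P\cdot R_{v'}\le P^2$, the second inequality holds because $P\cdot R_{v'}=P^2-P\cdot\mathfrak B$ and $P$ is nef (you left this step implicit).
\end{enumerate}

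\textbf{How the paper avoids this.} It makes an asymmetric choice: it intersects a residual polar $R_v$ with a \emph{full} Cartier polar $P_w$ chosen to contain no component of $R_v$. Then
\[
R_v\cdot P_w\cdot h^{g-3}=P^2h^{g-3}-\mathfrak B\cdot P\cdot h^{g-3}.
\]
Only Cartier classes applied to Weil cycles on $\widetilde E$ appear, so Fulton's intersection theory works directly in dimension $g-1$. No slicing, normality of sections, or Mumford theory is needed. The local bound is unchanged, because $P_w=R_w$ near $\eta_{V_i}$. The obstacle you flagged therefore never arises. The paper also localizes at the generic point $\eta_{V_i}$, a three-dimensional regular local ring, and reads off the cycle coefficient, rather than counting closed slice points. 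The two localizations are equivalent.

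\textbf{One correction.} Two polars together with $f$ do not cut out the singular scheme; that requires all polars. The $\mathfrak m$-primary condition comes instead from properness of the chosen intersection. Near a general point of $V_i$, the base locus of the polar system is $\Sing E$, which has codimension two in $E$. So general polars share no components there, and the intersection is proper at $\eta_{V_i}$.
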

\begin{proof}
At each generic point $\eta_{V_i}$ the normalization is an isomorphism.
Moreover $\mathfrak B$ does not contain that point: a codimension-one
singular component through it would contradict normality of the
localized hypersurface.

Choose $R_v$ containing no prime component of $\mathfrak B$. Then
choose a full Cartier polar divisor $P_w$ containing no component of
$R_v$. Such choices are possible because the residual system has no
fixed prime divisor, and only finitely many prime components have to be
avoided. Their intersection is an effective cycle of dimension $g-3$.
Intersecting it with $h^{g-3}$ gives
\begin{align*}
 (R_v\cdot P_w\cdot h^{g-3})
 &= (P^2\cdot h^{g-3})-(\mathfrak B\cdot P\cdot h^{g-3})\\
 &\le a^3H^g.
\end{align*}
The inequality follows because $P$ and $h$ are nef Cartier classes
and $\mathfrak B$ is an effective Weil cycle. This calculation does
not take the self-intersection of $\mathfrak B$ or of $R_v$.

In the regular local ring $\OO_{A,\eta_{V_i}}$, of dimension three,
the equations $f,v(f),w(f)$ have orders at least
$r_i,r_i-1,r_i-1$. The intersection just constructed is proper, so
these equations generate an ideal primary to the maximal ideal.
Near $\eta_{V_i}$ the residual polar is the full polar, because
$\mathfrak B$ is absent. Its cycle coefficient is consequently
\[
 \length\OO_{A,\eta_{V_i}}/(f,v(f),w(f))
      \ge r_i(r_i-1)^2
\]
by Lemma~\ref{lem:local-length}. Adding the contributions proves
\eqref{eq:polar-self}.

For the second assertion, restrict $D$ to $E$ and pull back to
$\widetilde E$. This is an effective Cartier divisor $D'$ because
$E$ is not a component of $D$. Choose a residual polar $R_v$
containing no component of $D'$. The proper intersection $D'\cdot R_v$
is effective, and
\[
 (D'\cdot R_v\cdot h^{g-3})
 = (D'\cdot P\cdot h^{g-3})
       -(D'\cdot\mathfrak B\cdot h^{g-3})
 \le a^2dH^g.
\]
Here $D'$ is numerically $dh$, so the subtracted term is nonnegative.
At $\eta_{V_i}$ the three local equations have orders at least
$r_i,e_i,r_i-1$. Lemma~\ref{lem:local-length} again computes the
required lower bound for the coefficient of $V_i$. This proves
\eqref{eq:polar-mixed}.
\end{proof}

\subsection{The divisor defined by the first Fourier--Jacobi coefficient}
We now fix a very general principally polarized abelian fivefold
$(A,\Theta)$. In particular $\NS(A)=\Z[\Theta]$.
Let $\Sigma_1,\ldots,\Sigma_{27}$ be the surfaces constructed in
Section~\ref{sec:prym}. Their properties needed here are
\begin{equation}\label{eq:surface-properties}
 0\in\Sigma_i,\qquad
 [\Sigma_i]=\frac23\Theta^3,\qquad
 \ord_{\Sigma_i}(\phi)\ge2.
\end{equation}
They are integral and distinct. Proposition~\ref{prop:theta-normal}
shows that $\divi(\phi)$ is integral and normal in a neighborhood
of the origin.

\begin{proposition}\label{prop:phi-integral}
The divisor $Z=\divi(\phi)$ on $A$ is integral and reduced.
It is normal at the generic point of every $\Sigma_i$, and
\[
 \ord_{\Sigma_i}(\phi)=2\qquad(1\le i\le27).
\]
\end{proposition}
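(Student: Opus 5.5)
The argument compares the component of $Z$ through the origin with the polar inequality \eqref{eq:polar-self}, using the numbers in \eqref{eq:prym-surface-class}. Since $(A,\Theta)$ is very general, $\NS(A)=\Z[\Theta]$, so every effective divisor on $A$ is numerically equivalent to a nonnegative multiple of $\Theta$. A nonzero effective divisor is numerically equivalent to a positive multiple, because a nonzero effective divisor on an abelian variety is nef and not numerically trivial.

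\emph{Step 1: the component through the origin.} By Proposition~\ref{prop:theta-normal}, $Z$ is integral and normal in a neighborhood $U$ of $0$. Hence exactly one prime component $E$ of $Z$ passes through $0$, and it occurs in $Z$ with multiplicity one. Write
\[
 Z=E+D',\qquad E\equiv a\Theta\ (1\le a\le4),\qquad D'\equiv(4-a)\Theta,
\]
where $D'$ is effective and does not contain $0$.

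Each $\Sigma_i$ is irreducible and contains $0$, so the generic point $\eta_i$ of $\Sigma_i$ specializes to $0$. Every open set containing $0$, in particular $U$ and the normal locus of $E$, therefore contains $\eta_i$. This has three consequences. First, $E$ is normal at $\eta_i$. Second, $D'$ does not contain $\Sigma_i$, since otherwise $0\in D'$. Third, $\Sigma_i\subset Z$ with $\Sigma_i\not\subset D'$ forces $\Sigma_i\subset E$. Hence
\[
 r_i:=\ord_{\Sigma_i}(E)=\ord_{\Sigma_i}(\phi)\ge2
\]
by \eqref{eq:surface-properties}.

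\emph{Step 2: polar inequality bounds $a$.} Apply Proposition~\ref{prop:polar} with $g=5$, $H=\Theta$, the integral divisor $E$, and $V_i=\Sigma_i$ for $i=1,\dots,27$. By \eqref{eq:prym-surface-class}, $\Theta^2\cdot\Sigma_i=80$, and $\Theta^5=120$. Then \eqref{eq:polar-self} gives
\[
 80\sum_{i=1}^{27}r_i(r_i-1)^2\le120\,a^3 .
\]
Since every $r_i\ge2$, the left side is at least $27\cdot2\cdot80=4320$. Thus $a^3\ge36$, and so $a=4$.

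Consequently $D'$ is effective and numerically trivial, hence $D'=0$. Therefore $Z=E$ is integral, and it is reduced because the multiplicity of $E$ in $Z$ is one.

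\emph{Step 3: exact order two.} With $a=4$, the same inequality reads
\[
 \sum_{i=1}^{27} r_i(r_i-1)^2\le96 .
\]
This alone still allows a few surfaces with $r_i=3$. To exclude them, I invoke the last assertion of Corollary~\ref{cor:prym-leading-order}. Since $\phi$ is a globally defined Jacobi form, on a sufficiently general abelian fivefold its $27$ generic orders have a common value $r$. This is compatible with the present choice, because a very general fivefold is in particular sufficiently general. Then $27\,r(r-1)^2\le96$, which forces $r=2$.

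I expect the main point needing care to be this use of the common value: the intersection inequality by itself does not reach the exact order. A secondary point is the specialization argument in Step 1, which transports integrality and normality from a neighborhood of the origin to the generic points of the $\Sigma_i$. It uses only that each $\Sigma_i$ passes through $0$ and is irreducible.
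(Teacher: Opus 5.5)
Your proposal is correct and matches the paper's proof step for step. The steps are the unique multiplicity-one component $E$ through the origin, normality of $E$ at the generic points of the $\Sigma_i$ via openness of the normal locus, the polar inequality \eqref{eq:polar-self} giving $a^3\ge36$ and hence $a=4$, and the common generic order from Corollary~\ref{cor:prym-leading-order} forcing $r=2$. Your bound $27r(r-1)^2\le96$ is the paper's $18r(r-1)^2\le64$, rescaled by $\Theta^5/(\Theta^2\cdot\Sigma_i)=3/2$. You are also right that the inequality alone would still allow a few larger $r_i$, which is why the common value is needed.
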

\begin{proof}
Let $E$ be the unique global prime component of $\divi(\phi)$ through
the origin. Its multiplicity is one. Each integral surface $\Sigma_i$
is contained in $\divi(\phi)$ and contains the origin, so it is
contained in $E$ and in no other prime component. Hence
\[
 r_i:=\ord_{\Sigma_i}(E)=\ord_{\Sigma_i}(\phi)\ge2.
\]
The normal locus of $E$ contains a neighborhood of the origin.
Its intersection with $\Sigma_i$ is therefore nonempty, so it
contains the generic point of $\Sigma_i$.

Write $E\equiv a\Theta$. Since $\NS(A)=\Z[\Theta]$ and
$\divi(\phi)\in|4\Theta|$, one has $1\le a\le4$.
Proposition~\ref{prop:polar} gives
\[
 a^3\Theta^5\ge
 \sum_{i=1}^{27}r_i(r_i-1)^2(\Theta^2\cdot\Sigma_i)
 \ge36\Theta^5.
\]
Thus $a=4$. Every nonzero effective divisor has positive
$\Theta$-degree, so no further component or multiplicity can occur.
Consequently $\divi(\phi)=E$.

By the last assertion of Corollary~\ref{cor:prym-leading-order},
the integers $r_i$ have a common value $r$ on a sufficiently general
abelian fivefold. Applying the same inequality with $a=4$ yields
\[
 18r(r-1)^2\le64.
\]
Since $r\ge2$, this forces $r=2$. The normality assertion was
already established above.
\end{proof}

\subsection{Fixed components of the linear systems}
For integers $d,e\ge0$ and $\alpha\in\Pic^0(A)$ define
\begin{equation}\label{eq:W-definition}
 W_{d,e}(\alpha)=
 H^0\bigl(A,\OO_A(d\Theta)\otimes\alpha\otimes\mathcal I^{(e)}\bigr).
\end{equation}
We omit $\alpha$ when it is trivial.

\begin{theorem}\label{thm:fixed-part}
If $0\ne s\in W_{d,e}(\alpha)$ and $j=\ord_Z(s)$, then
\begin{equation}\label{eq:fixed-part}
 \max\left\{0,\roundup{\frac{9e-4d}{2}}\right\}
       \le j\le\rounddown{\frac d4}.
\end{equation}
In particular $W_{d,e}(\alpha)=0$ if $d<2e$. On the boundary
$d=2e$ one has
\begin{equation}\label{eq:critical-fiber}
 W_{2e,e}(\alpha)=
 \begin{cases}
   \C\,\phi^{e/2},& e\text{ even and }\alpha\simeq\OO_A,\\
   0,&\text{otherwise}.
 \end{cases}
\end{equation}
\end{theorem}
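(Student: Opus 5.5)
The plan is to factor out the largest power of $\phi$ dividing $s$ and then control the residual divisor in two ways: a $\Theta$-degree count, and the mixed polar inequality \eqref{eq:polar-mixed} of Proposition~\ref{prop:polar}. That proposition will be applied to $E=Z$, $H=\Theta$, $g=5$, with $V_i=\Sigma_i$. By Proposition~\ref{prop:phi-integral}, $Z=\divi(\phi)$ is integral and reduced, $Z\equiv4\Theta$ (so $a=4$), $Z$ is normal at the generic point of each $\Sigma_i$, and $r_i=\ord_{\Sigma_i}(Z)=2$. The numerical inputs are $\Theta^5=120$ and $\Theta^2\cdot\Sigma_i=80$ from \eqref{eq:prym-surface-class}.

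\emph{Step 1 (factorization and upper bound).} Let $j=\ord_Z(s)$. Since $\divi(\phi)=Z$ with multiplicity one, the quotient $t=s/\phi^j$ is a holomorphic section of $\OO_A((d-4j)\Theta)\otimes\alpha$. It is nonzero, and by the choice of $j$ its divisor $D=\divi(t)$ does not contain $Z$. Because $\Theta$ is ample, every effective divisor has nonnegative $\Theta$-degree, so $D\equiv(d-4j)\Theta$ forces $d\ge4j$. This is the upper bound $j\le\lfloor d/4\rfloor$.

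\emph{Step 2 (lower bound).} At the generic point of $\Sigma_i$ one has $\ord_{\Sigma_i}(s)\ge e$ and $\ord_{\Sigma_i}(\phi^j)=2j$, hence $\ord_{\Sigma_i}(D)\ge e-2j$. There are two cases.
\begin{itemize}
\item If $e-2j\ge1$, then $D$ is nonzero, effective and does not contain $Z$. Applying \eqref{eq:polar-mixed} with $r_i=2$ and $e_i=e-2j$ to all $27$ surfaces gives
\[
 27\cdot2\cdot(e-2j)\cdot80\le16(d-4j)\cdot120 .
\]
Dividing by $480$ yields $9(e-2j)\le4(d-4j)$, that is, $9e-4d\le2j$.
\item If $e\le2j$, Step 1 gives $4d\ge16j$, so
\[
 9e-4d\le9e-16j\le18j-16j=2j .
\]
\end{itemize}
In both cases $2j\ge9e-4d$. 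Together with $j\ge0$ and integrality this gives \eqref{eq:fixed-part}.

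\emph{Step 3 (consequences).} Combining the two bounds gives $(9e-4d)/2\le d/4$, i.e. $d\ge2e$, so $W_{d,e}(\alpha)=0$ for $d<2e$.

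Now suppose $d=2e$. The lower bound becomes $j\ge e/2$ and the upper bound becomes $j\le e/2$. Hence $e$ is even and $j=e/2$. Then $d-4j=0$, and $t$ is a nonzero section of the degree-zero bundle $\alpha$. This forces $\alpha\simeq\OO_A$ and $t$ constant, so $s=c\,\phi^{e/2}$. Conversely, $\phi^{e/2}$ lies in $W_{2e,e}$, since its order along each $\Sigma_i$ is $2\cdot e/2=e$. The case $e=0$ is consistent: $W_{0,0}(\alpha)=H^0(A,\alpha)$.

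The only delicate point is that Proposition~\ref{prop:polar} applies here. Its hypotheses are normality of $Z$ at each $\eta_{\Sigma_i}$, the exact multiplicity $r_i=2$, and that $D$ does not contain $Z$. All three come from Proposition~\ref{prop:phi-integral} and the choice of $j$, so the remaining work is bookkeeping of the intersection numbers.
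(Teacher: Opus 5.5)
Your proof is correct and follows essentially the same route as the paper. You factor $s=\phi^j u$ with $Z\not\subset\divi(u)$, take the upper bound from effectivity, and obtain the lower bound from the mixed polar inequality \eqref{eq:polar-mixed} with $r_i=2$, $\Theta^2\cdot\Sigma_i=80$ and $\Theta^5=120$, i.e.\ $36(e-2j)\le16(d-4j)$. The differences are cosmetic: you absorb the paper's separate case $d-4j=0$ into the observation that $e-2j\ge1$ forces $\divi(u)\neq0$, and you state the trivial reverse inclusion $\phi^{e/2}\in W_{2e,e}$ explicitly.
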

\begin{proof}
Remove the whole $Z$-component of $\divi(s)$:
\[
 s=\phi^j u,\qquad
 u\in H^0\bigl(A,\OO_A((d-4j)\Theta)\otimes\alpha\bigr),
 \qquad Z\not\subset\divi(u).
\]
Effectivity gives $d-4j\ge0$, proving the upper bound.
If $d-4j>0$ and $e-2j>0$, Proposition~\ref{prop:polar}
applied to $Z$ and $\divi(u)$ gives
\[
 36(e-2j)\le16(d-4j).
\]
If $e-2j\le0$, the same inequality holds trivially. If $d-4j=0$,
a nonzero section of the numerically trivial line bundle $\alpha$
has no zeros and trivializes $\alpha$, so again $e-2j\le0$.
Rearranging in all cases gives $2j\ge9e-4d$, as required.

Combining this inequality with $j\le d/4$ gives $d\ge2e$.
If $d=2e$, both bounds force $j=e/2$. Thus $e$ must be even.
The residual line bundle is $\alpha$ and its nonzero section
trivializes it. This proves \eqref{eq:critical-fiber}.
\end{proof}

\begin{corollary}\label{cor:fixed-deficit}
Let $0\le e<b$ and put
$J=\max\{0,\lceil(9e-8b)/2\rceil\}$. Then
\[
 W_{2b,e}=\phi^J W_{2(b-2J),\,e-2J}.
\]
If $\delta=b-e$ and $9e>8b$, and if $\varepsilon\in\{0,1\}$ is
congruent to $e$ modulo two, the residual indices are
\[
 b-2J=9\delta-\varepsilon,\qquad
 e-2J=8\delta-\varepsilon.
\]
\end{corollary}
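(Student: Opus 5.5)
The plan is to deduce the corollary from Theorem~\ref{thm:fixed-part} with $d=2b$ and $\alpha$ trivial, together with one divisibility fact about $\phi$. The inclusion $\phi^J W_{2(b-2J),e-2J}\subseteq W_{2b,e}$ is the easy direction. Multiplication by $\phi^J$ raises the degree by $4J$. By Proposition~\ref{prop:phi-integral}, $\phi$ has order exactly $2$ on every $\Sigma_i$, so the symbolic order at each generic point rises by $2J$. Since $\ord_{\Sigma_i}$ is additive on the regular local ring at the generic point, a section of order at least $e-2J$ becomes one of order at least $e$. (When $J=0$ the statement is trivial.)

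For the reverse inclusion, take $0\ne s\in W_{2b,e}$ and put $j=\ord_Z(s)$. Theorem~\ref{thm:fixed-part} with $d=2b$ gives
\[
 j\ge\roundup{\frac{9e-8b}{2}},
\]
and hence $j\ge J$. Since $Z=\divi(\phi)$ is integral and reduced by Proposition~\ref{prop:phi-integral}, and $A$ is smooth, a section vanishing to order $j$ along the prime divisor $Z$ is divisible by $\phi^j$. In particular $s=\phi^J u$ with $u\in H^0(A,\OO_A((2b-4J)\Theta))$.

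It remains to check the vanishing order of $u$. At the generic point of $\Sigma_i$ we have $\ord_{\Sigma_i}(u)=\ord_{\Sigma_i}(s)-2J\ge e-2J$, again by additivity and $\ord_{\Sigma_i}(\phi)=2$. If $e-2J<0$ this condition is vacuous. We must also confirm that $2b-4J\ge0$. This follows from the upper bound $j\le\rounddown{b/2}$ in Theorem~\ref{thm:fixed-part} (indeed $J\le j\le b/2$), or directly from $e<b$. Hence $u\in W_{2(b-2J),e-2J}$, and the two spaces are equal. The convention $\mathcal I^{(0)}=\OO_A$, extended to negative exponents, handles the degenerate case.

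The index formulas are arithmetic. Write $e=b-\delta$, so $9e-8b=b-9\delta$, and note $b\equiv e+\delta$. When $9e>8b$ we get
\[
 J=\roundup{\frac{b-9\delta}{2}},\qquad 2J=b-9\delta+\varepsilon',
\]
where $\varepsilon'\equiv b-9\delta\equiv b-\delta\equiv e\pmod 2$, so $\varepsilon'=\varepsilon$. Then
\[
 b-2J=9\delta-\varepsilon,\qquad e-2J=b-\delta-2J=8\delta-\varepsilon.
\]
The only substantive point is the divisibility by $\phi^J$. It uses global integrality of $Z$, which is exactly what Proposition~\ref{prop:phi-integral} provides, so I expect no real obstacle.
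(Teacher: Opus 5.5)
Your proof is correct and follows the paper's argument: the lower bound in Theorem~\ref{thm:fixed-part} with $d=2b$ gives divisibility by $\phi^J$, $\ord_{\Sigma_i}(\phi)=2$ from Proposition~\ref{prop:phi-integral} subtracts exactly $2J$ from each order, multiplication gives the reverse inclusion, and the index formulas come from writing $2J=9e-8b+\varepsilon$. Your hedge about $e-2J<0$ is unnecessary, because $e<b$ gives $9e-8b\le e-8$, hence $J\le\lfloor e/2\rfloor$; so the residual orders are always nonnegative and no extended convention for $\mathcal I^{(e)}$ is needed.
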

\begin{proof}
Theorem~\ref{thm:fixed-part} gives the divisibility. Since
$\ord_{\Sigma_i}(\phi)=2$, division subtracts exactly $2J$ from
the order at each $\Sigma_i$ and yields the stated space.
The reverse inclusion follows by multiplication. The inequalities
$0\le e<b$ imply $J\le\lfloor e/2\rfloor$, so the residual orders
are nonnegative. The last formulas follow by writing
$2J=9e-8b+\varepsilon$.
\end{proof}

\section{Vertical factors and the minimal slope}
\label{sec:modular}

\subsection{A finite-image norm}
We recall a slope estimate for finite-dimensional representations with
finite image. This estimate is already
\cite[Proposition~2.4]{BR15}; its proof is included because the common
order of all components is essential in its application here.

The correction \cite{BR25} replaces the continuation argument in
\cite[Lemma~4.4 and Theorem~4.5]{BR15}. It also explains how to avoid
the incorrect finite-generation theorem of Runge quoted in
\cite[Theorem~3.8]{BR15}; see \cite[Remark~1.3]{BR25} and
\cite{BBHJ24}. Neither argument is used here. We work with the
Fourier--Jacobi coefficients of the holomorphic form supplied by
\cite{DSS21}, and the following multiplicative-norm proof is independent
of formal Fourier--Jacobi convergence and of finite generation of
rings of Jacobi forms.

\begin{lemma}\label{lem:finite-norm}
Let $h$ be a nonzero holomorphic Siegel modular form of genus five,
scalar weight $w\in\frac12\Z$, and finite-image metaplectic type.
If all its components have boundary order at least $\nu>0$, then
\[
 w\ge\frac{54}{7}\nu.
\]
There is no nonzero such form of negative weight.
\end{lemma}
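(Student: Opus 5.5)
The plan is to pass from the vector-valued form $h=(h_\mu)_\mu$ to a scalar form by taking a multiplicative norm over the finite image. Let $\rho$ be the finite-image representation of the metaplectic group through which $h$ transforms. Its kernel $\Gamma'$ is then a finite-index subgroup, which we may take normal in the metaplectic cover of $\Sp_{10}(\Z)$. Each component $h_\mu$ is a scalar modular form of weight $w$ for $\Gamma'$ (with a multiplier system of finite order). Choose a component $h_{\mu_0}\ne0$ and form
\[
 N(\tau)=\prod_{\gamma\in\Gamma'\backslash\widetilde\Gamma_5}
 \bigl(h_{\mu_0}|_w\gamma\bigr)(\tau),
\]
raised if necessary to a power $M$ that kills the residual character and the metaplectic sign. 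The slash $h_{\mu_0}|_w\gamma$ equals $\sum_\nu\rho(\gamma)_{\mu_0\nu}h_\nu$, which is a linear combination of components. Hence every factor is again holomorphic on $\Hh_5$, and none is identically zero because $\rho(\gamma)$ is invertible and $h\ne0$. The product $N^M$ is therefore a nonzero scalar Siegel modular form for $\Sp_{10}(\Z)$ of weight $wnM$, where $n$ is the index $[\widetilde\Gamma_5:\Gamma']$.

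Next comes the boundary order. This is where the hypothesis that \emph{all} components have order at least $\nu$ is essential. Each factor is a linear combination of components whose Fourier expansions (in the appropriate fractional exponents) have minimal $T_{gg}$ at least $\nu$, so each factor has boundary order at least $\nu$ in the sense of \eqref{eq:boundary-order}. Additivity of the boundary order under products, read off from the leading Fourier--Jacobi coefficients exactly as in Section~\ref{sec:preliminaries}, gives $b(N^M)\ge nM\nu$. One point to check is that the expansions of the translated factors at the standard cusp are controlled: since $\widetilde\Gamma_5$ permutes the factors, each $h_{\mu_0}|\gamma$ is expanded at $i\infty$ through the components. This is simply the statement that the vector $h$ has order at least $\nu$ at the unique cusp of $\Sp_{10}(\Z)$.

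To conclude, apply the genus-five slope bound $54/7$ from the corrected version of \cite{FGSMV14}. It says that every nonzero cusp form $G$ on $\Sp_{10}(\Z)$ satisfies $k\ge\frac{54}{7}b(G)$. Applied to $N^M$, this gives $wnM\ge\frac{54}{7}nM\nu$, which is the claim. For negative weight, $N^M$ would be a nonzero scalar form of negative weight for $\Sp_{10}(\Z)$, which is impossible by Koecher and the standard vanishing. This last statement does not even need $\nu>0$, only $h\ne0$ and holomorphy at the cusp, which Koecher supplies in genus five.

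I expect the main obstacle to be the careful bookkeeping of the multiplier: the half-integral weight, the metaplectic cocycle, and the finite character have to be made trivial by a uniform exponent $M$ without affecting the ratio of weight to order. The boundary-order transfer through fractional Fourier exponents also needs attention. Both quantities scale by the same factor $nM$, so the ratio is preserved.
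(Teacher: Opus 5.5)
Your proposal is correct and is essentially the paper's argument. The paper likewise forms the multiplicative norm $P=\prod_{A\in G_\rho}\ell(Ah)$ over the finite image, observes that the weight and the boundary order both scale by $|G_\rho|$, and then applies the corrected genus-five slope $54/7$ (and, for negative weight, the Koecher principle). Where the paper uses a generic functional $\ell$, you use the coordinate functional of a nonzero component; each of your factors $h_{\mu_0}|_w\gamma$ is nonzero simply because it is the slash of the nonzero function $h_{\mu_0}$, not because $\rho(\gamma)$ is invertible. The paper obtains integrality and evenness of $|G_\rho|w$ directly from $(I,-1)$ and $-I$. Your auxiliary power $M$ is harmless but unnecessary, since your product is already exactly invariant under the whole metaplectic group.
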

\begin{proof}
Write $G_\rho$ for the finite image of the representation and
$N=|G_\rho|$. Choose $\tau_0$ where $h(\tau_0)\ne0$ and a linear
functional $\ell$ avoiding the finitely many hyperplanes
$\ell(Ah(\tau_0))=0$ for $A\in G_\rho$. Then
\[
 P(\tau)=\prod_{A\in G_\rho}\ell(Ah(\tau))
\]
is nonzero and holomorphic. Multiplication by a representation matrix
permutes the factors. Consequently $P$ is scalar of weight $Nw$,
and its boundary order is at least $N\nu$.
The element $(I,-1)$ in the kernel of the metaplectic cover gives
$(-1)^{2Nw}=1$, so $Nw$ is integral. The element $-I$ of
$\Sp_{10}(\Z)$ then forces $Nw$ to be even. Thus $P$ is a
full-level scalar Siegel modular
form, without an additional character.

The slope theorem for $\A_5$ gives
$Nw\ge(54/7)N\nu$; we use the corrected version of
\cite[Theorem~0.7]{FGSMV14}. The same norm construction, without an
order hypothesis, reduces negative-weight vanishing to the scalar
case. The latter follows from the Koecher principle and the
ampleness of the Hodge line bundle on the Satake compactification.
\end{proof}

\subsection{Primitivity over the base}
\begin{proposition}\label{prop:vertical-primitive}
Every nonzero Jacobi form in $\JJ_{14,2}^{(5)}$ whose boundary
order in the genus-five variables is at least two has no vertical
divisor on the universal abelian fivefold. Equivalently, its theta
coefficient vector has no common divisor on $\A_5$.
In particular this applies to $\phi=\phi_2(\F)$.
\end{proposition}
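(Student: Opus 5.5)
Suppose $\psi\in\JJ_{14,2}^{(5)}$ is nonzero, has boundary order at least two in the genus-five variables, and has a vertical divisor. A vertical prime divisor on the universal abelian fivefold is the preimage of a prime divisor $V\subset\A_5$, so $\psi$ vanishes on every fiber over $V$. In the theta decomposition \eqref{eq:theta-decomposition} the theta functions $\theta_{2,\mu}(\tau,\cdot)$ are linearly independent on every fiber. Hence all components $h_\mu$ vanish along the preimage of $V$ in $\Hh_5$, and the two formulations in the statement are equivalent. The plan is to divide this common factor out by a scalar form and then reach a contradiction from the slope bound of Lemma~\ref{lem:finite-norm}.

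By Freitag's theorem \cite[Satz~2]{Freitag83}, recalled in Section~\ref{sec:preliminaries}, the prime divisor $V$ is the exact zero divisor of a scalar form $f\in\MM_w^{(5)}$ with $w>0$ and $w$ even. Each quotient $h_\mu/f$ is holomorphic on $\Hh_5$, since $f$ vanishes to order one exactly along $V$. The vector $h/f$ then transforms under the same finite-image Weil representation, now with weight $27/2-w$. Its boundary orders are $\ord(h_\mu)-b(f)$. For this I use that the component orders are additive on products at the level of first Fourier--Jacobi coefficients, as in the additivity argument of Section~\ref{sec:preliminaries}. By \eqref{eq:theta-order} with $c=2$ and $m=2$, every $h_\mu$ has order at least $3/2$.

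The argument then splits into two cases. If $f$ is not a cusp form, so $b(f)=0$, then $h/f$ is a nonzero holomorphic vector-valued form of weight $27/2-w$ whose components all have order at least $3/2$. Lemma~\ref{lem:finite-norm} gives $27/2-w\ge\frac{54}{7}\cdot\frac32=\frac{81}{7}$, so $w\le 27/2-81/7<2$. This contradicts $w\ge2$ even. If $f$ is a cusp form, then $b(f)\ge1$ and the genus-five slope bound gives $w\ge\frac{54}{7}b(f)$. The quotient $h/f$ has weight $27/2-w$ and order at least $3/2-b(f)$. When $b(f)\ge2$ the weight $27/2-w$ is negative, which Lemma~\ref{lem:finite-norm} forbids. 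When $b(f)=1$, we have $w\ge8$, and $S_8(\Sp_{10}(\Z))=0$ \cite{PY07} forces $w\ge10$. The weight $27/2-w\le 7/2$ is then positive, so this case requires a finer order estimate.

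The main obstacle is the case $b(f)=1$: the order of $h/f$ is only guaranteed to be at least $1/2$, and Lemma~\ref{lem:finite-norm} then gives the bound $7/2\ge 27/7$, which fails. That is the desired contradiction, provided the order estimate is exactly $1/2$, that is, provided \eqref{eq:theta-order} and the subtraction of $b(f)$ are done carefully component by component. I would first check that subtracting $b(f)=1$ leaves every component with order at least $1/2>0$, so that Lemma~\ref{lem:finite-norm} applies; the resulting inequality $27/2-w\ge 27/7$ gives $w\le 9$. Combined with $w\ge 10$, this is a contradiction. The delicate point, where I expect to spend most effort, is justifying that the minimal order of a component drops by exactly $b(f)$ under division. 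This amounts to the claim that $\ord(gf)=\ord(g)+b(f)$ for vector-valued $g$, which follows by looking at the leading Fourier--Jacobi coefficients in the last variable. Having no vertical divisor then applies in particular to $\phi=\phi_2(\F)$, since $b(\F)=2$ means $\phi$ has genus-five boundary order at least two.
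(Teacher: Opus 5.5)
Your proposal is correct and is essentially the paper's proof: Freitag's exact-divisor theorem supplies the scalar form $f$, the quotient $h/f$ has weight $27/2-w$ and common order at least $3/2-b(f)$, and Lemma~\ref{lem:finite-norm}, the genus-five slope bound and $S_8(\Sp_{10}(\Z))=0$ together exclude every case. The only difference is the order of the case analysis. The paper first obtains $w\le12$ from negative-weight vanishing, hence $b(f)\le1$, and then derives $0\le w-\frac{54}{7}b(f)\le\frac{27}{14}$, which leaves only $w=8$; you split directly on $b(f)$ and, for $b(f)=1$, exclude $w=8$ first and then obtain $w\le9$.
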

\begin{proof}
Let $\psi$ be such a Jacobi form. Its theta coefficient vector $h$
has weight $27/2$ and the Weil type associated with index two.
Applying \eqref{eq:theta-order} with $m=c=2$ shows
that every theta component has boundary order at least $3/2$.

Suppose that a prime divisor on the base divides all components.
By Freitag's exact-divisor theorem
\cite[Satz~2]{Freitag83}, it is the exact zero divisor of a
full-level scalar form $H$ of positive integral weight $d$.
Since the genus is five, $d$ is even. Put $c=b(H)$.
The quotient $h/H$ is holomorphic of finite Weil type, weight
$27/2-d$ and common order at least $3/2-c$.
Lemma~\ref{lem:finite-norm} first gives $d\le12$.
The scalar slope inequality $d\ge(54/7)c$ therefore gives $c\le1$.
In particular $3/2-c$ is positive, and another application of the
lemma gives
\begin{equation}\label{eq:vertical-numerical}
 0\le d-\frac{54}{7}c
 \le\frac{27}{2}-\frac32\frac{54}{7}
 =\frac{27}{14}.
\end{equation}
If $c=0$, no positive even $d$ satisfies this inequality.
If $c=1$, the only possibility is $d=8$. But
$S_8(\Sp_{10}(\Z))=0$ by \cite[Example~12]{PY07}.
This is again impossible.

To justify holomorphy of the quotient, work first on a smooth finite
level cover of the base. Divisibility by the base divisor means
componentwise divisibility of the theta expansion, hence division
produces a holomorphic finite-type form. The Koecher principle
provides its holomorphic Fourier expansion. The same statement
descends, since the divisor and the original Jacobi form are
full-level invariant. Thus no growth assumption about a meromorphic
quotient has been introduced in \eqref{eq:vertical-numerical}.
Finally, every diagonal entry in a nonzero Fourier coefficient of
$\F$ is at least two, so $\phi$ satisfies the required boundary-order
hypothesis.
\end{proof}

\begin{lemma}\label{lem:global-division}
Let $\psi$ be a holomorphic Jacobi form of genus five. Suppose that,
on a very general abelian fiber, $\phi^J$ divides $\psi$ for some
integer $J\ge0$. Then $\psi/\phi^J$ is holomorphic on
$\Hh_5\times\C^5$ and satisfies the quotient Jacobi transformation law.
If its index is positive it is a holomorphic Jacobi form.
If its weight and index are both zero, it is constant.
\end{lemma}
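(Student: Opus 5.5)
The plan is to study the meromorphic function $Q=\psi/\phi^J$ on $\Hh_5\times\C^5$. Its polar divisor is contained in the zero divisor of $\phi$, and we will show that this divisor is empty. The transformation law comes for free. Since $\phi$ satisfies \eqref{eq:jacobi-translation} and \eqref{eq:jacobi-modular} with weight $14$ and index $2$, and $\psi$ satisfies them with some weight $k$ and index $m$, the quotient formally satisfies the same laws with weight $k-14J$ and index $m-2J$. The automorphy factors are nowhere vanishing, so only holomorphy needs proof.

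\textbf{Holomorphy.} Work on a smooth finite level cover of the base, carrying a universal abelian scheme $\mathcal X\to\mathcal B$. There $\phi$ and $\psi$ define sections of line bundles on a smooth variety, and we split the polar divisor according to the components of $\divi(\phi)$.

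\emph{Vertical components.} By Proposition~\ref{prop:vertical-primitive}, $\divi(\phi)$ has no vertical component, i.e.\ none pulled back from the base.

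\emph{Horizontal components.} Every prime component $\mathcal Z$ of $\divi(\phi)$ therefore dominates the base. By Proposition~\ref{prop:phi-integral}, on a very general fiber $\divi(\phi)$ restricts to the integral reduced divisor $Z$. Hence there is exactly one horizontal component $\mathcal Z$, it occurs with multiplicity one, and its restriction to a very general fiber is $Z$. The order of $\psi$ along $\mathcal Z$ equals its order along $Z$ on a general fiber, because the generic point of $Z$ specializes generically from that of $\mathcal Z$ and both are reduced. The hypothesis gives $\ord_Z(\psi)\ge J$, so $\psi$ vanishes to order at least $J$ along $\mathcal Z$. Thus $\psi/\phi^J$ has no pole along any divisor. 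Since $\mathcal X$ is normal, Hartogs extension shows the quotient is holomorphic.

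One point needs care: ``very general'' means outside countably many proper closed subsets. The locus where the order along $\mathcal Z$ is at least $J$ is constructible, and here it contains a very general point, so it contains the generic point. Pulling back to $\Hh_5\times\C^5$ and descending by invariance gives holomorphy upstairs.

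\textbf{Conclusion.} If the index $m-2J$ is positive, the Koecher principle for Jacobi forms in genus $5>1$ (Ziegler, quoted in Section~\ref{sec:preliminaries}) shows the quotient is a holomorphic Jacobi form. If the weight and index are both zero, the index-zero translation law makes $Q$ periodic in $z$, so $Q(\tau,\cdot)$ is holomorphic on a compact torus and hence constant in $z$. Then $Q$ is a holomorphic scalar modular function of weight zero on $\Hh_5$. By Koecher it is a holomorphic modular form of weight zero, hence constant, for instance because the Hodge bundle is ample on the Satake compactification.

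The main obstacle is passing from fiberwise divisibility to divisibility along the horizontal component in families. This needs both the uniqueness and reducedness of $\mathcal Z$ and the absence of vertical factors, which is exactly why Proposition~\ref{prop:vertical-primitive} precedes this lemma.
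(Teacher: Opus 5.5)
Your proposal is correct and follows essentially the same route as the paper. On a smooth level cover, Proposition~\ref{prop:vertical-primitive} excludes vertical components, and Proposition~\ref{prop:phi-integral} gives a unique horizontal component of multiplicity one, along which fiberwise divisibility removes the pole. Normality then extends the quotient, and the Koecher principles handle the positive-index case and the case of weight and index zero. The paper makes two points explicit that you leave implicit: a non-dominant prime divisor of the total space is the full preimage of a base divisor (by the fiber-dimension count), and a vertical component on the level cover maps to one at full level, which is where Proposition~\ref{prop:vertical-primitive} actually applies.
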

\begin{proof}
Pass to a smooth finite level base, so that the universal family is
an abelian scheme. Proposition~\ref{prop:phi-integral} implies that the
geometric generic fiber of $\divi(\phi)$ is integral and reduced.
There is therefore one horizontal prime component, with multiplicity
one, on each connected component of this level cover. Here one may
first restrict to the open base where the divisor is flat; openness
of geometric integrality for this proper family transfers the very
general fiber assertion to its geometric generic fiber. The assumed
generic divisibility removes the possible pole on this component.

A divisor in the total space cannot map into a subset of the base
of codimension at least two, because all fibers have dimension five.
Any remaining vertical divisor would thus dominate a base divisor,
and at its generic point would be the entire abelian fiber.
Proposition~\ref{prop:vertical-primitive} excludes precisely this
possibility. Indeed a vertical component after finite level change
would map onto a vertical component at full level. The quotient has
no divisorial poles. The total space
is smooth, so normality extends it across subsets of codimension at
least two.

Dividing the transformation laws gives the asserted automorphy.
For positive index the Jacobi Koecher principle applies.
For index zero the quotient restricts to a holomorphic function on
each compact abelian fiber, hence is independent of $z$. If its
weight is also zero, it is a holomorphic modular function on $\Hh_5$.
The Koecher principle extends it to the Satake compactification,
where it is constant.
\end{proof}

\subsection{The uniform slope theorem}
\begin{theorem}\label{thm:minimal-slope}
Let $0\ne G\in\MM_k^{(6)}$ and suppose $b=b(G)>0$. Then
\[
 k\ge7b.
\]
If $k=7b$, then $b$ is even and
$G=c\F^{b/2}$ for some $c\in\C^\times$.
\end{theorem}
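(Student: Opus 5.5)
The plan is to push everything down to the first nonzero Fourier--Jacobi coefficient and apply Theorem~\ref{thm:fixed-part} on a very general abelian fivefold. Let $\psi_b=\phi_b(G)\in\JJ_{k,b}^{(5)}$ be the first nonzero Fourier--Jacobi coefficient. It is nonzero because $b=b(G)$. On a very general $(A,\Theta)$ with $\NS(A)=\Z[\Theta]$ it restricts to a section of $\OO_A(2b\Theta)$. This restriction is nonzero for very general $\tau$, since a nonzero holomorphic Jacobi form cannot vanish on all fibres over a dense set.

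Put $e=\max\{0,22b-3k\}$. By Corollary~\ref{cor:prym-leading-order}, the restriction lies in $W_{2b,e}$. Theorem~\ref{thm:fixed-part} then gives $2b\ge 2e$, so $b\ge22b-3k$, i.e.\ $3k\ge21b$. This is the bound $k\ge7b$. (If $e=0$, then $22b\le 3k$ already gives $k\ge 22b/3>7b$.)

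Now suppose $k=7b$. Then $e=b$ and $d=2b=2e$, so the equality case~\eqref{eq:critical-fiber} applies with $\alpha$ trivial. It forces $b$ to be even and gives $\psi_b|_A=c_\tau\,\phi^{b/2}|_A$ on very general fibres. To globalize, apply Lemma~\ref{lem:global-division} with $J=b/2$. Since $\phi^{b/2}$ divides $\psi_b$ on a very general fibre, the quotient $\psi_b/\phi^{b/2}$ is holomorphic of weight $7b-14\cdot b/2=0$ and index $b-2\cdot b/2=0$. It is therefore a constant $c$, and $c\ne0$ because $\psi_b\ne0$.

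By construction the first Fourier--Jacobi coefficient of $\F^{b/2}$ is $\phi^{b/2}$, using additivity of leading coefficients. So $G'=G-c\F^{b/2}\in\MM_{7b}^{(6)}$ has $\phi_b(G')=0$. If $G'\ne0$, then $b(G')>b$, and its weight $7b<7b(G')$ contradicts the inequality just proved. Hence $G=c\F^{b/2}$.

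The main obstacle is the globalization of fibrewise proportionality. That is handled by Lemma~\ref{lem:global-division}, which rests on Proposition~\ref{prop:vertical-primitive}. Given those results, the argument above only requires checking that the weight and index of the quotient both vanish exactly. One also needs that the symbolic-order bound holds uniformly on very general fibres; Corollary~\ref{cor:prym-leading-order} supplies this.
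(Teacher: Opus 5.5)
Your proposal is correct and follows the paper's proof essentially step for step. Like the paper, you feed the Prym normal-order bound of Corollary~\ref{cor:prym-leading-order} into Theorem~\ref{thm:fixed-part} on a very general fivefold, globalize the equality case \eqref{eq:critical-fiber} via Lemma~\ref{lem:global-division}, and subtract $c\F^{b/2}$ to conclude by the strict inequality. Your extra remarks, namely the case $e=0$ and the nonvanishing of $\psi_b$ on very general fibres, are harmless details that the paper leaves implicit.
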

\begin{proof}
Let $\psi=\phi_b(G)$ be the first nonzero Fourier--Jacobi coefficient.
By Corollary~\ref{cor:prym-leading-order}, its order on every
$\Sigma_i$ in a very general abelian fivefold is at least
\[
 e=22b-3k.
\]
If $k<7b$, then $e>b$, whereas $\psi$ is a section of $2b\Theta$.
The vanishing assertion of Theorem~\ref{thm:fixed-part} gives
$\psi=0$, a contradiction.

If $k=7b$, then $e=b$. Equation~\eqref{eq:critical-fiber} gives
that $b$ is even and that, on a very general fiber,
$\psi$ is a scalar multiple of $\phi^{b/2}$.
The quotient has weight zero and index zero.
Lemma~\ref{lem:global-division} shows that it is a single constant
$c$, independent of the fiber.
The modular form $G-c\F^{b/2}$ has weight $7b$ and boundary order
strictly larger than $b$. The strict slope inequality just proved
forces this difference to vanish.
\end{proof}

\begin{corollary}\label{cor:canonical-ring}
For every smooth projective model $X$ of $\A_6$,
\[
 R(X,K_X)=\bigoplus_{m\ge0}H^0(X,mK_X)
          \simeq\C[u],\qquad \deg u=2,
\]
where $u$ is the bicanonical section defined by $\F$.
In particular $\kappa(\A_6)=0$.
\end{corollary}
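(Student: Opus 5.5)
The plan is to combine the injection \eqref{eq:canonical-injection} with Theorem~\ref{thm:minimal-slope} for the upper bound on sections, and Theorem~\ref{thm:known-F} for the lower bound. Fix a smooth projective model $X$. Since plurigenera and the canonical ring are birational invariants of smooth projective varieties, we may take $X$ to be a resolution of $\A_6^{\mathrm{Perf}}$ that admits a morphism to the Satake compactification. This is the setting in which \eqref{eq:canonical-injection} was stated. The ring $R(X,K_X)$ for any other model is then canonically isomorphic to it.

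\textbf{Upper bound.} Taking $a=0$ in \eqref{eq:canonical-injection} gives an injection
\[
 H^0(X,mK_X)\hookrightarrow \MM_{7m}^{(6)}[m].
\]
For $m=0$ the space is $\C$. For $m\ge1$, let $G\ne0$ lie in the image. It has weight $7m$ and boundary order $b=b(G)\ge m>0$. Theorem~\ref{thm:minimal-slope} gives $7m\ge7b$, so $b=m$. The equality case then forces $m$ to be even and $G=c\F^{m/2}$. Hence $\dim H^0(X,mK_X)\le1$ when $m$ is even, and $H^0(X,mK_X)=0$ when $m$ is odd.

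\textbf{Lower bound and ring structure.} By Theorem~\ref{thm:known-F}, $\F$ defines a nonzero bicanonical section $u\in H^0(X,2K_X)$. Its image under the injection is $\F$ itself, up to the normalizing scalar, because the injection is just restriction to the open modular variety. The power $u^{m/2}$ is a nonzero section of $mK_X$, since $X$ is integral, and it maps to $\F^{m/2}$. Therefore $H^0(X,mK_X)=\C u^{m/2}$ for even $m$. The map $\C[u]\to R(X,K_X)$ with $\deg u=2$ is thus bijective in each degree and respects multiplication. Since every plurigenus is at most one and $P_2=1$, we get $\kappa=0$.

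\textbf{Main obstacle.} The one point requiring care is compatibility: the bicanonical section supplied by Theorem~\ref{thm:known-F} must restrict, on the interior, exactly to the form $\F\,(\bigwedge d\Omega_{ij})^{\otimes2}$. Only then is its image under \eqref{eq:canonical-injection} the form $\F$, with the correct weight $14=7\cdot2$ and boundary order $2$. This is built into the construction in \cite{DSS21}, where the extension of precisely this tensor is proved via Tai's theorem. All remaining steps are formal.
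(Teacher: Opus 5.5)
Your proposal is correct and follows essentially the same route as the paper. You reduce by birational invariance to a resolution of $\A_6^{\mathrm{Perf}}$, then use the injection \eqref{eq:canonical-injection} with $a=0$ and the equality case of Theorem~\ref{thm:minimal-slope} to get $H^0(X,mK_X)\subseteq\C\,\F^{m/2}$ (zero for odd $m$), and finally take the reverse inclusion from the extension in Theorem~\ref{thm:known-F}; your check that the extended section restricts to $\F\,(\bigwedge d\Omega_{ij})^{\otimes2}$ simply makes explicit what the paper uses implicitly.
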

\begin{proof}
By birational invariance, choose $X$ as in
\eqref{eq:canonical-injection}. For $m=0$ the assertion is
$H^0(X,\OO_X)=\C$. For $m>0$, a nonzero section of $mK_X$ gives
a form of weight $7m$ and boundary order $b\ge m$.
Theorem~\ref{thm:minimal-slope} implies $b=m$, and then the form is
a scalar multiple of $\F^{m/2}$ with $m$ even.
Conversely all nonnegative powers of the bicanonical section
in Theorem~\ref{thm:known-F} extend to $X$. This proves the equality
of graded algebras.
\end{proof}

\begin{corollary}\label{cor:F-prime}
The zero divisor of $\F$ on $\A_6$ is prime and reduced.
\end{corollary}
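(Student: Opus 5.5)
The plan is to combine Freitag's exact-divisor theorem with the slope bound of Theorem~\ref{thm:minimal-slope}. We argue by contradiction: suppose the interior zero divisor of $\F$ on $\A_6$ is not prime and reduced. Then it can be written as $D_1+D_2$ with $D_1,D_2$ nonzero effective divisors on $\A_6$. For example, take $D_1$ to be one prime component with multiplicity one and $D_2$ the remainder, which is nonzero by hypothesis. By \cite[Satz~2]{Freitag83}, valid since $g=6\ge3$, each $D_i$ is the exact zero divisor of a scalar modular form $G_i\in\MM_{k_i}^{(6)}$ of positive integral weight. The product $G_1G_2$ then has the same interior divisor as $\F$. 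Hence $\F/(G_1G_2)$ is a nowhere-vanishing holomorphic function on $\Hh_6$ that transforms with weight $14-k_1-k_2$.

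The next step is to show that this quotient is constant and that the weights add up. Both $\F/(G_1G_2)$ and its inverse are holomorphic modular forms, of weights $14-k_1-k_2$ and $k_1+k_2-14$. By the Koecher principle and the vanishing of modular forms of negative weight (ampleness of the Hodge bundle on the Satake compactification), both weights must be nonnegative. Hence $k_1+k_2=14$, and the quotient is a weight-zero holomorphic modular function, so it is constant. Thus $\F=cG_1G_2$. Additivity of the boundary order gives $b(G_1)+b(G_2)=b(\F)=2$.

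Finally, we apply Theorem~\ref{thm:minimal-slope} to the $G_i$ to reach a contradiction. A form of positive weight with $b(G_i)=0$ is allowed a priori, so we split into cases by the boundary orders. If $b(G_1)=b(G_2)=1$, the theorem gives $k_i\ge7$. Then $k_1=k_2=7$, which is equality with $b=1$ odd, and this is excluded by the equality clause. If instead one form, say $G_2$, has $b(G_2)=0$, then $b(G_1)=2$ and $k_1\ge14$. This forces $k_2=0$, so $G_2$ is a constant and $D_2=0$, a contradiction.

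The main obstacle is the $b=1$ case. It is resolved exactly by the rigidity part of Theorem~\ref{thm:minimal-slope}, since odd $b$ cannot attain slope seven. A subsidiary check is that Freitag's theorem gives exact multiplicities with trivial character, as recalled in Section~\ref{sec:preliminaries}. This is what makes the quotient a genuine weight-zero modular function rather than one twisted by a character.
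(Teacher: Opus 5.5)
Your proof is correct and is essentially the paper's argument. Both use Freitag's exact-divisor theorem, division together with the Koecher principle and negative-weight vanishing, additivity of the boundary order, and Theorem~\ref{thm:minimal-slope} with its equality clause to exclude a factor of weight $7$ and boundary order $1$; the paper packages your case analysis through the additive defect $\mathrm{wt}(H)-7b(H)\ge 0$, which sums to $14-7\cdot 2=0$ over the two factors.
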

\begin{proof}
Freitag's exact-divisor theorem realizes any prime component as the
zero divisor of a scalar modular form $H$. Dividing $\F$ by $H$
gives a holomorphic scalar form $G$ by the Koecher principle.
Negative-weight vanishing ensures that the weight of $G$ is
nonnegative.
For nonzero scalar forms of nonnegative weight put
$\epsilon(H)=\operatorname{wt}(H)-7b(H)$.
Theorem~\ref{thm:minimal-slope} and the weight-zero case give
$\epsilon(H),\epsilon(G)\ge0$, while their sum is
$14-7\cdot2=0$. Thus a nonconstant factor must itself have slope seven.
Its boundary order is then a positive even integer.
Since the total order of $\F$ is two, there can be only one
nonconstant factor, occurring with multiplicity one.
\end{proof}

\begin{remark}
Primality of $\F$ was already obtained in
\cite[Section~5.6]{GIMS24}. The preceding proof records that it also
follows from the uniform theorem, independently of low-weight
dimension tables.
\end{remark}

\begin{corollary}\label{cor:exact-orders}
The generic order of $\F$ on $\mathcal P_7$ is exactly two.
The boundary order of the Jacobi form $\phi$ in its genus-five
variables is also exactly two.
\end{corollary}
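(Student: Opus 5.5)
Both assertions come from sandwiching a known lower bound against the equality cases already established. For the Prym order, Theorem~\ref{thm:prym-order} with $k=14$, $b=2$ gives $\ord_{\mathcal P_7}(\F)\ge22\cdot2-3\cdot14=2$. For the matching upper bound, Corollary~\ref{cor:prym-leading-order} gives $\ord_{\Sigma_i}(\psi_b)\ge\ord_{\mathcal P_7}(\F)$, where $\psi_b$ is the first nonzero Fourier--Jacobi coefficient. This bound only has force if the first nonzero coefficient is $\phi=\phi_2(\F)$, that is, if $b(\F)=2$; Theorem~\ref{thm:known-F} supplies exactly this. Proposition~\ref{prop:phi-integral} then gives $\ord_{\Sigma_i}(\phi)=2$ on a very general fiber. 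Combining,
\[
 2\le\ord_{\mathcal P_7}(\F)\le\ord_{\Sigma_i}(\phi)=2,
\]
so the generic order of $\F$ on $\mathcal P_7$ is exactly two. I will take care to note that the order used in Corollary~\ref{cor:prym-leading-order} is the order at the generic point of $\mathcal P_7$. That is the same notion as in the statement, so no separate genericity argument is needed.

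For the second assertion, the boundary order of $\phi$ in the $\tau$ variables is the minimum of $N_{55}$ over its nonzero coefficients. The lower bound $\ge2$ was already recorded at the end of the proof of Proposition~\ref{prop:vertical-primitive}: every diagonal entry of a nonzero Fourier coefficient of $\F$ is at least two. For the upper bound, I will argue by contradiction. Suppose the order were at least three. Then \eqref{eq:theta-order} with $m=2$, $c=3$ shows that every theta component $h_\mu$ of $\phi$ has order at least $3-\tfrac12=\tfrac52$. The vector $h$ is nonzero, has weight $27/2$, and has finite-image type, so Lemma~\ref{lem:finite-norm} would give
\[
 \tfrac{27}{2}\ge\tfrac{54}{7}\cdot\tfrac52=\tfrac{135}{7}.
\]
This is false, so the order is at most two, hence exactly two.

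The only step needing care is the bookkeeping in the second part. The bound in \eqref{eq:theta-order} is coefficientwise, so it applies to all components simultaneously, and Lemma~\ref{lem:finite-norm} requires precisely such a common order. I expect no substantive obstacle beyond checking these normalizations. If the argument via \eqref{eq:theta-order} looks delicate, a fallback is to use the $\GL_5(\Z)$ symmetry of $\F$'s Fourier coefficients. A coefficient of $\phi$ with $N_{55}=2$ corresponds to a $6\times6$ coefficient $T$ of $\F$ with two diagonal entries equal to $2$. The coefficient computed by Dittmann, Salvati Manni and Scheithauer has minimum two, and after a unimodular change of basis it presumably has two diagonal entries equal to two.
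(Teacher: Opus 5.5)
Your proposal is correct and follows the paper's proof essentially step for step. The first part is the sandwich $2\le\ord_{\mathcal P_7}(\F)\le\ord_{\Sigma_i}(\phi)=2$, using Theorem~\ref{thm:prym-order}, the boundary restriction in Corollary~\ref{cor:prym-leading-order} and Proposition~\ref{prop:phi-integral}. The second part is the contradiction $27/2\ge(54/7)(5/2)$ obtained from \eqref{eq:theta-order} and Lemma~\ref{lem:finite-norm}.

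Your fallback is unnecessary, and as stated it is unjustified. A nonzero coefficient with minimum two does not by itself admit a unimodular basis in which two diagonal entries equal two.
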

\begin{proof}
Put $j=\ord_{\mathcal P_7}(\F)$. Theorem~\ref{thm:prym-order}
gives $j\ge2$. The boundary restriction in the proof of
Corollary~\ref{cor:prym-leading-order} gives
\[
 j\le\ord_{\Sigma_i}(\phi)=2,
\]
where the equality follows from Proposition~\ref{prop:phi-integral}.
Hence $j=2$.

The second order is at least two by the Fourier coefficients of
$\F$. If it were at least three, \eqref{eq:theta-order} would give
common theta-vector order at least $5/2$. Lemma~\ref{lem:finite-norm}
would then imply $27/2\ge(54/7)(5/2)$, which is false.
\end{proof}

\section{Numerical dimension and minimal models}
\label{sec:applications}

The slope theorem determines the canonical ring. To obtain a minimal
model with numerically trivial canonical divisor, one needs an
additional argument: Kodaira dimension zero alone does not establish
numerical dimension zero. We prove the required estimate by allowing
a fixed positive difference between the weight and seven times the
boundary order.

\subsection{Jacobi forms obtained after division}
For an integer $\ell\ge1$ and $\varepsilon\in\{0,1\}$, define
$\mathcal B_{\ell,\varepsilon}$ to be the subspace of
\[
 \JJ_{190\ell-7\varepsilon,\,27\ell-\varepsilon}^{(5)}
\]
consisting of forms whose boundary order in the genus-five variables
is at least $27\ell-\varepsilon$, and whose order on each $\Sigma_i$
in a very general fiber is at least $24\ell-\varepsilon$.
These are finite-dimensional vector spaces. We also set
\[
 \mathcal B_{0,0}=\C,\qquad \mathcal B_{0,1}=0.
\]

\begin{proposition}\label{prop:bounded-leading}
Suppose that $0\ne G\in\MM_{7b+\ell}^{(6)}$ has boundary order $b$,
where $\ell\ge1$ and $b>27\ell$. Put
\[
 \varepsilon\equiv b-3\ell\pmod2,\quad
 0\le\varepsilon\le1,\qquad
 J=\roundup{\frac{b-27\ell}{2}}.
\]
Its leading Fourier--Jacobi coefficient has the form
\begin{equation}\label{eq:bounded-leading}
 \phi_b(G)=\phi^J\chi,\qquad
                 \chi\in\mathcal B_{\ell,\varepsilon}.
\end{equation}
\end{proposition}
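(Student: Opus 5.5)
The plan is to apply Corollary~\ref{cor:fixed-deficit} fiberwise to the leading coefficient $\psi=\phi_b(G)$, globalize the division using Lemma~\ref{lem:global-division}, and then check the weight, index and the two order conditions defining $\mathcal B_{\ell,\varepsilon}$.

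\emph{Fiberwise divisibility.} Put $k=7b+\ell$. By Corollary~\ref{cor:prym-leading-order}, on a very general abelian fivefold the restriction $\psi(\tau,\cdot)$ is a section of $2b\Theta$. Its symbolic order on every $\Sigma_i$ is at least
\[
 e=22b-3k=b-3\ell .
\]
Since $\psi$ is a nonzero holomorphic function, its restriction to a very general fiber is nonzero. Hence $\psi(\tau,\cdot)\in W_{2b,e}$ with $0\le e<b$; if $e$ were negative we would have $b<3\ell$, contrary to $b>27\ell$.

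We now apply Corollary~\ref{cor:fixed-deficit} with $\delta=b-e=3\ell$. The condition $9e>8b$ is equivalent to $b>27\ell$, which is exactly our hypothesis. The integer $J$ of that corollary is
\[
 \lceil(9e-8b)/2\rceil=\lceil(b-27\ell)/2\rceil ,
\]
which is the $J$ of the statement. The parity $\varepsilon\equiv e\equiv b-3\ell$ also matches. The corollary therefore shows that $\phi^J$ divides $\psi$ on the very general fiber. The residual section lies in $W_{2(27\ell-\varepsilon),\,24\ell-\varepsilon}$, since
\[
 b-2J=9\delta-\varepsilon=27\ell-\varepsilon,\qquad e-2J=8\delta-\varepsilon=24\ell-\varepsilon .
\]

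\emph{Globalization.} Lemma~\ref{lem:global-division} shows that $\chi=\psi/\phi^J$ is holomorphic on $\Hh_5\times\C^5$ and obeys the quotient transformation law. Its index is $b-2J=27\ell-\varepsilon\ge26>0$, so $\chi$ is a holomorphic Jacobi form. Using $2J=b-27\ell+\varepsilon$, its weight is
\[
 7b+\ell-14J=190\ell-7\varepsilon .
\]
The order on each $\Sigma_i$ is additive, because the associated graded ring of a regular local ring is a polynomial ring and hence a domain. Since $\ord_{\Sigma_i}(\phi)=2$ exactly by Proposition~\ref{prop:phi-integral}, we get $\ord_{\Sigma_i}(\chi)\ge e-2J=24\ell-\varepsilon$.

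\emph{Boundary order of $\chi$.} For the genus-five boundary order, I argue that it is additive for Jacobi forms, exactly as for scalar forms. Expand in the last diagonal variable $\tau_{55}$, i.e.\ take the partial Fourier--Jacobi expansion. The leading coefficients of a product multiply, and their product is nonzero as a product of nonzero holomorphic functions. The coefficient $\psi$ has boundary order at least $b$, because every nonzero Fourier coefficient of $G$ has all diagonal entries at least $b$. By Corollary~\ref{cor:exact-orders}, $\phi$ has boundary order exactly $2$. Hence
\[
 b(\chi)\ge b-2J=27\ell-\varepsilon ,
\]
and therefore $\chi\in\mathcal B_{\ell,\varepsilon}$.

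The only delicate point I expect is this boundary-order additivity: it must be checked that the minimum over $\GL_5(\Z)$-equivalent coefficients is genuinely attained by a leading-term product. I would first try to reduce to the last diagonal entry via the $\GL_5(\Z)$ symmetry, as in Section~\ref{sec:preliminaries}. Everything else is bookkeeping from results already established.
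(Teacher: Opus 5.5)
Your proposal is correct and follows essentially the paper's proof: fiberwise division by $\phi^J$ via the fixed-part theorem (you invoke it through Corollary~\ref{cor:fixed-deficit}, which packages the same residual indices), globalization by Lemma~\ref{lem:global-division}, and additivity of the genus-five boundary order together with $b(\phi)=2$ from Corollary~\ref{cor:exact-orders}. The additivity step you flag as delicate is in fact immediate. The paper defines the boundary order of a Jacobi form as the minimal last diagonal entry $N_{55}$ among nonzero coefficients, so your leading-term argument in the $e(\tau_{55})$-expansion already settles it without any further $\GL_5(\Z)$ reduction.
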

\begin{proof}
The normal-order estimate gives symbolic order at least
$22b-3(7b+\ell)=b-3\ell$ on the $27$ surfaces.
Theorem~\ref{thm:fixed-part}, with $d=2b$ and $e=b-3\ell$,
therefore gives the stated power of $\phi$ on a very general fiber.
Lemma~\ref{lem:global-division} makes the quotient a holomorphic
Jacobi form. Since
\[
 2J=b-27\ell+\varepsilon,
\]
its weight, index and symbolic order are respectively
\begin{align*}
 7b+\ell-14J&=190\ell-7\varepsilon,\\
 b-2J&=27\ell-\varepsilon,\\
 b-3\ell-2J&=24\ell-\varepsilon.
\end{align*}
Finally the genus-five boundary order of $\phi_b(G)$ is at least
$b$, by the integral symmetry of the Fourier coefficients of $G$.
Corollary~\ref{cor:exact-orders} gives boundary order exactly two
for $\phi$. Additivity of this valuation in \eqref{eq:bounded-leading}
therefore gives the asserted lower bound $b-2J$ for $\chi$.
\end{proof}

\subsection{Finite modules over the canonical algebra}
For $\ell,m\ge0$ put
\[
 \mathcal M_{m,\ell}=\MM_{7m+\ell}^{(6)}[m],
 \qquad
 \mathcal M_\ell=\bigoplus_{m\ge0}\mathcal M_{m,\ell}.
\]
The direct sum is graded by $m$. Multiplication by $\F$ makes
$\mathcal M_\ell$ a graded module over $\C[\F]$, with $\deg\F=2$.

\begin{theorem}\label{thm:finite-module}
For every fixed $\ell\ge0$, the dimensions
$\dim\mathcal M_{m,\ell}$ are bounded independently of $m$.
More precisely, if $m>27\ell$ and
$\varepsilon\equiv m-\ell\pmod2$, $0\le\varepsilon\le1$, then
\begin{equation}\label{eq:module-bound}
 \dim\mathcal M_{m,\ell}
 \le \sum_{t=0}^{\lfloor\ell/7\rfloor}
                    \dim\mathcal B_{\ell-7t,\varepsilon}.
\end{equation}
The module $\mathcal M_\ell$ is a finitely generated free graded
$\C[\F]$-module. In particular its Hilbert function is eventually
periodic with period dividing two.
\end{theorem}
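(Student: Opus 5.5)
The plan is to filter $\mathcal M_{m,\ell}$ by the exact boundary order. Then I bound each graded piece through its leading Fourier--Jacobi coefficient, using Proposition~\ref{prop:bounded-leading}, or Theorem~\ref{thm:minimal-slope} on the canonical ray. Freeness then follows from injectivity of multiplication by $\F$ and a stabilization argument.

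\emph{Filtration.} Let $0\ne G\in\mathcal M_{m,\ell}$ have boundary order $b\ge m$. Theorem~\ref{thm:minimal-slope} gives $7m+\ell\ge7b$. Writing $b=m+t$, this means $0\le t\le\lfloor\ell/7\rfloor$. Hence the filtration
\[
 \MM_{7m+\ell}^{(6)}[m]\supset\MM_{7m+\ell}^{(6)}[m+1]\supset\cdots
\]
has only the $\lfloor\ell/7\rfloor+1$ steps $b=m+t$ before it becomes zero. For each such $b$, the map $G\mapsto\phi_b(G)$ embeds
\[
 \MM_{7m+\ell}^{(6)}[b]/\MM_{7m+\ell}^{(6)}[b+1]
\]
into Jacobi forms of weight $7b+\ell'$, where $\ell'=\ell-7t$.

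\emph{Bounding each piece.} It suffices to show that this graded piece has dimension at most $\dim\mathcal B_{\ell',\varepsilon}$, with the same $\varepsilon\equiv m-\ell$ as in the statement.
\begin{itemize}
\item If $\ell'\ge1$, then $b\ge m>27\ell\ge27\ell'$. Proposition~\ref{prop:bounded-leading}, applied with $\ell'$ in place of $\ell$, gives $\phi_b(G)=\phi^{J}\chi$ with $\chi\in\mathcal B_{\ell',\varepsilon'}$. Here $J$ depends only on $b$ and $\ell'$, and $\varepsilon'\equiv b-3\ell'$. Division by the fixed power $\phi^J$ is injective, so the piece embeds into $\mathcal B_{\ell',\varepsilon'}$.
\item If $\ell'=0$, then $k=7b$. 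Theorem~\ref{thm:minimal-slope} shows that the piece is at most one-dimensional, spanned by $\F^{b/2}$, and that it vanishes when $b$ is odd. This matches the conventions $\mathcal B_{0,0}=\C$ and $\mathcal B_{0,1}=0$ once $\varepsilon'\equiv b$ is identified.
\end{itemize}
The parity bookkeeping is
\[
 b-3\ell'=m+t-3\ell+21t\equiv m-\ell\pmod 2,
\]
so in both cases $\varepsilon'=\varepsilon$. Summing over $t$ yields \eqref{eq:module-bound}. For $m\le27\ell$ there are only finitely many spaces, each finite-dimensional, so the dimensions are bounded for all $m$.

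\emph{Freeness.} Multiplication by $\F$ maps $\mathcal M_{m,\ell}$ into $\mathcal M_{m+2,\ell}$, since the weight rises by $14$ and the boundary order by $2$. It is injective because $\Hh_6$ is connected. Along each parity class of $m$ the dimensions are therefore nondecreasing and bounded, hence eventually constant. From some $m_0$ on, multiplication by $\F$ is then an isomorphism $\mathcal M_{m,\ell}\to\mathcal M_{m+2,\ell}$. Consequently $\mathcal M_\ell$ is generated by the finitely many finite-dimensional pieces with $m\le m_0+1$. A finitely generated graded module over the graded principal ideal domain $\C[\F]$ that is torsion-free is free, and torsion-freeness is exactly the injectivity above. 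The Hilbert function is eventually constant on each parity class, so it is eventually periodic with period dividing two.

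The step needing the most care is the bound on each piece. There I must confirm that Proposition~\ref{prop:bounded-leading} applies with the shifted excess $\ell'$ and the same hypothesis $b>27\ell'$. I must also confirm that the canonical-ray case $\ell'=0$ is correctly absorbed by the conventions for $\mathcal B_{0,\varepsilon}$, including the parity identification.
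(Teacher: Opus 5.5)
Your proposal is correct and follows the paper's proof essentially step for step. It uses the same filtration by exact boundary order $b=m+t$, and the same embedding of each graded piece into $\mathcal B_{\ell-7t,\varepsilon}$ via Proposition~\ref{prop:bounded-leading}, or via the equality case of Theorem~\ref{thm:minimal-slope} when $\ell=7t$, with the same parity check. It also uses the same stabilization of the injections $\mathcal M_{m,\ell}\hookrightarrow\mathcal M_{m+2,\ell}$ given by multiplication by $\F$.

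The only difference is cosmetic: you deduce freeness from the structure theory of finitely generated torsion-free graded modules over $\C[\F]$. The paper instead writes down a homogeneous basis directly, using complements of $\F\mathcal M_{m-2,\ell}$ in $\mathcal M_{m,\ell}$.
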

\begin{proof}
For a nonzero form of weight $7m+\ell$, Theorem~\ref{thm:minimal-slope}
bounds its first boundary index by
$m+\lfloor\ell/7\rfloor$. Filter $\mathcal M_{m,\ell}$ by boundary
orders
\[
 b=m+t,\qquad 0\le t\le\lfloor\ell/7\rfloor.
\]
The quotient between two consecutive terms injects, by its first
Fourier--Jacobi coefficient, into a Jacobi space. Its weight excess
relative to $b$ is $h=\ell-7t$.
If $h>0$, Proposition~\ref{prop:bounded-leading} applies because
$b\ge m>27\ell\ge27h$. It embeds this quotient into
$\mathcal B_{h,\varepsilon}$, since
\[
 b-3h=m+t-3(\ell-7t)\equiv m-\ell\pmod2.
\]
If $h=0$, Theorem~\ref{thm:minimal-slope} gives the same bound with
$\mathcal B_{0,0}=\C$ and $\mathcal B_{0,1}=0$.
Adding the dimensions of the successive quotients proves
\eqref{eq:module-bound}. There are only finitely many remaining
values of $m$, so boundedness holds for all $m$.

Multiplication by $\F$ gives injections
$\mathcal M_{m,\ell}\hookrightarrow\mathcal M_{m+2,\ell}$.
For each parity the dimensions are nondecreasing and bounded, and
therefore eventually constant. These injections are consequently
isomorphisms for all sufficiently large $m$ of either parity.
Choose a vector-space complement to
$\F\mathcal M_{m-2,\ell}$ in $\mathcal M_{m,\ell}$ at every degree,
with the source interpreted as zero for $m<2$.
Only finitely many complements are nonzero. Bases of these complements
and their successive multiples by $\F$ give a homogeneous free basis
of $\mathcal M_\ell$. This proves both finite generation and freeness.
\end{proof}

\begin{remark}\label{rem:eventual-divisibility}
There exists $m_0(\ell)$ such that every
$G\in\mathcal M_{m,\ell}$ with $m\ge m_0(\ell)$ is divisible by
$\F$, with quotient in $\mathcal M_{m-2,\ell}$.
This assertion about the entire Siegel modular form follows from
boundedness and injectivity in Theorem~\ref{thm:finite-module}.
It is stronger than the assertion about the leading coefficient in
Proposition~\ref{prop:bounded-leading}; the latter alone would not
justify division of the entire modular form.
\end{remark}

\subsection{Numerical dimension zero}
For a Cartier divisor $D$ on a smooth projective variety $X$ and an
ample Cartier divisor $A$, define
\[
 \sigma(D;A)=
 \max\left\{q\in\Z_{\ge0}:
       \limsup_{m\to\infty}
       \frac{h^0(X,mD+A)}{m^q}>0\right\},
\]
with value $-\infty$ if the set is empty. The numerical Kodaira
dimension used here is
\[
 \nu_\sigma(D)=\max_{A\ \mathrm{ample}}\sigma(D;A).
\]
This is Nakayama's section-growth definition
\cite[Chapter~V, Section~2]{Nakayama04}.
For the case $\nu_\sigma(D)=0$, the standard versions of numerical
dimension agree; see \cite[Definition~2.5 and Remark~2.8]{Gongyo11}.

We recall the terminology for the birational assertions below.
A normal variety $Y$ is $\Q$-factorial if every Weil divisor has a
positive Cartier multiple. If $K_Y$ is $\Q$-Cartier and
$f:W\to Y$ is a resolution, write
\[
 K_W=f^*K_Y+\sum_E a(E,Y)E,
\]
where the sum runs over the exceptional prime divisors on $W$.
The number $a(E,Y)$ is the discrepancy of the associated divisorial
valuation. Throughout, divisorial valuations are normalized to have
value group $\Z$. If $v=\ord_F$, we write $a(v,Y)=a(F,Y)$;
for a $\Q$-Cartier divisor $D$, the value $v(D)$ is the coefficient
of $F$ in its pullback to a model carrying $F$.
The singularities of $Y$ are terminal when every
exceptional divisorial valuation has positive discrepancy.
A terminal minimal model has nef canonical divisor; it is a good
minimal model when a positive multiple of the canonical divisor is
generated by global sections. We use the existence and abundance
theorems in numerical dimension zero, not an abundance assertion for
arbitrary Kodaira dimension.

\begin{theorem}\label{thm:numerical-zero}
For a smooth projective model $X$ of $\A_6$ one has
\[
 \nu_\sigma(K_X)=0.
\]
The variety $\A_6$ has a projective $\Q$-factorial terminal good
minimal model $Y$ satisfying
\begin{equation}\label{eq:canonical-torsion}
 2K_Y\sim0,\qquad K_Y\not\sim0.
\end{equation}
\end{theorem}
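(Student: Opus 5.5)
The plan splits the proof into three steps: section growth, construction of the minimal model, and the torsion computation.

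\emph{Step 1: section growth.} Choose $X$ as in \eqref{eq:canonical-injection}, with $L_X$ the pullback of the ample descended power $\mathcal L_6^r$. Let $A$ be an arbitrary ample divisor on $X$. Since $L_X$ is big and nef, some multiple $aL_X-A$ is effective for a fixed $a>0$. This gives
\[
 h^0(X,mK_X+A)\le h^0(X,mK_X+aL_X)\le\dim\MM_{7m+ra}^{(6)}[m],
\]
using \eqref{eq:canonical-injection} for the last inequality. Put $\ell=ra$. By Theorem~\ref{thm:finite-module} the right-hand side is bounded independently of $m$. Hence $\sigma(K_X;A)\le0$ for every ample $A$.

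Corollary~\ref{cor:canonical-ring} gives $H^0(X,2K_X)\ne0$. So $h^0(X,2mK_X+A)\ge1$ whenever $H^0(A)\ne0$, for instance after replacing $A$ by a sufficiently ample multiple, which is allowed in the maximum over ample classes. Therefore $\sigma(K_X;A)=0$ for such $A$, and $\nu_\sigma(K_X)=0$.

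\emph{Step 2: existence of a good minimal model.} Since $\kappa(X)\ge0$ and $\nu_\sigma(K_X)=0$, I would invoke the known results in numerical dimension zero. These are Nakayama's decomposition $K_X\equiv N_\sigma(K_X)$, together with the existence of minimal models when the positive part is numerically trivial (Druel, Gongyo \cite{Gongyo11}, building on BCHM). They give a $K_X$-MMP with scaling terminating in a projective $\Q$-factorial terminal $Y$ with $K_Y\equiv0$. Abundance in numerical dimension zero \cite{Gongyo11} then gives $K_Y\sim_\Q0$, and $Y$ is a good minimal model.

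\emph{Step 3: the torsion relation.} Because $Y$ is terminal and obtained by an MMP, $H^0(Y,mK_Y)=H^0(X,mK_X)$ for all $m$. So the canonical ring of $Y$ is $\C[u]$ with $\deg u=2$, by Corollary~\ref{cor:canonical-ring}. Let $N$ be the index with $NK_Y\sim0$. Then $H^0(Y,NK_Y)\ne0$, so $N$ is even, since $H^0(mK)=0$ for odd $m$. Moreover $H^0(Y,K_Y)=0$ gives $K_Y\not\sim0$.

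To get $2K_Y\sim0$, I would look at the section $u\in H^0(Y,2K_Y)$ itself. Its divisor is effective and numerically trivial on the projective variety $Y$, hence zero. So $u$ is nowhere vanishing as a section of the reflexive sheaf $\OO_Y(2K_Y)$. This trivializes $\OO_Y(2K_Y)$ on the smooth locus, whose complement has codimension at least two. Since $Y$ is normal, the trivialization extends, and $2K_Y\sim0$.

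\emph{Main obstacle.} The delicate point is Step 2. One must cite the correct existence and abundance statement for $\nu=0$ and check that its hypotheses, $\kappa\ge0$ and $\nu_\sigma=0$ on a smooth model, are exactly what Step 1 provides. Step 1 itself must also hold with respect to every ample class, which is why Theorem~\ref{thm:finite-module} is needed for all weight excesses $\ell$ and not only for $\ell=0$.
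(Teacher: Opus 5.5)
Your proposal is correct and follows the paper's proof essentially step for step. The paper likewise bounds $h^0(mK_X+aL_X)$ via \eqref{eq:canonical-injection} and Theorem~\ref{thm:finite-module}, uses bigness of $L_X$ to absorb an arbitrary ample $A$, invokes Gongyo's existence and abundance in numerical dimension zero, and shows that the numerically trivial effective divisor of the bicanonical section vanishes. One point you leave implicit: the statement is about every smooth projective model, so you need the birational invariance of $\nu_\sigma(K_X)$ under smooth modifications (the paper cites \cite[Lemma~2.6(1)--(2)]{Gongyo11}) to pass from your chosen resolution of $\A_6^{\mathrm{Perf}}$ to an arbitrary model.
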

\begin{proof}
First choose $X$ to resolve $\A_6^{\mathrm{Perf}}$, and let $L_X$
be the big Cartier Hodge multiple fixed in
\eqref{eq:canonical-injection}. For every fixed $a\ge0$, that
injection and Theorem~\ref{thm:finite-module} give
\[
 \sup_{m\ge0}h^0(X,mK_X+aL_X)<\infty.
\]
Let $A$ be any ample Cartier divisor. Bigness of $L_X$ implies that
$aL_X-A$ has a nonzero section for some $a>0$.
Multiplication by this fixed section gives, for all $m$,
\[
 H^0(X,mK_X+A)\lhook\joinrel\longrightarrow
                      H^0(X,mK_X+aL_X).
\]
Thus $\sigma(K_X;A)\le0$ for every ample $A$. The effective
bicanonical divisor shows that $K_X$ is pseudo-effective.
Equivalently, for a sufficiently positive $A$, even multiples of
the known bicanonical section already give nonzero sections of
$mK_X+A$. Hence $\nu_\sigma(K_X)=0$.
Numerical Kodaira dimension of the canonical divisor is unchanged
by smooth birational modifications: pullback and addition of an
effective exceptional divisor do not change it
\cite[Lemma~2.6(1)--(2)]{Gongyo11}. This proves the assertion for
every smooth projective model.

Choose an ample $\Q$-divisor $H$ such that $(X,H)$ is klt and
$K_X+H$ is nef. The minimal-model theorem for numerical dimension zero
\cite[Theorem~1.2 and Theorem~5.1]{Gongyo11} gives a terminating
$K_X$-minimal model program with scaling of $H$.
Starting with a smooth variety yields a $\Q$-factorial terminal
minimal model $Y$. Since $K_Y$ is nef and has numerical dimension
zero, it is numerically trivial \cite[Lemma~2.6(3)]{Gongyo11}.
Abundance for this case \cite[Lemma~6.2]{Gongyo11} gives
$K_Y\sim_\Q0$.

The canonical algebra is birationally invariant for terminal varieties.
Thus Corollary~\ref{cor:canonical-ring} gives a nonzero section of
the reflexive sheaf $\OO_Y(2K_Y)$ and no nonzero section of
$\OO_Y(K_Y)$. The effective zero divisor of the former is numerically
trivial. Its intersection with an ample divisor to the power
$\dim Y-1$ is zero, so the divisor itself is zero. It follows that
$2K_Y\sim0$ as a Weil divisor, and in particular $2K_Y$ is Cartier.
The vanishing of $H^0(Y,\OO_Y(K_Y))$ gives $K_Y\not\sim0$.
\end{proof}

\subsection{The exceptional divisor and the canonical double cover}
Let $E_{\F}$ denote the closure of the zero divisor of $\F$ in
$X_0=\A_6^{\mathrm{Perf}}$. It is prime and reduced by
Corollary~\ref{cor:F-prime}. The canonical divisor formula and the
boundary order of $\F$ give
\begin{equation}\label{eq:canonical-prime-divisor}
 2K_{X_0}\sim E_{\F},\qquad
 [E_{\F}]=14\lambda_6-2[D_6].
\end{equation}
Indeed the order two of $\F$ cancels the double pole of a
bicanonical volume form along the rank-one boundary.

\begin{theorem}\label{thm:minimal-geometry}
The minimal model in Theorem~\ref{thm:numerical-zero} can be chosen
with Picard number one. For every terminal good minimal model $Y$
of $\A_6$, the divisorial valuation defined by $E_{\F}$ is exceptional
over $Y$ and satisfies
\begin{equation}\label{eq:half-discrepancy}
 a(E_{\F},Y)=\frac12.
\end{equation}
The divisor $E_{\F}$ is uniruled. The canonical divisor of $Y$ is
not Cartier along the center of this valuation, and its Cartier
index there is two. In particular, $\A_6$ has no smooth projective
minimal model.
\end{theorem}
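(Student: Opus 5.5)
The plan is to run the $K$-minimal model program directly from $X_0=\A_6^{\mathrm{Perf}}$ rather than from an arbitrary resolution. First we must check that $X_0$ is a legitimate starting point. We use the standard facts that $X_0$ is projective and $\Q$-factorial with terminal singularities in genus six (Shepherd-Barron \cite{SB06} for $g\ge5$, together with the Reid--Tai criterion). We also use that $N^1(X_0)_\Q=\Q\lambda_6\oplus\Q[D_6]$, so $\rho(X_0)=2$.

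By \eqref{eq:canonical-prime-divisor}, $2K_{X_0}\sim E_{\F}$, a prime divisor. By Theorem~\ref{thm:numerical-zero} and Gongyo's theorem, a $K_{X_0}$-MMP with scaling terminates with a minimal model $Y$ on which $K_Y\equiv0$. Each step is $K$-negative, and every $K$-negative extremal ray $R$ satisfies $E_{\F}\cdot R<0$. Hence the exceptional locus of each contraction lies in (the strict transform of) $E_{\F}$, and a divisorial contraction can only contract $E_{\F}$. Since $K$ is not numerically trivial on $X_0$, because $E_{\F}\ne0$ is a nonzero effective divisor on a projective variety, the program cannot consist of flips alone. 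Otherwise the final $K_Y$ would be the strict transform of a nonzero effective divisor, while being numerically trivial; this contradicts the ampleness argument used in Theorem~\ref{thm:numerical-zero}. So exactly one divisorial contraction occurs, it contracts $E_{\F}$, and after it the strict transform of $E_{\F}$ is gone. Consequently the program ends at that point, and $\rho(Y)=2-1=1$. Uniruledness of $E_{\F}$ follows because the exceptional locus of a $K$-negative extremal divisorial contraction is covered by $K$-negative rational curves (Kawamata's theorem on the length of extremal rays).

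For the discrepancy, let $\mu\colon X_0\dashrightarrow Y$ be the composite. Flips are isomorphisms in codimension one, so on the model $X'$ just before the divisorial contraction $p\colon X'\to Y$ we have $2K_{X'}\sim E'$, the strict transform of $E_{\F}$, and $K_{X'}=p^*K_Y+aE'$ with $K_Y\equiv0$. Hence $E'\equiv2aE'$, so $a=1/2$. Equivalently, the bicanonical section $u$ has a simple zero along $E'$ and, since $2K_Y\sim0$, no zero on $Y$, and comparing orders gives $2a=1$. This proves \eqref{eq:half-discrepancy} for this $Y$.

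For an arbitrary terminal good minimal model $Y'$, we argue as follows. All terminal minimal models in the same birational class are connected by flops and hence isomorphic in codimension one, so discrepancies of divisorial valuations agree: $a(v,Y')=a(v,Y)$ for every $v$, via a common resolution and the fact that $K_Y,K_{Y'}\equiv0$ pull back to the same class. Therefore $a(E_{\F},Y')=1/2>0$, and so $E_{\F}$ is exceptional over $Y'$, since a divisor on $Y'$ would have discrepancy $0$. A half-integral discrepancy along the center $W$ of $\ord_{E_{\F}}$ forces $K_Y$ to be non-Cartier near $W$: if $K_Y$ were Cartier there, all discrepancies over $W$ would be integers. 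Because $2K_Y\sim0$ is Cartier, the index there is exactly two. A smooth minimal model would have only integral discrepancies, so none exists. The main obstacle is the starting input, namely that $X_0$ is $\Q$-factorial and terminal. If that fails, I would instead run the MMP on a $\Q$-factorial terminalization of $X_0$ that is small over the locus where $X_0$ is already terminal, and use $\rho=2$ only after checking that the extracted divisors have positive discrepancy and are contracted first.
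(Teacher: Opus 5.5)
Your proposal is correct and is essentially the paper's proof: run the $K_{X_0}$-minimal model program from the $\Q$-factorial terminal $X_0$ with $\rho(X_0)=2$; use $2K\sim E_{\F}$ to see that the only possible divisorial contraction is that of $E_{\F}$; get uniruledness from Kawamata's length theorem; and read off $a(E_{\F},Y)=1/2$ and the Cartier index two from the simple zero of the bicanonical section. The differences are only in ordering. The paper first proves $a(E_{\F},Y)=1/2$ for every terminal good minimal model at once, by comparing the unique bicanonical section on a common resolution, and uses this to force the contraction. You force the contraction by numerical triviality and then transfer the discrepancy to other models through isomorphism in codimension one; for non-$\Q$-factorial $Y'$ that step rests on the negativity lemma (or the same bicanonical comparison) rather than on Kawamata's flop theorem.
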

\begin{proof}
Let $W$ be a common smooth resolution of $X_0$ and $Y$.
The unique bicanonical section on $W$ has order one along the strict
transform of $E_{\F}$: its generic point is a smooth interior
point of $X_0$, where $\F$ has a simple zero.
On $Y$, the corresponding section of $2K_Y$ has no zeros by
\eqref{eq:canonical-torsion}. If $E_{\F}$ were not exceptional
over $Y$, the orders at its generic point would coincide, which is
impossible. Comparing the two sections under the canonical divisor
formula on $W$ gives
$2a(E_{\F},Y)=1$. This proves \eqref{eq:half-discrepancy}.

For the Picard-number assertion, use that $X_0$ is $\Q$-factorial
with Picard number two \cite[Theorem~1.4]{SB06}, and has terminal
singularities \cite[Theorem~1.1]{SB16}. The $\Q$-factoriality here
is a statement about the full-level coarse variety. It can also be
seen directly: Freitag's theorem makes every interior prime divisor
the exact zero divisor of a modular form, so its closure has class
$k\lambda_6-bD_6$ in the rational divisor class group. Both
$\lambda_6$ and $D_6$ are $\Q$-Cartier, and $D_6$ is the only
boundary prime. Hence every Weil divisor is $\Q$-Cartier.
Numerical dimension zero
descends from a resolution by \cite[Lemma~2.6(1)--(2)]{Gongyo11},
so run the terminating $K_{X_0}$-minimal model program with scaling.
At every stage before $E_{\F}$ is contracted, the transform of
\eqref{eq:canonical-prime-divisor} remains valid.
Suppose that a divisorial contraction had a different exceptional
prime divisor $E'$. Its contracted curves cover $E'$, and a general
such curve $C$ is not contained in $E_{\F}$.
Effectivity and $\Q$-Cartierness would imply $E_{\F}\cdot C\ge0$,
whereas $E_{\F}\cdot C=2K\cdot C<0$. This is a contradiction.
Thus the only possible divisorial contraction removes $E_{\F}$.
It must occur, by \eqref{eq:half-discrepancy}.
Flips preserve Picard number, and this divisorial contraction lowers
it from two to one. After it, the pushforward canonical divisor
satisfies $2K\sim0$, so the program has reached a good minimal model.

The exceptional divisor of a $K$-negative divisorial contraction is
covered by rational curves; this follows from the theorem on the
length of extremal rational curves \cite[Theorem~1]{Kawamata91},
which covers each positive-dimensional fiber component by rational
curves.
Uniruledness is a birational property, so $E_{\F}$ is uniruled.

Finally, if $K_Y$ were Cartier near the center of $E_{\F}$, the
coefficient of that exceptional divisor in
$K_W-f^*K_Y$ would be an integer. This contradicts
\eqref{eq:half-discrepancy}. As $2K_Y$ is Cartier, the local
Cartier index is precisely two. If a smooth projective minimal model
existed, its nef canonical divisor would have numerical dimension
zero by Theorem~\ref{thm:numerical-zero}. It would therefore be
numerically trivial, and then $\Q$-linearly trivial by
\cite[Lemmas~2.6(3) and~6.2]{Gongyo11}. Thus it would be a good
minimal model to which the same discrepancy argument applies.
Its canonical divisor would be Cartier everywhere, a contradiction.
\end{proof}

\begin{corollary}\label{cor:canonical-cover}
There is a connected projective double cover
$\pi:\widetilde Y\to Y$, \'etale in codimension one, such that
$\widetilde Y$ has terminal singularities and $K_{\widetilde Y}\sim0$.
\end{corollary}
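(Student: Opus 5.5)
The plan is the standard index-one cover attached to a $2$-torsion Weil divisor class. By Theorem~\ref{thm:numerical-zero}, $2K_Y\sim0$ and $K_Y\not\sim0$. Fix a nowhere vanishing section $s$ of the reflexive sheaf $\OO_Y(2K_Y)\cong\OO_Y$; up to scalars $s$ is the bicanonical section defined by $\F$. Put
\[
 \widetilde Y=\operatorname{Spec}_Y\bigl(\OO_Y\oplus\OO_Y(-K_Y)\bigr),
\]
where the algebra structure is given by $\OO_Y(-K_Y)\otimes\OO_Y(-K_Y)\to\OO_Y(-2K_Y)\cong\OO_Y$, followed by reflexive hull and multiplication by $s^{-1}$. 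Thus $\pi:\widetilde Y\to Y$ is finite of degree two.

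First, on the smooth locus $Y^\circ$, whose complement has codimension at least two because $Y$ is normal, $\OO(-K_Y)$ is a line bundle whose square is trivialized by the nowhere vanishing $s$. Hence $\pi$ is étale over $Y^\circ$, so it is étale in codimension one. The algebra is reflexive and $S_2$, and it is normal over $Y^\circ$; therefore $\widetilde Y$ is normal. For connectedness, note that $\widetilde Y^\circ\to Y^\circ$ splits only if $K_Y|_{Y^\circ}$ is trivial. Since $Y$ is normal, that would give $K_Y\sim0$ on $Y$, contradicting $K_Y\not\sim0$. Thus $\widetilde Y$ is irreducible. It is projective because it is finite over a projective variety.

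Next, $K_{\widetilde Y}=\pi^*K_Y$, since $\pi$ is étale in codimension one. On $\widetilde Y^\circ$ the pullback of $\OO(K_Y)$ acquires a tautological nowhere vanishing section $\sqrt s$, namely the tautological generator of the summand $\OO(-K_Y)$ read in reverse. This section extends across codimension two by reflexivity, so $K_{\widetilde Y}\sim0$.

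The main point is terminality of $\widetilde Y$. I would argue with the standard ramification formula for finite morphisms that are étale in codimension one: for any divisorial valuation $w$ over $\widetilde Y$ with restriction $v$ over $Y$ and ramification index $r_w\in\{1,2\}$,
\[
 1+a(w,\widetilde Y)=r_w\bigl(1+a(v,Y)\bigr),
\]
as in Kollár--Mori, Proposition~5.20. Since $Y$ is terminal, $a(v,Y)>0$ for every exceptional $v$, and therefore $a(w,\widetilde Y)\ge a(v,Y)>0$. Exceptional $w$ restrict to exceptional $v$ because $\pi$ is finite. This gives terminal singularities. The singularities are also klt because $K_{\widetilde Y}$ is Cartier. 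The only care needed is to check that the Cartier index of $K_Y$ divides $2$, so that the index-one cover construction and the formula apply; this holds since $2K_Y\sim0$ is Cartier.
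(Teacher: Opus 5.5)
Your proposal is correct and follows essentially the same route as the paper. Both use the index-one cover $\Spec_Y(\OO_Y\oplus\OO_Y(-K_Y))$ built from the trivialization $2K_Y\sim0$, get connectedness from $K_Y\not\sim0$, trivialize $K_{\widetilde Y}=\pi^*K_Y$ by the tautological square root, and get terminality from the ramification formula of Koll\'ar--Mori, Proposition~5.20. The only differences are minor: you check normality directly ($S_2$ plus \'etale in codimension one) instead of normalizing, and you test connectedness on the smooth locus rather than at the generic point. Your aside that klt follows from $K_{\widetilde Y}$ being Cartier is a non sequitur, but it is not needed, since terminal already implies klt.
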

\begin{proof}
Choose the trivialization of $\OO_Y(-2K_Y)$ supplied by
\eqref{eq:canonical-torsion}, and form the normalization of
\[
 \Spec_Y\bigl(\OO_Y\oplus\OO_Y(-K_Y)\bigr),
\]
using reflexive multiplication. Since $K_Y$ has exact order two in the divisor class group, this
cover is connected. Indeed, if the associated quadratic algebra
split over $\C(Y)$, a rational function trivializing $2K_Y$
would be a square, making $K_Y$ principal. Its square root on
the cover trivializes the pullback of $K_Y$. The cover is
\'etale at every codimension-one point, and the canonical
bundle formula therefore gives
$K_{\widetilde Y}=\pi^*K_Y\sim0$.
Terminality is preserved by this finite cover: the discrepancy
formula \cite[Proposition~5.20]{KM98} is
\[
 a(F',\widetilde Y)+1
      =e(F'/F)\bigl(a(F,Y)+1\bigr),
\]
where $e(F'/F)\ge1$ is the divisorial ramification index.
An exceptional valuation upstairs maps to an exceptional valuation
downstairs, so positivity of the discrepancies is preserved.
\end{proof}

\begin{remark}
The double cover is not asserted to be smooth, and no vanishing
$H^i(\widetilde Y,\OO_{\widetilde Y})=0$ for intermediate $i$ is used
or proved. In particular the conclusion is stated in terms of
terminal singularities and a trivial canonical divisor, without
imposing an additional definition of a Calabi--Yau variety.
Likewise, \eqref{eq:half-discrepancy} does not specify the dimension
or local analytic type of the center of $E_{\F}$.
\end{remark}

\begin{remark}\label{rem:modular-canonical-cover}
The canonical cover has a direct modular description. On the smooth
interior, let $s$ be the bicanonical section defined by $\F$ and
choose a rational top differential $\eta$. Writing $s=f\eta^2$
defines the quadratic function-field extension $\C(\A_6)(\sqrt f)$,
independently of the choice of $\eta$. This is the function field of
$\widetilde Y$. In modular coordinates it is given by
\[
 w^2=\F(\Omega),\qquad
 (\Omega,w)\longmapsto
 \bigl(\gamma\Omega,\det(C\Omega+D)^7w\bigr).
\]
The normalization of this modular model is branched along the
reduced interior divisor $E_{\F}$.
Theorem~\ref{thm:prym-transverse-A1}, proved below, shows that along a general
Prym point, it has a three-dimensional ordinary double point in
the transverse directions. The valuation $E_{\F}$ has a unique
extension $R$ to this quadratic field, with ramification index two.
The discrepancy formula gives
\[
 a(R,\widetilde Y)+1=2\bigl(a(E_{\F},Y)+1\bigr)=3.
\]
This ramification is compatible with the absence of codimension-one
ramification over $Y$, since $E_{\F}$ is exceptional over $Y$.
\end{remark}

\section{Transverse singularities along the Prym locus}
\label{sec:singularities}

We retain the notation $\phi=\phi_2(\F)$ and
$\Sigma_1,\ldots,\Sigma_{27}$ from Sections~\ref{sec:prym}
and~\ref{sec:theta}. We shall determine the transverse singularity
of $\divi(\phi)$ along these surfaces. Only its local equation near
the origin is used; global integrality of the divisor is not required.

For a sufficiently general $(A,\Theta)$, let $f$ be a local equation
of $\divi(\phi)$ at $0$, and put
\[
 T_{10}=\operatorname{in}_0(f),\qquad
 X=V(T_{10})\subset\mathbf P T_0A=\mathbf P^4.
\]
The proof of Proposition~\ref{prop:theta-normal} gives
\[
 \deg X=10,\qquad X\text{ integral},\qquad \dim\Sing X\le1.
\]
We identify each curve $C_i$ with its Prym-canonical image in
$\mathbf P T_0A$. By the proof of
Proposition~\ref{prop:prym-boundary-surfaces}, it is the support of
$\mathbf P C_0\Sigma_i$, where $C_0$ denotes the tangent cone.
These curves are nondegenerate, have degree $10$, and are pairwise
distinct: the embedded curve recovers
$\eta_i=\OO_{C_i}(1)\otimes \omega_{C_i}^{-1}$ and hence its Prym presentation.
Corollary~\ref{cor:prym-leading-order} gives
$\ord_{\Sigma_i}(\phi)\ge2$. Thus $df$ vanishes on the reduced
surface $\Sigma_i$, and its initial terms show that
\begin{equation}\label{eq:prym-tangent-singular}
 C_i\subset\Sing X\qquad(1\le i\le27).
\end{equation}

\begin{lemma}\label{lem:gauss-image-dimension}
Let $X\subset\mathbf P^4$ be an integral hypersurface of degree
$d\ge2$ with $\dim\Sing X\le1$. If $\Sing X$ contains an integral curve
which is not a line, then the image of the Gauss map
\[
 \gamma:X_{\mathrm{sm}}\longrightarrow(\mathbf P^4)^*,
 \qquad x\longmapsto T_xX,
\]
has dimension at least two.
\end{lemma}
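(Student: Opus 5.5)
The plan is to argue by contradiction, using the structure of hypersurfaces whose Gauss map has small image. Suppose the closure $\Gamma=\overline{\gamma(X_{\mathrm{sm}})}\subset(\mathbf P^4)^*$ has dimension at most one. Since $X$ is integral of degree $d\ge2$, it is not a hyperplane, so $\gamma$ is nonconstant and $\dim\Gamma=1$. In characteristic zero the general fiber of the Gauss map of a hypersurface is an open subset of a linear space. Here the fibers are two-dimensional, because $\dim X=3$, and the closure of a general fiber is a plane $\Lambda\subset\mathbf P^4$. Along $\Lambda\cap X_{\mathrm{sm}}$ the tangent hyperplane is constant, equal to some $H_\Lambda$. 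Thus $X$ is swept out by a one-parameter family of planes $\Lambda_t$, and each $\Lambda_t$ is the whole fiber of $\gamma$ over the point $H_t\in\Gamma$.

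Next I will identify $X$ concretely as a developable hypersurface of this type. By the classical description (Griffiths--Harris, or Akivis--Goldberg), $X$ is the dual variety of the curve $\Gamma$. Its planes are $\Lambda_t=H_t\cap H_t'\cap H_t''$, the osculating $2$-plane of $\Gamma$ at $t$ read dually. If $\Gamma$ spans a $\mathbf P^1$ or a $\mathbf P^2$ in $(\mathbf P^4)^*$, then $X$ is a cone over a curve with vertex a plane or a line. If $\Gamma$ spans a $\mathbf P^3$, then $X$ is a cone with a point vertex over a developable surface in $\mathbf P^3$. If $\Gamma$ is nondegenerate, then $X$ is the osculating-plane scroll of the dual curve. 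I will then compute the singular locus in each case and show that the hypothesis $\dim\Sing X\le1$ forces $\Sing X$ to consist only of lines and points.

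The plane-vertex case has a two-dimensional singular locus, so it is excluded. In the line-vertex case $X$ is a cone over a plane curve $\Gamma^*$, and its singular locus contains the vertex line. The remaining singular components are joins of that line with singular points of $\Gamma^*$, which are planes, so they are again excluded. The only one-dimensional singular component is therefore the vertex line.

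The main obstacle is the last two cases. There I have to show that the focal (edge) locus of the family $\Lambda_t$ is a surface. This locus is the union of the lines $\Lambda_t\cap\Lambda_t'$, the characteristic lines, and it lies in $\Sing X$ because $\gamma$ degenerates there. For a nondegenerate dual curve, the union of these lines is the tangent-line surface of the dual curve in $\mathbf P^4$. That surface is two-dimensional, which contradicts $\dim\Sing X\le1$.

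The point-vertex cone case is handled the same way. There the cuspidal edge of the developable surface cones up to a surface, unless the developable surface is itself a cone. If it is a cone, we are back in the line-vertex case.

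Consequently every one-dimensional component of $\Sing X$ is a line, which contradicts the presence of an integral non-line curve. The step I expect to be delicate is making rigorous that the characteristic lines sweep out a surface rather than collapsing. I would first try to do this directly in local coordinates: parametrize the planes by $t$ and check that $\Lambda_t\cap\Lambda_t'$ moves with $t$ whenever $\Gamma$ is not contained in a line or a plane.
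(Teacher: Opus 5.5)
Your reduction is sound as far as it goes. In characteristic zero, reflexivity gives $X=\Gamma^*$, the split by the span of $\Gamma$ is legitimate, and your line-vertex case is exactly how the paper's proof ends. (The span-$\mathbf P^1$ case is simply vacuous, since then $\Gamma^*$ is a plane; the reason you give for excluding it is not the right one.)

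The gap is in the point-vertex and nondegenerate cases. There you claim that the characteristic lines $\ell_t=H_t\cap H_t'\cap H_t''$ (resp.\ the cuspidal edge) lie in $\Sing X$ ``because $\gamma$ degenerates there''. This containment is the entire content of those cases, and the stated reason does not prove it. What degenerates along $\ell_t$ is the parametrization $(t,u,v)\mapsto c(t)+uc'(t)+vc''(t)$ of the scroll, and a non-immersive parametrization can have smooth image. The Gauss map itself shows nothing, since the tangent hyperplanes $H_s$ along nearby planes converge continuously to $H_{t_0}$. Conversely, the step you single out as delicate is immediate: if $\ell_t$ were constant, every $H_t$ would contain it and $\Gamma$ would lie in a plane. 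Note also an index slip: the Gauss fibers are $\Lambda_t=H_t\cap H_t'$, and the triple intersection is the characteristic line.

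To close the gap, take a general $p\in\ell_{t_0}$ and a plane $N\ni p$ with $\hat N\cap\hat\Lambda_{t_0}=\C\hat p$ and $N\not\subset H_{t_0}$.
The arc $n(s)=N\cap\Lambda_s$ is injective near $t_0$, because nearby planes $\Lambda_s$ meet only near the point $\bigcap_{i\le3}H_{t_0}^{(i)}\ne p$. Differentiating $\langle H_s,\hat n\rangle=\langle H_s',\hat n\rangle=0$ and using $\langle H_{t_0}'',\hat p\rangle=0$ gives $\hat n'(t_0)\in\hat N\cap\hat\Lambda_{t_0}=\C\hat p$. Thus $X\cap N$ has a singular branch at $p$. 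If $X$ were smooth at $p$, then $T_pX=H_{t_0}$ by continuity, and $X\cap N$ would be a smooth curve germ, a contradiction.

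The paper sidesteps all of this without duality. On a general Gauss plane $\Pi$ the gradient of the equation $T$ is proportional to a fixed vector, so $\partial_iT|_\Pi=h_iP_\Pi$ with $\deg P_\Pi=d-1$. Hence $V(P_\Pi)$ is a curve in $\Pi\cap\Sing X$. Since $\Sing X$ has finitely many curve components and the planes form an irreducible one-parameter family, a single component lies in every $\Pi$. It cannot span a plane, so it is a common line, and $X$ is a cone with line vertex, landing directly in your line-vertex case. That argument replaces your entire $\mathbf P^3$ and $\mathbf P^4$ analysis.
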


\begin{proof}
Write $r=\dim\overline{\gamma(X_{\mathrm{sm}})}$. The closure of a
general Gauss fiber is a linear space of dimension $3-r$
\cite[(2.10), pp.~388--389]{GH79}. The case $r=0$ would make $X$
a hyperplane. Suppose $r=1$, and denote the planes arising as
closures of general fibers by $\Pi$.

If $T$ defines $X$, the restrictions of its partial derivatives to
$\Pi$ have constant projective ratios. They are therefore constant
multiples of one nonzero homogeneous polynomial $P_\Pi$ of degree
$d-1$. The plane curve $V(P_\Pi)$ is contained in $\Sing X$.
There are finitely many curve components of $\Sing X$. Since the
planes form an irreducible one-dimensional family and containment
of a fixed curve is a closed condition, one such component is
contained in every general $\Pi$. A curve spanning a plane would
force $\Pi$ to be fixed. Consequently this component is a line
$\ell$, and $X$ is the cone with vertex $\ell$ over an integral
plane curve $B$ of degree $d$.

If $B$ were singular, the plane joining $\ell$ to a singular point
of $B$ would lie in $\Sing X$, contrary to its dimension bound.
Thus $B$ is smooth. The equation of the cone depends only on the
three coordinates transverse to $\ell$, so $\Sing X=\ell$.
This contradicts the hypothesis on the singular curve.
\end{proof}

\begin{lemma}\label{lem:tangent-polar-cycle}
Let $X=V(T)\subset\mathbf P^4$ be an integral hypersurface of
degree $d\ge2$ with $\dim\Sing X\le1$, and let
$B_1,\ldots,B_s$ be distinct integral curves in $\Sing X$.
Two homogeneous polars with general directions $v,w\in\C^5$
define a complete-intersection curve
\[
 Q=V(T,\partial_vT,\partial_wT)
\]
of degree $d(d-1)^2$. If $m_i$ is the coefficient of $B_i$ in
the fundamental cycle of $Q$, then $m_i\ge2$. If the quadratic
normal term of $T$ at the generic point of $B_i$ has rank at most
two, then $m_i\ge3$. Moreover, if the Gauss image of $X$ has
dimension at least two, then
\begin{equation}\label{eq:tangent-polar-degree}
 \sum_{i=1}^s m_i\deg B_i<d(d-1)^2.
\end{equation}
\end{lemma}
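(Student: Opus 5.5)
We prove the four assertions in order: that $Q$ is a complete intersection curve, the two local multiplicity bounds, and then the strict degree inequality.

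\emph{Complete intersection.} For general $v$ the polar $\partial_vT$ does not vanish identically on the integral hypersurface $X$, since $\deg\partial_vT<d$. So $X\cap V(\partial_vT)$ is a surface. For general $w$ the second polar does not contain any of its finitely many components. If it did for all $w$, all partials would vanish on that component, which would then lie in $\Sing X$; this is impossible because $\dim\Sing X\le1$. Hence $Q$ is a complete intersection curve, and B\'ezout gives $\deg Q=d(d-1)^2$.

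\emph{Multiplicity bounds.} Localize at the generic point of $B_i$. The ring $\OO_{\mathbf P^4,B_i}$ is regular of dimension three, and $T$ has order $r\ge2$ there because $B_i\subset\Sing X$. The two polars have order at least $r-1\ge1$. Lemma~\ref{lem:local-length} then gives $m_i\ge r(r-1)^2\ge2$.

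For the rank condition we may assume $r=2$, since otherwise $m_i\ge12$. Choose regular parameters $y_1,y_2,y_3$ transverse to $B_i$ and write $T=q(y)+O(y^3)$, with $q$ the quadratic normal term. The polar $\partial_vT$ has linear part $\partial_vq$ in the normal directions; the tangential part contributes only through $q$, whose coefficients vary along $B_i$. If $\rk q\le2$, change coordinates so that $q$ involves only $y_1,y_2$. Then the initial forms of $T,\partial_vT,\partial_wT$ are $q(y_1,y_2)$ and two linear forms in $y_1,y_2$. These do not form a regular sequence in the associated graded ring $k[y_1,y_2,y_3]$, because all three lie in $(y_1,y_2)$.

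We then estimate the length via the Hilbert--Samuel function. The ideal $I=(T,\partial_vT,\partial_wT)$ has initial ideal contained in $(y_1,y_2)$ and, modulo $\mathfrak m^3$, at most two independent linear forms. Hence $\length R/(I+\mathfrak m^2)\ge2$, since $y_3\notin I+\mathfrak m^2$. A colength count then gives $\length R/I\ge3$: the classes of $1$, $y_3$ and either $y_3^2$ or a further monomial survive. We expect this step to need care.

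\emph{Strict degree inequality.} The key step is to produce a component of $Q$ not among the $B_i$. A point $x\in X_{\mathrm{sm}}$ lies on $Q$ exactly when $T_xX$ contains the points $[v]$ and $[w]$, that is, when $\gamma(x)$ lies in the line $\ell_{v,w}$ of hyperplanes containing the line $\overline{vw}$. Let $G=\overline{\gamma(X_{\mathrm{sm}})}$, of dimension $r\ge2$. For general $v,w$ the line $\ell_{v,w}$ is a general line in $(\mathbf P^4)^*$ when $r=3$, and then it meets $G$ in $\deg G>0$ points.

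In general we argue dimensionally. The set $\gamma^{-1}(\ell_{v,w})\cap X_{\mathrm{sm}}$ is cut out by two conditions on the three-dimensional $X_{\mathrm{sm}}$, so it is nonempty of dimension at least one whenever $\ell_{v,w}$ meets $\gamma(X_{\mathrm{sm}})$. As $v,w$ vary, the lines $\ell_{v,w}$ form the lines in $(\mathbf P^4)^*$ contained in hyperplanes of special form. More simply, the family of all pairs of points of $\mathbf P^4$ is four-dimensional in each factor, and $\ell_{v,w}$ ranges over the Grassmannian of lines, so a general line meets $G$ if and only if $\dim G\ge2$. This is precisely the hypothesis, and it is where it enters. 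So for general $v,w$ the curve $Q$ contains points of $X_{\mathrm{sm}}$. These points lie on a curve component of $Q$ meeting $X_{\mathrm{sm}}$; such a component is not any $B_i\subset\Sing X$ and has positive degree. Comparing fundamental cycles, $\sum m_i\deg B_i\le\deg Q-\deg(\text{that component})<d(d-1)^2$.

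The main obstacle is the rank-at-most-two length bound, where the argument needs to be made precise.
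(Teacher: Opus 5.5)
Your complete-intersection argument and your bound $m_i\ge r(r-1)^2\ge2$ are correct and agree with the paper. The bound $m_i\ge3$ in the rank-at-most-two case, however, is not proved, and the premise you start from is false. The ideal $I=(T,\partial_vT,\partial_wT)$ is $\mathfrak m$-primary, so its initial ideal contains a power of $y_3$; only the initial forms of the three generators lie in $(y_1,y_2)$. The fact $y_3\notin I+\mathfrak m^2$ gives length at least two and cannot by itself give more, as $(y_1,y_2,y_3^2)$ shows.

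What is missing is a case split on the linear parts $\ell_v,\ell_w\in\langle y_1,y_2\rangle$ of the two polars.
\begin{itemize}
\item If they are dependent, the image of $I$ in $\mathfrak m/\mathfrak m^2$ has dimension at most one. Hence $\length R/(I+\mathfrak m^2)\ge3$.
\item If they are independent, work in $\widehat R=K[[y_1,y_2,y_3]]$. The formal implicit function theorem gives $\widehat R/(\partial_vT,\partial_wT)\cong K[[y_3]]$, with $y_1,y_2\mapsto g_1,g_2\in y_3^2K[[y_3]]$, because $\ell_v,\ell_w$ do not involve $y_3$. Then $\overline T=q(g_1,g_2)+O(y_3^3)$, so $m_i=\ord_{y_3}\overline T\ge3$.
\end{itemize}
This is the paper's argument. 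Your remark that the initial forms are not a regular sequence would also suffice, but only together with the equality criterion for $\length R/I=\prod_i\ord f_i$, which you do not state.

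The Gauss step contains a dimension error. The hyperplanes containing the line through $[v]$ and $[w]$ satisfy two linear conditions, so they form a plane $\Pi_{v,w}\subset(\mathbf P^4)^*$, not a line. Your criterion ``a general line meets $G$ iff $\dim G\ge2$'' is also false: in $(\mathbf P^4)^*$ a general line meets $G$ only when $\dim G\ge3$.

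Read literally, your argument produces nothing when $r=2$. When $r=3$ it produces finitely many general points of $G$. Their preimage under the generically finite Gauss map is finite, which contradicts your own count of two conditions on a threefold.

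The two errors cancel, and the fix is short. The map $(v,w)\mapsto\Pi_{v,w}$ is dominant onto $\mathrm{Gr}(3,5)$, so general $(v,w)$ give a general codimension-two linear section of $G$. This section has dimension $r-2\ge0$ and meets the open subset of $G$ over which the Gauss fibres have dimension $3-r$. Its preimage is therefore a curve in $Q\cap X_{\mathrm{sm}}$. With this correction your strictness argument is exactly the paper's.
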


\begin{proof}
The polar system on $X$ has base locus $\Sing X$. A general
polar cuts a surface on $X$, and no component of this surface is
contained in the base locus. A second general polar avoids these
components. Thus $T,\partial_vT,\partial_wT$ is a regular
sequence and $\deg Q=d(d-1)^2$.

At the generic point of $B_i$, the ambient local ring is regular
of dimension three. Its completion is $K[[x_1,x_2,x_3]]$, where
$K$ is the function field of $B_i$, and a local equation of $X$ is
\[
 f=q_2(x_1,x_2,x_3)+O((x_1,x_2,x_3)^3).
\]
The two local polar equations belong to the maximal ideal.
Lemma~\ref{lem:local-length} therefore gives $m_i\ge2$.
Their linear terms are contractions of $q_2$. Indeed, after
dehomogenization, a homogeneous polar is a local directional
derivative of $f$ plus a multiple of $f$; differentiation along
$B_i$ contributes only terms of order at least two.

Suppose that $\rk q_2\le2$. If the linear terms of the two polar
equations are dependent, their finite local quotient together
with $f$ has embedding dimension at least two and hence length
at least three. Otherwise $\rk q_2=2$, and the common kernel of
these linear terms is the radical of $q_2$. Eliminating two
variables by the formal implicit function theorem leaves an
equation of order at least three in the remaining variable.
Again $m_i\ge3$.

It remains to prove strictness under the additional hypothesis.
Let $Y$ be the closure of the Gauss image and put $r=\dim Y\ge2$.
A general codimension-two linear section of $Y$ meets the open
set over which the Gauss map has fibers of dimension $3-r$.
Its inverse image in $X_{\mathrm{sm}}$ is nonempty and has
dimension $(r-2)+(3-r)=1$. Its closure is a component of $Q$
meeting $X_{\mathrm{sm}}$. This component has positive degree
and differs from every $B_i$, which proves
\eqref{eq:tangent-polar-degree}.
\end{proof}

\begin{theorem}\label{thm:prym-transverse-A1}
For a sufficiently general principally polarized abelian fivefold $(A,\Theta)$,
the degree-ten hypersurface $V(T_{10})\subset\mathbf P T_0A$
has a transverse $A_1$ singularity at the general point of each
$C_i$, and $\divi(\phi)$ has a transverse $A_1$ singularity
at the general point of each $\Sigma_i$. The divisor $E_{\F}$
likewise has a transverse $A_1$ singularity at the general point
of $\mathcal P_7$. Their local analytic equations at these points
can be written
\[
 x_1^2+x_2^2+x_3^2=0,
\]
where $x_1,x_2,x_3$ are coordinates normal to the respective loci.
\end{theorem}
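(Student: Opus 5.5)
The plan is to work first on the tangent cone $X=V(T_{10})\subset\mathbf P^4$, pass from there to $\divi(\phi)$ near $\Sigma_i$, and then go up to $E_{\F}$ along $\mathcal P_7$ through the boundary specialization.

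\emph{Step 1: the tangent cone.} By the proof of Proposition~\ref{prop:theta-normal}, $X$ is integral of degree $10$ with $\dim\Sing X\le1$. By \eqref{eq:prym-tangent-singular} it contains the $27$ distinct nondegenerate degree-ten curves $C_i$ in its singular locus. None of them is a line, so Lemma~\ref{lem:gauss-image-dimension} shows that the Gauss image has dimension at least two. Lemma~\ref{lem:tangent-polar-cycle} then gives the strict inequality $\sum_i m_i\cdot10<10\cdot81=810$. Each $m_i\ge2$, and $m_i\ge3$ whenever the transverse quadratic term $q_2$ at the generic point of $C_i$ has rank at most two.

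If every $q_2$ were degenerate, we would have $\sum m_i\cdot10\ge 3\cdot27\cdot10=810$, which is a contradiction. So at least one $C_i$ has a rank-three transverse quadratic term. To get this for \emph{every} $i$, I would use monodromy. Over the generically \'etale degree-$27$ cover $P_6$, the $27$ presentations form one irreducible family, since $\mathcal R_6$ is irreducible. Nondegeneracy of $q_2$ at the generic point is an open condition on that family, and it holds at one presentation point. Hence it holds on a dense open subset and therefore at all $27$ points over a general $(A,\Theta)$, by the same Chevalley-type argument as in Corollary~\ref{cor:prym-leading-order}. Because $\ord_{C_i}$ is exactly two with rank-three quadratic part in three transverse variables, the Morse lemma with parameters gives the normal form $x_1^2+x_2^2+x_3^2$ at the generic point of $C_i$.

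\emph{Step 2: from $X$ to $\divi(\phi)$.} By Proposition~\ref{prop:phi-integral}, $\ord_{\Sigma_i}(\phi)=2$. Let $Q_i$ be the transverse quadratic term of $f$ at the generic point of $\Sigma_i$. Its determinant is a section on $\Sigma_i$ whose vanishing locus is closed, so it suffices to find nondegeneracy at points of $\Sigma_i$ near the origin. Near $0$, the tangent cone $C_0\Sigma_i$ is the cone over $C_i$. Taking initial forms of the normal Hessian of $f$ along $\Sigma_i$ at the origin, I would show that the lowest-degree part of $\det Q_i$ is the determinant of the transverse Hessian of $T_{10}$ along the cone over $C_i$, suitably homogenized. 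By Step 1 that determinant is not identically zero on $C_i$, so $\det Q_i\not\equiv0$. The Morse lemma with parameters again gives the $A_1$ form.

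\emph{Step 3: from $\divi(\phi)$ to $E_{\F}$.} By Corollary~\ref{cor:exact-orders}, $\ord_{\mathcal P_7}(\F)=2$. Its quadratic normal term is a section of $\Sym^2$ of the conormal bundle, twisted as in the proof of Theorem~\ref{thm:prym-order}. By the specialization in Corollary~\ref{cor:prym-leading-order} (divide by $q^2$, restrict to $q=0$, then to a fiber), this term restricts at general points of $\Delta'_0$ to the quadratic term of $\phi$ along $\Sigma_i$. That is the point where the immersion is transverse to $D_6$ with $q=t$ and the normal coordinates $y_1,y_2,y_3$ restrict to normal coordinates of $\Sigma_i$. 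Nondegeneracy is open, so it holds at the general point of $\mathcal P_7$, and the Morse lemma finishes the argument.

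\emph{Main obstacle.} I expect Step 2 to be the hardest: passing from nondegeneracy of the tangent-cone hypersurface along $C_i$ to nondegeneracy of $f$ along $\Sigma_i$. The order filtrations at the origin and along $\Sigma_i$ interact, and $\Sigma_i$ is singular at $0$. My first attempt would be a weighted-blowup computation: blow up the origin, identify the strict transform of $\Sigma_i$ with the cone over $C_i$ on the exceptional divisor, and read off $\det Q_i$ at those exceptional points, where the strict transform of $\divi(\phi)$ meets the exceptional divisor in $X$.
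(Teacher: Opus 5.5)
Your proposal is correct and follows the paper's proof step for step. The steps are: the polar-degree count $810\le\sum_i m_i\deg C_i<810$ via the Gauss-image lemma; spreading nondegeneracy to all $27$ curves over the irreducible $\mathcal R_6$; transfer to $\divi(\phi)$; and boundary specialization of $\F/q^2\in(y_1,y_2,y_3)^2$ with $q=t$ up to a unit.

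For your Step~2, the paper avoids defining a transverse determinant $\det Q_i$ near the singular origin. It instead uses the rank of the ambient Hessian along an arc $q(s)=s^m(p+O(s))$ in $\Sigma_i$. Homogeneity gives $\operatorname{Hess}f(q(s))=s^{8m}(\operatorname{Hess}T_{10}(p)+O(s))$, which has rank at least three, and $df|_{\Sigma_i}=0$ puts the tangent plane of $\Sigma_i$ in the kernel, so the rank is exactly three. Your blowup plan yields exactly this once you phrase it with $3\times3$ minors of the ambient Hessian rather than $\det Q_i$.
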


\begin{proof}
Apply Lemma~\ref{lem:gauss-image-dimension} to $X=V(T_{10})$,
using \eqref{eq:prym-tangent-singular}. If the quadratic normal
terms of $T_{10}$ were degenerate at the general point of every
$C_i$, Lemma~\ref{lem:tangent-polar-cycle} would give
\[
 810=27\cdot10\cdot3
 \le\sum_{i=1}^{27}m_i\deg C_i
 <10\cdot9^2=810,
\]
a contradiction. Thus the quadratic normal term is nondegenerate
at the general point of at least one $C_i$.

We now use the irreducibility of $\mathcal R_6$ to obtain this
assertion for all $27$ curves on a sufficiently general fivefold.
Over a suitable open subset of $\mathcal R_6$, the universal
curve has its relative Prym-canonical embedding in the projective
bundle of the tangent spaces of the Prym varieties. The quadratic
normal jet of $T_{10}$ is algebraic, and its determinant is
nonzero on an open subset of this universal curve. The preceding
argument shows that this open subset is nonempty. The universal
curve is irreducible, so, after shrinking $\mathcal R_6$, the
nonvanishing locus meets each curve fiber in a dense open subset.
Restrict the degree-$27$ Prym map to an open subset of
$\mathcal A_5$ over which it is finite and \'etale. Removing
the image of the closed complement of the chosen open subset
of $\mathcal R_6$ ensures that all $27$ Prym presentations have
the asserted property. The holomorphic Morse lemma with parameters
proves the assertion about $V(T_{10})$.

Fix $i$ and choose a nonzero vector $p\in C_0\Sigma_i$ over a
general point of $C_i$. The affine Hessian of $T_{10}$ at $p$
has rank three: the radial direction is in its kernel, and the
quadratic normal form in projective space is nondegenerate.
We transfer this rank statement to the local equation $f$ of
$\divi(\phi)$ without any multiplicity assumption on
$C_0\Sigma_i$.

Blow up $\Sigma_i$ at the origin. The support of its exceptional
fiber is $\Supp(\mathbf P C_0\Sigma_i)=C_i$. An analytic curve through
the point $[p]$ of this exceptional fiber, with general point
over $(\Sigma_i)_{\mathrm{sm}}\setminus\{0\}$, gives after
normalization an arc
\[
 q(s)=s^m\bigl(p+O(s)\bigr)\in\Sigma_i,
 \qquad m\ge1.
\]
Here a nonzero constant rescaling of $p$ has been absorbed into
the parameter. Since $f=T_{10}+O((z)^{11})$, one has
\begin{equation}\label{eq:tangent-hessian-arc}
 \operatorname{Hess}f(q(s))
 =s^{8m}\bigl(\operatorname{Hess}T_{10}(p)+O(s)\bigr).
\end{equation}
A nonzero $3\times3$ minor on the right remains nonzero for
general $s\ne0$. Thus the Hessian of $f$ has rank at least
three there. On the other hand, $df$ vanishes identically on
the reduced surface $\Sigma_i$. Differentiating this identity
at its smooth points shows that its two-dimensional tangent
space lies in the kernel of the Hessian. The rank is therefore
exactly three. Nondegeneracy is an open condition, so it holds
at the general point of $\Sigma_i$. The same Morse lemma
proves the assertion about $\divi(\phi)$.

Finally, Theorem~\ref{thm:prym-order} gives
$\ord_{\mathcal P_7}(\F)\ge2$. At a general point of
$\Delta'_0$, the proof of
Corollary~\ref{cor:prym-leading-order} identifies the immersed
Prym branch with $(y_1,y_2,y_3)=0$ and the boundary parameter
with $q=t$ up to a unit. The normal directions of this branch
restrict to the normal directions of $\Sigma_i$ in the abelian
fiber. The holomorphic section $\F/q^2$ belongs to
$(y_1,y_2,y_3)^2$: the parameter $q$ is a nonzerodivisor modulo
this ideal, so membership away from $q=0$ extends across the
boundary. Its quadratic normal jet restricts to that of $\phi$.
Its determinant is nonzero at a general point of the boundary
surface and hence is not identically zero along $\mathcal P_7$.
The Morse lemma proves the remaining assertion.
\end{proof}

\begin{corollary}\label{cor:prym-crepant-blowup}
Over suitable dense open subsets of each $\Sigma_i$ and of
$\mathcal P_7$, blowing up the ambient smooth variety along the
Prym locus gives a crepant resolution of the divisor near the center.
Its exceptional divisor is a smooth conic bundle
over that locus. Locally, the double cover of the ambient variety
branched along the divisor has a three-dimensional ordinary double
point transverse to the Prym locus.
\end{corollary}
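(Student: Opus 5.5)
The plan is to reduce everything to the local normal form from Theorem~\ref{thm:prym-transverse-A1}. After shrinking to dense open subsets, the Prym locus (the surface $\Sigma_i$ in $A$, or the immersed branch of $\mathcal P_7$ in a smooth level chart of $\mathscr A_6$) is a smooth codimension-three submanifold $V$ of a smooth ambient manifold $M$. By the holomorphic Morse lemma with parameters there are local analytic coordinates $(y,x_1,x_2,x_3)$ with $V=\{x=0\}$ and the divisor $D$ given by $x_1^2+x_2^2+x_3^2=0$. This is uniform in the parameters $y$, because the quadratic normal term is nondegenerate on an open set, so the determinant of the normal Hessian does not vanish there. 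All three assertions can then be checked in this product model, $V\times(\text{cone over a conic})$. For $\mathcal P_7$ we should work on the smooth stack or a neat level cover; on the coarse space, the generic point of $\mathcal P_7$ has trivial generic stabilizer beyond $\pm1$, which acts trivially, so nothing changes.

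Let $\beta\colon\widetilde M\to M$ be the blowup along $V$, with exceptional divisor $F\cong\mathbf P(N_V)$, a $\mathbf P^2$-bundle. In the chart $x_1=u_1x_3$, $x_2=u_2x_3$, the equation becomes $x_3^2(u_1^2+u_2^2+1)$. Since $\ord_V D=2$, this gives $\beta^*D=\widetilde D+2F$, where the strict transform $\widetilde D$ is cut out by $u_1^2+u_2^2+1$ and the analogous expressions in the other charts. The strict transform is smooth: in the chart it is a smooth quadric bundle in $(u_1,u_2)$ times the coordinates $(x_3,y)$. It meets $F$ transversally in the smooth conic bundle $\{x_1^2+x_2^2+x_3^2=0\}\subset\mathbf P(N_V)$. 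Hence $\widetilde D\to D$ is a resolution whose exceptional divisor is this conic bundle over $V$.

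For crepancy, use $K_{\widetilde M}=\beta^*K_M+2F$, which holds because the codimension is $3$. Adjunction gives $K_D=(K_M+D)|_D$ and $K_{\widetilde D}=(K_{\widetilde M}+\widetilde D)|_{\widetilde D}=\beta^*(K_M+D)|_{\widetilde D}+(2F-2F)|_{\widetilde D}$, which equals the pullback of $K_D$. So the resolution is crepant. Here $D$ is Gorenstein, being a hypersurface, so the canonical comparison makes sense.

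For the double cover, $w^2=x_1^2+x_2^2+x_3^2$ in the product coordinates is $\{\text{3-fold }A_1\}\times V$. The fibre over each point of $V$ is therefore a three-dimensional ordinary double point transverse to the Prym locus. The only step needing care is the first, namely getting the Morse normal form uniformly along an open subset of $V$ rather than pointwise. I would handle it with the parametric splitting lemma, which applies once $f$ lies in the square of the ideal of $V$ and the normal Hessian determinant is a unit on the open set. Both facts were established in the proof of Theorem~\ref{thm:prym-transverse-A1}, and for $\mathcal P_7$ this is done via the saturation argument with $\F/q^2$.
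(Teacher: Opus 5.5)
Your proposal is correct and follows essentially the paper's argument. In both, the nondegenerate normal quadratic form gives a smooth conic bundle in $\mathbf P(N_V)$ and a smooth strict transform near it. The codimension-three blowup discrepancy $2$ is cancelled by the multiplicity $2$ via adjunction, and the double cover has local equation $w^2=x_1^2+x_2^2+x_3^2$. Your chart computation after the parametric Morse lemma makes explicit what the paper states in a few lines; the paper only needs $f\in I_V^2$ with nondegenerate quadratic normal part, not the full normal form.
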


\begin{proof}
The normal quadratic form in Theorem~\ref{thm:prym-transverse-A1}
is nondegenerate. Its zero locus in the projectivized normal
bundle is therefore a smooth conic bundle, and the strict transform
of the divisor is smooth near this exceptional divisor. An ambient
blowup in codimension three has discrepancy two, while the divisor
has multiplicity two along its center. Adjunction gives discrepancy
$2-2=0$ on its strict transform. Finally, the branched double cover
has local equation
\[
 u^2=x_1^2+x_2^2+x_3^2,
\]
a nondegenerate quadratic hypersurface in four transverse variables.
\end{proof}

\section{The moving slope}
\label{sec:moving}

An effective divisor is moving if its complete linear system has no
fixed prime component and has positive dimension. For a rational
divisor we use a sufficiently divisible integral multiple. The moving slope
$s_{\mathrm{Mov}}(\A_6)$ is the infimum of the slopes of such divisors
on $\A_6^{\mathrm{Perf}}$. We prove the upper bound proposed in
\cite[Corollary~E]{GIMS24} by combining the transverse quadratic
singularity along the Prym locus with the geometry of Nikulin surfaces.

\begin{theorem}\label{thm:moving-form}
There is a scalar Siegel modular form $G_{86}\in\MM_{86}^{(6)}$ such that
\[
 b(G_{86})=12,\qquad \F\nmid G_{86}.
\]
Consequently
\[
 s_{\mathrm{Mov}}(\A_6)\le\frac{43}{6}.
\]
\end{theorem}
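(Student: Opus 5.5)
The plan is to apply the differential-operator construction of \cite{GIMS24} to $\F$ and verify its one hypothesis geometrically. As I understand \cite[Corollary~E]{GIMS24}, the operator takes the $6\times6$ matrix of first derivatives $(\partial\F/\partial\Omega_{ij})$ and forms a determinant-type expression, modified by $\F$ times lower terms so that it is modular. For $\F$ this has weight $6\cdot14+2=86$ and boundary order $6\cdot2=12$. The output is nonzero and not divisible by $\F$ exactly when this derivative matrix, restricted to the zero divisor of $\F$, has full rank at some smooth point of that divisor. Corollary~\ref{cor:F-prime} shows that $V(\F)$ is prime and reduced. Hence non-divisibility by $\F$ is equivalent to non-vanishing of the restriction at one point of $E_{\F}$.

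The slope bound then follows. $G_{86}$ and $\F^6$ both lie in $\MM_{84+2}$ after multiplying $\F^6$ by nothing; more precisely, I compare the classes $86\lambda_6-12D_6$ and $6(14\lambda_6-2D_6)+2\lambda_6$. The divisor of $G_{86}$ is not supported on $E_{\F}$. Since $E_{\F}$ is the only fixed-slope obstruction, the pencil spanned by $G_{86}$ and a suitable multiple of $\F$ times a Hodge-class form has no fixed prime component. This is the argument of \cite{GIMS24}, which gives slope $86/12=43/6$.

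The main step is the full-rank verification. I would do it near a general point of $\mathcal P_7$, using Theorem~\ref{thm:prym-transverse-A1}. There $E_{\F}$ is singular, but smooth points of $E_{\F}$ accumulate along the transverse cone $x_1^2+x_2^2+x_3^2=0$. At such nearby points, to leading order, $d\F$ equals the derivative of the quadratic normal term $Q$. The cotangent vector $d\F$ lies in the conormal space of $\mathcal P_7$. By Proposition~\ref{prop:nikulin-conormal}, this conormal space is $I_2(S)$, the net of quadrics containing the Nikulin surface $S\subset\mathbf P^5=\mathbf P(H^0(L)^*)$. Under Kodaira--Spencer duality, a cotangent vector to $\A_6$ is a symmetric $6\times6$ matrix, namely a quadric on $\mathbf P^5$. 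So the derivative matrix at a point near the cone is, to leading order, the quadric $\nabla Q(x)\in I_2(S)$. As $x$ ranges over the cone $Q=0$, the quadrics $\nabla Q(x)$ trace the dual conic, which is smooth because $Q$ is nondegenerate, in the plane $\mathbf P(I_2(S))$.

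Full rank fails at all such points only if this conic lies in the discriminant curve $\{\det=0\}\subset\mathbf P(I_2(S))$. That curve has degree $6$. I would show it is a smooth, hence irreducible, sextic. A singular point of the discriminant corresponds to a quadric of rank $\le4$ in the net. By the standard description of nets of quadrics with smooth base locus, such a quadric contains a $\mathbf P^3$-family, or a plane, that produces an elliptic pencil or a class of small self-intersection on $S$. The lattice $\Z H\perp\mathfrak N$ with $H^2=12$ excludes such a class, and I expect this lattice exclusion to be the delicate point. An irreducible sextic cannot contain a conic, so some quadric of rank six occurs.

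A perturbation argument then handles the higher-order terms. The leading-order determinant is a nonzero polynomial in the normal directions, and the exact derivative matrix differs from it by higher-order terms. So the exact matrix has full rank at smooth points of $E_{\F}$ sufficiently close to a general Prym point. This verifies the hypothesis of \cite[Corollary~E]{GIMS24} and gives both $b(G_{86})=12$ and $\F\nmid G_{86}$.
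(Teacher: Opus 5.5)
Your strategy is the paper's. You verify Condition~$(\star)$ of \cite{GIMS24} at a general Prym point using $N^*_p=I_2(S)$, the nondegenerate quadratic normal term of Theorem~\ref{thm:prym-transverse-A1}, its dual conic in $\mathbf P(I_2(S))$, and irreducibility of the discriminant sextic, and then run the arc argument of Proposition~\ref{prop:F-gradient-nondegenerate}. Three steps are not justified as written.

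\emph{The exact boundary order.} The operator $\mathcal D_{6,14}$ only gives $b(G_{86})\ge 6b(\F)=12$. The equality claimed in the statement needs Theorem~\ref{thm:minimal-slope}, which gives $b(G_{86})\le 86/7<13$ once $G_{86}\ne0$.

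\emph{The pencil.} To place $\F^6$ in the class $86\lambda_6-12D_6$ you need a form of weight $2$. But $\MM_2^{(6)}=0$: weight $2<6/2$ is singular, and there is no even unimodular lattice of rank $4$. If you use an auxiliary form of larger weight instead, the remark that $E_{\F}$ is the only obstruction does not exclude common components of $\divi(G_{86})$ with that form. The paper compares $\F^{43}$ and $G_{86}^7$ as sections of $602\lambda_6-84D_6$. Their only possible common interior component is $E_{\F}$, which $\F\nmid G_{86}$ excludes. The section $G_{86}^7$ does not vanish along $D_6$ precisely because $b(G_{86})=12$, so the first point is used here. Alternatively, cite \cite[Corollary~E]{GIMS24} directly once $(\star)$ is verified.

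\emph{The discriminant.} This step is short, but it must be carried out, and not every case is lattice-theoretic. Since $S$ is a smooth complete intersection, every quadric of the net is smooth along $S$.
\begin{enumerate}[label=\textup{(\alph*)},leftmargin=*]
\item A member of rank at most three is impossible, because its vertex plane meets $S$.
\item At a rank-five member with kernel $v$ one has $\partial_j\det=c\,q_j(v)$ with $c\ne0$. A singular point of $\Delta_S$ there, or $\det\equiv0$, would therefore put the vertex on $S$.
\item Only rank four needs the lattice. The vertex line misses $S$, and projection gives a finite map $S\to\mathbf P^1\times\mathbf P^1$ of degree $L^2/2=4$. The class $M=\pi^*\OO(1,0)$ then satisfies $M^2=0$ and $L\cdot M=4$. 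Writing $M=aH+\sum_i b_iN_i$ with $b_i\in\tfrac12\Z$ gives $\sum_i b_i^2=6a^2$ and $\sum_i b_i=4-12a$. Cauchy--Schwarz then yields $6a(a-1)+1\le0$, which is impossible for $a\in\Z$.
\end{enumerate}
With these three repairs your argument coincides with the paper's proof.
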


\subsection{The discriminant of a Nikulin net of quadrics}
Retain the standard Nikulin surface $S$ and the classes $H,e,L=H-e$
from Section~\ref{sec:prym}. By
\eqref{eq:nikulin-section-dimensions}, the embedding
$S\subset\mathbf P^5$ is the complete intersection of the net
\[
 W=I_2(S),\qquad \dim W=3.
\]
Choose a basis $q_0,q_1,q_2$ of $W$ and write
$q_j(z)=z^tA_jz$, where the $A_j$ are symmetric $6\times6$ matrices.
The discriminant of the net is
\[
 \Delta_S=V\!\left(\det(s_0A_0+s_1A_1+s_2A_2)\right)
                    \subset\mathbf P(W).
\]

\begin{lemma}\label{lem:nikulin-discriminant}
If $\Pic(S)=\Z H\mathbin{\perp}\mathfrak N$, then
$\Delta_S$ is a smooth irreducible plane sextic. In particular,
this holds for a general standard Nikulin surface of genus seven.
\end{lemma}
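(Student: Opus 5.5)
Singular points of the discriminant of a net of quadrics whose base locus is a smooth complete intersection surface are governed by the classical theory: for a net $W$ with smooth base locus $S=\bigcap_{q\in W}V(q)$ of dimension two in $\mathbf P^5$, every quadric in the net has corank at most two, and a point $[q]\in\Delta_S$ is a singular point of the sextic curve exactly when either $\rk q\le4$ or $\rk q=5$ and the vertex of $q$ lies on $S$. The second possibility cannot occur: if the vertex $p$ of a corank-one quadric $q$ lay on $S$, then at $p$ the differentials of a basis of $W$ span at most a two-dimensional space, since $dq_p=0$, contradicting smoothness of the complete intersection $S$ at $p$. So I would first record that $\Delta_S$ is a plane curve of degree six, not identically the whole plane because the general quadric in the net is smooth (by Bertini, base locus smooth), and that its singular points are exactly the quadrics of rank at most four. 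This is the standard computation: near a corank-one point, $\det$ restricted to the net has nonzero differential iff the kernel vector $p$ satisfies $(q_j(p))_j\ne0$, i.e.\ $p\notin S$.

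The main step is then to exclude quadrics of rank at most four in $W$ using the lattice hypothesis. A quadric $Q\supset S$ of rank at most four in $\mathbf P^5$ is a cone with vertex containing a line over a quadric surface in $\mathbf P^3$; rank $\le4$ means $Q$ contains two families of $\mathbf P^3$'s (rank four) or a single family of $\mathbf P^3$'s containing a fixed $\mathbf P^4$-type degeneration (rank $\le3$). In either case the rulings cut $S$ (not contained in any $\mathbf P^3$, as $S$ is nondegenerate) in a pencil of curves, giving a decomposition $L=D_1+D_2$ with $h^0(D_i)\ge2$ (rank four) or $L=2D+F$ with $F$ effective supported in the vertex and $h^0(D)\ge2$ (rank $\le3$). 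So I would show that $\Pic(S)=\Z H\perp\mathfrak N$ admits no decomposition $L\sim D_1+D_2$ with both $D_i$ moving effective classes. Write $D_1=aH+\nu$, $\nu\in\mathfrak N\otimes\Q$ with the Nikulin integrality constraints; moving classes on a K3 satisfy $D_i^2\ge0$ and $D_i$ nef modulo fixed $(-2)$-curves. Since $L=H-e$, one gets $a_1+a_2=1$ with $a_i\ge0$ forced by $D_i\cdot H\ge0$ and $D_i\ne0$ mobile; say $a_2=0$, so $D_2\in\mathfrak N$, which is negative definite, so $D_2^2<0$ unless $D_2=0$, and no element of $\mathfrak N$ moves. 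This contradiction covers rank four; the rank $\le3$ case gives $L-F=2D$ and the same argument with $2a=1-(\text{coefficient of }H\text{ in }F)$ handles it, using that $F$ lies in the vertex so $F$ is effective with $L\cdot F$ small.

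Finally irreducibility: a smooth plane curve is irreducible, since any two components of a reducible plane curve meet by B\'ezout and the intersection points are singular. The last sentence of the lemma follows because the general standard Nikulin surface of genus seven has exactly this Picard lattice, as in Section~\ref{sec:prym} and \cite[Definition~1.2]{FV12}. I expect the main obstacle to be the lattice step, in particular making the reduction from a rank-$\le4$ quadric to a genuine decomposition of $L$ into two moving classes rigorous (handling base components and the vertex meeting $S$), and checking the $\tfrac12$-integrality of $\mathfrak N$-coordinates does not allow a sneaky class like $\tfrac12H+\nu$; here I would note $H$ is primitive orthogonally split, so the $H$-coefficient of any divisor class is an integer, making the argument above go through.
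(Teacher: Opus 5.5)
Your proof is correct. Its overall structure matches the paper's: smoothness at rank-five points via $\partial_{s_j}\det A(s)=c\,q_j(v)$ with $[v]\notin S$, exclusion of quadrics of lower rank, and irreducibility by B\'ezout. The two arguments differ in the lattice step that excludes rank-four quadrics.

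\textbf{The paper's route.} It projects from the vertex line to obtain $\pi:S\to\mathbf P^1\times\mathbf P^1$, takes the single class $M=\pi^*\OO(1,0)$ with $M^2=0$ and $L\cdot M=4$, and rules it out by Cauchy--Schwarz on the $N_i$-coordinates, using $L\cdot N_i=1$. This is purely numerical: it needs only $M^2=0$ and $L\cdot M=4$, with no effectivity or nefness input.

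\textbf{Your route.} You use both summands of $L=D_1+D_2$. Nefness of $H$ makes their $H$-coefficients nonnegative, and the orthogonal splitting $\Z H\perp\mathfrak N$ makes them integers summing to one. So one summand lies in the negative definite lattice $\mathfrak N$, which contains no nonzero moving class. This avoids all computation and is robust to base components: split off fixed parts, and the mobile parts still have nonnegative integral $H$-coefficients. The cost is slightly more geometric input, namely effectivity of both summands and nefness of $H$.

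\textbf{Small clean-ups.}
\begin{enumerate}
\item The worry you flag about the vertex meeting $S$ disappears. Your Jacobian argument from the first paragraph shows that every quadric of the net is smooth along $S$. Hence the vertex line of a rank-four quadric misses $S$, and the $D_i$ are base-point free.
\item Your rank $\le3$ discussion (the decomposition $L=2D+F$ with $F$ in the vertex) is vague and unnecessary. Your opening claim that every member has corank at most two already settles that case. It is exactly the paper's one-line argument: the other two quadrics meet on the vertex plane at a point of $S$ where the Jacobian drops rank.
\item The Picard lattice hypothesis holds for a very general Nikulin surface. To get a Zariski-general one, add, as the paper does, that smoothness of the discriminant is open in families.
\end{enumerate}
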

\begin{proof}
We first show that no class $M\in\Pic(S)$ satisfies
$M^2=0$ and $L\cdot M=4$. Write
\[
 M=aH+\sum_{i=1}^8 b_iN_i,
 \qquad a\in\Z,\quad b_i\in\tfrac12\Z.
\]
The additional parity conditions defining $\mathfrak N$ are not
needed. Since $H^2=12$, $H\cdot N_i=0$ and $L\cdot N_i=1$,
the two assumed equalities give
\[
 \sum_{i=1}^8b_i^2=6a^2,\qquad
 \sum_{i=1}^8b_i=4-12a.
\]
The Cauchy--Schwarz inequality implies
\[
 (4-12a)^2\le48a^2,
 \qquad\text{hence}\qquad 6a(a-1)+1\le0,
\]
which is impossible for an integer $a$.

Every quadric in the net is nonsingular at every point of $S$:
otherwise the Jacobian of its three defining equations would have
rank at most two there. A quadric of rank at most three has a
vertex of dimension at least two. The other two quadrics necessarily
have a common zero on that vertex, contradicting the smoothness
of $S$. A quadric of rank four has a vertex line disjoint from $S$.
Projection from this line restricts to a morphism
\[
 \pi:S\longrightarrow\mathbf P^1\times\mathbf P^1,
 \qquad \pi^*\OO(1,1)=L.
\]
The ampleness of $L$ makes $\pi$ finite and surjective. Since
$L^2=8$, its degree is four. The class $M=\pi^*\OO(1,0)$ then
satisfies $M^2=0$ and $L\cdot M=4$, again a contradiction.
Thus every singular quadric in the net has rank five.

Put $A(s)=\sum_js_jA_j$. At a rank-five matrix $A(s)$, let $v$
span its kernel. For some $c\ne0$ one has
\[
 \operatorname{adj}(A(s))=c\,vv^t,\qquad
 \frac{\partial\det A(s)}{\partial s_j}
    =\tr\!\left(\operatorname{adj}(A(s))A_j\right)=c\,q_j(v).
\]
If all three derivatives vanished, the point $[v]$ would belong
to $S$ and to the vertex of the corresponding quadric, which is
impossible. This also shows that $\det A(s)$ cannot vanish
identically: otherwise every member would have rank five and all
three derivatives would vanish. Hence $\Delta_S$ is a smooth plane
sextic. Two positive-degree plane curves intersect, so smoothness
also implies irreducibility. Finally, smoothness of the discriminant
is open in families, giving the assertion for a general surface.
\end{proof}

\subsection{The gradient along the distinguished divisor}
For a symmetric period matrix $\Omega$, use the normalization
\[
 \partial_{ij}=\frac{1+\delta_{ij}}2
                  \frac{\partial}{\partial\Omega_{ij}},
 \qquad \partial=(\partial_{ij})_{1\le i,j\le6}.
\]
For a local holomorphic function $f$ on a smooth Siegel chart,
the matrix $\partial f$ represents its differential as a quadratic
form under the local Kodaira--Spencer identification of the
cotangent bundle with $\Sym^2\mathbb E_6$.
In particular the off-diagonal entries contain the factor $1/2$.
Condition~$(\star)$ of \cite{GIMS24} requires that
$\det(\partial f)$ not vanish identically on any component of the
zero divisor of the modular form. Along that divisor, replacing
$f$ by $uf$ for a holomorphic unit $u$ multiplies its differential
by $u$, so this condition does not depend on the trivialization.

\begin{proposition}\label{prop:F-gradient-nondegenerate}
The form $\F$ satisfies Condition~$(\star)$ of \cite{GIMS24}.
\end{proposition}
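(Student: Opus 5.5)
Since the zero divisor of $\F$ on $\A_6$ is prime and reduced (Corollary~\ref{cor:F-prime}), Condition~$(\star)$ only has to be checked on $E_{\F}$, and it suffices to find one smooth point of $E_{\F}$ where the $6\times6$ symmetric matrix $\partial\F$ is invertible. We argue by contradiction near a general Prym point. Assume $\det(\partial\F)$ vanishes identically on $E_{\F}$. By Theorem~\ref{thm:prym-transverse-A1}, near a general point $x\in\mathcal P_7$ there are local coordinates $(y_1,y_2,y_3)$ normal to the Prym locus and coordinates along it in which $\F=Q(y)+O(y^3)$ up to a unit, with $Q$ a nondegenerate quadratic form. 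We may moreover take $x$ general, so that it lies outside every proper closed subset we need to avoid.

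\textbf{Identifying the gradient.} At a point $y$ near $x$, the differential of $\F$ is $d\F=\sum_a\partial_aQ(y)\,dy_a+O(|y|^2)$. Its linear term is therefore a covector in the conormal space $\mathcal K_x$. Proposition~\ref{prop:nikulin-conormal} identifies $\mathcal K_x$ with the net $W=I_2(S)\subset\Sym^2H^0(S,L)$ inside $\Sym^2\mathbb E_6$, under the Kodaira--Spencer identification used to define $\partial$. Thus, for a normal vector $y$, the matrix $\partial\F$ at the corresponding point is $M(y)+O(|y|^2)$. Here $M:\C^3\to W$ is the linear map $y\mapsto\nabla Q(y)$, which is an isomorphism because $Q$ is nondegenerate. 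Writing $q_j=z^tA_jz$ as in Lemma~\ref{lem:nikulin-discriminant}, $M(y)$ is the symmetric matrix $\sum_js_j(y)A_j$, up to the fixed invertible normalization of $\partial$.

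\textbf{Conic against sextic.} Points of $E_{\F}$ near $x$ have normal components $y$ satisfying $Q(y)+O(y^3)=0$. To leading order these are the directions on the cone $Q=0$, and these directions are smooth points of $E_{\F}$ off $\mathcal P_7$. The function $\det(\partial\F)$ is homogeneous-leading of degree $6$ in $y$, with leading term $\det\bigl(\sum_js_j(y)A_j\bigr)$. If $\det(\partial\F)$ vanished on $E_{\F}$, then by restricting to rays $ty$, dividing by $t^6$, and letting $t\to0$, this leading term would vanish on the whole cone $\{Q=0\}$. Through the linear isomorphism $M$, this says that the smooth conic $\{\nabla Q(y):Q(y)=0\}\subset\mathbf P(W)$ lies inside the discriminant $\Delta_S$. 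By Lemma~\ref{lem:nikulin-discriminant}, however, $\Delta_S$ is a smooth irreducible sextic, and an irreducible sextic cannot contain a conic. This contradiction shows that $\det(\partial\F)$ is not identically zero on $E_{\F}$. The genericity of $x$ is used here: the Nikulin surface attached to $x$ must be general, so that $\Pic(S)=\Z H\perp\mathfrak N$ holds.

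\textbf{Main obstacle.} The delicate step is the identification in the second paragraph. One has to check that the linear part of $\partial\F$ in the normal variables really takes values in $I_2(S)$, embedded in $\Sym^2\mathbb E_6$ compatibly with the convention for $\partial$. The off-diagonal factor $1/2$ only changes the matrix by a fixed invertible linear transformation, so it does not affect whether the matrix is invertible. The along-locus derivatives contribute only $O(|y|^2)$, because $\F$ vanishes to order two along $\mathcal P_7$. I would carry this out on a neat level cover where the Siegel chart is smooth. There I would use the exact sequence~\eqref{eq:nikulin-multiplication}: the codifferential kills $I_2(S)$, so the conormal space of the Prym locus is exactly $I_2(S)$, realized as quadratic forms in $H^0(C,\omega_C\otimes\eta)=H^0(S,L)$.
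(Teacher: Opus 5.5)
Your proposal is correct and is essentially the paper's argument. The conormal space at a general Prym point is the net $W=I_2(S)$, and the nondegenerate normal quadric $q$ from Theorem~\ref{thm:prym-transverse-A1} makes the degree-six leading term of $\det(\partial f)$ on a transverse slice equal to the discriminant pulled back by $\beta_q$. Lemma~\ref{lem:nikulin-discriminant} then keeps the conic $\mathbf P(\beta_q)(V(q))$ out of $\Delta_S$.

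The paper argues directly rather than by contradiction. It chooses $v$ on the conic with $\det\beta_q(v)\ne0$ and uses the holomorphic Morse lemma to produce an arc of $V(f)$ tangent to $v$. This is the rigorous form of your ``rays $ty$'' step: in your coordinates, where $\F=Q(y)+O(y^3)$, the rays do not lie on $E_{\F}$; they do only after passing to Morse coordinates.

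One correction to your last paragraph: the $\tfrac{1+\delta_{ij}}2$ normalization is not a harmless change of basis. A general linear automorphism of the space of symmetric matrices does not preserve the determinant hypersurface; for instance, doubling the off-diagonal entries of $\bigl(\begin{smallmatrix}1&1\\1&1\end{smallmatrix}\bigr)$ makes it invertible. It is precisely this normalization, for which $df=\tr(\partial f\,d\Omega)$, that makes $\partial f$ the Gram matrix $\sum_j s_jA_j$ of the quadric. Only then is $\det(\partial f)$ compared with $\Delta_S$ rather than with some other plane sextic.
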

\begin{proof}
Choose a general point $p=P_7(C,\eta)$ of $\mathcal P_7$ and work
in a smooth local Siegel chart. The conormal identification
\eqref{eq:nikulin-multiplication} gives
\begin{equation}\label{eq:prym-conormal-net}
 N_p^*:=N^*_{\mathcal P_7/\A_6,p}
   =I_2(C,\omega_C\otimes\eta)=I_2(S)=W,
\end{equation}
where $S$ is the general standard Nikulin surface containing
$(C,\eta)$. These are subspaces of
$\Sym^2H^0(C,\omega_C\otimes\eta)$, and their elements are
quadrics on $\mathbf P(H^0(C,\omega_C\otimes\eta)^*)$.

Let $f$ be a local equation of the divisor of $\F$.
By Theorem~\ref{thm:prym-transverse-A1}, its quadratic normal term
$q\in\Sym^2N_p^*$ is nondegenerate. Differentiating this quadratic
form defines an isomorphism
\[
 \beta_q:N_p\longrightarrow N_p^*,\qquad v\longmapsto dq_v.
\]
The image under $\mathbf P(\beta_q)$ of the isotropic conic
$V(q)\subset\mathbf P(N_p)$ is therefore a smooth conic
$\Gamma_p\subset\mathbf P(W)$. By
Lemma~\ref{lem:nikulin-discriminant}, it cannot be contained in
$\Delta_S$. Choose $v\ne0$ with
\[
 q(v)=0,\qquad \det\bigl(\beta_q(v)\bigr)\ne0,
\]
where the second determinant is that of the quadric matrix in
\eqref{eq:prym-conormal-net}.

Take local coordinates $(x,y)$ with
$\mathcal P_7=\{x_1=x_2=x_3=0\}$ and $p=(0,0)$.
Then $f\in(x_1,x_2,x_3)^2$ and
$f(x,0)=q(x)+O(|x|^3)$. The holomorphic Morse lemma on the
transverse slice $y=0$ gives an analytic arc $\gamma(t)$ in
$V(f)$ whose first normal direction is $v$. In the original
period coordinates its differential has expansion
\[
 df\bigl(\gamma(t)\bigr)=t\,\beta_q(v)+O(t^2).
\]
The tangential components have order at least two because
$f\in(x_1,x_2,x_3)^2$. It follows that
\[
 \det(\partial f)\bigl(\gamma(t)\bigr)
   =t^6\det\bigl(\beta_q(v)\bigr)+O(t^7)\ne0
\]
for sufficiently small nonzero $t$. Thus the determinant does not
vanish identically on the divisor of $\F$. That divisor is prime
and reduced by Corollary~\ref{cor:F-prime}, so this proves
Condition~$(\star)$.
\end{proof}

\begin{proof}[Proof of Theorem~\ref{thm:moving-form}]
By \cite[Theorem~A]{GIMS24}, the differential operator
$\mathcal D_{6,14}$ applied to $\F$ gives a scalar form
\[
 G_{86}:=\mathcal D_{6,14}(\F)\in\MM_{86}^{(6)},
 \qquad b(G_{86})\ge6b(\F)=12.
\]
Its restriction to $V(\F)$ is a nonzero constant multiple of
$\det(\partial\F)$.
Proposition~\ref{prop:F-gradient-nondegenerate} therefore implies
$\F\nmid G_{86}$ and, in particular, $G_{86}\ne0$.
Theorem~\ref{thm:minimal-slope} now gives
\[
 12\le b(G_{86})\le\frac{86}{7}<13.
\]
Since the boundary order is an integer, it is exactly twelve.

The two forms $\F^{43}$ and $G_{86}^{7}$ have weight $602$
and boundary orders $86$ and $84$, respectively. As sections of
$602\lambda_6-84[D_6]$, they have no common prime component:
the only possible interior component is $V(\F)$, and the second
section does not vanish along the boundary. They therefore define
a moving pencil of slope $602/84=43/6$.
\end{proof}

\section{Divisor cones and the boundary of the minimal model}
\label{sec:birational}

In this section put $X_0=\A_6^{\mathrm{Perf}}$, $\lambda=\lambda_6$,
$D=D_6$ and $E=E_{\F}$. The minimal model program constructed in
Theorem~\ref{thm:minimal-geometry} has the form
\begin{equation}\label{eq:finite-MMP}
 X_0\dashrightarrow X_1\dashrightarrow\cdots\dashrightarrow X_r
       \xrightarrow{\ f\ }Y,
\end{equation}
where the dashed arrows are flips and $f$ contracts precisely the
strict transform of $E$. The models $X_i$ have Picard number two,
and $Y$ has Picard number one. A subscript $i$ denotes strict
transform on $X_i$. We identify the divisor spaces of the $X_i$
using these isomorphisms in codimension one.

\subsection{The effective and movable cones}
\begin{theorem}\label{thm:effective-cone}
The pseudo-effective cone is
\[
 \overline{\Eff}(X_0)
   =\mathbb R_{\ge0}[D]+\mathbb R_{\ge0}(7\lambda-[D]).
\]
A real class $a\lambda-b[D]$ is big if and only if
$a>0$ and $a>7b$. Every nonzero effective rational divisor on the
second extremal ray is a positive rational multiple of $E$.
\end{theorem}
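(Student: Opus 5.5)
The plan is to work in $N^1(X_0)_{\mathbb R}=\mathbb R\lambda\oplus\mathbb R[D]$, which is two-dimensional, and identify the two extremal rays of $\overline{\Eff}(X_0)$ separately.

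\emph{The ray $\mathbb R_{\ge0}[D]$.} This ray is effective, since $D$ is a prime divisor. It lies on the boundary of the pseudo-effective cone because $\lambda$ is nef. Indeed, $\lambda$ is the pullback of an ample class from the Satake compactification, and $D$ maps to the Satake boundary, which has large codimension. Hence $\lambda^{20}\cdot D=0$, while $\lambda^{21}>0$. A pseudo-effective class $a\lambda-b[D]$ must therefore satisfy $a\ge0$, by intersecting with the movable curve class $\lambda^{20}$.

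\emph{The other ray.} $E$ is effective with class $14\lambda-2[D]$, so $7\lambda-[D]$ is effective. For the converse, I would show that no pseudo-effective class has slope below seven. Let $a\lambda-b[D]$ be big with $b>0$. Then some integral multiple is effective. Its prime components are either $D$ or closures of interior divisors, and by Freitag's exact-divisor theorem each interior component has class $k\lambda-b'[D]$ for a modular form $G$ with $b'=b(G)$. When $b'>0$, Theorem~\ref{thm:minimal-slope} gives $k\ge7b'$. When $b'=0$ the class $k\lambda$ is trivially fine. Adding a multiple of $D$ only increases the slope deficit in the right direction. Hence every effective class satisfies $a\ge7b$ whenever $b\ge0$, and classes with $b<0$ and $a\ge0$ are obviously allowed. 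Taking closures gives the cone
\[
 \mathbb R_{\ge0}[D]+\mathbb R_{\ge0}(7\lambda-[D]).
\]
Bigness is equivalent to lying in the interior of this cone, namely $a>0$ and $a>7b$. The condition $a>0$ is needed when $b\le0$; if $b<0$, the interior condition is $a>0$.

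\emph{Rigidity on the second ray.} Let $M$ be an effective $\mathbb Q$-divisor with class proportional to $7\lambda-[D]$. Write $M=cD+\sum c_iE_i$, with the $E_i$ interior closures of classes $k_i\lambda-b_i[D]$. Comparing coefficients gives
\[
 \sum c_ik_i=7\Bigl(\sum c_ib_i-c\Bigr).
\]
Since $k_i\ge7b_i$ for every component, this forces $c=0$ and $k_i=7b_i$ for all $i$. The case $b_i=0$ would force $k_i=0$, which is impossible. So each $E_i$ is the divisor of a form on the canonical ray, and by Theorem~\ref{thm:minimal-slope} that form is a power of $\F$. Since $E$ is prime (Corollary~\ref{cor:F-prime}), we get $E_i=E$.

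\emph{Main obstacle.} The delicate step is passing from effective integral divisors to the closed pseudo-effective cone, and doing so without circularity. Freitag's theorem applies to every interior effective divisor on the coarse space, so each effective rational class splits into prime components as above. The inequality $a\ge7b$ is closed, so it passes to limits. I would also check carefully that $\lambda$ restricted to $D$ is not big, that is, $\lambda^{20}\cdot D=0$, so that the ray $[D]$ is genuinely extremal and not in the interior.
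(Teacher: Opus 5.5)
Your proposal is correct and follows essentially the same route as the paper. Both arguments decompose an effective divisor into $D$ and closures of interior primes, use Freitag's exact-divisor theorem to give each interior prime a class $k\lambda-b[D]$ with $k\ge7b$ by Theorem~\ref{thm:minimal-slope}, and invoke the equality case of that theorem for rigidity on the ray of $7\lambda-[D]$. Your separate nefness argument with $\lambda^{20}$ is harmless but redundant, since the decomposition already gives $a=\sum c_ik_i\ge0$ for every effective class.
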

\begin{proof}
Freitag's exact-divisor theorem realizes an interior prime divisor
as the zero divisor of a scalar form of weight $k>0$ and boundary
order $b\ge0$. The class of its closure is
\[
 k\lambda-b[D]
 =\frac{k}{7}(7\lambda-[D])+
                         \left(\frac{k}{7}-b\right)[D].
\]
Theorem~\ref{thm:minimal-slope} makes both coefficients nonnegative.
The only remaining prime boundary divisor is $D$. Conversely,
$D$ and $E\sim2(7\lambda-D)$ are effective. This proves the
cone assertion, and its interior is the big cone. On the second
extremal ray, each prime summand must have slope seven. The equality
case of Theorem~\ref{thm:minimal-slope} identifies it with $E$.
\end{proof}

We use the definition of a Mori dream space from
\cite[Definition~1.10]{HK00}: a normal projective $\Q$-factorial
variety with $\Pic_\Q=N^1_\Q$, whose nef cone and the nef cones
of finitely many small $\Q$-factorial modifications are generated
by semiample classes, and whose movable cone is the union of
these nef cones.

\begin{theorem}\label{thm:mori-dream}
The perfect cone compactification $X_0$ is a Mori dream space.
There is a rational number $\sigma$ such that
\begin{equation}\label{eq:movable-cone}
 \overline{\operatorname{Mov}}(X_0)
   =\mathbb R_{\ge0}\lambda+
                  \mathbb R_{\ge0}(\sigma\lambda-[D]),
 \qquad 7<\sigma\le\frac{43}{6}.
\end{equation}
In particular, $s_{\mathrm{Mov}}(\A_6)=\sigma$ is attained.
The class $\sigma\lambda_r-D_r$ is semiample and defines the
last contraction $f$ in \eqref{eq:finite-MMP}.
\end{theorem}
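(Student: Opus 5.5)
The plan is to get the Mori dream property from finite generation of adjoint rings (Birkar--Cascini--Hacon--McKernan), and then to read off the two rays of the movable cone from the Satake contraction and the last step $f$ of \eqref{eq:finite-MMP}. The input on $X_0$ is as follows: it is $\Q$-factorial and terminal with $\Pic_\Q=N^1_\Q=\Q\lambda\oplus\Q[D]$ (Section~\ref{sec:preliminaries} and the proof of Theorem~\ref{thm:minimal-geometry}). Moreover $K_{X_0}\sim_\Q\tfrac12E$, and Theorem~\ref{thm:effective-cone} describes $\overline{\Eff}(X_0)$.

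\textbf{Finite generation.} Given a rational class $B$ in the interior of $\Eff(X_0)$, write $B=K_{X_0}+(B-K_{X_0})$. After replacing $B$ by a large multiple $NB$, the class $NB-\tfrac12E$ is still big, because the big cone is open and $E$ is fixed. Choose $\Delta$ to be a small multiple of a general member of a multiple of this big class. Since $X_0$ is terminal, the pair $(X_0,\Delta)$ is klt with $\Delta$ big. Then $R(X_0,K_{X_0}+\Delta)$ is finitely generated by BCHM. More usefully, BCHM's finiteness of models applies to a rational polytope of big boundaries $\Delta$ whose adjoint classes sweep out a cone containing the whole movable cone. This yields finitely many $\Q$-factorial small modifications whose nef cones are generated by semiample classes and cover $\overline{\operatorname{Mov}}(X_0)$. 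On the boundary rays one checks directly that $R(X_0,D)$ and $R(X_0,E)$ are finitely generated, since both are generated by a single section in the relevant degree. Hu--Keel \cite[Definition~1.10]{HK00} then gives the Mori dream property. In fact the small modifications should be exactly the $X_i$ of \eqref{eq:finite-MMP}: the MMP with scaling of $K$ from Theorem~\ref{thm:minimal-geometry} passes through successive chambers of this decomposition.

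\textbf{The movable cone.} The class $\lambda$ is semiample, since it descends to an ample class on the Satake compactification. The Satake map contracts $D$ to a locus of codimension at least two, so $\lambda$ is not in the interior of $\operatorname{Mov}$ and spans one extremal ray. For the other ray, take an ample class $A_Y$ on $Y$, which has Picard number one. Its pullback $f^*A_Y$ is semiample on $X_r$, and its transform lies in $\overline{\operatorname{Mov}}(X_0)$, because the flips are isomorphisms in codimension one. It also lies on the boundary of the movable cone, since $f$ contracts the divisor $E_r$. Write $f^*A_Y\equiv c(\sigma\lambda_r-D_r)$ with $\sigma\in\Q$; rationality holds because $f^*A_Y$ is a rational $\Q$-Cartier class. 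Then $\sigma>7$: the ray of slope seven is spanned by $E$, which by Theorem~\ref{thm:effective-cone} is the only effective divisor on it and is rigid, hence not movable. The bound $\sigma\le 43/6$ follows because Theorem~\ref{thm:moving-form} exhibits a moving pencil of class $602\lambda-84[D]$, whose slope is $43/6$. Since the closed movable cone is a two-dimensional cone with these two extremal rays, it equals \eqref{eq:movable-cone}. The moving slope is attained by a multiple of $\sigma\lambda-[D]$, which is basepoint-free on $X_r$ and has no fixed component on $X_0$.

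\textbf{Main obstacle.} The hard step is the BCHM bookkeeping. One must choose the boundary polytope so that every pair is klt and every boundary is big, and its adjoint classes must still reach both ends $\lambda$ and $\sigma\lambda-[D]$, where $\lambda$ is big and nef but not ample. If this is awkward, I would instead argue chamber by chamber: each flip $X_i\dashrightarrow X_{i+1}$ is the flip of a $(K+t_iH)$-trivial extremal contraction. The corresponding boundary class of $\operatorname{Nef}(X_i)$ is then semiample by the basepoint-free theorem applied to a klt pair $(X_i,\Delta_i)$, with $\Delta_i$ big, chosen as above. Adjacent nef cones share these rays, and they fill the region between $\lambda$ and $f^*A_Y$.
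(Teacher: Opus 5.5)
Your fallback chamber-by-chamber argument, together with your identification of the two rays, is essentially the paper's proof. On each Picard-number-two model $X_i$, one boundary of $\operatorname{Nef}(X_i)$ is pulled back from the contraction of the $K$-negative extremal ray, and the other from the base of the preceding flip; the two ends are $\lambda$ and $f^*A_Y$. The contraction exists by the contraction theorem on the terminal $\Q$-factorial $X_i$, so no auxiliary big boundary $\Delta_i$ is needed. Adjacent chambers share a ray, so they tile the sector between $\lambda$ and $f^*A_Y$, which you correctly identify with $\overline{\operatorname{Mov}}(X_0)$. Getting $\sigma>7$ from bigness of $f^*A_Y$ is a slightly shorter substitute for the paper's computation of $E_r\cdot C<0$ on $f$-contracted curves.

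One step you leave implicit is that $f^*A_Y$ is not proportional to $\lambda$, i.e.\ that its $D$-coefficient is negative; the paper proves this as $\lambda_r\cdot C>0$. It follows from your own observations. Near $\lambda$ the movable cone lies on the $-D$ side, and near $f^*A_Y$ it lies on the $-E_r$ side. These are opposite sides of the ray $\mathbb R_{\ge0}\lambda$, whereas $\overline{\operatorname{Mov}}(X_0)$ is two-dimensional.

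The BCHM paragraph, however, is wrong as written and should be dropped. No class on the ray of $\lambda$ has the form $K_{X_0}+\Delta$ with $(X_0,\Delta)$ klt. Indeed $c\lambda-\tfrac12E\equiv(c-7)\lambda+[D]$, and for $c<7$ this class is not effective. For $c\ge7$, $D$ is exceptional for the Satake contraction, so every effective $\Q$-divisor in this class contains $D$ with coefficient at least one. Hence no polytope of klt boundaries has adjoint cone containing the closed movable cone.

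Moreover, your choice $N\gg0$ goes the wrong way. If $B$ is not nef on $X_0$, the asymptotic order of $NB-\tfrac12E$ along the curves on which $B$ is negative grows linearly in $N$. A scaled general member is therefore eventually not klt; for $B$ outside $\overline{\operatorname{Mov}}(X_0)$ it even contains $D$ or $E$ with coefficient greater than one. If you instead fix $\Delta$ ample and let the multiple of $B$ vary, every class of slope in $(7,\infty)$ does become a klt adjoint class. But $\lambda$ would still need separate treatment, and the direct chamber argument is simpler.
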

\begin{proof}
The $\Q$-factoriality and the equality $\Pic_\Q=N^1_\Q$ follow
from \cite[Theorem~1.4]{SB06}; they are preserved by the small
modifications in \eqref{eq:finite-MMP}. We prove that both nef
boundary rays on every $X_i$ are semiample.

On $X_0$, one ray is the Hodge ray $\lambda$, which defines the
Satake morphism. Its contracted curves cover $D$; a general such
curve $C$ is not contained in $E$, so $K_{X_0}\cdot C=E\cdot C/2\ge0$.
The canonical divisor is not nef: otherwise $\nu_\sigma(K_{X_0})=0$
would make it numerically trivial, contrary to the nonzero
effective divisor $E\sim2K_{X_0}$. Since the cone of curves is
two-dimensional, its other boundary ray is $K$-negative.
The contraction theorem gives a contraction of this ray, and
the pullback of an ample rational class on its target generates
the other nef boundary. It is semiample.

After any flip, the incoming contracted curve ray is $K$-positive.
The same argument applies: $2K_{X_i}\sim E_i\ne0$ and
$\nu_\sigma(K_{X_i})=0$, so the other curve ray is $K$-negative. The
incoming and outgoing contractions therefore give the two
semiample nef boundaries on $X_i$. On $X_r$, the outgoing one
is $L_*=f^*A$, where $A$ is an ample rational divisor on $Y$.
The nef cones are rational polyhedral, and their adjacent members
share the ray pulled back from the target of the corresponding flip.

It remains to prove that these finitely many nef cones exhaust
the movable cone. A movable divisor has nonnegative intersection
with a general curve in a family covering a divisor, on each $X_i$.
Indeed one can choose an effective member without that prime
component, and a general curve of the family is not contained in
its support. The assertion extends to the closed movable cone
by continuity.

For curves in a general abelian boundary fiber of the Satake
morphism, $\lambda\cdot C=0$ and $D\cdot C<0$; on the abelian
cover the normal class of the boundary is $-2\Theta$.
Thus every movable class $a\lambda-b[D]$ has $b\ge0$.
Next take a general curve $C$ in the family contracted by $f$
and covering $E_r$. Since $\lambda_r$ is movable,
$\lambda_r\cdot C\ge0$. In fact it is positive. If it were zero,
the inequality
\[
 E_r\cdot C=14\lambda_r\cdot C-2D_r\cdot C<0
\]
would give $D_r\cdot C>0$. An ample class
$a_0\lambda-b_0[D]$ on $X_0$ has $b_0>0$, by the preceding
boundary-fiber test. Its transform is movable on $X_r$ but would
have negative degree on $C$, a contradiction.

Set
\[
 \sigma=\frac{D_r\cdot C}{\lambda_r\cdot C}.
\]
This number is rational and is greater than seven. The curve test
on $E_r$ forces $a\ge\sigma b$ for every movable class. The ray
$L_*$ is therefore generated by $\sigma\lambda_r-D_r$.
All the nef cones in \eqref{eq:finite-MMP} lie in the closed
sector bounded by $\lambda$ and $L_*$. Intersect this sector
with a positive affine hyperplane in $N^1(X_0)_\mathbb R$.
The finitely many nef cones give closed intervals with adjacent
intervals intersecting. Their union is connected and contains
both endpoints of the sector, so it is the whole interval.
This proves that their union is the closed movable cone and
verifies the Mori dream space definition.

The boundary ray $L_*$ is big and semiample, so a sufficiently
divisible multiple is moving. Hence its slope $\sigma$ is the
attained moving slope. The upper bound in
\eqref{eq:movable-cone} is Theorem~\ref{thm:moving-form}.
\end{proof}

\subsection{The algebra of prescribed boundary orders}
Let $t$ be a formal variable and retain both gradings in
\begin{equation}\label{eq:boundary-algebra}
 \mathcal R_D=
 \bigoplus_{k,b\ge0}\MM_k^{(6)}[b]t^b.
\end{equation}
This algebra records all scalar weights and all required boundary
orders at once. Finite generation of the separate modules
$\mathcal M_\ell$ does not by itself imply finite generation of
\eqref{eq:boundary-algebra}.

\begin{theorem}\label{thm:boundary-algebra}
The algebra $\mathcal R_D$ is finitely generated over $\C$.
Its associated graded algebra
\[
 \operatorname{gr}_D\MM_\bullet^{(6)}
 =\bigoplus_{k,b\ge0}
     \MM_k^{(6)}[b]/\MM_k^{(6)}[b+1]
\]
is a finitely generated domain. The first Fourier--Jacobi coefficient
identifies it with a subalgebra of
$\bigoplus_{k,b\ge0}\JJ_{k,b}^{(5)}$, where index zero means
scalar Siegel modular forms of genus five.
\end{theorem}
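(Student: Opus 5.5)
The plan is to identify $\mathcal R_D$ with a Cox-type ring of $X_0=\A_6^{\mathrm{Perf}}$ and then use the Mori dream space property from Theorem~\ref{thm:mori-dream}. For a sufficiently divisible $r$, the line bundle $\mathcal L_6^{r}$ and $\OO(rD)$ are Cartier on $X_0$. I first check that for all $k,b\ge0$ restriction to the interior gives an isomorphism
\[
 H^0\bigl(X_0,\OO_{X_0}(k\lambda-bD)\bigr)\cong\MM_k^{(6)}[b],
\]
with the left side read as reflexive sections of the Weil divisor class. The argument has three parts. $X_0$ is normal, and by Section~\ref{sec:preliminaries} its boundary has the single prime component $D$. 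The Koecher principle extends interior holomorphic sections across the codimension-$\ge2$ higher boundary strata. Vanishing of order $b$ along $D$ is exactly $b(G)\ge b$, because $q$ is a normal parameter on the double cover and the Kummer quotient does not change generic orders. Consequently $\mathcal R_D=\bigoplus_{k,b}H^0(X_0,k\lambda-bD)$ is the multisection ring of the two Weil classes $\lambda,-D$, indexed by the monoid $\Z_{\ge0}^2$.

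Next I would invoke \cite[Proposition~2.9]{HK00}: on a Mori dream space, every multisection ring $\bigoplus_{m\in\Z^2_{\ge0}}H^0(X_0,m_1L_1+m_2L_2)$ of $\Q$-Cartier Weil divisors is finitely generated. One first passes to the finite-index sublattice where the classes are Cartier. The ring $\mathcal R_D$ is a finite module over that Veronese subring, namely a direct sum over the finitely many residue classes. Each summand is finitely generated by the divisorial-ring version of the same result, since the Cox ring of a Mori dream space is finitely generated and $\Q$-factoriality makes $\mathrm{Cl}(X_0)_\Q=N^1$. Alternatively I would argue directly with the chamber decomposition. The effective cone from Theorem~\ref{thm:effective-cone} splits into the nef cones of the $X_i$ and the fixed-part cones spanned with $[D]$ and $E$. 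On each chamber the section rings are finitely generated because the divisors are semiample on some $X_i$, after removing the fixed components $D$ or $E$. Gluing finitely many chambers gives finite generation. The region $b>k/7$ contributes nothing beyond the ray $E$ by Theorem~\ref{thm:minimal-slope}.

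For the associated graded algebra I would use the Rees-algebra trick. The element $t^{-1}$ is not in $\mathcal R_D$, but $\operatorname{gr}_D=\mathcal R_D/(\text{image of }\MM_k[b+1]t^{b})$. Concretely, the degree-$(0,-1)$ inclusion $\MM_k[b+1]t^{b+1}\mapsto \MM_k[b]t^{b}$ shows that the quotient of $\mathcal R_D$ by the ideal $I=\bigoplus \MM_k[b+1]t^b$ is $\operatorname{gr}_D$. Since $I$ is an ideal of $\mathcal R_D$, finite generation passes to the quotient. The map sending the class of $G$ to $\phi_b(G)$ is well defined and injective on $\MM_k[b]/\MM_k[b+1]$. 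It is multiplicative because the leading Fourier--Jacobi coefficient of a product is the product of leading coefficients, as in the additivity of $b$. Its image therefore lies in $\bigoplus\JJ_{k,b}^{(5)}$, with $b=0$ giving $\Phi$-restrictions to genus five. The target is a domain of holomorphic functions, so $\operatorname{gr}_D$ is a domain.

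The main obstacle is the first step: identifying modular forms with prescribed boundary order with sections of the Weil divisor on the coarse perfect-cone space. The delicate points are the stacky/coarse comparison along $D$, where generic automorphism $-1$ acts, and the passage from Cartier multiples to all Weil classes in the HK00 finite-generation statement. I would handle the former through the Kummer remark in Section~\ref{sec:preliminaries}, and the latter by using $\Q$-factoriality of $X_0$ together with finiteness of $\mathrm{Cl}(X_0)/\text{Cartier}$ for the rank-two lattice involved.
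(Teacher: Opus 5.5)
Your outline follows the paper's proof. You identify $\mathcal R_D$ with a two-parameter section ring on $X_0$, get finite generation on a Cartier sublattice from the Mori dream space property via \cite[Proposition~2.9]{HK00}, and pass to all $(k,b)$. You then obtain $\operatorname{gr}_D$ as the quotient by the homogeneous ideal $\bigoplus\MM_k^{(6)}[b+1]t^b$, and use injectivity and multiplicativity of $G\mapsto\phi_b(G)$ for the domain property and the embedding into Jacobi forms.

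The one step you only assert is that $\mathcal R_D$ is a finite module over its $N$-Veronese subring $\mathcal R_D^{(N)}$. Finite index of the Cartier sublattice does not give this by itself. Appealing to a ``divisorial-ring version'' of the HK00 result amounts to citing what has to be proved; if that version applied directly to $\Z_{\ge0}^2$, your Veronese reduction would be redundant. Note also that HK00 only gives finite generation of the Pic-graded Cox ring. Restricting it to your cone and sublattice needs Gordan's lemma, which the paper spells out.

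The paper proves the missing step directly. The group $\mu_N^2$ acts on $\mathcal R_D$ through the bigrading, with invariant ring $\mathcal R_D^{(N)}$. Every element is integral over the invariants via its orbit polynomial, and the fraction field extension has degree at most $N^2$. So $\mathcal R_D$ lies in the integral closure of the finitely generated domain $\mathcal R_D^{(N)}$ in a finite extension, which is a finite module. Hence $\mathcal R_D$ is finite over the Noetherian ring $\mathcal R_D^{(N)}$.

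With this argument in place of your citation, or with an explicit reference for finite generation of the class-group-graded Cox ring of a $\Q$-factorial Mori dream space, your proof is complete. The chamber-gluing alternative is then unnecessary.
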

\begin{proof}
By Theorem~\ref{thm:mori-dream} and \cite[Proposition~2.9]{HK00},
the Cox ring of $X_0$ is finitely generated. Choose a positive
integer $N$ such that $N\lambda$ and $ND$ are Cartier and the
Hodge power descends to the coarse space. Enlarge $N$ so that
these classes also belong to the lattice used to grade the Cox ring. Then
\[
 \mathcal R_D^{(N)}
   =\bigoplus_{k,b\ge0}\MM_{Nk}^{(6)}[Nb]t^{Nb}
   =\bigoplus_{k,b\ge0}H^0(X_0,Nk\lambda-NbD)t^{Nb}
\]
is finitely generated. To justify this directly, choose homogeneous
Cox generators. The nonnegative exponent vectors of monomials
whose degrees lie in the required lattice and cone form a finitely
generated monoid by Gordan's lemma. Their images generate precisely
the indicated subalgebra. This is a restriction to a rational
polyhedral cone and a finite-index lattice, not an assertion about
arbitrary subalgebras of a finitely generated ring.

We must also include weights and orders not divisible by $N$.
The finite group $\mu_N^2$ acts on $\mathcal R_D$ through its two
gradings, with invariant ring $\mathcal R_D^{(N)}$.
Every element is integral over the invariant ring, by its orbit
polynomial. Moreover
\[
 [\operatorname{Frac}(\mathcal R_D):
       \operatorname{Frac}(\mathcal R_D^{(N)})]\le N^2.
\]
Here the invariant fraction field equals the fraction field of the
invariants, as follows by multiplying a denominator by its other
translates. The integral closure of the finitely generated complex
domain $\mathcal R_D^{(N)}$ in this finite field extension is a
finite module. It contains $\mathcal R_D$, which is consequently
a finite module as well, since the invariant ring is Noetherian.
This proves finite generation of the full algebra.

The associated graded algebra is the quotient by the homogeneous
ideal $\bigoplus_{k,b\ge0}\MM_k^{(6)}[b+1]t^b$.
The map $G\mapsto\phi_b(G)$ on its $(k,b)$ component is injective
and respects multiplication. The boundary order is additive on
nonzero forms, so this associated graded algebra is a domain.
\end{proof}

\begin{remark}
The image in the Jacobi algebra consists of coefficients that extend
to genus-six Siegel modular forms with the prescribed preceding
coefficients zero. In particular, a positive-index coefficient has
boundary order at least its index in the genus-five variables.
There is no assertion that every Jacobi form occurs. This distinction
is essential in comparison with \cite{BBHJ24}, which proves that
the full rings of Siegel--Jacobi forms of fixed or bounded positive
relative index, for neat groups, are not finitely generated.
The proof above uses neither those full rings nor the incorrect
finite-generation statement discussed in \cite[Remark~1.3]{BR25}.
\end{remark}

\subsection{The minimal model and its boundary}
\begin{proposition}\label{prop:unique-minimal-model}
Every birational map between projective $\Q$-factorial terminal
minimal models of $\A_6$ is an isomorphism. In particular, $Y$ is
unique up to isomorphism, and
\[
 \operatorname{Bir}(\A_6)=\operatorname{Aut}(Y).
\]
\end{proposition}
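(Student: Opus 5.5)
The plan is to use the fact that $Y$ has Picard number one and $K_Y\equiv0$, together with the standard result that minimal models are connected by flops. Let $Y'$ be another projective $\Q$-factorial terminal minimal model and $\chi:Y\dashrightarrow Y'$ a birational map. Both have nef canonical divisors, so by the negativity lemma $\chi$ is an isomorphism in codimension one. Concretely, on a common resolution $p:W\to Y$, $p':W\to Y'$ one has $p^*K_Y=p'^*K_{Y'}$, and the divisors exceptional over $Y$ and over $Y'$ coincide, because both have positive discrepancy (terminality) and the discrepancies agree. In our case this is immediate: $K_Y\equiv0$ and $K_{Y'}\equiv0$ by Theorem~\ref{thm:numerical-zero} and its argument, which applies to any minimal model since numerical dimension zero is birationally invariant. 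Then $K_W=p^*K_Y+\sum a_F F=p'^*K_{Y'}+\sum a'_F F$ with both pullbacks numerically trivial. Comparing effective exceptional parts via the negativity lemma on each side gives that every $p$-exceptional divisor is $p'$-exceptional and conversely.

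Next I use $\rho(Y)=1$ from Theorem~\ref{thm:minimal-geometry}. Since $\chi$ is small and both varieties are $\Q$-factorial, $\chi_*$ identifies $N^1(Y)$ with $N^1(Y')$, so $\rho(Y')=1$. Take an ample Cartier divisor $A'$ on $Y'$ and let $A=\chi^{-1}_*A'$ be its strict transform. It is $\Q$-Cartier, and it is big because $h^0$ of its multiples is preserved by small maps. With Picard number one, a big class is ample, so $A$ is ample on $Y$. Since $\chi$ is an isomorphism in codimension one between normal projective varieties, section rings of Weil divisors agree. Hence $Y=\Proj R(Y,A)\cong\Proj R(Y',A')=Y'$ compatibly with $\chi$, and $\chi$ is an isomorphism.

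Applying this to $Y'=Y$ shows that every birational self-map of $Y$ is biregular. Every element of $\operatorname{Bir}(\A_6)$ induces a birational self-map of $Y$, so $\operatorname{Bir}(\A_6)=\operatorname{Aut}(Y)$. Uniqueness of the minimal model is the case of a general $Y'$.

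The main point to check is the first step, the codimension-one isomorphism property. I would cite the standard statement that birational minimal models (terminal, $\Q$-factorial, with nef $K$) are isomorphic in codimension one, as in \cite[Theorem~3.52]{KM98}. The remaining care is in showing that $Y'$ also satisfies $K_{Y'}\equiv0$ and is projective. Only the $\Q$-factoriality and terminality of $Y'$ are needed for the Picard-number transfer. Bigness of $A$ should be justified through the identification of section spaces under a small map.
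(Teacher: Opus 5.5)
Your proof is correct, but it takes a different route from the paper's.

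\textbf{The paper's route.} The paper cites Kawamata's theorem that birational $\Q$-factorial terminal minimal models are connected by a sequence of flops. It then observes that no flop can start from $Y$: with $\rho(Y)=1$, a flopping contraction would have to pull back an ample class to a nonzero nef class that is not ample.

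\textbf{Your route.} You avoid that theorem and use only two ingredients.
\begin{enumerate}
\item The negativity lemma, which gives that $\chi$ is an isomorphism in codimension one, via $p^*K_Y=p'^*K_{Y'}$ and terminality.
\item The observation that, on the $\Q$-factorial variety $Y$ with $\rho(Y)=1$, the strict transform of an ample divisor on $Y'$ is $\Q$-Cartier and big, hence ample. Both models are then $\Proj$ of the same divisorial section ring, compatibly with $\chi$.
\end{enumerate}

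\textbf{Comparison.} Your argument is more elementary. It does not depend on the termination-of-flops machinery behind Kawamata's theorem. It also uses $\Q$-factoriality only on $Y$; on $Y'$ you need only that $K_{Y'}$ is $\Q$-Cartier, for the discrepancy comparison. The paper's version is shorter once the citation is accepted.

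\textbf{Two small points.}
\begin{itemize}
\item Your aside that $\chi_*$ identifies $N^1(Y)$ with $N^1(Y')$ is true, since numerical triviality transfers by the same negativity-lemma argument. It is not used, however: only $\rho(Y)=1$ enters the ampleness step.
\item You should state the final reduction explicitly. A birational map $Y_1\dashrightarrow Y_2$ between two arbitrary minimal models equals the isomorphisms $Y\to Y_i$ you constructed, composed with a birational self-map of $Y$. That self-map is biregular, so the map $Y_1\dashrightarrow Y_2$ is an isomorphism.
\end{itemize}
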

\begin{proof}
By \cite[Theorem~1]{Kawamata08}, a birational map between two such
minimal models is a composition of flops. No flop can start from
$Y$, since $\rho(Y)=1$: a nontrivial projective birational
contraction would pull back an ample divisor to a nonzero nef
class which is not ample. Thus every minimal model is isomorphic
to $Y$, and the same argument applies to every birational self-map.
\end{proof}

\begin{theorem}\label{thm:minimal-boundary}
Let $D_Y$ be the strict transform of $D$ on $Y$. Then $D_Y$ is
a prime ample unirational $\Q$-Cartier divisor, and
\begin{equation}\label{eq:common-affine-open}
 U:=\A_6\setminus V(\F)\cong Y\setminus D_Y.
\end{equation}
The center on $Y$ of the valuation $E$ is contained in $D_Y$.
Furthermore,
\begin{equation}\label{eq:boundary-threshold}
 \operatorname{lct}(Y;D_Y)
   \le\frac{3(\sigma-7)}{\sigma}\le\frac3{43}.
\end{equation}
In particular the reduced pair $(Y,D_Y)$ is not log canonical.
\end{theorem}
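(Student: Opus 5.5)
We work on $X_r$, where the flips of \eqref{eq:finite-MMP} have already been carried out and $f:X_r\to Y$ contracts exactly the strict transform $E_r$. The first step is to record the $\Q$-linear relation on $Y$. Since $\rho(Y)=1$ and $f_*E_r=0$, the class $f_*(14\lambda_r-2D_r)=2f_*K_{X_r}$ is trivial. It follows that $f_*\lambda_r\equiv\frac17 D_Y$. By Theorem~\ref{thm:mori-dream},
\[
 f^*D_Y=\frac1{\sigma}\bigl(\sigma\lambda_r-D_r\bigr)\cdot\sigma-\text{(correction)},
\]
more precisely $f^*D_Y=D_r+cE_r$ for the unique $c\ge0$ with $(D_r+cE_r)\cdot C=0$ on the contracted curves $C$. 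Using $\lambda_r\cdot C=\frac1\sigma D_r\cdot C$ and $E_r\cdot C=(\frac{14}{\sigma}-2)D_r\cdot C<0$, we obtain
\[
 c=\frac{\sigma}{2(\sigma-7)}>0 .
\]
Since $D_Y$ is the image of the irreducible $D$, it is prime. It is $\Q$-Cartier because $Y$ is $\Q$-factorial, and ample because $\rho(Y)=1$ and $D_Y$ is a nonzero effective divisor. Unirationality follows because $D$ is birational to the universal Kummer variety over $\A_5$, which is unirational by \cite{FV16}, and $f$ is an isomorphism at the generic point of $D_r$. Positivity of $c$ shows that the center of $E$ lies in $D_Y$.

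For the isomorphism \eqref{eq:common-affine-open}, we compare complements. From $E\sim2(7\lambda-D)$ we see that $V(\F)\cup D$ supports the big divisor $14\lambda-D$, and on $Y$ the pullback of $D_Y$ has support $D_r\cup E_r$. We then need that the flips and $f$ are isomorphisms off $D\cup E$. The flipping loci are covered by $K$-negative curves, and since $2K\sim E_i$ those curves meet $E_i$ negatively, so they lie in $E_i$. The flipped curves are $K$-positive. To place them inside $D$, we would use the $\lambda$-test: for $\lambda$-trivial curves we have $E\cdot C=-2D\cdot C$. For the remaining curves I would argue that any curve with $E\cdot C<0$ lies in $E$, and, symmetrically after the flip, that $K$-positive flipped curves have negative degree against the big class $\sigma\lambda-D$ and therefore lie in its stable base locus, which is contained in $E\cup D$. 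The outcome is $X_0\setminus(D\cup E)\cong Y\setminus f(D_r\cup E_r)=Y\setminus D_Y$, and the left side is $\A_6\setminus V(\F)$. This identification of the exceptional loci with subsets of $D\cup E$ is where I expect the main obstacle: one must check that the flipped loci never meet the interior off $V(\F)$. The fallback is to describe $Y\setminus D_Y$ as $\Spec$ of the degree-zero part of the localization of the section ring of $D_Y$, namely $\bigoplus H^0(Y,mD_Y)$. This coincides with forms divided by powers of $\F$, and so computes the coordinate ring of the affine variety $\A_6\setminus V(\F)$ directly.

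Finally, for the log canonical threshold, take a resolution $W$ through $X_r$ carrying $E$. Then
\[
 a(E,Y;tD_Y)=a(E,Y)-t\,\ord_E(f^*D_Y)=\tfrac12-tc .
\]
Log canonicity of $(Y,tD_Y)$ requires $\tfrac12-tc\ge-1$, which gives $t\le\frac{3}{2c}=\frac{3(\sigma-7)}{\sigma}$. The bound $\sigma\le43/6$ yields $\frac{3(1/6)}{43/6}=\frac3{43}<1$, so the reduced pair $(Y,D_Y)$ is not log canonical.
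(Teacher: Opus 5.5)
The step you flagged as the main obstacle fails as written. The flipped curves $C^+\subset X_{i+1}$ do \emph{not} have negative degree on $\sigma\lambda-D$. Let $W_i$ be the class pulled back from the flipping base, so $W_i\cdot C^+=0$. The nef cones of $X_0,\dots,X_r$ run from $\lambda$ to $\sigma\lambda-D$, so the ray $\sigma\lambda-D$ lies strictly between $W_i$ and $K$. Hence $\sigma\lambda-D=aW_i+bK$ with $a,b>0$, and $(\sigma\lambda-D)\cdot C^+=b\,K\cdot C^+>0$; the class is negative on the flipping curves instead.

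The class that works is $\lambda$, which lies on the other side of every wall, so $\lambda\cdot C^+<0$. Since $14\lambda_{i+1}\sim_\Q E_{i+1}+2D_{i+1}$ (the divisor of $\F$), this forces $C^+\subset E_{i+1}\cup D_{i+1}$. This is exactly parallel to your argument for the flipping curves, and then your complement comparison goes through.

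The rest of your proposal is sound and agrees with the paper: primality, $\Q$-Cartierness, ampleness, unirationality, $c=\sigma/(2(\sigma-7))$, and the threshold bound. Your garbled first display should be replaced by $f^*D_Y\equiv\frac{7}{\sigma-7}(\sigma\lambda_r-D_r)$. Deducing $f(E_r)\subset D_Y$ from $c>0$ is a valid shortcut that the paper does not use.

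Your fallback is also correct, and is the cleaner way to finish. The prime divisors of $Y$ are those of $X_0$ other than $E$. By normality, $\bigcup_mH^0(Y,\OO_Y(mD_Y))$ is then the ring of functions regular at every prime divisor of $X_0$ except $D$ and $E$, i.e.\ $\Gamma(U,\OO_U)$. Both $Y\setminus D_Y$ (complement of an ample divisor) and $U=D_+(\F)$ (in the Satake $\Proj$) are affine, so they coincide.

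The paper argues differently. It shows that an affine open subset whose complement has codimension at least two in a normal separated variety is the whole variety. It applies this after each flip, using only that flipping curves lie in $E_i$, and then to $V=Y\setminus D_Y$. This gives the isomorphism and $f(E_r)\subset D_Y$ at once, without controlling flipped curves or using ampleness of $D_Y$. Your section-ring route bypasses the flips entirely. It is essentially the identification $B_m=H^0(Y,\OO_Y(mD_Y))$, which the paper records only after the theorem as a consequence of it.
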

\begin{proof}
The cusp form $\F$ vanishes on the Satake boundary. Thus
$U=D_+(\F)$ in the Satake compactification
$\operatorname{Proj}\MM_\bullet^{(6)}$, and $U$ is affine.
On $X_0$ it is $X_0\setminus(E\cup D)$.

We use a consequence of normality and separatedness. If an affine
open subset $U$ of a normal separated variety $V$ has complement
of codimension at least two, then $U=V$. Indeed, restriction gives
$\Gamma(V,\OO_V)=\Gamma(U,\OO_U)$ and hence a retraction $V\to U$.
Its composition with the open immersion agrees with the identity
on a dense open subset, and therefore everywhere.

Every curve contracted by a flipping contraction in
\eqref{eq:finite-MMP} is contained in $E_i$, because
$E_i\cdot C=2K_{X_i}\cdot C<0$. Hence the flip embeds $U$ into
$X_{i+1}\setminus(E_{i+1}\cup D_{i+1})$ with complement of
codimension at least two. The preceding observation proves equality.
For the last contraction put $Z=f(E_r)$, which has codimension
at least two in $Y$. Away from its exceptional locus, $f$ gives
\[
 U=Y\setminus(D_Y\cup Z).
\]
Apply the observation to $V=Y\setminus D_Y$. It follows that
$Z\subset D_Y$ and proves \eqref{eq:common-affine-open}.
The divisor $D_Y$ is nonzero, effective and $\Q$-Cartier. Since
$\rho(Y)=1$, it is ample. It is birational to $D$, which is
unirational by \cite[Theorem~0.1]{FV16}.

There is only one $f$-exceptional prime divisor, so write
\[
 f^*D_Y=D_r+cE_r.
\]
The class $\sigma\lambda_r-D_r$ is pulled back from $Y$.
For an $f$-contracted curve $C$, one therefore has
\[
 D_r\cdot C=\sigma\lambda_r\cdot C,\qquad
 E_r\cdot C=2(7-\sigma)\lambda_r\cdot C<0.
\]
Intersecting the pullback identity with $C$ gives
\begin{equation}\label{eq:boundary-valuation}
 \ord_E(D_Y)=c=\frac{\sigma}{2(\sigma-7)}.
\end{equation}
The quantity on the left is a divisorial valuation, not an ordinary
multiplicity at a point. Since $a(E,Y)=1/2$, its log discrepancy
for the pair $(Y,tD_Y)$ is
\[
 A_{Y,tD_Y}(E)=1+a(E,Y)-t\ord_E(D_Y)
            =\frac32-t\frac{\sigma}{2(\sigma-7)}.
\]
By definition the log canonical threshold is the infimum of
$A_Y(v)/v(D_Y)$, where $A_Y(v)=1+a(v,Y)$, over divisorial valuations with $v(D_Y)>0$.
Testing $v=\ord_E$ gives the first inequality in
\eqref{eq:boundary-threshold}, and
$7<\sigma\le43/6$ gives the second. No equality for the threshold
is asserted.
\end{proof}

The affine open set in \eqref{eq:common-affine-open}, together with
its boundary valuation, determines the minimal model algebraically. Put $B=\Gamma(U,\OO_U)$ and let
$\nu_D$ be the boundary valuation of its fraction field. Then
\begin{equation}\label{eq:intrinsic-filtration}
 \begin{split}
 B_m&=\{h\in B:\nu_D(h)\ge-m\}\\
 &=\bigcup_{r\ge0}
 \left\{\frac{G}{\F^r}:
 G\in\MM_{14r}^{(6)}[\max\{0,2r-m\}]\right\}
       \qquad(m\ge0).
 \end{split}
\end{equation}
The union is increasing by multiplication of numerator and
denominator by $\F$. Normality of $Y$ and
\eqref{eq:common-affine-open} identify
$B_m=H^0(Y,\OO_Y(mD_Y))$. Hence, for $N$ such that $ND_Y$ is
Cartier,
\begin{equation}\label{eq:intrinsic-minimal-model}
 Y\cong\operatorname{Proj}\bigoplus_{m\ge0}B_{Nm}.
\end{equation}
This description uses only modular functions on $U$ and their
boundary orders; it does not require the individual flips to be
constructed explicitly.

\begin{corollary}\label{cor:rational-fibrations}
Let $\A_6\dashrightarrow B_0$ be a dominant rational map with
connected general fiber and $0<\dim B_0<21$. A smooth projective
model $V$ of its general fiber is of general type:
\[
 \kappa(V)=\dim V=21-\dim B_0.
\]
\end{corollary}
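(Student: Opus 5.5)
We will deduce the statement from the structure of the minimal model $Y$ together with the subadditivity of Kodaira dimension over a base of general type, as follows. Let $\Phi:\A_6\dashrightarrow B_0$ be the given map. By Proposition~\ref{prop:unique-minimal-model} and Theorem~\ref{thm:minimal-geometry}, we may replace $\A_6$ by the canonical double cover $\widetilde Y$ of Corollary~\ref{cor:canonical-cover}. This is a terminal variety with $K_{\widetilde Y}\sim0$.

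\emph{First step.} Show that the general fiber $V$ of $\Phi$ is not uniruled. Equivalently, $\A_6$ admits no covering family of rational curves. Since $\kappa(\A_6)=0$, the variety is not uniruled: a uniruled variety has $\kappa=-\infty$, and here $H^0(X,2K_X)\ne0$ by Corollary~\ref{cor:canonical-ring}. Hence the general fiber is not uniruled either, and by the theorem of Boucksom--Demailly--P\u{a}un--Peternell its canonical class is pseudo-effective.

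\emph{Second step (where the modular input enters).} Show that $K_V$ is big. Restrict the adjunction $K_V=(K_X)|_V$ for a general fiber on a resolution $X$ of the graph. Then $K_V$ differs from the restriction of $K_X$ by the relative canonical class. By Viehweg's weak positivity, $K_{X/B_0}$ is pseudo-effective on a suitable model, but this alone only gives $\kappa(V)\ge0$-type information. We therefore use the modular side. The pullback $L_X$ of the Hodge class is big and nef, and on the interior of $\A_6$ the cotangent bundle is $\Sym^2\mathbb E_6$, which is nef and big along general subvarieties, so interior subvarieties are of general type in the logarithmic sense. The intended route is the following. Because $K_Y\equiv0$ and $E_{\F}$ is the unique exceptional valuation of discrepancy $<1$ (Theorem~\ref{thm:intro-birational}), the general fiber $V$ meets the open modular set $U=\A_6\setminus V(\F)$. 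A general fiber through a general point of $\A_6$ is a subvariety of the locally symmetric variety $\A_6$ that is not contained in the boundary, $V(\F)$, or the branch loci. For such subvarieties, $\Omega^1_V(\log)$ receives a generically surjective map from $\Sym^2\mathbb E_6|_V$. Combined with the injection \eqref{eq:canonical-injection} restricted to $V$, this should give
\[
 K_V\ \ge\ (\dim V+1)\lambda|_V-(\text{boundary and ramification corrections}).
\]
Here the corrections are controlled by the slope bound $k\ge7b$ of Theorem~\ref{thm:minimal-slope}, applied through Freitag's exact-divisor theorem to divisors pulled back from $B_0$.

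\emph{Third step.} Conclude bigness from the positivity of $\lambda|_V$. Since $\lambda$ is big and nef and $V$ is general, $\lambda|_V$ is big. Bigness of $K_V$ then gives $\kappa(V)=\dim V$.

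The main obstacle is the second step. We must control the boundary and the ramification of $\Hh_6\to\A_6$ along $V$ precisely enough that the interior bigness of $\Sym^2\mathbb E_6$ is not destroyed at $D$. This is exactly the role of the discrepancy estimates: terminality of $Y$, and the statement that only $E_{\F}$ has small discrepancy. Our first attempt would be to work on $Y$ directly, where $K_Y\equiv0$. There, the Easy Addition and Iitaka fibration give $0=\kappa(Y)\ge\kappa(V)+\dim B_0$-type constraints only in the reverse direction. So the genuine input must be a Viehweg--Zuo type negativity of the base combined with the hyperbolicity of $\A_6$. We expect to have to use that the fibers of $\Phi$ through general points are subvarieties of $\A_6$ of general type, by Tsuyumine-style arguments adapted to genus six.
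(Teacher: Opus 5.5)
\textbf{There is a genuine gap in your second step.} Your first and third steps are fine, but they only show that $K_V$ is pseudo-effective and that $\lambda|_V$ is big. Your second step, which has to connect the two, is a heuristic, and you flag it as the obstacle yourself.

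The principle you want to invoke there is also false at full level. $\A_6$ contains interior subvarieties that are not of general type. For example, the product locus $\A_1\times\A_5$ is unirational. More pointedly, Theorem~\ref{thm:minimal-geometry} shows that the interior divisor $E_{\F}$ is uniruled. So neither ``interior subvarieties are of general type in the logarithmic sense'' nor a Tsuyumine-type statement holds in genus six. The orbifold corrections to $\Sym^2\mathbb E_6\to\Omega^1_V$ are exactly what break down.

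Any correct argument must therefore use that $V$ is a fiber of a fibration of the whole space, i.e.\ a member of a covering family. None of your other devices supply this:
\begin{itemize}
\item passing to $\widetilde Y$ only replaces $V$ by a cover of $V$;
\item Viehweg's weak positivity and Easy Addition give nothing, as you note;
\item the slope bound, applied through Freitag's theorem to pulled-back divisors, is never connected to the geometry of the fibers.
\end{itemize}

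\textbf{The missing idea is $\rho(Y)=1$,} combined with terminality and $K_Y\sim_\Q0$; the paper follows the argument of \cite{AC08}.
\begin{itemize}
\item Resolve the map as $\pi:W\to Y$ and $h:W\to B_0$ with $W$ smooth.
\item For a general very ample $H$ on $B_0$, the divisor $L=\pi_*h^*H$ is nonzero, effective and $\Q$-Cartier. It is therefore ample, because $\rho(Y)=1$.
\item Write $\pi^*L=h^*H+\sum_ib_iE_i$ with $b_i\ge0$, and $K_W\sim_\Q\sum_ia_iE_i$ with every $a_i>0$. The latter is exactly terminality together with $K_Y\sim_\Q0$.
\item On a general smooth fiber $V$ of $h$ one has $h^*H|_V=0$ and $K_W|_V=K_V$. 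Choosing $M$ with $Ma_i\ge b_i$ for all $i$ gives $MK_V-\pi^*L|_V\sim_\Q\sum_i(Ma_i-b_i)E_i|_V\ge0$.
\item Finally, $\pi^*L|_V$ is big because $\pi|_V$ is generically finite, so $K_V$ is big.
\end{itemize}
No modular input is needed beyond the structure of $Y$ already established. Your non-uniruledness step and the uniqueness of the discrepancy-$\tfrac12$ valuation play no role.
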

\begin{proof}
We give the argument of \cite[Theorem~2.1 and Section~2, Remark~2]{AC08}
in the present setting. Replace the base by a smooth projective model and resolve the map
as $\pi:W\to Y$ and $h:W\to B_0$, with $W$ smooth. For a general member $H$ of a very
ample linear system on $B_0$, the divisor $L=\pi_*h^*H$ is
nonzero, effective and $\Q$-Cartier, hence ample on $Y$.
After replacing $L$ and $H$ by corresponding Cartier multiples,
resolving the base ideal gives
\[
 \pi^*L=h^*H+\sum_i b_iE_i,\quad b_i\ge0,
 \qquad K_W\sim_\Q\sum_i a_iE_i,\quad a_i>0.
\]
Here the $E_i$ run through all $\pi$-exceptional prime divisors,
and some $b_i$ may be zero. The second formula uses terminality
and $K_Y\sim_\Q0$.
For a smooth general fiber $V$ of $h$, adjunction gives
$K_W|_V=K_V$. Such a fiber is not contained in any $E_i$, so
each restriction $E_i|_V$ is effective or zero. The divisor $\pi^*L|_V$
is big, since $\pi|_V$ is generically finite onto its image.
Choose $M>0$ with $Ma_i\ge b_i$ for every $i$. Restriction gives
\[
 MK_V-\pi^*L|_V
       \sim_\Q\sum_i(Ma_i-b_i)E_i|_V\ge0.
\]
Thus $K_V$ is big.
\end{proof}

\section{Birational automorphisms}
\label{sec:automorphisms}

We retain the notation of \eqref{eq:finite-MMP}. For a normalized divisorial
valuation $v$, write $a(v,X)$ for its discrepancy and
$A_X(v)=1+a(v,X)$ for its log discrepancy. We prove that the
valuation defined by $E$ can be recovered from $Y$ alone.
The order-ten vanishing in the proof of Lemma~\ref{lem:origin-ten}
is needed at the finite quotient singularities; terminality by
itself would not suffice.

\subsection{Finite stabilizers and toroidal charts}

For a finite-order linear transformation $g$ with eigenvalues
$\exp(2\pi i\alpha_j)$, where $0\le\alpha_j<1$, put
$\operatorname{age}(g)=\sum_j\alpha_j$.

\begin{lemma}\label{lem:age-six}
Let $V$ be a complex vector space of dimension six, and let $g\in\GL(V)$
have finite order. Suppose that the characteristic polynomial of
$g$ on $V\oplus\overline V$ has integral coefficients. If $g$
acts nontrivially on $\Sym^2V$, then
\[
 \operatorname{age}(g\mid\Sym^2V)\ge\frac76.
\]
The only case in which this age is at most $3/2$ is, up to
multiplication of all eigenvalues on $V$ by $-1$,
\[
 (\exp(2\pi i/6),1,1,1,1,1).
\]
Its age on $\Sym^2V$ is $7/6$.
\end{lemma}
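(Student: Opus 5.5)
The plan is to diagonalize $g$ and turn the lemma into a finite combinatorial problem about fractional parts. Write the eigenvalues of $g$ on $V$ as $e(a_1),\dots,e(a_6)$ with $0\le a_j<1$. The eigenvalues on $\Sym^2V$ are then $e(a_i+a_j)$ for $i\le j$, so
\[
 \operatorname{age}(g\mid\Sym^2V)=\sum_{i=1}^6\{2a_i\}+\sum_{i<j}\{a_i+a_j\}.
\]
Replacing $g$ by $-g$ leaves $\Sym^2V$ unchanged, so the eigenvalues may be normalized up to a common sign. The action on $\Sym^2V$ is trivial exactly when $g=\pm1$, which is excluded. The integrality hypothesis on $V\oplus\overline V$ says that the multiset $\{a_j\}\cup\{-a_j\}$ is stable under $\operatorname{Gal}(\overline{\Q}/\Q)$. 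Hence, if a primitive $n$th root of unity occurs on $V$, then at least $\varphi(n)/2$ primitive $n$th roots occur on $V$. Since $\dim V=6$, only finitely many orders $n$ are possible, namely those with $\varphi(n)\le12$ and compatible multiplicities.

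The first step is a general lower bound that disposes of most configurations. If two eigenvalues $e(a_i),e(a_j)$ are both nontrivial and are not inverse to each other, then $\{a_i+a_j\}>0$. Moreover, for a complex-conjugate pair $a,1-a$ occurring on $V$, the diagonal and mixed terms contribute $\{2a\}+\{2-2a\}\ge$ one unless $2a\in\Z$. Combining these terms with the $\binom{6}{2}$ mixed terms, I expect the following. Whenever at least two eigenvalues, counted after sign normalization, differ from the dominant value $\pm1$, the age already exceeds $3/2$. This happens because each of the remaining (up to four) eigenvalues equal to $1$ pairs with each nontrivial $e(a_i)$ to contribute $\{a_i\}$. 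The bound I aim for is
\[
 \operatorname{age}\ge\sum_{i\ \text{nontrivial}}\bigl(\{2a_i\}+c\,\{a_i\}\bigr)+(\text{cross terms}),
\]
where $c$ is the number of trivial eigenvalues. Together with the Galois-orbit condition, this leaves only the case where exactly one eigenvalue on $V$ is nontrivial and the other five are $1$. It also leaves the case of a small number of $-1$'s, which is handled directly.

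In the remaining case $(e(a),1,1,1,1,1)$, the age is $\{2a\}+5\{a\}$. Here $e(a)\oplus e(-a)$ has integral characteristic polynomial, so $\varphi(\operatorname{ord} e(a))\le2$ and $a\in\{1/6,1/4,1/3,1/2,\dots\}$, giving the values $7/6$, $7/4$, $7/3$ and so on. The case $a=1/2$ reduces by the sign change to a configuration of the same type with larger age. Only $a=1/6$, or its sign-changed version, gives age $\le3/2$, and its age is exactly $7/6$. This also yields the global lower bound $7/6$.

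The main obstacle is the bookkeeping in the intermediate case analysis: mixed configurations containing some $-1$ eigenvalues together with roots of unity of order $3$, $4$, $5$, $8$, $10$ or $12$, where the sign normalization must be chosen well. My approach is to split the eigenvalues by their class in $\tfrac12\Z$ modulo $1$, normalize so that the majority class is $0$, and bound cross terms between the two classes from below by $1/2$ each. If some configuration resists these estimates, the set of configurations is finite, and it can be enumerated explicitly by listing Galois-stable multisets of size at most six.
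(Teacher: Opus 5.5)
\textbf{There is a genuine gap: the key lower bound is announced, not proved.}

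Your setup is sound and matches the final step of the paper's argument. This includes the age formula $\sum_i\{2a_i\}+\sum_{i<j}\{a_i+a_j\}$, normalization by $-1$, Galois stability on $V\oplus\overline V$ (so only orders with $\varphi(m)\le12$ occur), and the closing check of $(e(a),1,\dots,1)$. But the actual content of the lemma is that every other nontrivial configuration has age $>3/2$. You only announce this (``I expect''), and the mechanism you give does not establish it.

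Your lower bound gets its strength from the terms $c\{a_i\}$ supplied by the $c$ eigenvalues equal to $1$. When $c$ is small it gives essentially nothing, and you give no estimate for the cross terms among the nontrivial eigenvalues. Such configurations do occur. All six eigenvalues can come from one factor $\Phi_m$ of degree $12$ ($m=13,21,26,28,36,42$), or from two factors of degree $6$. Then there is no eigenvalue $\pm1$ at all. Your device of splitting by classes in $\tfrac12\Z$ and bounding cross terms by $1/2$ applies only to the angles $0$ and $1/2$. Falling back on enumerating all Galois-stable multisets is legitimate in principle. It is, however, a sizable search that you have not carried out, so as written this is a plan rather than a proof.

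\textbf{The missing idea: a uniform estimate attached to one Galois block.}

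Suppose $\Phi_m$ of degree $2d\ge4$ occurs. Integrality then gives eigenangles $B=\{\beta_1,\dots,\beta_d\}$ on $V$, one from each pair $\{a/m,1-a/m\}$ with $(a,m)=1$.
\begin{enumerate}[label=\textup{(\roman*)},leftmargin=*]
\item The diagonal terms of $B$ total at least $D_m=\sum_a\min(\{2a/m\},\{-2a/m\})$.
\item The mutual sums $\{\beta_i+\beta_j\}$ are nonzero multiples of $\epsilon=\gcd(2,m)/m$.
\item For any further eigenangle $t$, the function $t\mapsto\sum_i\{\beta_i+t\}$ attains its minimum at some $t=-\beta_j$. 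There it equals $\sum_i\{\beta_i-\beta_j\}\ge\epsilon\binom d2$, because these differences are distinct nonzero multiples of $\epsilon$.
\end{enumerate}
Hence the age is at least $D_m+(7-d)\binom d2\epsilon$. This exceeds $3/2$ for every $m$ with $4\le\varphi(m)\le12$ except $m=12$. For $m=12$ the bound gives only $7/6$, and the four possible choices of $B$ must be checked by hand; each gives age at least $2$.

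After that, all angles lie in $\{0,1/6,1/4,1/3,1/2,2/3,3/4,5/6\}$. Let $s$ be the number of angles outside $\{0,1/2\}$. Their diagonal terms give at least $s/3$, and their cross terms with the remaining $6-s$ angles give at least $s(6-s)/6$. This disposes of $2\le s\le6$. The cases $s=0,1$ are a short direct count, and they leave only your one-eigenvalue case.

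A minor point: the sign change does not send $(-1,1,\dots,1)$ to a configuration of the same type. Its age $5/2$ is simply computed directly.
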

\begin{proof}
Suppose that a cyclotomic factor $\Phi_m$ of degree $2d\ge4$
occurs on $V\oplus\overline V$. The eigenangles on $V$ then
contain a set $B=\{\beta_1,\ldots,\beta_d\}$ obtained by choosing
one member from each pair $\{a/m,1-a/m\}$ with
$1\le a<m/2$ and $(a,m)=1$. This remains true when the factor
occurs with multiplicity. Put
\[
 S(B)=\sum_{i\le j}\{\beta_i+\beta_j\},\qquad
 c(B)=\min_j\sum_i\{\beta_i-\beta_j\},
\]
where braces denote fractional parts. The minimum of
$\sum_i\{\beta_i+t\}$ occurs immediately after one of its
downward jumps, namely at $t=-\beta_j$. Thus every additional
eigenangle contributes at least $c(B)$ in its cross terms with
$B$, and the required age is at least $S(B)+(6-d)c(B)$.
Put $\epsilon=\gcd(2,m)/m$. The off-diagonal sums
$\{\beta_i+\beta_j\}$ are nonzero multiples of $\epsilon$.
For each fixed $j$, the $d-1$ nonzero differences
$\{\beta_i-\beta_j\}$ are distinct multiples of $\epsilon$;
thus $c(B)\ge\epsilon\binom d2$. The diagonal terms give
the bound
\[
 S(B)+(6-d)c(B)
 \ge D_m+(7-d)\binom d2\epsilon,\qquad
 D_m=\sum_{\substack{1\le a<m/2\\(a,m)=1}}
       \min\bigl(\{2a/m\},\{-2a/m\}\bigr).
\]
Let $\varphi$ denote Euler's totient function. The elementary
sums $D_m$ give the following bounds:
\[
\begin{array}{c|l|c}
 d&m\text{ with }\varphi(m)=2d&
 D_m+(7-d)\binom d2\epsilon\ \ge\\ \hline
 2&5,8,10&8/5\\
 3&7,9,14,18&19/9\\
 4&15,16,20,24,30&32/15\\
 5&11,22&35/11\\
 6&13,21,26,28,36,42&15/7
\end{array}
\]
For the remaining case $m=12$, the two angles are chosen from
$\{1/12,11/12\}$ and $\{5/12,7/12\}$. Their diagonal terms
sum to at least $1/3$, their cross term is at least $1/3$, and
$c(B)\ge1/3$. Hence $S(B)+4c(B)\ge2$.
Thus every cyclotomic factor of degree at least four gives
age greater than $3/2$.

It remains to consider angles in
$\{0,1/6,1/4,1/3,1/2,2/3,3/4,5/6\}$. Let $s$ be the number
outside $\{0,1/2\}$. Their diagonal terms contribute at least
$s/3$, and their cross terms with the other angles at least
$s(6-s)/6$. If $2\le s\le6$, the resulting bound
$s(8-s)/6$ is at least two. If $s=0$, a mixture of $p$ zero
angles and $6-p$ half angles contributes $p(6-p)/2\ge5/2$.
If $s=1$ and the five central angles are mixed, their cross
terms already contribute at least two. Otherwise multiplication
by $-1$ makes those five angles zero. The remaining contribution
$\{2\alpha\}+5\alpha$ is at most $3/2$ only for
$\alpha=1/6$, when it equals $7/6$.
\end{proof}

\begin{lemma}\label{lem:toroidal-age-height}
Locally analytically, $X_0$ admits proper birational models
$Q=Z/G$, where $Z$ is a smooth equivariant toroidal refinement
and $G$ is finite. The quotient map $Z\to Q$ is unramified in
codimension one. At a fixed point of a nonidentity stabilizer
element, its tangent age is greater than $3/2$, except possibly
when the semiabelian variety has a principally polarized elliptic
factor with $j=0$ on which the element acts by $\exp(2\pi i/6)$,
and it acts identically on the complementary semiabelian factor.
Every prime divisor of $Q$ exceptional over $X_0$ has discrepancy
greater than $1/2$ over $X_0$.
\end{lemma}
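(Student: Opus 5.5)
The first step is to produce the local charts. I would use the standard Ash--Mumford--Rapoport--Tai construction of the perfect cone compactification near a point of a stratum of torus rank $r$. Locally analytically, $X_0$ is the quotient by a finite group $G$ of a partial toroidal embedding $T_\sigma$. Here $T_\sigma$ is a torus bundle over a neighborhood in the universal family over a level cover of $\A_{6-r}$, and $\sigma$ runs over the perfect cones. Choosing a $G$-equivariant regular subdivision of the fan of $T_\sigma$ gives a smooth toroidal refinement $Z$ and a proper birational model $Q=Z/G$ over $X_0$. I would also divide by the normal subgroup generated by reflections only if one occurs. By the Reid--Shepherd-Barron--Tai criterion of \cite{SB16}, however, the genus-six modular action has no divisorial branch locus. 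The same applies to the toroidal boundary strata, since the only quasi-reflections in $\Sp_{2g}$ come from $-I$ on elliptic factors and are excluded in genus six by Tai's analysis. I would therefore state that $Z\to Q$ is unramified in codimension one.

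For the age statement, at a fixed point of $g\in G$ I would decompose the tangent space of $Z$ along the semiabelian structure. It splits into the $\Sym^2$ of the Hodge space of the abelian part $A'$ of dimension $6-r$, the extension directions $\mathrm{Hom}$ between the toric and abelian parts, and the toric coordinates. The action on the tangent space of the Siegel space $\Sym^2V$, $V=\C^6$, has integral characteristic polynomial on $V\oplus\overline V$. So if $g$ acts nontrivially on the $\Sym^2$ part, Lemma~\ref{lem:age-six} gives age at least $7/6$, with age at most $3/2$ only in the listed case $(\exp(2\pi i/6),1,\dots,1)$. In that case $g$ acts as an order-six automorphism of an elliptic factor with $j=0$ and trivially on the complement. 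I would carry out the genuinely toroidal case as Tai does: the age on the toric and extension coordinates dominates the corresponding part of $\Sym^2V$ computed on the degeneration. The main obstacle is verifying that toric coordinates cannot lower the age below the abelian bound. Here I would use that the monodromy on the cone lattice comes from $\GL_r(\Z)$ acting on $\Sym^2$ of the character lattice, which again has integral characteristic polynomial and fits into Lemma~\ref{lem:age-six} with $V$ replaced by the full six-dimensional space.

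For the discrepancy bound, consider a prime divisor $F$ of $Q$ exceptional over $X_0$. Either it is the image of a toric divisor of $Z$ corresponding to a new ray $v$ of the subdivision, or it arises from blowing up fixed loci. In the toric case, the discrepancy over the affine toric $\sigma$-chart equals $\langle m_\sigma,v\rangle-1$, where $m_\sigma$ is the linear function equal to $1$ on the primitive generators, and $\sigma$ is a perfect cone. Integrality of the lattice together with Minkowski's convex body theorem would give that a new primitive interior vector satisfies $\langle m_\sigma,v\rangle\ge2$, so the discrepancy is at least $1$. Passage to the quotient preserves this because $Z\to Q$ is unramified in codimension one. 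For divisors over the quotient singularities, the Reid--Tai formula gives a log discrepancy at least the age, hence a discrepancy at least $\operatorname{age}-1>1/2$ whenever the age exceeds $3/2$. In the exceptional $j=0$ case the age is $7/6$, which gives discrepancy at least $1/6$. I would then need a direct check that weighted blowups with weights $(1/6$ on the normal elliptic-extension coordinates$)$ still give a discrepancy exceeding $1/2$, computed from the full eigenvalue list including the extension directions. I expect this last check to be the delicate point.
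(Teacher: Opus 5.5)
Your treatment of the charts, the age comparison with $\Sym^2(W\oplus\Lambda_\C)$ (where $W$ is the tangent space of the abelian part and $\Lambda$ is the toric lattice), and the new toric rays follows the paper's route. For the rays, both you and the paper use the height $\tr(Pu)$, Minkowski's theorem and the absence of divisorial ramification.

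\textbf{The concrete problem: the second half of your plan for the last assertion.}

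The lemma concerns prime divisors \emph{of} $Q$. The refinement $Z$ is obtained from the perfect-cone chart only by refining the fan, and $Q=Z/G$ is a finite quotient. So every prime divisor of $Q$ is the image of a prime divisor of $Z$. Those exceptional over $X_0$ are exactly the images of toric divisors of new rays. No divisor of $Q$ ``arises from blowing up fixed loci'', so the Reid--Tai discussion is not part of what must be proved.

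Worse, the check you single out as delicate cannot succeed. At a general product $E_{\mathrm{ell}}\times A$ with $j(E_{\mathrm{ell}})=0$, the transverse quotient has type $\frac16(1,1,1,1,1,2)$. The weighted blowup along its primitive ray is a divisorial valuation with $a(v,X_0)=\frac76-1=\frac16$. Valuations over the quotient singularities of $Q$ can therefore have discrepancy below $1/2$ over $X_0$. This is why the lemma is restricted to prime divisors of $Q$. It is also why the paper obtains $a(v,Y)\ge1$ only later, in Theorem~\ref{thm:unique-small-discrepancy}, using the order-ten vanishing of $\F$ along the product locus. Drop this part of your plan.

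\textbf{Steps of the age argument that are announced but not supplied.}

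First, the comparison inequality itself. At a boundary fixed point, the toric normal coordinates are not acted on linearly through $\Sym^2\Lambda_\C$; they are permuted with weights. Integrality of the characteristic polynomial of the $\GL_r(\Z)$-action controls only the right-hand side of
$\operatorname{age}(g\mid T_Z)\ge\operatorname{age}(g\mid\Sym^2(W\oplus\Lambda_\C))$.
The inequality comes from two facts.
\begin{itemize}
\item A weighted $l$-cycle has age $(l-1)/2+\delta$ with $0\le\delta<1$. This is at least the age of the permutation representation on the saturated lattice of the smooth cone.
\item The orbit directions carry the quotient lattice representation, on which translations act trivially.
\end{itemize}

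Second, effectivity. Lemma~\ref{lem:age-six} needs $g$ to act nontrivially on $\Sym^2(W\oplus\Lambda_\C)$. When the abelian part has positive dimension, this is Lemma~1.2 of \cite{SB16}. For a maximal cusp, it is the observation that the translation part $BA^t$ acts trivially after dividing by $\Sym_r(\Z)$. With both steps in hand, every nonidentity stabilizer has age at least $7/6>1$ on $T_Z$. That is what excludes quasireflections along the new toric divisors of $Z$; the interior criterion you cite does not cover them.

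Third, the exceptional case. You still need that the nonreal eigenvalue cannot lie in the rational representation $\Lambda_\C$. You also need that $g-g^2$ is a Rosati-symmetric idempotent splitting off $E_{\mathrm{ell}}$. Finally, the extension classes have zero $E_{\mathrm{ell}}$-component because $g-1$ is a unit there.

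\textbf{A smaller point on the toric bound.} Minkowski's theorem gives only $\tr(P_sU)\ge\frac s4v_s^{2/s}$, where $v_s$ is the volume of the unit $s$-ball; for $s=2$ this is $\pi/2$. That suffices for discrepancy $>1/2$, but not for the bound $\langle m_\sigma,v\rangle\ge2$ you state.
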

\begin{proof}
We use the toroidal local description of
\cite[\S1, Lemma~1.2]{SB16}. Write $h+r=6$, let $C$ be the
abelian quotient of dimension $h$, let
$W=H^0(C,\Omega_C^1)^\vee$, and let $\Lambda\cong\Z^r$ be
the toric lattice. The tangent representation of the smooth
base is
\[
 \Sym^2W\oplus(W\otimes\Lambda_\C).
\]
A finite-group-equivariant smooth subdivision exists, as in
\cite[p.~2]{SB16}. At a fixed point, the smooth cone associated
with its torus orbit has a saturated lattice generated by its
primitive rays. The group permutes these rays. A cycle of length
$l$ on their normal coordinates is a weighted permutation with
age $(l-1)/2+\delta$, $0\le\delta<1$. The corresponding
permutation representation has age $(l-1)/2$. The tangent to
the orbit is the quotient lattice representation; translations
have no effect on its logarithmic derivative. Splitting the
tangent exact sequences as finite-group representations gives
\begin{equation}\label{eq:toroidal-age-bound}
 \operatorname{age}(g\mid T_Z)
 \ge \operatorname{age}\bigl(g\mid
                   \Sym^2(W\oplus\Lambda_\C)\bigr).
\end{equation}
The characteristic polynomial on
$W\oplus\overline W\oplus2\Lambda_\C$ is integral.

For $h\ge1$, effectivity is precisely Lemma~1.2 of \cite{SB16}:
the only kernel on the pair of lattice representations is the
simultaneous central sign, which is already removed from the
effective modular group. When $h=0$, a maximal-cusp parabolic
element acts by
\[
 \Omega\longmapsto A\Omega A^t+BA^t,
 \qquad A\in\GL_r(\Z),\quad BA^t\in\Sym_r(\Z).
\]
The full unipotent translation lattice is $\Sym_r(\Z)$.
After quotienting by this lattice, the integral translation in
the displayed formula acts trivially on the exponential torus. Its effective action is therefore the lattice
action of $A$, whose kernel on $\Sym^2\Lambda_\C$ is
$\{\pm I\}$. There is no additional translation kernel.
Lemma~\ref{lem:age-six} now applies to every nonidentity
effective stabilizer. In particular all its tangent ages exceed
one, so there are no quasireflections and the quotient is
unramified in codimension one.

In the exceptional spectrum of Lemma~\ref{lem:age-six}, the
rational representation $\Lambda_\C$ cannot contain the single
nonreal eigenvalue. After the simultaneous central sign has
been removed, the lattice action is the identity and the action
on $W$ has eigenvalues $\exp(2\pi i/6),1,\ldots,1$.
For the resulting automorphism $g$ of $C$, the endomorphisms
\[
 e=g-g^2,\qquad 1-e=g^2-g+1
\]
are integral complementary idempotents. They are self-adjoint
for the Rosati involution, because $g^\dagger=g^{-1}$.
Consequently $C$ splits as a product of principally polarized
abelian varieties $C=E_{\mathrm{ell}}\times C'$, with $j(E_{\mathrm{ell}})=0$.
All extension classes indexed by $\Lambda$ are fixed by $g$.
On $E_{\mathrm{ell}}$, the endomorphism $g-1$ is a unit, so these extension
classes have zero $E_{\mathrm{ell}}$-component. The semiabelian variety itself
therefore splits as $E_{\mathrm{ell}}$ times its complement. This argument
is in the category of extensions and also removes any Jacobi
translation in the $E_{\mathrm{ell}}$-component; no splitting up to isogeny
is being used.

We finally check the toric discrepancies with the lattice
convention made explicit. The integral translations of $\Omega$
give the cocharacter lattice
\[
 N=\Sym_r(\Z),\qquad
 M=N^\vee=\{T=T^t:T_{ii}\in\Z,\ 2T_{ij}\in\Z\},
 \qquad\langle T,B\rangle=\tr(TB).
\]
Thus the primitive rays of a perfect cone are $vv^t$ with $v$
primitive integral. Let $P$ be its positive definite supporting
form, normalized by $\min_{v\ne0}v^tPv=1$, and write
$h_P(u)=\tr(Pu)$. A newly introduced ray has rank $s\ge2$,
because a rank-one positive semidefinite matrix spans an
extremal ray of the whole positive semidefinite cone. An integral
change of basis writes its primitive generator as
$u=\operatorname{diag}(U,0)$ with $U$ positive definite integral;
in particular $\det U\ge1$. The restriction $P_s$ has lattice
minimum at least one. Minkowski's convex body theorem gives
\[
 \det P_s\ge\left(\frac{v_s}{2^s}\right)^2,
 \qquad v_s=\frac{\pi^{s/2}}{\Gamma(1+s/2)}.
\]
Hence the arithmetic--geometric mean inequality gives
\begin{equation}\label{eq:toric-height-gap}
 h_P(u)=\tr(P_sU)\ge s(\det P_s\det U)^{1/s}
        \ge\frac{s}{4}v_s^{2/s}>\frac32
        \qquad(2\le s\le6).
\end{equation}
The toric discrepancy is $h_P(u)-1$. The finite quotient has no
divisorial ramification, so this also proves the last assertion.
\end{proof}

\subsection{The exceptional valuation of discrepancy one half}

\begin{theorem}\label{thm:unique-small-discrepancy}
Among exceptional divisorial valuations over $Y$, the valuation
defined by $E$ is the unique one of discrepancy less than one.
More precisely,
\[
 a(E,Y)=\frac12,\qquad a(v,Y)\ge1\quad\text{for }v\ne\ord_E.
\]
The second bound is attained.
\end{theorem}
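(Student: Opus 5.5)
The plan is to transport discrepancies from $X_0$ to $Y$ through the relation $2K_{X_0}\sim E$ and the minimal model program \eqref{eq:finite-MMP}. Since $2K_Y\sim0$, for any divisorial valuation $v$ one has, comparing the pulled-back bicanonical section on a common resolution,
\[
 a(v,Y)=a(v,X_0)+\tfrac12 v(E),
\]
where $v(E)$ is the value on $X_0$ of the prime divisor $E$. This uses that the unique bicanonical section vanishes exactly along $E$ on $X_0$ (Corollary~\ref{cor:F-prime} and \eqref{eq:canonical-prime-divisor}) and has no zeros on $Y$. For $v=\ord_E$ this recovers $a(E,Y)=1/2$ from Theorem~\ref{thm:minimal-geometry}. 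Also $Y$ has no divisors beyond those of $X_0$ except through contraction of $E$, so every $v\ne\ord_E$ exceptional over $Y$ is either $\ord_D$-free and exceptional over $X_0$, or is a divisor of $X_0$. The latter cannot happen: the only divisor of $X_0$ exceptional over $Y$ is $E$.

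It therefore suffices to show that every divisorial valuation $v$ exceptional over $X_0$ satisfies
\[
 a(v,X_0)+\tfrac12 v(E)\ge1 .
\]
For this we will work in the local models $Q=Z/G$ of Lemma~\ref{lem:toroidal-age-height}. Discrepancies over $X_0$ of valuations centered in the interior, or on the toroidal boundary, are computed on $Z$ by the Reid--Tai criterion and toric heights.

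\textbf{Generic case.} If the center of $v$ is not at a fixed point of the exceptional $j=0$ stabilizer, then the lemma gives tangent age $>3/2$ for nonidentity stabilizers. Reid--Tai then gives $a(v,X_0)\ge\min(\text{age})-1>1/2$ for valuations arising from the quotient, together with the toric height bound \eqref{eq:toric-height-gap}. For valuations that are divisors on a smooth chart exceptional over the smooth $Z$, the discrepancy is at least $1$ already. A refinement is needed where $a(v,X_0)\in(1/2,1)$. Here we plan to use the integrality of discrepancies on the cover: $a$ upstairs is an integer, and the ramification formula $A_{X_0}(v)=A_Z(w)/e$ forces $A_{X_0}(v)\in\frac1{e}\Z$, with the age ensuring values $\ge$ next admissible level. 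Whenever the center lies in $E$, we add the contribution $\frac12 v(E)\ge\frac12$.

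\textbf{Exceptional case (main obstacle).} This is the stabilizer acting by $\exp(2\pi i/6)$ on an elliptic factor with $j=0$, fixing the complementary factor. Its age $7/6$ only yields $a(v,X_0)\ge1/6$. Here the center lies on the product locus $\mathcal D$, where Corollary~\ref{cor:product-normal-term} shows $\F$ vanishes to order exactly ten in the normal coordinates $z$, on which $g$ acts by weight $\exp(2\pi i/6)$. For a quasi-monomial (weighted) valuation on $Z$ with weights $w_j=\alpha_j$ scaled by the order, $v(E)$ is at least $10$ times the minimal weight on the $z$-coordinates. We will estimate $a(v,X_0)+\frac12 v(E)$ by computing, for the cyclic quotient valuation attached to $g^k$, the sum $\operatorname{age}(g^k)-1+5\cdot(\text{weight of }z)$. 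Checking $k=1,\dots,5$ against the eigenangles $(\{2\alpha\},\alpha,\ldots)$ with $\alpha=k/6$, we aim to show each total is $\ge1$. For general divisorial valuations, we will reduce to these via convexity of log discrepancy and linearity of $v(E)$ on the relevant cones of monomial valuations.

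\textbf{Attainment.} The bound $a(v,Y)=1$ should be attained by blowing up a codimension-two smooth subvariety of $Y$ away from $D_Y$ and from the center of $E$. We expect this case to be routine.
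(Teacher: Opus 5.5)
Your architecture matches the paper's: the comparison $a(v,Y)=a(v,X_0)+\tfrac12v(E)$, the local models $Z/G$ of Lemma~\ref{lem:toroidal-age-height}, the age dichotomy, and the order-ten vanishing along the $j=0$ product locus. The gap is in your generic case, at the step that upgrades $a(v,X_0)>\tfrac12$ to $a(v,Y)\ge1$; the mechanism you propose does not work. Knowing $A_{X_0}(v)\in\frac1e\Z$ and $A_{X_0}(v)>\tfrac32$ does not force $A_{X_0}(v)\ge2$: for $e=5$ or $6$ it allows $\tfrac85$ or $\tfrac53$. Your claim that $\tfrac12v(E)\ge\tfrac12$ whenever the center lies in $E$ is also unjustified. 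Since $E\sim2K_{X_0}$ is only $\Q$-Cartier at the singular points of $X_0$, positivity of $v(E)$ gives no lower bound of $1$; the weighted ray at the $j=0$ point already has the nonintegral value $v(E)=\tfrac{10}{6}$. The missing observation is that $2K_Y\sim0$ is Cartier. Hence on any resolution $p:W\to Y$ the divisor $2K_W-p^*(2K_Y)$ is integral, and every $a(v,Y)$ lies in $\tfrac12\Z$. Then $a(v,Y)\ge a(v,X_0)>\tfrac12$ already gives $a(v,Y)\ge1$. The paper reduces the whole theorem to the strict bound $a(v,Y)>\tfrac12$ in exactly this way.

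In the exceptional case your weighted-blowup computation is right: among the powers $g^k$ only $g$ itself has age at most $\tfrac32$, and it gives $\tfrac16+\tfrac56=1$. But ``convexity on cones of monomial valuations'' does not reach an arbitrary divisorial valuation, which need not be monomial in any fixed coordinates. The argument that covers every valuation runs as follows.
\begin{enumerate}
\item Take the extension $w$ of $v$ to $Z$, with ramification index $e$ and cyclic inertia.
\item Choose the inertia generator acting on a uniformizer by $\exp(2\pi i/e)$, and take its eigenparameters $t_j$.
\item Use both the Jacobian inequality $A_Z(w)\ge\sum_jw(t_j)$ and the congruences $w(t_j)\equiv a_j\pmod e$. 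The congruences hold because inertia acts trivially on the residue field.
\end{enumerate}
When that generator has age at most $\tfrac32$, this forces $e=6$ and $w(t_j)\ge1$ on the five normal parameters. Hence $w(b)\ge10$ for $b\in I^{10}$, and $A_Y(v)\ge\tfrac76+\tfrac56=2$.

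You also omit centers in the toroidal boundary. Corollary~\ref{cor:product-normal-term} is an interior statement. The paper extends $\F\in I^{10}$ across the chart by saturation in the boundary parameters. It also packages $a(v,X_0)$ and $v(E)$ into the single coefficient $b=\varepsilon\F/\prod_\rho x_\rho^2$, which is holomorphic because $\ord_u\F\ge2h_P(u)$.

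Your attainment argument, by contrast, is correct and more elementary than the paper's. Blowing up a smooth codimension-two center in the smooth locus of $Y\setminus D_Y\cong U$ gives discrepancy exactly one. The paper's weighted ray at a $\frac16(1,1,1,1,1,2)$ point shows in addition that the bound is attained over the singular locus.
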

\begin{proof}
Use throughout the rational bicanonical differential defined by
$\F$. Its divisor on $X_0$ is $E$ and its divisor on $Y$ is zero.
Every exceptional valuation over $Y$, other than $E$, is
exceptional over $X_0$, and
\begin{equation}\label{eq:discrepancy-with-modular-divisor}
 a(v,Y)=a(v,X_0)+\frac12v(E).
\end{equation}
Since $2K_Y\sim0$, all these discrepancies are half-integral: on a smooth model
$p:W\to Y$, the divisor $2K_W-p^*(2K_Y)$ is integral.
It is enough to prove $a(v,Y)>1/2$ for $v\ne\ord_E$.

Take a local model $Q=Z/G$ as in
Lemma~\ref{lem:toroidal-age-height}, and let $x_\rho$ be the
boundary parameters on $Z$. In an ordinary, nowhere vanishing
canonical frame on $Z$, the bicanonical differential has
coefficient
\begin{equation}\label{eq:bicanonical-coefficient}
 b=\varepsilon\,\frac{\F}{\prod_\rho x_\rho^2},
 \qquad\varepsilon\in\OO_Z^\times.
\end{equation}
Indeed the period differential is, up to a unit, the wedge of
the base differentials and logarithmic torus differentials.
The function $b$ need not be invariant: the bicanonical
differential is invariant, so $b$ has the inverse square of
the determinant character in linearized coordinates.

The coefficient $b$ is holomorphic. To see this directly, every
nonzero Fourier index $T$ of $\F$ satisfies $v^tTv\ge2$ for
every primitive integral $v$, because the full modular group
has a single boundary divisor and the boundary order is two.
For a ray $u=\sum_i a_i v_iv_i^t$ in a perfect cone this gives
$\ord_u\F\ge2\sum_i a_i=2h_P(u)$. Writing $D_u$ for the boundary prime corresponding to $u$, we get
$\ord_{D_u}b\ge2(h_P(u)-1)\ge0$ on every boundary component.
This also handles all prime divisors on $Q$ exceptional over
$X_0$, by \eqref{eq:toric-height-gap}.

Now let $v$ be exceptional over $Q$, let $w$ be a normalized
extension to $Z$, and let $e$ be its ramification index, so that
$w|_{\C(Q)}=ev$. The
inertia group is cyclic of order $e$. The usual valuation
ramification formula, applied to the bicanonical differential,
is
\begin{equation}\label{eq:inertia-bicanonical}
 A_Y(v)=\frac{A_Z(w)+w(b)/2}{e}.
\end{equation}
Indeed the pullback to a resolution above $Z$ has order
$w(b)+2(A_Z(w)-1)$, whereas ramification from a resolution
above $Q$ gives order $2e(A_Y(v)-1)+2(e-1)$.
For the cyclic inertia and this ramification calculation see
\cite[\S\S2.5--2.6]{IR96}, whose local argument applies to
arbitrary finite subgroups of $\GL_n$.

If $e=1$, the valuation is exceptional over smooth $Z$, so
$A_Z(w)\ge2$ and \eqref{eq:inertia-bicanonical} suffices.
If $e>1$, choose the inertia generator acting on a uniformizer
by $\exp(2\pi i/e)$. In eigenparameters $t_j$ with eigenvalues
$\exp(2\pi i a_j/e)$, where $0\le a_j<e$, the fact that inertia acts trivially on
the residue field gives
\[
 w(t_j)\equiv a_j\pmod e,\qquad
 \frac{A_Z(w)}e\ge\sum_j\frac{w(t_j)}e
                   \ge\sum_j\frac{a_j}e.
\]
For the first inequality, take an equivariant system of parameters
at a general smooth point of the center of $w$. Parameters
tangent to the center have value zero. Pulling their top wedge
to a model carrying $w$ gives order at least
$\sum_jw(t_j)-1$, and hence the displayed Jacobian inequality.
If the tangent age is greater than
$3/2$, holomorphicity of $b$ finishes the proof.

In the remaining case, Lemma~\ref{lem:toroidal-age-height}
provides the elliptic product locus on the smooth chart. Its
ideal $I$ has five normal parameters: $h-1$ cross-period
parameters and $r$ extension parameters. They all have
eigenvalue $\exp(2\pi i/6)$. The Taylor argument in the proof
of Lemma~\ref{lem:origin-ten} gives $\F\in I^{10}$ on the
interior. Locally the toroidal chart is a torus embedding over
the deformation and extension base, and its fan depends only
on $\Lambda$. Thus its boundary parameters vary independently
of these five normal parameters. The ideal $I^{10}$ is
saturated with respect to the product of the boundary parameters,
so the same inclusion
holds on the entire chart. Applying this observation once more to
\eqref{eq:bicanonical-coefficient} gives $b\in I^{10}$.
All these ideals are on the smooth chart, before the finite
quotient is taken. Averaging lifts from $I/I^2$ supplies
equivariant normal parameters. Consequently
\[
 \frac{w(b)}e\ge\frac{10}{6},\qquad
 A_Y(v)\ge\frac76+\frac12\frac{10}{6}=2.
\]
This proves the asserted discrepancy bound in every case.

For sharpness, take a general product $E_{\mathrm{ell}}\times A$ with
$j(E_{\mathrm{ell}})=0$ and $\dim A=5$. The transverse quotient has type
$\frac16(1,1,1,1,1,2)$. Its primitive weighted ray defines
a valuation with $a(v,X_0)=1/6$. By
Corollary~\ref{cor:product-normal-term}, the degree-ten normal term
of $\F$ is
\[
 \Delta(\omega)^2T_{10}(\tau,z).
\]
Since $\Delta$ is nonzero on $\Hh_1$ and
$T_{10}\ne0$, this valuation has $v(E)=10/6$. Equation
\eqref{eq:discrepancy-with-modular-divisor} gives $a(v,Y)=1$.
\end{proof}

\begin{theorem}\label{thm:birational-automorphisms}
The birational automorphism group of $\A_6$ is trivial.
\end{theorem}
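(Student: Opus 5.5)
By Proposition~\ref{prop:unique-minimal-model}, $\operatorname{Bir}(\A_6)=\operatorname{Aut}(Y)$, so the plan is to show that an arbitrary automorphism $\psi$ of $Y$ is the identity. We follow the route sketched in the introduction: recover the extraction of $E$ and then $X_0$, and finish with the automorphisms of the open modular variety.

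\emph{Step 1: extract $E$ equivariantly.} By Theorem~\ref{thm:unique-small-discrepancy}, $\ord_E$ is the unique exceptional divisorial valuation over $Y$ with discrepancy less than one. Discrepancies are intrinsic to $Y$, so $\psi$ fixes $\ord_E$. The morphism $f\colon X_r\to Y$ is the extraction of this valuation. We identify $X_r$ with the relative $\Proj$ over $Y$ of the valuation ideals $\{h:\ord_E(h)\ge m\}$; finite generation of this algebra follows from $f$ being projective with $-E_r$ $f$-ample. Since $\psi$ preserves these ideals, it lifts to an automorphism $\psi_r$ of $X_r$ fixing the class of $E_r$. Pulling back through the isomorphisms in codimension one of \eqref{eq:finite-MMP}, we obtain a pseudoautomorphism $\psi_0$ of $X_0$.

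\emph{Step 2: make $\psi_0$ regular and recover the boundary.} Because $\rho(X_0)=2$, the map $\psi_0^*$ is a linear automorphism of $N^1(X_0)_\mathbb R$. It preserves $\overline{\Eff}(X_0)$ and $\overline{\operatorname{Mov}}(X_0)$ and the integral class of $E$. By Theorem~\ref{thm:effective-cone} it fixes the ray through $7\lambda-[D]$, and therefore also the other effective ray $\mathbb R_{\ge0}[D]$. The movable cone \eqref{eq:movable-cone} has the distinct rays $\lambda$ and $\sigma\lambda-D$. Since the ray through $D$ is fixed, the only possible swap is ruled out: $\psi_0^*$ preserves the prime divisor $D$ and preserves the effective ray through $E$. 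Comparing the boundary rays of the movable cone, whose slopes are distinct, we conclude that $\psi_0^*$ fixes $\lambda$ up to a positive scalar. A linear map fixing $[E]$ and scaling $\lambda$ and $[D]$ is forced to be the identity on $N^1$. It therefore fixes an ample class, so the pseudoautomorphism $\psi_0$ is regular, by the standard argument that a small modification preserving an ample class is an isomorphism. Since $\psi_0$ preserves $\lambda$, it descends to the Satake compactification $\Proj\MM_\bullet^{(6)}$ and preserves its boundary. Hence it restricts to an automorphism of $\A_6=\Gamma_6\backslash\Hh_6$.

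\emph{Step 3: automorphisms of the open modular variety.} An automorphism of $\A_6$ extends to the Baily--Borel compactification, by Borel's extension theorem. It lifts to the universal cover of the orbifold, or equivalently to a biholomorphism of $\Hh_6$ away from the branch locus, which has codimension at least two by \cite{SB16}. The lift normalizes $\Gamma_6$ in $\operatorname{Aut}(\Hh_6)=\operatorname{PSp}_{12}(\mathbb R)$. The normalizer of $\Sp_{12}(\Z)$ in $\operatorname{PSp}_{12}(\mathbb R)$ is its own image, since $\Sp_{12}(\Z)$ is a maximal arithmetic lattice. The only extra symplectic similitude to consider is $\tau\mapsto-\bar\tau$, which is antiholomorphic and so excluded. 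Thus the automorphism is the identity.

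The main obstacle is Step~3, specifically lifting an automorphism of the coarse space to $\Hh_6$. I would justify this through the orbifold fundamental group. Because the quotient is unramified in codimension one, any automorphism of the normal variety $\A_6$ lifts to the universal cover of the smooth locus, and this cover extends across a set of codimension at least two in $\Hh_6$ by Hartogs. A secondary subtlety is Step~1: we must check that the relative $\Proj$ reproduces $X_r$ and not just some model carrying $E$. For this we use that $X_r$ is $\Q$-factorial with $\rho(X_r/Y)=1$ and that $-E_r$ is $f$-ample.
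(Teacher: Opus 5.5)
Your proposal is correct and follows the paper's proof. Both arguments fix $\ord_E$ as the unique exceptional valuation of discrepancy below one, recover $X_r$ as the relative $\Proj$ of the valuation ideals, and conjugate through the flips to a pseudoautomorphism of $X_0$ acting trivially on $N^1$. That map is made regular by a preserved ample class, it recovers the Satake contraction and $D$, and the argument finishes by lifting to $\Hh_6$ through the simply connected free locus with Hartogs extension.

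The differences are minor. You get triviality on $N^1$ from the effective and movable cones of $X_0$, whereas the paper works directly on $X_r$: $[E_r]$ is fixed, and the one-dimensional space $f^*N^1(Y)$ has a preserved lattice and ample ray. You cite maximality of $\Sp_{12}(\Z)$ where the paper uses the transvection identity $ST_vS^{-1}=T_{Sv}$. Finally, you leave implicit the maximum-principle check that the Hartogs extension lands in $\Hh_6$ rather than on its boundary; the paper handles this with a supporting hyperplane.
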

\begin{proof}
By Proposition~\ref{prop:unique-minimal-model}, a birational
automorphism is an automorphism of $Y$.
Theorem~\ref{thm:unique-small-discrepancy} shows that it fixes
the valuation of $E$. Since $-E_r$ is ample over $Y$, the last
model $X_r$ is recovered as the relative Proj of the valuation
ideal sheaves
\[
 \mathcal I_m=f_*\OO_{X_r}(-mE_r),
 \qquad
 X_r\cong\Proj_Y\bigoplus_{m\ge0}\mathcal I_{m\ell}
\]
for sufficiently divisible $\ell>0$. These sheaves are intrinsic
to the valuation: on an open set meeting its center their
sections are the regular functions with $\ord_E(a)\ge m$,
and outside the center they equal $\OO_Y$. Thus the automorphism
lifts to $X_r$. It acts trivially on $N^1(X_r)$: it fixes
$[E_r]$ and $f^*N^1(Y)$, which are independent, and it acts
trivially on the latter one-dimensional space because it
preserves its integral lattice and ample ray.

Conjugating by the flips gives a pseudoautomorphism of $X_0$
acting trivially on $N^1(X_0)$. Since
$\Pic(X_0)_\Q=N^1(X_0)_\Q$, it preserves some positive
power of an ample line bundle. Sections of its powers are
unchanged under an isomorphism in codimension one on a normal
variety. Taking their Proj therefore makes the
pseudoautomorphism regular. It also preserves a positive
multiple of $\lambda$, hence the Satake contraction. The
unique exceptional divisor $D$ of that contraction is preserved.
We have now proved, rather than assumed, that the automorphism
preserves the modular open subset $\A_6$.

We finish by recalling a direct argument that this open subset
has no automorphisms. Put $\Gamma=\Sp_{12}(\Z)/\{\pm I\}$.
By Lemma~\ref{lem:age-six} there are no quasireflections, so
the inverse image $\Hh_6^\circ$ of the smooth locus of $\A_6$
is the locus on which $\Gamma$ acts freely. Its complement
is a locally finite union of fixed loci of complex codimension
at least two. General position makes $\Hh_6^\circ$ simply
connected. An automorphism of $\A_6$ therefore lifts to
$\Hh_6^\circ$. In a bounded convex realization of the Siegel
domain, Hartogs extension extends its coordinate functions
across the omitted fixed loci. A supporting-hyperplane maximum
principle shows that the image stays in the domain. The inverse
extends in the same way. The lift is therefore a holomorphic
automorphism of $\Hh_6$ normalizing $\Gamma$.

Represent this lift by $S\in\Sp_{12}(\mathbb R)$.
For primitive $v\in\Z^{12}$, put $T_v=I+vv^tJ$, with $J$
the standard symplectic matrix. Since
$ST_vS^{-1}=T_{Sv}$ is integral, $Sv=\sqrt c\,w$ for a
primitive integral $w$ and a positive integer $c$.
If $c>1$, conjugating $T_w$ back gives an integral unipotent
$c$-th root of $T_v$. The unique unipotent root is
$I+c^{-1}vv^tJ$, which is not integral since $v$ is primitive.
Thus $c=1$. Applying this to the standard basis gives
$S\in\Sp_{12}(\Z)$, and the induced automorphism of
$\A_6$ is the identity.
\end{proof}

\appendix
\section{The exact theta certificate}
\label{sec:theta-certificate}

\subsection{The theta-jet matrix}

Put $e(x)=\exp(2\pi i x)$.  For $\mu\in(\Z/4\Z)^5$, define
\begin{equation}\label{eq:theta-basis}
 \theta_\mu(\tau,z)
  =\sum_{r\equiv\mu\pmod4}
       e\!\left(\frac{r^t\tau r}{8}+r^tz\right).
\end{equation}
These functions form a basis of $H^0(A_\tau,4\Theta)$.  Choose one
representative from each orbit of $\mu\mapsto-\mu$.  For an orbit of
size two, put
\[
 b_\mu=\frac{\theta_\mu+\theta_{-\mu}}2,
\]
and for an orbit of size one put $b_\mu=\theta_\mu$.
There are $32$ orbits of size one, and hence
\begin{equation}\label{eq:even-dimension}
 \dim H^0(A_\tau,4\Theta)^+
   =\frac{4^5+2^5}{2}=528.
\end{equation}
For a multi-index $\alpha\in\Z_{\geq0}^5$ of even total degree,
the $\alpha$-derivative of $b_\mu$ at the origin equals that of
$\theta_\mu$.  Indeed the substitution $r\mapsto-r$ leaves $r^\alpha$
unchanged.

The map taking an even section to its Taylor coefficients of degrees
$0,2,4,6,8$ has $791$ coordinates, since
\[
 \sum_{j=0}^{4}\binom{2j+4}{4}=1+15+70+210+495=791.
\]
Let $J(\tau)$ be its $791\times528$ matrix in the basis just defined.
The nonzero section $\phi$ and Lemma~\ref{lem:origin-ten} imply
\begin{equation}\label{eq:jet-rank-upper}
 \rk J(\tau)\leq527\qquad\text{for every }\tau\in\Hh_5.
\end{equation}
To justify the assertion at points where the entire section $\phi$ is
zero, observe that every $528\times528$ minor vanishes on the nonempty
open set where $\phi$ is nonzero.  It therefore vanishes identically.

For the finite calculation we restrict to $\tau=uQ$, where $u\in\Hh_1$
and
\begin{equation}\label{eq:Q-matrix}
 Q=\begin{pmatrix}
 9&1&0&-1&2\\
 1&10&2&1&-1\\
 0&2&11&-2&1\\
 -1&1&-2&12&0\\
 2&-1&1&0&13
 \end{pmatrix}.
\end{equation}
The matrix is positive definite; in fact $Q\geq5I_5$.
We use $t=e(u)$ as the variable for this calculation, to distinguish it
from the Fourier--Jacobi boundary variable of a degree-six modular form.
Set
\[
 d_\mu=\min_{r\equiv\mu\pmod4}r^tQr.
\]
The difference of the norms of any two vectors in the same coset is
divisible by $8$, and $d_\mu=d_{-\mu}$.  Consequently the moments of the
normalized sections $t^{-d_\mu/8}b_\mu(uQ,z)$ are integer power series.
Multiplying the row of Taylor coefficients indexed by $\alpha$ by
the nonzero scalar $\alpha!/(2\pi i)^{|\alpha|}$ gives the matrix
\begin{equation}\label{eq:jet-integral-matrix}
 A_{\alpha,\mu}(t)
  =\sum_{r\equiv\mu\pmod4}
      r^\alpha t^{(r^tQr-d_\mu)/8}
       \in\Z[[t]],\qquad |\alpha|\in\{0,2,4,6,8\}.
\end{equation}
All its $528\times528$ minors vanish identically by
\eqref{eq:jet-rank-upper}.  Notice that this is an exact identity over
$\Z[[t]]$, obtained before any reduction modulo a prime.
For the numerical ordering, represent $\mu_i$ by $0,1,2,3$, order cosets
by $\sum_{i=1}^5 4^{i-1}\mu_i$, and choose the smaller member of each
pair $\{\mu,-\mu\}$.  Order rows first by total degree and then
lexicographically by their multi-indices.

\subsection{Lifting a projective kernel by maximal minors}

The following elementary observation avoids any assumption that a
normalized kernel vector lifts integrally.

\begin{lemma}\label{lem:cofactor-lift}
Let $p$ be a prime and let $A(t)$ be an $n\times m$ matrix over
$\Z_{(p)}[[t]]$.  Suppose that every $m\times m$ minor of $A(t)$ is zero
and that its reduction $\overline A(t)$ has rank $m-1$ over
$\Ff_p((t))$.  Then there is a vector
\[
 v(t)\in\ker A(t)\subset\Z_{(p)}[[t]]^m
\]
whose reduction is nonzero.  If $\overline K(t)$ spans
$\ker\overline A(t)$, then $\overline v(t)=c(t)\overline K(t)$ for
some $c(t)\in\Ff_p((t))^\times$.

In particular, let $P$ be a homogeneous polynomial in the coordinates
of a vector, with coefficients in $\Z_{(p)}[[t]]$.  If
$\overline P(\overline K(t))\ne0$, then $P(v(t))\ne0$.
\end{lemma}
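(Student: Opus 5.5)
The plan is to construct the kernel vector directly from maximal minors via a cofactor (Cramer-type) vector, after selecting $m-1$ rows whose reduction is already independent.

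Since $\overline A(t)$ has rank $m-1$ over $\Ff_p((t))$, we choose rows $i_1,\ldots,i_{m-1}$ and a column index $j_0$ such that the $(m-1)\times(m-1)$ minor of $\overline A$ on these rows, omitting column $j_0$, is nonzero. Let $B(t)$ be the $(m-1)\times m$ submatrix of $A(t)$ on these rows. Define $v(t)\in\Z_{(p)}[[t]]^m$ by
\[
 v_j(t)=(-1)^j\det B^{(j)}(t),
\]
where $B^{(j)}$ omits column $j$. All entries lie in $\Z_{(p)}[[t]]$, because determinants are polynomial in the entries, and reduction commutes with taking determinants. Thus $\overline v_{j_0}\ne0$, so $\overline v\ne0$.

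Next we verify that $v\in\ker A$. For any row $a$ of $A$, Laplace expansion along the first row of the $m\times m$ matrix with rows $a,B$ gives $a\cdot v=\pm\det\begin{pmatrix}a\\B\end{pmatrix}$. If $a$ is one of the chosen rows, this determinant has a repeated row and vanishes. Otherwise it is, up to sign, an $m\times m$ minor of $A$, which vanishes by hypothesis. Hence $A(t)v(t)=0$.

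For proportionality, reduction gives $\overline A\,\overline v=0$ with $\overline v\ne0$. The kernel of $\overline A$ over the field $\Ff_p((t))$ has dimension $m-\rk\overline A=1$, so $\overline v=c\overline K$ with $c\ne0$.

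For the final claim, take $P$ homogeneous of degree $n$. Then $\overline{P(v)}=\overline P(\overline v)=c^n\overline P(\overline K)\ne0$. Since reduction modulo $p$ is a ring homomorphism $\Z_{(p)}[[t]]\to\Ff_p[[t]]$, an element with nonzero reduction is nonzero, so $P(v(t))\ne0$.

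There is no real obstacle. The only point requiring care is choosing the row subset so that the reduced cofactor is nonzero. This is exactly what the rank hypothesis over $\Ff_p((t))$ guarantees, since a nonzero minor of $\overline A$ is the reduction of the corresponding minor of $A$.
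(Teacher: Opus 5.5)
Your proof is correct and is essentially the paper's argument: you build a cofactor vector from $m-1$ rows whose reduction has rank $m-1$, use Laplace expansion and the vanishing maximal minors to get $Av=0$, and then use the one-dimensionality of $\ker\overline A$ and the homogeneity of $P$. Your explicit handling of rows already in $B$ (a repeated row gives a zero determinant) is slightly more careful than the paper, which simply cites the hypothesis; one small fix is that you should not reuse $n$ for the degree of $P$, since $n$ already denotes the number of rows.
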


\begin{proof}
Choose $m-1$ rows forming a matrix $B(t)$ whose reduction has rank
$m-1$.  For $1\leq j\leq m$, let
\[
 v_j(t)=(-1)^{j-1}\det B_{\widehat j}(t),
\]
where $B_{\widehat j}$ denotes the matrix obtained by deleting column
$j$.  Some $v_j$ is nonzero modulo $p$.  For any row of $A$, its scalar
product with $v$ is, up to sign, the determinant obtained by adjoining
that row to $B$.  This is zero by hypothesis.  Hence $Av=0$.
Since $\ker\overline A$ is one-dimensional, the asserted proportionality
follows.  If $P$ has degree $h$, then
\[
 \overline P(\overline v)
     =c(t)^h\overline P(\overline K)\ne0,
\]
which implies $P(v)\ne0$ in characteristic zero.
\end{proof}

\subsection{The finite certificate}

We describe both the enumeration bound and the matrix operations, so
that the calculation can be reproduced without earlier numerical data.
The ancillary program \texttt{certificate.py} uses only Python and
NumPy.  All its operations are integer operations or operations in
$\Ff_{101}$.  The complete pivot selections are recorded in
\texttt{certificate.json}.
An optional second implementation,
\texttt{independent\_\allowbreak certificate\_\allowbreak check.py},
uses block-Toeplitz
matrices and polynomial interpolation to verify the same ranks and
precision bounds. The first program alone reproduces the certificate
described below.

First, all coset minima satisfy $d_\mu\leq140$.  This bound can already
be verified by the $5^5$ vectors in $[-2,2]^5$.  Thus the coefficients
of \eqref{eq:jet-integral-matrix} through $t^{14}$ involve only vectors
with $r^tQr\leq252$.  This enumeration is complete in $[-5,5]^5$.
Indeed
\[
 \det Q=130217,
\]
and the five diagonal cofactors of $Q$ are
\[
 15470,\quad14206,\quad12976,\quad11559,\quad10653.
\]
For every real vector $r$, completing the square gives
\[
 r^tQr\geq\frac{r_i^2}{(Q^{-1})_{ii}}.
\]
If $r\notin[-5,5]^5$, it follows that
\begin{equation}\label{eq:enumeration-bound}
 r^tQr\geq\frac{36\cdot130217}{15470}
      =\frac{2343906}{7735}>252.
\end{equation}
There are $14815$ vectors of norm at most $252$ in the box.

We next explain the power-series elimination.  Permute rows and columns
so that a maximal nonsingular submatrix of the constant coefficient
occurs in the upper left corner.  Write the resulting matrix as
\[
 \begin{pmatrix}B_{11}&B_{12}\\B_{21}&B_{22}\end{pmatrix},
 \qquad\det B_{11}(0)\ne0.
\]
Eliminating $B_{11}$ leaves the Schur complement
$B_{22}-B_{21}B_{11}^{-1}B_{12}$.
If its first $a$ coefficients vanish, divide by $t^a$ and repeat.
These operations are performed over $\Ff_{101}[[t]]$.  Applied to
$\overline A$, they give the following table.  The exponent is the
accumulated exponent at the current stage.
\begin{center}
\begin{tabular}{c|rrrrr}
\toprule
Exponent & $0$&$1$&$2$&$3$&$5$\\
New pivots & $504$&$13$&$7$&$2$&$1$\\
Remaining columns & $24$&$11$&$4$&$2$&$1$\\
\bottomrule
\end{tabular}
\end{center}
Thus $\rk\overline A=527$.  The last column is identically zero by
\eqref{eq:jet-rank-upper}, reduced modulo $101$.  This conclusion is
not inferred from a finite string of zero coefficients.  The pivot
exponents have sum
\[
 13+2\cdot7+3\cdot2+5=38.
\]
In particular, the elimination identifies a nonzero $527\times527$
minor of valuation $38$.

The largest pivot exponent is $5$.  The input through $t^{14}$ therefore
determines the normalized kernel $\overline K(t)$ modulo $t^{10}$.
In the ordering specified by the program, its final coordinate is $1$.
This is the coordinate indexed by $\mu=(2,2,2,2,2)$.
Substitution in the original matrix verifies
\[
 \overline A(t)\overline K(t)\equiv0\pmod{t^{10}}.
\]
The all-orders kernel whose truncation is used here is determined by
the preceding rank identity and the invertible blocks in the elimination.

For a vector of coefficients $w=(w_\mu)$, define the integral
degree-ten polynomial
\begin{equation}\label{eq:tangent-moment}
 \mathcal T(w;t,z)
   =\sum_\mu w_\mu
        \sum_{r\equiv\mu\pmod4}
           (r^tz)^{10}t^{(r^tQr-d_\mu)/8}.
\end{equation}
Up to the common factor $(2\pi i)^{10}/10!$, this is the degree-ten
Taylor term of the corresponding even section.  Restrict it to the
plane $z=U^tx$, where $x=(x_1,x_2,x_3)^t$ and
\begin{equation}\label{eq:plane-matrix}
 U=\begin{pmatrix}
 16&2&4&5&4\\
 16&17&12&1&2\\
 7&9&12&10&6
 \end{pmatrix}.
\end{equation}
The first three columns have determinant $1420$, which is nonzero
modulo $101$.  This matrix therefore defines a plane over both fields.

Let $P(x)=\mathcal T(\overline K;t,U^tx)$ over $\Ff_{101}((t))$.
Its three first partial derivatives have degree $9$.  Consider the map
\begin{equation}\label{eq:macaulay-map}
 \bigoplus_{i=1}^3\Ff_{101}((t))[x_1,x_2,x_3]_{16}
 \longrightarrow \Ff_{101}((t))[x_1,x_2,x_3]_{25},\qquad
 (a_1,a_2,a_3)\longmapsto\sum_{i=1}^3a_i\frac{\partial P}{\partial x_i}.
\end{equation}
The two dimensions are $459$ and $351$.  The matrix, with source
monomials as rows, has the following elimination profile:
\begin{center}
\begin{tabular}{c|rrrrrrrrrr}
\toprule
Exponent & $0$&$1$&$2$&$3$&$4$&$5$&$6$&$7$&$8$&$9$\\
New pivots & $221$&$27$&$76$&$7$&$10$&$4$&$2$&$0$&$3$&$1$\\
\bottomrule
\end{tabular}
\end{center}
The sum is $351$, so \eqref{eq:macaulay-map} is surjective.  The
valuation of the nonzero minor obtained in this elimination is $305$;
the largest pivot exponent is $9$.  Knowledge modulo $t^{10}$ is
sufficient: at each step it determines the next constant pivot block,
and no unknown term can affect the last block at exponent $9$.

Surjectivity means that every monomial of degree $25$ belongs to the
ideal generated by the three partial derivatives of $P$.  These
derivatives cannot have a common projective zero.  Since $101\nmid10$,
the plane curve $P=0$ is smooth.  Every $351\times351$ minor of
\eqref{eq:macaulay-map} is homogeneous of degree $351$ in the coordinates
of a kernel vector.  Lemma~\ref{lem:cofactor-lift}, applied to
\eqref{eq:jet-integral-matrix}, consequently gives a characteristic-zero
kernel vector whose degree-ten form has a smooth plane section.

\begin{remark}
The only global input in the arithmetic lifting is the exact rank
identity \eqref{eq:jet-rank-upper}.  Finite vanishing tests alone would
not supply this identity.  Conversely, no rational lift of the
normalized vector $\overline K(t)$ is needed: the integral cofactor
vector in Lemma~\ref{lem:cofactor-lift} lifts its projective class,
which is all that is required for the tangent-cone calculation.
\end{remark}

\section*{Disclosure of AI Assistance}
Under the author's guidance, OpenAI's large language model Astra was used
extensively to develop the mathematical arguments and proposed proofs and to
prepare the initial draft of this manuscript. The author has reviewed the overall
structure and coherence of the arguments and takes full responsibility for the
content. However, independent line-by-line verification of every step has not yet
been completed. This manuscript records the author's exploration and its
findings. It is shared in its current form to invite feedback and collaboration
from interested readers.

\end{document}